\documentclass[11pt]{amsart}
\usepackage[margin=1in]{geometry}

\usepackage{amssymb}
\usepackage{amsthm}
\usepackage{amsmath}
\usepackage{mathrsfs}
\usepackage{amsbsy}
\usepackage{bm}
\usepackage{tikz}
\usepackage{array}
\usepackage{enumerate}
\usepackage{bbm}
\usepackage{comment}
\usepackage{mathtools}
\usepackage{tabu}
\usepackage{makecell} 
\usepackage{colortbl}
\usepackage{xcolor}
\usetikzlibrary{positioning,calc}
\usetikzlibrary{decorations.markings}
\usetikzlibrary{fit} 
\usepackage{booktabs}
\usepackage{csquotes}
\usepackage{bbold}

\DeclareFontFamily{U}{mathx}{}
\DeclareFontShape{U}{mathx}{m}{n}{<-> mathx10}{}
\DeclareSymbolFont{mathx}{U}{mathx}{m}{n}
\DeclareMathAccent{\widecheck}{0}{mathx}{"71}

\usepackage{microtype}
\usepackage{multicol}
\usepackage{paracol}
\usepackage{etoolbox}
\usepackage{caption}
\usepackage[nobreak,noadjust]{cite}
\usepackage{hyperref}
\hypersetup{colorlinks=true, citecolor=LightBlue, linkcolor=NewTurquoise,urlcolor=NewTurquoise} 

\usepackage[noabbrev,capitalise,nameinlink]{cleveref}
\crefname{conjecture}{Conjecture}{Conjectures}

\usepackage{letltxmacro} 

\usepackage{letltxmacro}
\usepackage{xparse}
\LetLtxMacro{\originalcite}{\cite}
\RenewDocumentCommand{\cite}{om}{\IfValueTF{#1}{\mbox{\originalcite[#1]{#2}}}{\mbox{\originalcite{#2}}}}

\DeclareRobustCommand{\fold}{\raisebox{0.2ex}{\resizebox{\width}{0.5\height}{\rotatebox{90}{\textsf{w}}}}}

\definecolor{LightBlue}{rgb}{0,0.8,1} 
\definecolor{Green}{rgb}{0,0.863,0}
\definecolor{DarkGreen}{rgb}{0,0.5,0}
\definecolor{MildGreen}{rgb}{0,0.784,0}
\definecolor{Turquoise}{rgb}{0,0.784,0.45}
\definecolor{NormalGreen}{rgb}{0,0.8,0}
\definecolor{LightGreen}{rgb}{0,0.922,0}
\definecolor{Magenta}{rgb}{1,0,0.6}
\definecolor{Yellow}{rgb}{0.95,0.95,0}
\definecolor{lavender}{rgb}{0.4,0,1}
\definecolor{peach}{rgb}{1,0.43,0.39} 
\definecolor{DarkBlue}{rgb}{0,0,0.549} 
\definecolor{Purple}{rgb}{0.863,0,1} 
\definecolor{Orange}{rgb}{1,0.7,0}
\definecolor{Teal}{rgb}{0,0.7,1}
\definecolor{Brown}{rgb}{0.5,0.294,0}
\definecolor{Gray}{rgb}{0.3,0.3,0.3}
\definecolor{NewBlue}{rgb}{0,0.2,0.9}
\definecolor{NewTurquoise}{rgb}{0.2,0,1}
\definecolor{s1Yellow}{RGB}{255,210,0}
\definecolor{s2Blue}{RGB}{0,210,255}
\definecolor{Cyan}{RGB}{0,255,255}



\newcommand{\Sort}{{\sf Sort}}

\newcommand{\Cat}{{\sf Cat}}

\newcommand{\wo}{w_\circ}

\newcommand{\w}{{\sf w}}

\newcommand{\Red}{\mathrm{Red}}

\definecolor{keywords}{RGB}{255,0,90}
\definecolor{comments}{RGB}{0,0,113}

\newcommand{\ZZ}{{\mathbb{Z}}}
\newcommand{\ff}{\mathsf{f}}
\newcommand{\g}{\mathsf{g}}

\newcommand{\Q}{\mathcal{Q}}

\newcommand{\inv}{{\mathsf{inv}}}

\newcommand{\hgt}{{\mathsf{ht}}}

\newcommand{\R}{\mathrm{R}} 

\newcommand{\HH}{\mathcal{H}}

\newcommand{\BIP}{Q}
\newcommand{\BIPs}{\mathbf{Q}} 

\newcommand{\JJ}{\mathfrak{J}}



\newcommand{\B}{\mathbf{B}}
\newcommand{\sw}{\mathsf{w}}

\def\bw{\mathbf{w}}

\NewDocumentCommand\CatpW{O{p}}{{\Cat_{#1}(W)}}

\NewDocumentCommand\Parkcp{O{p}}{{\mathcal{P}_{e,\mathbf{c}^{#1}}(W)}}

\newcommand{\Vreg}{V^{\mathrm{reg}}}
\newcommand{\bpi}{\vec{\pi}}

\crefname{problem}{problem}{problems}
\Crefname{problem}{Problem}{Problems}
\crefname{problem}{problem}{problems}
\Crefname{problem}{Problem}{Problems}
\newcommand{\Sal}{{\sf Sal}}

\newcommand{\Weak}{{\sf Weak}}
\newcommand{\NC}{{\sf NC}}

\newcommand{\MCh}{{\sf MC}}
\newcommand{\Bessis}{{\sf{BBW}}}

\newcommand{\SBDW}{\mathsf{SBDW}} 

\def\hgt{\mathrm{ht}}

\def\RR{\mathbb{R}}
\def\CC{\mathbb{C}}

\newcommand{\leqB}{\leq_{\mathrm{B}}}
\newcommand{\C}{\vec{\pi}}
\newcommand{\tC}{\widetilde{\boldsymbol{\pi}}}
\newcommand{\LL}{\mathrm{L}} 
\newcommand{\rw}{\mathrm{rw}} 
\newcommand{\yy}{\mathfrak{y}}
\newcommand{\yys}{{\mathfrak{y}^{\star}}}
\newcommand{\xx}{\mathfrak{x}}
\newcommand{\uuu}{\mathfrak{u}}
\newcommand{\vvv}{\mathfrak{v}}
\newcommand{\www}{\mathfrak{w}}
\newcommand{\zz}{\mathfrak{z}}
\newcommand{\BB}{\mathbb{B}}
\newcommand{\SSS}{\mathfrak{S}}

\newcommand{\DD}{\mathcal{D}}
\newcommand{\bs}{\mathbf{s}}
\newcommand{\bt}{\mathbf{t}}
\newcommand{\conv}{\mathrm{conv}}
 
\newcommand{\VV}{\mathfrak{V}}

\newcommand{\Faces}{\mathsf{Faces}} 
\newcommand{\Class}{\mathsf{CC}} 
\newcommand{\RRR}{\mathcal{R}}
\newcommand{\Span}{\mathrm{span}} 
\newcommand{\CL}{\mathcal{C}} 

\newcommand{\NCW}{\mathrm{NCWeak}}
\newcommand{\Perm}{\mathrm{Perm}}

\newcommand{\dT}{\mathbf{T}_{c}}
\newcommand{\dt}{\mathbf{t}_{c}}
\newcommand{\BA}{\B_{W,w_\circ}}
\newcommand{\BD}{\B_{W,c}}
\newcommand{\hB}{\mathrm{B}}
\newcommand{\ppp}{\Pi}
\newcommand{\leqD}{\leq_{\Delta}}
\newcommand{\spange}{\mathrm{span}_{\geq 0}}
\newcommand{\dd}{\delta}

\newcommand{\MCl}{\mathrm{MClus}^+}

\newcommand{\thuge}{{\LARGE huge}}
\newcommand{\tlarge}{{\Large large}}
\newcommand{\tmedium}{{\normalsize medium}}
\newcommand{\tsmall}{{\tiny small}}
%

\newtheorem{theorem}{Theorem}[section]
\newtheorem{proposition}[theorem]{Proposition}
\newtheorem{corollary}[theorem]{Corollary}
\newtheorem{conjecture}[theorem]{Conjecture}

\newtheorem{lemma}[theorem]{Lemma}

\theoremstyle{definition}
\newtheorem{definition}[theorem]{Definition}
\newtheorem{remark}[theorem]{Remark}
\newtheorem{example}[theorem]{Example}

\usepackage[textsize=tiny]{todonotes}
\setlength{\marginparwidth}{0.9in}

\newcommand{\dfn}[1]{\textcolor{NewBlue}{\emph{#1}}}
\newcommand{\defn}[1]{\textcolor{NewBlue}{\emph{#1}}}

\begin{document}

\title[]{Permutahedron Triangulations via Total Linear Stability \\ and the Dual Braid Group}
\subjclass[2010]{}

\author[]{Colin Defant}
\address[]{Department of Mathematics, Harvard University, Cambridge, MA 02138, USA}
\email{colindefant@gmail.com} 

\author[]{Melissa Sherman-Bennett}
\address[]{Department of Mathematics, University of California Davis, Davis, CA 95616, USA
}
\email{mshermanbennett@ucdavis.edu} 

\author[]{Nathan Williams}
\address[]{Department of Mathematical Sciences, University of Texas at Dallas, Richardson, TX 75080}
\email{nathan.f.williams@gmail.com}

\begin{abstract} For each finite Coxeter group $W$ and each standard Coxeter element of $W$, we construct a triangulation of the $W$-permutahedron. For particular realizations of the $W$-permutahedron, we show that this is a regular triangulation induced by a height function coming from the theory of total linear stability for Dynkin quivers. We also explore several notable combinatorial properties of these triangulations that relate the Bruhat order, the noncrossing partition lattice, and Cambrian congruences. Each triangulation gives an explicit mechanism for relating two different presentations of the corresponding braid group (the standard Artin presentation and Bessis's dual presentation). This is a step toward uniformly proving conjectural simple, explicit, and type-uniform presentations for the corresponding pure braid group. 
\end{abstract} 

\maketitle

\section{Introduction}\label{sec:intro} 

\subsection{Prelude}  
This article concerns new\footnotemark{} triangulations of Coxeter permutahedra into simplices coming from the interplay between Bruhat order and the lattice of noncrossing partitions.  Before defining these triangulations and considering their geometric and combinatorial properties, we set the stage by discussing the topological and algebraic concepts that motivate them. 

\subsubsection{$K(\pi,1)$ hyperplane complements}
The complexified complements of finite real simplicial hyperplane arrangements 
and the complements of hyperplane arrangements for finite well-generated complex reflection groups 
share the remarkable property of being $K(\pi,1)$ spaces: the fundamental group---called the \defn{pure braid group} 
 of the arrangement---is the only nontrivial homotopy group of the space.

\medskip

\begin{paracol}{2}
Building on Garside's single mathematical paper~\cite{garside1969braid} (``que je suis
souvent de tr\`es pr\`es''~\cite{deligne1972immeubles}), Deligne proved that the complexified complement of a real finite simplicial hyperplane arrangement is a $K(\pi,1)$ space~\cite{deligne1972immeubles}. This result had previously been proven for the reflection arrangements of types $A,B,D, F_4$, and $I_2(m)$ by Brieskorn~{\cite{brieskorn2006groupes}} (who also conjectured the result for all finite Coxeter groups).  According to van der Lek~\cite{lek1983homotopy}, Arnold, Pham, and Thom subsequently conjectured it to hold for hyperplane arrangements for all (infinite) Coxeter groups~\cite{charney1995k,paris2014k}.
\switchcolumn
Building on the structure of Deligne's proof~\cite{deligne1972immeubles} (``The general architecture of our
proof is borrowed from Deligne’s original approach, but the details are quite
different''~\cite{bessis2015finite}), Bessis proved that the complement of the hyperplane arrangement of a finite complex reflection group is a $K(\pi,1)$ space~\cite{bessis2015finite}. Various special cases~\cite{nakamura1983note,orlik1988discriminants} (including those of Shephard groups, which have Artin groups isomorphic to those of certain real groups) were known prior to Bessis's foundational work, and his ``dual'' approach has subsequently been generalized to affine Weyl groups~\cite{mccammond2017artin} and Coxeter groups of rank $3$~\cite{delucchi2024dual}. 
\end{paracol}

\subsubsection{The Salvetti and Bessis--Brady--Watt complexes}\label{sec:sal_bessis}
The proofs that these spaces are $K(\pi,1)$ rely on two different combinatorial models for the hyperplane complements.

\begin{paracol}{2}
The complexified hyperplane complement of a real simplicial arrangement has a combinatorial model called the \emph{Salvetti complex}.  In~\cite{salvetti1987topology}, by choosing a real basepoint for the fundamental group, Salvetti used the real chamber geometry of any real finite hyperplane arrangement---not just simplicial arrangements---to construct a CW complex that is a deformation retract of the complexified hyperplane complement.  
\switchcolumn \footnotetext{See~\Cref{rem:new} and~\Cref{sec:past_and_present}.}
The hyperplane complement for a finite well-generated complex reflection group has a combinatorial model we call the \emph{Bessis--Brady--Watt complex}.    In~\cite{bessis2015finite}, by choosing an eigenvector for a Coxeter element as the basepoint for the fundamental group, Bessis used the noncrossing partition lattice as a replacement for the chamber geometry for real arrangements (see also Brady and Brady--Watt~\cite{brady2000artin,brady2001partial,brady2002k}).
\end{paracol}
\medskip 

An arrangement that lies at the intersection of simplicial arrangements and well-generated complex reflection arrangements---that is, a finite Coxeter arrangement---has \emph{both} combinatorial models: that is, the hyperplane complement is homotopic to both the Salvetti complex and the Bessis--Brady--Watt complex.  

\subsubsection{The Artin and dual presentations}  

Associated to quotients of the Salvetti and Bessis--Brady--Watt complexes are the Artin and dual presentations for the braid group of the arrangement.

Let $W$ be a finite Coxeter group acting in its reflection representation on a real vector space $V$ of dimension $r$.  Write $\mathcal{H}$ for the set of its $N$ reflecting hyperplanes and $V^\mathrm{reg}:=V\setminus \bigcup\mathcal{H}$ for the hyperplane complement.  Choose a connected component of $V^\mathrm{reg}$ to call the \dfn{base region} $\BB$; this induces a bijection from $W$ to the set of connected components of $V^\mathrm{reg}$ given by $w \mapsto w\BB$.  Let $\Vreg_\CC$ be the complexification of $\Vreg$. The \dfn{pure braid group} of type $W$ is the fundamental group ${\bf P}_W:=\pi_1\!\left(\Vreg\right)$. The \dfn{braid group} of type $W$ is $\B_W:=\pi_1\!\left(V^\mathrm{reg}_\CC/W\right)$, the fundamental group of the quotient of $\Vreg_\CC$ by the natural action of $W$.   These fit into a natural short exact sequence
\[0\to {\bf P}_W \to {\bf B}_W \to W \to 0.\]

\medskip

\begin{paracol}{2}
The group $W$ is generated by its set $S$ of $r$ \dfn{simple reflections}, which act on $V$ via reflections through the walls of $\BB$. The \defn{length} $\ell_S(w)$ of an element $w \in W$ is the minimum number of simple reflections required to write $w$. Equivalently, $\ell_S(w)$ is the number of hyperplanes in $\HH$ separating $\BB$ from $w\BB$. 
\switchcolumn

The group $W$ is generated by its set $T$ of $N$ \dfn{reflections}, which act on $V$ via reflections through the hyperplanes in $\HH$. The \defn{absolute length} $\ell_T(w)$ of an element $w \in W$ is the minimum number of reflections required to write $w$.  Equivalently (as $W$ is real), $\ell_T(w)$ is the dimension of the image of $w-1$.
\switchcolumn*

The \dfn{long element} $w_\circ$ is the unique element of $W$ with $\ell_S(w_\circ)=N$. For $\yy\in \BB$, the \dfn{$W$-permutahedron} $\Perm_\yy=\Perm_\yy(W)\subseteq V$ is the polytope defined as the convex hull of the $W$-orbit $W\yy$. The 1-skeleton of $\Perm_\yy$ is the \defn{weak order}, the interval $\Weak(W):=[e,w_\circ]_S$ between the identity $e$ and the long element in the right Cayley graph of $W$ generated by $S$ (with edges labeled by simple reflections).

\switchcolumn

A \dfn{standard Coxeter element} is an element $c$ of $W$ obtained as a product of the simple reflections in some order. We have $\ell_T(c)=r= \ell_S(c)$. The order of $c$ is denoted $h$. The \dfn{$c$-noncrossing partition lattice} is the interval $\NC(W,c):=[e,c]_T$ between the identity $e$ and the Coxeter element $c$ in the right Cayley graph of $W$ generated by $T$ (with edges labeled by reflections).  Elements of $\NC(W,c)$ are called \defn{$c$-noncrossing partitions}.  

\switchcolumn*

Write $\mathbf{S}:=\{ \mathbf{s} : s \in S\}$ for a formal copy of the set $S$ of simple reflections. The \dfn{Artin presentation} of the braid group is
\[\BA = \langle \mathbf{S} : [\mathrm{Red}_S(w_\circ)] \rangle,\]
where $[\mathrm{Red}_S(w_\circ)]$ stands for the relations setting equal all reduced $S$-words for $w_\circ$ (replacing each $s \in S$ by its corresponding $\mathbf{s} \in \mathbf{S})$. 

Writing $\w_\circ$ for the image of $w_\circ$ in $\BA$, the \defn{full twist} ${\bf w}_\circ^2$ lies in the center of $\BA$.

\switchcolumn

Write $\dT:=\{ \dt : t \in T\}$ for a formal copy of the set $T$ of reflections. Bessis's \dfn{dual presentation} of the braid group is
\[\BD = \langle \dT : [\mathrm{Red}_T(c)] \rangle,\]
where $[\mathrm{Red}_T(c)]$ stands for the relations setting equal all reduced $T$-words for $c$ (replacing each $t \in T$ by its corresponding $\dt \in \dT)$.

Writing ${\bf c}$ for the image of $c$ in $\BD$, the \defn{full twist} ${\bf c}^h$ lies in the center of $\BD$.
\switchcolumn*
\end{paracol}

\medskip 

These two different presentations for the braid group come from the two different combinatorial models for the complexified hyperplane complement mentioned in~\Cref{sec:sal_bessis}.  In more detail:
\medskip
\begin{paracol}{2}
For a finite Coxeter group $W$, the Salvetti complex $\Sal(W)$ can be taken to be invariant with respect to the action of the group, giving a homotopy equivalence between the quotient complex and the orbit space~\cite{salvetti1994homotopy}.

The Artin presentation for $\B_{W}$ is associated to this \dfn{quotient Salvetti complex} $\Sal(W)/W$: informally, this complex is formed by gluing together oriented copies of the $W$-permutahedron $\Perm_\yy(W)$---one for each element of $W$---and then quotienting by the action of $W$.  The edges of this complex can be viewed as oriented edges of the permutahedron, so they are labeled by simple reflections, while relations come from higher-dimensional faces of the permutahedron.
\switchcolumn

The Bessis--Brady--Watt complex $\Bessis(W,c)$ carries a natural action of the complex reflection group $W$, and it is beautifully described as a simplicial complex built from the noncrossing partition lattice in~\cite[Examples 2.2 \& 2.3]{brady2018noncrossing}. 

Bessis's dual presentation for $\B_{W}$ is associated to this \dfn{quotient Bessis--Brady--Watt complex} $\Bessis(W,c)/W$: informally, this complex is formed by gluing together oriented copies of the order complex of the noncrossing partition lattice $\NC(W,c)$---one for each element of $W$---and then quotienting by the action of $W$.  The edges of this complex can be viewed as oriented edges in the noncrossing partition lattice, so they are labeled by reflections, while relations come from longer chains in the noncrossing partition lattice.  
\end{paracol} 

\medskip 

Our triangulation of the permutahedron (depending on a Coxeter element $c$) subdivides the Salvetti complex into simplices that resemble those used in the Bessis--Brady--Watt complex.  In~\cref{sec:presentations}, we discuss how it provides an explicit mechanism for deriving the relations in the Artin presentation from the dual presentation. Because the Salvetti and BBW complexes are both homotopy equivalent to $\Vreg_{\CC}$, it is natural to ask for an explicit homotopy between them---in \cref{sec:obstacles}, we discuss the obstacles we encountered when attempting to construct this homotopy from our triangulation.

\begin{remark}
Since one can identify $\mathbf{S}$ as a subset of $\dT$, it is reasonable to restrict the Cayley graph of the \emph{dual braid monoid} to the image of the \emph{positive Artin monoid} (see~\Cref{fig:nathan}).  After drawing some highly suggestive low-rank pictures, the third author was led to search the internet for the term ``triangulation of the permutahedron'' in the summer of 2021.  This search yielded some excellent slides by Brady, Delucchi, and Watt~\cite{slides} that appeared to have the same construction as these low-rank drawings and a similar construction to the one we present here for bipartite $c$.  (In this bipartite case, the height function is particularly nice to describe, as indicated in~\Cref{foot:bipartite}.)  After some number of years, it was suggested to us by several parties that we write up our version. 
\label{rem:new}
\end{remark} 

\subsubsection{Presentations of the pure braid group}
The Artin and dual presentations of the braid group are explicit, elegant, and type-uniform. By contrast, it is notoriously difficult to find ``nice'' presentations for the pure braid group.

Artin gave a normal form of pure braids, which yields a presentation of the pure braid group in type $A$~\cite{artin1947theory}.  This presentation is at least somewhat unsatisfactory---in Artin's own words~\cite{artin1947theory}:
\begin{displayquote}
``Although it has been proved that every braid can be deformed into a similar normal form the writer is convinced that any attempt to carry this out on a living person would only lead to violent protests and discrimination against
mathematics. He would therefore discourage such an experiment.''
\end{displayquote}
Extending the fiber-type technique from type $A$, Cohen gave a presentation of the pure braid group in type $B$~\cite[Theorem 1.4.3]{cohen2001monodromy} (see also Digne--Gomi~\cite{digne2001presentation,digne2015pr}).  Markushevich claimed to have found a presentation in type $D$ in~\cite{markushevich1991d}, but Falk and Randell later determined it was incorrect~\cite{falk2000homotopy}. These approaches share similar problems with Artin's presentation: they are type-specific and give complicated, unmotivated presentations. For example, Artin's presentation uses five families of relations, while the presentations of Cohen and Digne--Gomi each use nine families.

Salvetti's presentation in~\cite{salvetti1987topology} for the complexified complement of any real arrangement is unwieldy without very precise combinatorial control over all shards of the arrangement (see~\cite{reading2011noncrossing} and especially~\cite[Theorem 1.1]{defant2022pop}).  Nearly a decade ago, in unpublished notes, the third author conjectured explicit, elegant, type-uniform presentations for the pure braid group of $W$ relying on the interaction of Salvetti's presentation with noncrossing Coxeter--Catalan combinatorics.  He proved these presentations in types $A$, $B$, $H_3$, and $I_2(m)$ using the precise combinatorial control over noncrossing shards given by Coxeter--Catalan combinatorics.

In more detail, using the inclusion $\mathbf{P}_W \hookrightarrow \mathbf{B}_W$, write $\mathbb{t}_c:=\mathbf{t}^2_c$ for the square of the generator of the dual braid group corresponding to the reflection $t$, and let $\mathbb{T}_c:=\{\mathbb{t}_c : t \in T\}$.  It is not difficult~\cite{defant2022pop} to use Salvetti's construction together with Reading's theory of Coxeter-sorting words to see that $\mathbb{T}_c$ generates ${\bf P}_W$.  The \defn{full twist} $\mathbb{c}:={\bf w}_\circ^2=\mathbf{c}^h \in \mathbf{P}_W$ generates the center of ${\bf P}_W$.  
\begin{conjecture}[{N.~Williams}]\label{conj:pure}  For any finite Coxeter group $W$, the pure braid group has presentation
\[{\bf P}_W =  \langle \mathbb{T}_c : [\Red_{\mathbb{T}_c}(\mathbb{c})] \rangle,\]
where $[\Red_{{\mathbb{T}_c}}(\mathbb{c})]$ stands for the relations setting equal all reduced words in $\mathbb{T}_c$ for the full twist $\mathbb{c}$.
\end{conjecture}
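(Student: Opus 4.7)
The plan is to use the triangulation of $\Perm_\yy(W)$ constructed in this paper as a bridge between the Artin-style presentation of $\BA$ and the sought-after $\mathbb{T}_c$-presentation of $\mathbf{P}_W$. First I would invoke the fact (established in \cite{defant2022pop} using Reading's theory of Coxeter-sorting words together with Salvetti's construction) that $\mathbb{T}_c$ already generates $\mathbf{P}_W$, so the entire content of \cref{conj:pure} reduces to identifying a sufficient set of defining relations. Because the full twist $\mathbb{c} = \mathbf{w}_\circ^2 = \mathbf{c}^h$ is central in $\mathbf{P}_W$, every reduced $\mathbb{T}_c$-word for $\mathbb{c}$ yields a relation by equating it with any fixed reference word; the conjecture asserts that these relations alone suffice.

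Next I would leverage Salvetti's standard presentation of $\mathbf{P}_W$ coming from the $2$-skeleton of $\Sal(W)$, whose $2$-cells are indexed by pairs of hyperplanes (encoded via shards) and produce an explicit but cumbersome set of relations. The triangulation of this paper subdivides each $W$-translate of $\Perm_\yy$ into simplices indexed by maximal chains of $\NC(W,c)$. Passing to the quotient Salvetti complex $\Sal(W)/W$, the $2$-faces of the triangulation that lie in the interior of the permutahedron produce commutation relations $\mathbb{t}\,\mathbb{t}' = \mathbb{t}'\mathbb{t}$, while those spanning a rank-$2$ standard-parabolic face produce the squared braid relations arising from the $m$-gons in the noncrossing partition lattice of the associated parabolic. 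Each such local relation can be interpreted as an identity between two reduced $\mathbb{T}_c$-words for a rank-$2$ piece of $\mathbb{c}$, hence is a consequence of $[\Red_{\mathbb{T}_c}(\mathbb{c})]$.

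To complete the derivation, I would run a van Kampen-style argument: since $\Sal(W)$ has the homotopy type of $\Vreg_\CC$, its $2$-skeleton presents $\mathbf{P}_W$, and the triangulation fills each $2$-cell of $\Sal(W)$ with a disk tiled by triangles coming from the new simplicial structure. Every such triangle, lifted via the dual-braid lift $t \mapsto \mathbf{t}_c$ and its square $t \mapsto \mathbb{t}_c$, becomes a word identity visible in a reduced $\mathbb{T}_c$-factorization of $\mathbb{c}$. Organizing this gluing along the template that worked in types $A$, $B$, $H_3$, and $I_2(m)$, one then checks that every Salvetti relation is derivable from the $[\Red_{\mathbb{T}_c}(\mathbb{c})]$ relations, which together with generation would give the presentation.

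The main obstacle is precisely the completeness half: showing that no further relations are required beyond the reduced-word relations for $\mathbb{c}$. As the paper notes in \cref{sec:obstacles}, an explicit homotopy between $\Sal(W)$ and $\Bessis(W,c)$ is not known, and it is exactly such a homotopy that would let one transport the defining Salvetti $2$-cells cleanly onto the triangulated complex and conclude. Without it, one is forced into a combinatorial argument, and the hard step will be verifying uniformly that sufficiently many relations are present; I expect this ultimately demands either a topological construction of the missing homotopy (perhaps via the triangulation together with a compatible CW structure on $\Bessis(W,c)$) or a global combinatorial theory of reduced $\mathbb{T}_c$-words for $\mathbb{c}$, playing the role of a Matsumoto-type exchange theorem for squared reflections.
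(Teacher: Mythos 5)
This statement is a \emph{conjecture} in the paper, not a theorem: the authors explicitly state that they view their results ``as steps toward resolving these conjectures'' and that the third author has proven it only in types $A$, $B$, $H_3$, and $I_2(m)$ (proofs not given here). There is therefore no ``paper's own proof'' to compare against, and any complete blind proof of this statement would constitute a new mathematical result beyond the paper.

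Your proposal is not a proof; you acknowledge this yourself by ending with ``the main obstacle is precisely the completeness half'' and observing that either an explicit $\Sal(W)\simeq\Bessis(W,c)$ homotopy or a Matsumoto-type exchange theorem for $\mathbb{T}_c$-words would be needed. That candid diagnosis is accurate, and it matches the authors' own discussion in \cref{sec:obstacles}. Two more specific gaps in the sketch are worth flagging. First, the generators of Salvetti's presentation of $\mathbf{P}_W$ are indexed by \emph{shards} (or by pairs consisting of a chamber and an adjacent chamber), not by $\mathbb{T}_c$ directly, so one must first rewrite each Salvetti generator as a word in $\mathbb{T}_c$ and then show that all Salvetti relations follow from $[\Red_{\mathbb{T}_c}(\mathbb{c})]$ after this rewriting; your outline jumps over the rewriting step. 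Second, you assert that interior $2$-faces of the triangulation ``produce commutation relations'' and parabolic $2$-faces ``produce the squared braid relations,'' but the $2$-cells of $\SBDW_\yy(W,c)$ are all \emph{triangles}, and a single triangle in a triangulated $2$-cell does not correspond to a relation of either of those shapes; the relations come from how the hexagonal/$2m$-gonal $2$-faces of $\Perm_\yy$ are tiled and from how those tilings are reattached across codimension-one walls, which is exactly the bookkeeping that the missing homotopy would organize. In short: your approach is broadly consistent with the program the authors sketch, but it leaves the genuinely hard step open, and the statement remains a conjecture both in the paper and after your proposal.
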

We give more context for this conjecture in~\cref{subsec:pure}, including a conjectural characterization of $[\Red_{{\mathbb{T}_c}}(\mathbb{c})]$ as the EL-shelling orders of the noncrossing partition lattice (\Cref{conj:total_orders}).  We view our results in this article, together with recent work of the first and third authors~\cite{defant2022pop}, as steps toward resolving these conjectures.

\begin{figure}[htbp]
  \begin{center}{\includegraphics[height=5.745cm]{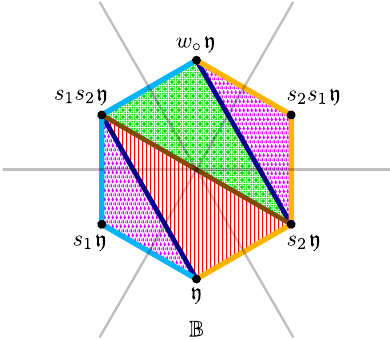}}\qquad\qquad\raisebox{-1.43cm}{\includegraphics[height=8.003cm]{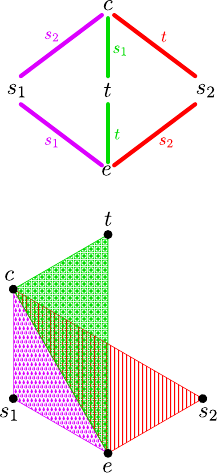}}
  \end{center}
\caption{On the top right is the $c$-noncrossing partition lattice of $\SSS_3$, where ${c=s_1s_2}$. On the bottom right is its order complex. On the left is the permutahedron $\Perm_\yy(\SSS_3)$, triangulated by $\SBDW_\yy(\SSS_3,c)$. Each simplex $\conv(w\C \yy)$ is drawn on the left in the same color as the edges in the chain $\C$ on the right. There are two ways of walking from $\yy$ up to $\wo\yy$ along the $1$-skeleton of $\Perm_\yy$; these paths are colored differently.}\label{fig:S3}
\end{figure}

\begin{figure}[htbp]
  \begin{center}\includegraphics[height=9.937cm]{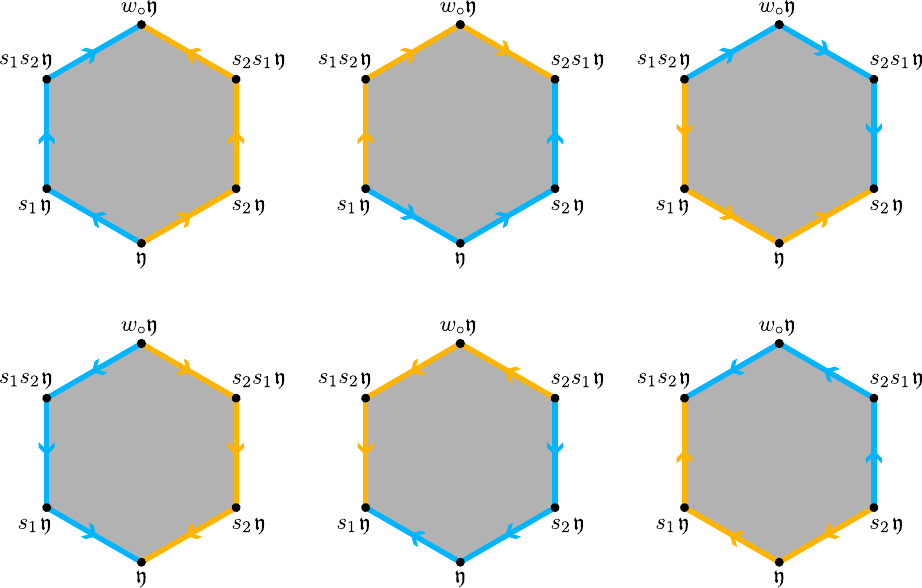}
  \end{center}
\caption{The Salvetti complex for the symmetric group $\SSS_3$ has six $0$-cells (one for each vertex of $\Perm_\yy$), twelve $1$-cells (one for each orientation of each edge of $\Perm_\yy$), and six $2$-cells (one for each of the oriented copies of $\Perm_\yy$ shown here). Quotienting by the action of $\SSS_3$ yields the quotient Salvetti complex. Each {\color{Teal}light blue} path corresponds to the expression ${\color{Teal}\mathbf{s}_1\mathbf{s}_2\mathbf{s}_1}$, while each {\color{Orange}orange} path corresponds to the expression ${\color{Orange}\mathbf{s}_2\mathbf{s}_1\mathbf{s}_2}$. One can push a {\color{Teal}light blue} path through a {\color{Gray}gray} 2-cell and onto an {\color{Orange}orange} path, obtaining a topological manifestation of the relation ${\color{Teal}\mathbf{s}_1\mathbf{s}_2\mathbf{s}_1}={\color{Orange}\mathbf{s}_2\mathbf{s}_1\mathbf{s}_2}$ in the Artin presentation of $\B_{\SSS_3}$.}\label{fig:6Salvetti}
\end{figure}

\begin{figure}[htbp]
  \begin{center}\includegraphics[height=9.937cm]{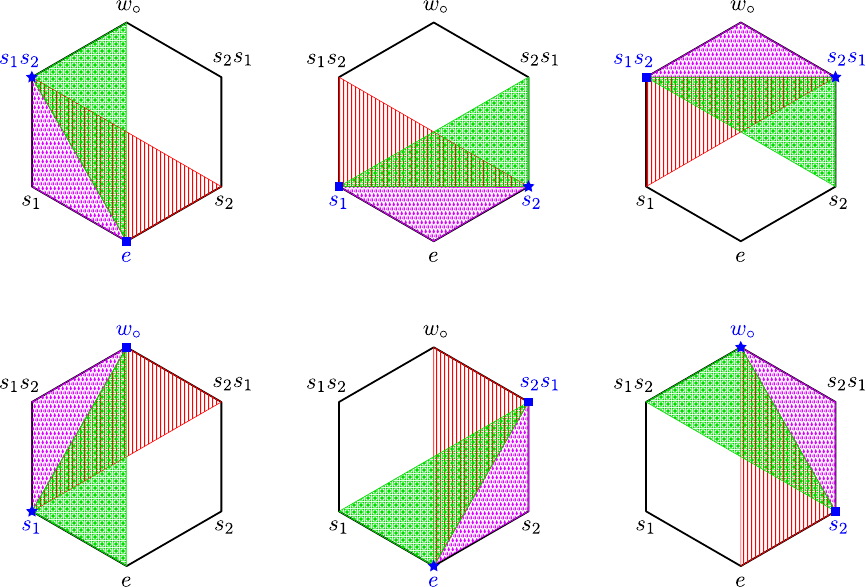}
  \end{center}
\caption{The Bessis--Brady--Watt complex $\Bessis(\SSS_3,s_1s_2)$ is a pure simplicial complex with $3!\times 3=18$ maximal (2-dimensional) simplices, shown here as colored triangles. Each edge of these simplices is oriented from some element $w\in\SSS_3$ (represented by a {\color{blue}blue} square) to the element $ws_1s_2$ (represented by a {\color{blue}blue} star). Each maximal simplex is of the form $\{w\pi_0,w\pi_1,w\pi_2\}$, where $\{\pi_0\lessdot_T \pi_1\lessdot_T \pi_2\}$ is one of the maximal chains in the noncrossing partition lattice shown on the right in \cref{fig:S3}. Quotienting by the action of $\SSS_3$ yields the quotient Bessis--Brady--Watt complex. }\label{fig:6BBW}
\end{figure}

\subsection{Triangulating the permutahedron using totally stable linear stability} 

Recall that a \dfn{subdivision} of a polytope $P$ is a collection $\Theta$ of polytopes of the same dimension as $P$ such that $\bigcup\Theta=P$ and such that for all $P_1,P_2\in\Theta$, the intersection $P_1\cap P_2$ is a (possibly empty) common face of $P_1$ and $P_2$. A subdivision $\Theta$ is a \dfn{triangulation} if all polytopes in $\Theta$ are simplices. 

Fix a standard Coxeter element $c$ of $W$ and a point $\yy\in \BB$. The \dfn{Bruhat order} is the partial order $\leqB$ on $W$ with a cover relation $u \lessdot_\hB v$ if and only if $\ell_S(v)= \ell_S(u) + 1$ and $v=ut$ for some $t \in T$. 
We will construct a triangulation of $\Perm_\yy$ by leveraging an interplay between the noncrossing partition lattice $\NC(W,c)$ and the Bruhat order.   
Let \begin{equation}W_c^+:=\{u\in W:\ell_S(uc)=\ell_S(u)+\ell_S(c)\}.\label{eq:wplus}\end{equation}  (One can show that $W_c^+$ is the set of elements less than or equal to $w_\circ c^{-1}$ in left weak order.) For each $u\in W_c^+$, the Bruhat interval $[u,uc]_\hB$---consisting of those elements between $u$ and $uc$ in the Bruhat order---is necessarily isomorphic to a subposet of $\NC(W,c)$ under the map $x \mapsto u^{-1}x$.

A maximal chain $\{u=u_0\lessdot_\hB u_1\lessdot_\hB \cdots \lessdot_\hB u_r=uc\}$ in the interval $[u,uc]_\hB$ now defines a polytope $\conv\big(\{u_0\yy,u_1\yy,\ldots,u_r\yy\}\big)$, where $\conv(\cdot)$ denotes convex hull. We will see that this polytope is in fact an $r$-dimensional simplex. 

\begin{definition}\label{def:SBDW} 
Fix a standard Coxeter element $c$ of $W$ and a point $\yy\in \BB$. We define the \dfn{Salvetti--Brady--Delucchi--Watt triangulation} (or \dfn{SBDW triangulation}) $\SBDW_\yy(W,c)$ to be the collection of simplices in the permutahedron $\Perm_\yy(W)$ of the form $\conv(\{u_0\yy,u_1\yy,\ldots,u_r\yy\})$, where $u\in W_c^+$ and ${\{u=u_0\lessdot_\hB u_1\lessdot_\hB \cdots \lessdot_\hB u_r=uc\}}$ is a maximal chain of the Bruhat interval $[u,uc]_\hB$. 
\end{definition} 

\begin{example}
\cref{fig:S3} illustrates \cref{def:SBDW} for the symmetric group \[\SSS_3=\langle s_1,s_2 : s_1s_2s_1=s_2s_1s_2, s_1^2=s_2^2=e\rangle,\] with Coxeter element $c=s_1s_2$ (where $s_i=(i\,\,i+1)$).
\end{example}

It is not obvious that $\SBDW_\yy(W,c)$ is a triangulation of the $W$-permutahedron $\Perm_\yy$. Indeed, it is not clear that the union of the simplices is the entire $W$-permutahedron, nor that two simplices do not intersect in their interior, nor that intersections occur along common faces.  Remarkably, these simplices coming from the interaction of $\NC(W,c)$ and the Bruhat order do form a triangulation of $\Perm_\yy$---to steal a phrase from Bessis, the noncrossing partition lattice is ``somehow real''~\cite{bessisslides}. 

\begin{theorem}\label{thm:triangulation} 
Let $W$ be a finite Coxeter group. For each standard Coxeter element $c$ of $W$ and each point $\yy\in\BB$, the collection of simplices $\SBDW_\yy(W,c)$ is a triangulation of the $W$-permutahedron $\Perm_\yy(W)$. 
\end{theorem}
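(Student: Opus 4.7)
The plan is to realize $\SBDW_\yy(W,c)$ as a regular triangulation using the total linear stability height function advertised in the abstract. Fix a height function $h\colon W \to \RR$ coming from the stability structure of the Dynkin quiver $\vec{Q}_c$ associated to $c$, lift each vertex of $\Perm_\yy$ to $(w\yy,\,h(w)) \in V \times \RR$, and let $\Theta$ be the polyhedral subdivision of $\Perm_\yy$ obtained by projecting the lower faces of the convex hull of the lifted points back to $V$. The theorem will follow once $\Theta = \SBDW_\yy(W,c)$, since lower hulls of lifted configurations are automatically polyhedral subdivisions and are triangulations as soon as the heights are sufficiently generic.

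The first and easiest step is to verify that each candidate cell is genuinely an $r$-simplex. For $u \in W_c^+$ and a maximal chain $u = u_0 \lessdot_\hB u_1 \lessdot_\hB \cdots \lessdot_\hB u_r = uc$, the isomorphism $x \mapsto u^{-1}x$ from $[u,uc]_\hB$ onto a subposet of $\NC(W,c)$ writes $u_i = u\pi_i$ with $\ell_T(\pi_i) = i$. Then $u_i\yy - u_0\yy = u_0(\pi_i - 1)\yy$, and affine independence follows from $\mathrm{rank}(\pi_i - 1) = \ell_T(\pi_i) = i$ together with the regularity of $\yy \in \BB$, which prevents $(\pi_i - 1)\yy$ from accidentally landing in a proper subspace.

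The crux is identifying the lower facets of the lifted configuration with the top-dimensional cells of $\SBDW_\yy(W,c)$. Concretely, for $w = u\pi$ with $u \in W_c^+$ and $\pi \in [e,c]_T$, one wants $h(w)$ to decompose as a sum of a piece depending only on $u$ and a convex piece depending only on $\pi$, whose lower hull over $\NC(W,c)$ is stratified by maximal chains. Two things must be shown: every maximal chain of $[u,uc]_\hB$ with $u \in W_c^+$ contributes a lower facet, and no $r$-chain starting from some $u \notin W_c^+$ does. Both reduce to sign computations on the differences $h(w') - h(w)$ as $w,w'$ range over vertices connected by a reflection, with input from the Euler/Tits form of $\vec{Q}_c$ and the combinatorics of Coxeter-sorting words.

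The main obstacle I anticipate is the second bullet point above: ruling out spurious lower facets from outside $W_c^+$. This is precisely where the interplay between $\NC(W,c)$, Bruhat order, and quiver stability does the real work---one needs the height function to strictly favor chains anchored at Coxeter-sortable bottom elements. Once this is established, $\Theta = \SBDW_\yy(W,c)$ for any $\yy$ in the secondary-fan chamber selected by $h$, giving a regular triangulation; for an arbitrary $\yy \in \BB$ (where regularity may fail) the combinatorial structure of $\SBDW_\yy(W,c)$ is independent of $\yy$, so a continuity argument deforming $\yy$ within $\BB$---together with the already-verified affine independence in step one---promotes the result to the full statement of \cref{thm:triangulation}.
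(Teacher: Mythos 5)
Your high-level plan is the same as the paper's: build a height function from the total linear stability data, show it induces $\SBDW_{\yys}(W,c)$ as a regular triangulation for the specific $\yys$ furnished by stability, then deform $\yys$ to an arbitrary $\yy$. Several of the specific steps, however, either go a different way than the paper or contain real gaps.

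\textbf{The decomposable height function does not exist, and the paper works locally instead of globally.} You posit that $h(w)$ should split as a sum of a term depending only on $u\in W_c^+$ and a convex term depending only on $\pi\in[e,c]_T$ when $w=u\pi$. This cannot work as stated: the factorization $w=u\pi$ with $u\in W_c^+$ and $\pi\leq_T c$ is not unique (nor always available), so such a function is not even well defined on $W$. The paper's height function is $\hgt^\epsilon(w\yy)=\langle\gamma^\star,w^{-1}\yy\rangle-\epsilon\,2^{\ell_S(w)}$, which has no such separable structure. More importantly, rather than trying to identify the lower hull globally, the paper proves a general \emph{local} criterion (Proposition~\ref{prop:conditions-for-max-cells-in-reg-sub}): if the proposed cells cover $P$, meet facet-to-facet, each lie on a hyperplane, and satisfy a pairwise ``local folding'' inequality across shared facets, then they are \emph{the} regular subdivision induced by the height. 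Verifying the folding inequality across each shared facet is where the totally stable slope function and the Auslander--Reiten quiver actually enter (Lemmas~\ref{lem:regular_simply_laced_1}--\ref{lem:regular_simply_laced_3}). You also never address condition~(ii)---the facet-matching Proposition~\ref{prop:facet-matching}---which is nontrivial and handles exactly your worry about ``spurious facets from outside $W_c^+$'': when a Bruhat chain would exit $W_c^+$, the corresponding facet is shown to lie on the boundary of the permutahedron, so there is no neighbor to worry about.

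\textbf{Two further gaps.} First, the quiver-stability machinery only exists in simply-laced type; your proof would end at the ADE case. The paper deduces types $B$, $F_4$, $G_2$, $H_3$, $H_4$, $I_2(m)$ by folding a simply-laced $(W,S)$ down to $(W^{\fold},S^{\fold})$, which you do not mention. Second, your closing continuity argument assumes what must be proved: you say ``the combinatorial structure of $\SBDW_\yy(W,c)$ is independent of $\yy$.'' The index set $\Omega(W,c)$ is indeed independent of $\yy$, but whether the corresponding simplices actually form a triangulation is the content of the theorem. The paper's Proposition~\ref{prop:deform} is a genuine argument: it shows that the set of bad $\tau$ (where simplices fail to meet along common faces) is simultaneously open and closed in $[0,1]$, hence empty---and this uses nontrivial inputs such as Lemma~\ref{lem:hyperplane_polytopes} and Lemma~\ref{lem:facet-to-facet}, not just the preserved affine independence you cite. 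Also, a small slip: you say the height must favor chains anchored at ``Coxeter-sortable bottom elements''; the relevant anchoring set is all of $W_c^+$, of which the $c^{-1}$-sortable inverses are only a transversal for Cambrian classes.

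Finally, on the affine-independence step: $\mathrm{rank}(\pi_i-1)=i$ by itself does not give affine independence of the points $(\pi_i-1)\yy$; you need the successive differences to be linearly independent. The paper does this cleanly by observing each difference is parallel to a coroot, and the coroots are linearly independent by Carter's lemma (\cref{lem:linearly_independent}). Your appeal to ``regularity of $\yy$'' is not wrong in spirit but does not by itself close the argument.
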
 

To prove \cref{thm:triangulation}, we first assume that $W$ is simply-laced (i.e., of type $A$, $D$, or $E$). Under this assumption, we provide a type-uniform argument that relies on recent developments in quiver representation theory. Specifically, we invoke \emph{totally stable linear stability functions}, the existence of which was established in \cite{apruzzese2020stability,HuangHu,kinser2022total} in type~$A$ and in \cite{chang2024geometric} in general.  Geometrically, specifying a totally stable linear stability function amounts to finding a point $\yy^\star \in \BB$ and a direction $\gamma$ so that shooting a laser from $\yy^\star$ in the direction of $\gamma$ crosses a certain piece of each hyperplane in the arrangement $\HH$ in a certain order (more precisely, the laser crosses the \emph{$c$-noncrossing shards} in the order specified by the inversion sequence of the $c$-sorting word for $w_\circ$; this is essentially the same data required for determining the generators in Salvetti's presentation of $\mathbf{P}_W$).  The time at which the hyperplane piece is crossed gives a height function, and the existence of such functions allow us to show that $\SBDW_{\yys}(W,c)$ is a \emph{regular} triangulation of $\Perm_{\yys}$ (for the particular $\yys\in\BB$ used in the stability function---see~\Cref{prop:regular_simply_laced}).

We also establish a result---which may be of independent interest---providing sufficient conditions under which a triangulation is preserved under a continuous deformation (see \cref{prop:deform}). Using this result, we can deform $\Perm_{\yys}$ into $\Perm_{\yy}$ for an arbitrary choice of $\yy$, thereby proving \cref{thm:triangulation} in general for simply-laced Coxeter groups.  We use \emph{folding} to deduce the theorem for the remaining finite Coxeter groups (types $B$, $F_4$, $G_2$, $H_3$, $H_4$, and $I_2(m)$ for $m\geq 4$).

\begin{remark}
The preceeding construction breaks if we define simplices using the noncrossing partition lattice coming from a \emph{nonstandard} Coxeter element $c$---that is, a Coxeter element $c$ such that $\ell_S(c)>r$.  In this case, the collection of simplices coming from taking the convex hull of maximal Bruhat-increasing chains ${\{u=u_0\lessdot_\hB u_1\lessdot_\hB \cdots \lessdot_\hB u_r=uc\}}$ will not necessarily triangulate the permutahedron, and one can have simplices that intersect in their interiors.
\end{remark}

\subsection{Combinatorics of the SBDW triangulation}\label{subsec:properties-of-triang}
While our height function gives us a coarse global understanding of the SBDW triangulation, we actually have very fine combinatorial control of the dual graph (see~\cite{nathansite} for an interactive animation, and \Cref{ex:visualization,fig:TrianglesLinear} for a static visualization for $\SSS_4$).

\begin{figure}[htbp]
\includegraphics[height=14.75cm]{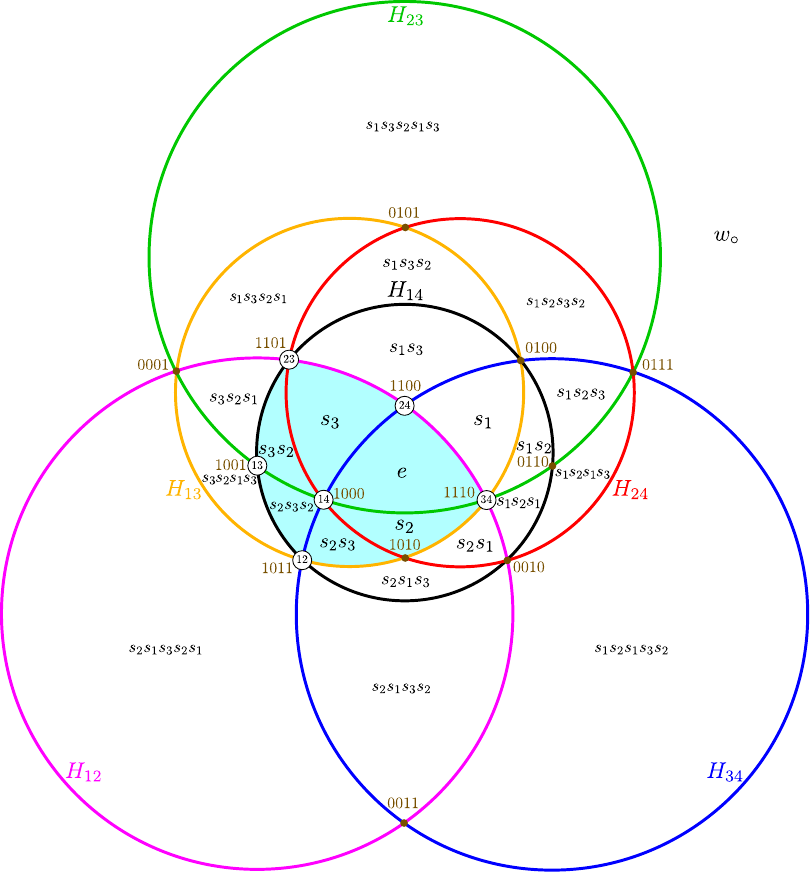}
\caption{The hyperplane arrangement $\mathcal H$ for $W=\SSS_4$, represented via a stereographic projection of great circles. The region $\Delta_{s_1s_2s_3}^+$ is shaded in {\color{Cyan}cyan}. We have labeled each region $w\BB$ by $w$. Each vertex $\dd_{(i\,j)}$ (for $(i\,j)\in T$) is represented by~\raisebox{-0.10cm}{\includegraphics[height=0.5cm]{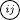}}\,. }
\label{fig:a3arrangement}
\end{figure}

To analyze the combinatorics of the SBDW triangulation, it is helpful to use the following alternative description. Let $\MCh(W,c)$ denote the set of maximal chains in the $c$-noncrossing partition lattice $\NC(W,c)$. There is a straightforward bijection from $\MCh(W,c)$ to the set $\Red_T(c)$ of reduced $T$-words for $c$ that sends the chain $\C=\{\pi_0\lessdot_T \pi_1 \lessdot_T \cdots \lessdot_T \pi_r\}$ to the word $\rw(\C)=t_1\cdots t_r$, where $t_i=\pi_{i-1}^{-1}\pi_i$.    
\begin{definition}
We say that a chain $\C=\{\pi_0\lessdot_T \pi_1 \lessdot_T \cdots \lessdot_T \pi_r\}\in\MCh(W,c)$ and an element $u \in W_c^+$ are \dfn{concordant} if $u\pi_0\leqB u\pi_1\leqB\cdots\leqB u\pi_r$. Let $\Omega(W,c)$ denote the set of pairs $(u,\C)\in W_c^+\times \MCh(W,c)$ such that $\C$ and $u$ are concordant. 
\end{definition}
In this language, we have \[\SBDW_\yy(W,c)=\{\conv(u\C \yy):(u,\C)\in\Omega(W,c)\},\]
where $\conv(u\C\yy)=\conv(\{u\pi\yy:\pi\in\C\})\subseteq\Perm_\yy.$  We now establish some combinatorial properties of the SBDW triangulation that allow us to use the positive cluster complex to interpret $W_c^+$ and $\MCh(W,c)$ in a common geometric setting.

\subsubsection{Commutation classes}
We say two chains $\bpi,\bpi'\in\MCh(W,c)$ are \dfn{commutation equivalent} if their corresponding reduced $T$-words $\rw(\bpi)$ and $\rw(\bpi')$ are related by commutation moves (equivalently, these words use the same set of reflections).  Write $\Class_c$ for the set of commutation classes of maximal chains in $\MCh(W,c)$, and let $\CL_{\bpi} \in \Class_c$ denote the commutation equivalence class of $\bpi$. For $u\in W_c^+$, let $\Class_c(u)$ denote the set of commutation equivalence classes of chains in $\MCh(W,c)$ that are concordant with $u$. We say $u$ and $\CL$ are concordant if $\CL \in \Class_c(u)$.

There is a partial order $\preceq_c$ on $T$ coming from the heap of the $c$-sorting word for the long element $w_\circ$ (see \cref{subsec:Coxeter_sorting} for the definition). We say a chain $\bpi\in\MCh(W,c)$ with $\rw(\bpi)=t_1\cdots t_r$ is \dfn{$\preceq_{c}$-increasing} (respectively, \dfn{$\preceq_c$-decreasing}) if there do not exist $1\leq i<j\leq r$ such that $t_j\preceq_c t_i$ (respectively, $t_i\preceq_c t_j$). We say the class $\CL_{\bpi}$ is \dfn{$\preceq_c$-increasing} (respectively, \dfn{$\preceq_c$-decreasing}) if $\bpi$ is (this is well defined). It is known that there is a unique $\preceq_c$-increasing commutation class, which we denote by~$\mathcal I_c$. When $W$ is irreducible, it is well known that the number of $\preceq_c$-decreasing commutation classes is the \defn{positive $W$-Catalan number} 
\begin{equation}\label{eq:Cat}
\Cat^+(W):=\prod_{i=1}^r \frac{h-1+e_i}{d_i},
\end{equation} 
where $h$ is the Coxeter number (the order of $c$), $d_1,\ldots,d_r$ are the degrees, and $e_1,\ldots,e_r$ are the exponents (with $e_i=d_i-1$). In particular, the number of such classes is independent of $c$. 

\begin{theorem}\label{thm:unique_decreasing} 
For each $u\in W_c^+$, the set $\Class_c(u)$ contains the $\preceq_c$-increasing class $\mathcal I_c$ and contains a unique $\preceq_c$-decreasing class, denoted $\DD_c(u)$. 
\end{theorem}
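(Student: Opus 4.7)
My plan is to prove the two claims of the theorem separately; the existence of $\mathcal I_c$ in $\Class_c(u)$ is straightforward, while uniqueness of the $\preceq_c$-decreasing class is the main content.

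For the existence of $\mathcal I_c$ in $\Class_c(u)$, I would fix a reduced $S$-expression $c = s_{i_1} \cdots s_{i_r}$ and consider the chain $\C_0 = \{e \lessdot_T s_{i_1} \lessdot_T s_{i_1} s_{i_2} \lessdot_T \cdots \lessdot_T c\}$ in $\NC(W,c)$, whose reduced $T$-word is $\rw(\C_0) = (s_{i_1}, \ldots, s_{i_r})$. Since $u \in W_c^+$ gives $\ell_S(uc) = \ell_S(u) + r$, the subword property of Bruhat order ensures that each partial product $u s_{i_1} \cdots s_{i_k}$ has length $\ell_S(u) + k$, so the lifted sequence $u \lessdot_\hB u s_{i_1} \lessdot_\hB \cdots \lessdot_\hB uc$ is a saturated Bruhat chain and $\C_0$ is concordant with $u$. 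To identify $\C_0$ as a representative of $\mathcal I_c$, I would locate each simple reflection $s_{i_k}$ as an inversion of the $c$-sorting word of $w_\circ$ and verify that the induced heap order recovers $s_{i_1} \prec_c s_{i_2} \prec_c \cdots \prec_c s_{i_r}$; uniqueness of the $\preceq_c$-increasing class then gives $\C_0 \in \mathcal I_c$.

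For uniqueness of the decreasing class, I propose a greedy/recursive argument. A $\preceq_c$-decreasing chain $\C = (t_1, \ldots, t_r)$ satisfies $t_i = \max_{\preceq_c}\{t_i, \ldots, t_r\}$ for each $i$; in particular, $t_1$ is $\preceq_c$-maximal in the support of $\C$. Concordance with $u$ forces $t_1$ to lie in $R(u) := \{t \in T : t \leq_T c \text{ and } \ell_S(ut) = \ell_S(u) + 1\}$. The key structural claim is that, among reflections in $R(u)$ that can extend to the first letter of a $\preceq_c$-decreasing concordant chain, there is a unique $\preceq_c$-maximal element $t^\star = t^\star(u)$. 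Granting this, $t_1 = t^\star$ for every $\preceq_c$-decreasing $\C \in \Class_c(u)$, and we reduce to the analogous statement for the pair $(u t^\star,\, (t^\star)^{-1} c)$ inside a suitable sub-Coxeter system in which $(t^\star)^{-1}c$ plays the role of a standard Coxeter element compatible with the restriction of $\preceq_c$. Induction on $r$ completes the argument, with the base case $r = 0$ being trivial.

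The main obstacle lies in establishing the structural claim for $t^\star(u)$ and making the induction precise. I expect to combine (i) the combinatorial rigidity of $\preceq_c$---each reflection in $T$ appears exactly once as an inversion of the $c$-sorting word of $w_\circ$, yielding a strongly constrained heap; (ii) Reading's theory of $c$-sortable elements, which controls how Bruhat ascents of $u$ interact with the interval $[u,uc]_\hB$; and (iii) the parabolic decomposition of $\NC(W,c)$, ensuring that $(t^\star)^{-1}c$ is a standard Coxeter element of a proper parabolic reflection subgroup on which the restricted heap is compatible with $\preceq_{(t^\star)^{-1}c}$. A potentially cleaner alternative would bypass direct induction by translating through Reading's bijection between $\NC(W,c)$ and $c$-sortable elements: $\preceq_c$-decreasing concordant chains should correspond to $c$-sorting words of sortable elements in a Cambrian-lattice interval determined by $u$, and uniqueness of $\DD_c(u)$ would follow from the uniqueness of $c$-sorting words.
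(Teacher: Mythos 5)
Your treatment of the increasing class is essentially the paper's: take the prefix chain coming from a reduced $S$-word for $c$, observe that membership in $W_c^+$ makes the lifted chain Bruhat-saturated, and identify it as $\mathcal I_c$. That part is fine.

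For the uniqueness of the decreasing class, however, you have correctly located the hard step but not resolved it. Your greedy plan has two unaddressed gaps. First, your ``structural claim'' that the admissible first letter $t^\star(u)$ is unique is precisely the content to be proved, and you leave it as a goal rather than an argument. Second, and more seriously, the proposed induction doesn't land in a well-formed smaller instance: after peeling off $t^\star$, the element $(t^\star)^{-1}c$ is a Coxeter element of a \emph{parabolic subgroup in the reflection-subgroup sense}, not necessarily a standard parabolic $W_J$, so the heap order $\preceq_c$, the notion of $\preceq$-decreasing, and the set $W_c^+$ don't automatically restrict in the way the induction requires. You gesture toward ``a suitable sub-Coxeter system'' and ``compatibility with $\preceq_{(t^\star)^{-1}c}$,'' but this is exactly where the proof has to do work.

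The paper avoids both gaps by a different organization. It first reduces (via a separate lemma showing the $c^{-1}$-Cambrian class of $u^{-1}$ determines $\Class_c(u)$) to the case where $u^{-1}$ is $c^{-1}$-sortable. It then does a double induction on the rank $r$ \emph{and} on $\ell_S(u)$, splitting on whether $u$ lies in the maximal standard parabolic $W_{\langle s_r\rangle}$, where $s_r$ is a right descent of $c$. Crucially, $s_r$ is simple, so the recursion always descends to a \emph{standard} parabolic with Coxeter element $c' = c s_r$ (Case 1) or to a conjugate standard Coxeter element $s_r c' = s_r c s_r$ of the same group with a shorter $u$ (Case 2); Lemma 2.9 (\cref{lem:preceq_c2}) then guarantees the heap order transforms coherently under these moves. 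In Case 1 the key observation is that $s_r$, being $\preceq_c$-maximal (\cref{lem:preceq_c}), must occur in the factorization and can be commuted to the front; in Case 2 it does not occur at all, which is used together with \cref{lem:all_have_descent,lem:Bruhat_descents}. This is the machinery that makes the recursion well-founded, and it is what your sketch is missing. Your closing suggestion of routing through Reading's bijection with sortable elements is close in spirit to how the paper \emph{constructs} $\DD_c(u)$ explicitly afterward (via skips in the $c^{-1}$-sorting word of $u^{-1}$, see \cref{rem:explicit}), but the paper's actual proof of uniqueness is the parabolic-descent induction just described, not a direct appeal to uniqueness of sorting words.
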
 

\subsubsection{Reflection isomorphism} 
Define two Bruhat intervals $[u_1,v_1]_{\mathrm B}$ and $[u_2,v_2]_{\mathrm B}$ to be \dfn{reflection isomorphic} if there is a poset isomorphism $\varphi\colon[u_1,v_1]_{\mathrm B}\to[u_2,v_2]_{\mathrm B}$ such that for every cover relation $x\lessdot_{\mathrm{B}}y$ in $[u_1,v_1]_{\mathrm B}$, we have $x^{-1}y=\varphi(x)^{-1}\varphi(y)$.
If $u_1,u_2\in W_c^+$, then $[u_1,u_1c]_{\mathrm B}$ and $[u_2,u_2c]_{\mathrm B}$ are reflection isomorphic if and only if $\Class_c(u_1)=\Class_c(u_2)$.  The following theorem uses the \emph{$c^{-1}$-Cambrian congruence}, a lattice congruence of the right weak order of $W$ introduced by Reading \cite{ReadingCambrian,reading2007sortable}.

\begin{theorem}\label{thm:Cambrian} 
Let $u_1,u_2\in W_c^+$. The following are equivalent: 
\begin{itemize} 
\item $\DD_c(u_1)=\DD_c(u_2)$; 
\item $\Class_c(u_1)=\Class_c(u_2)$;
\item $[u_1,u_1c]_\hB$ and $[u_2,u_2c]_\hB$ are reflection isomorphic; 
\item $u_1^{-1}$ and $u_2^{-1}$ belong to the same $c^{-1}$-Cambrian class. 
\end{itemize}
\end{theorem}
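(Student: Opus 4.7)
I would prove the four conditions equivalent through the chain $(2)\Rightarrow(1)$, $(2)\Leftrightarrow(3)$, and the more subtle $(1)\Leftrightarrow(4)\Rightarrow(2)$. The implication $(2)\Rightarrow(1)$ is immediate from \cref{thm:unique_decreasing}, since $\DD_c(u)$ is by definition the unique $\preceq_c$-decreasing member of $\Class_c(u)$; therefore the fibers of $u\mapsto\Class_c(u)$ refine those of $u\mapsto\DD_c(u)$.

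\textbf{Step 1: $(2)\Leftrightarrow(3)$.} I would analyze the reflection-labeled subposet $u^{-1}[u,uc]_\hB$ of $\NC(W,c)$. A cover $u\pi\lessdot_\hB u\pi'$ in $[u,uc]_\hB$ pulls back to a cover $\pi\lessdot_T\pi'$ in $\NC(W,c)$ labeled by $t=\pi^{-1}\pi'$, and such a cover is present precisely when $\ell_S(u\pi')=\ell_S(u\pi)+1$. Since every element of $[u,uc]_\hB$ lies on some maximal Bruhat chain from $u$ to $uc$, the subposet $u^{-1}[u,uc]_\hB$ is exactly the union of maximal chains of $\NC(W,c)$ concordant with $u$. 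A commutation class is determined by (and determines) its constituent maximal chains, so the reflection-labeled poset $u^{-1}[u,uc]_\hB$ carries exactly the data of $\Class_c(u)$. Thus $\Class_c(u_1)=\Class_c(u_2)$ is equivalent to $u_1^{-1}[u_1,u_1c]_\hB=u_2^{-1}[u_2,u_2c]_\hB$ as reflection-labeled posets, giving the reflection isomorphism explicitly via $u_1\pi\mapsto u_2\pi$.

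\textbf{Step 2: $(1)\Leftrightarrow(4)$ via Cambrian theory.} Here I would invoke Reading's machinery. Each $c^{-1}$-Cambrian class has a unique $c^{-1}$-sortable minimum in right weak order, and Reading's $\pi$-map bijects $c^{-1}$-sortable elements with elements of $\NC(W,c)$ via the inversion sequence of the $c^{-1}$-sorting word. Since $\preceq_c$ is the heap order of the $c$-sorting word for $w_\circ$, a reduced $T$-word for $c$ is $\preceq_c$-decreasing iff it is the inversion sequence (read in a specific heap-theoretic order) of a $c^{-1}$-sortable element. I would show that the $c^{-1}$-sortable projection of $u^{-1}$ records precisely the $\preceq_c$-decreasing chain in the class $\DD_c(u)$, so that $u\mapsto\DD_c(u)$ factors through the $c^{-1}$-Cambrian projection and is injective on Cambrian classes intersected with $(W_c^+)^{-1}$.

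\textbf{Step 3: $(4)\Rightarrow(2)$ and main obstacle.} To close the cycle I would verify that concordance is preserved under the elementary weak-order moves generating a $c^{-1}$-Cambrian class. Concretely, if $u_1^{-1}$ and $u_2^{-1}$ are joined by a cover $u_1^{-1}\lessdot s u_1^{-1}=u_2^{-1}$ within a single Cambrian class, I would check that the effect on the Bruhat interval $[u,uc]_\hB$ and on the set of concordant chains is either trivial or a controlled local commutation, using that Cambrian covers within a class are precisely those not crossing shards of the $c^{-1}$-Cambrian fan. The main obstacle is the precise tracking of Reading's $\pi$-map through the heap order $\preceq_c$ in a type-uniform way: establishing that ``$\preceq_c$-decreasing reduced $T$-word for $c$'' is the exact heap-theoretic translation of ``$c^{-1}$-sortable inversion sequence.'' I would expect to lean heavily on existing results in \cite{ReadingCambrian,reading2007sortable} rather than develop this correspondence from scratch, isolating it as an auxiliary lemma before running the main argument.
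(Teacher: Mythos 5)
Your high-level plan is sound and matches the paper's logical skeleton: $(2)\Rightarrow(1)$ is indeed immediate from \cref{thm:unique_decreasing}, and $(2)\Leftrightarrow(3)$ is essentially definitional (the reflection-labeled poset $u^{-1}[u,uc]_\hB$ is precisely the union of the concordant maximal chains, and an isomorphism preserving labels is forced to be $u_1\pi\mapsto u_2\pi$). The paper's proof, however, proceeds around the cycle $(4)\Rightarrow(2)\Rightarrow(1)\Rightarrow(4)$ using two inductive ingredients that your sketch does not supply, and this is where the real work lies.

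For $(4)\Rightarrow(2)$ (your Step 3), the paper reduces to a weak-order cover $u_1^{-1}\lessdot_\R u_2^{-1}=s^*u_1^{-1}$ within a Cambrian class and then proves (\cref{lem:parabolic}) the Bruhat-order fact $u_2\not\leq_\hB u_1c$; this is what lets one show no element of $[u_1,u_1c]_\hB$ has $s^*$ as a left descent, so that left-multiplication by $s^*$ gives the desired reflection isomorphism. Your appeal to ``Cambrian covers are precisely those not crossing shards'' is a weak-order/fan statement, and it is not clear how it yields the needed Bruhat-order exclusion $u_2\not\leq_\hB u_1c$; \cref{lem:parabolic} is a delicate induction on parabolic rank and length that the shard picture does not obviously replace. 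There is also a hidden dependency: \cref{thm:unique_decreasing}, which you invoke for $(2)\Rightarrow(1)$, itself uses \cref{lem:Cambrian_one_direction} (i.e.\ $(4)\Rightarrow(2)$) to reduce to the sortable case, so your Step~3 is rediscovering something already required upstream.

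For $(1)\Rightarrow(4)$ (your Step 2), you propose a direct bijection ``$\pi^\downarrow_{c^{-1}}(u^{-1})\leftrightarrow\DD_c(u)$'' via Reading's $\pi$-map. This is a genuinely different route from the paper, which instead proves injectivity of $u\mapsto\DD_c(u)$ on $c^{-1}$-sortables by the inductive \cref{lem:different_sortables} and then invokes $(4)\Rightarrow(2)\Rightarrow(1)$ once more. The explicit sortable-element$\leftrightarrow$decreasing-word dictionary you want is essentially the construction the paper records only as \cref{rem:explicit}, stated there without proof; making it the engine of the argument would require a careful induction very similar to the one the paper already carries out, so you do not actually save work. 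In short: the logical structure is correct and the easy equivalences are handled properly, but the two hard implications need the substance of \cref{lem:parabolic} and \cref{lem:different_sortables} (or equivalents), which your sketch defers rather than supplies.
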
 

Despite the fact that the number of simplices in the SBDW triangulation and the number of elements in $W_c^+$ both depend on the choice of the Coxeter element $c$, we have the following corollary. 

\begin{corollary}\label{cor:reflection-iso-classes-pos-catalan}
When $W$ is irreducible, the number of reflection isomorphism classes of Bruhat intervals of the form $[u,uc]_\hB$ with $u\in W_c^+$ is the positive $W$-Catalan number $\Cat^+(W)$. In particular, the number of these classes is independent of $c$. 
\end{corollary}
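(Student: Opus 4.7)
The plan is to derive the corollary from \cref{thm:unique_decreasing} and \cref{thm:Cambrian}, together with the cardinality formula \eqref{eq:Cat} and the fact stated just before \cref{thm:unique_decreasing} that, when $W$ is irreducible, the number of $\preceq_c$-decreasing commutation classes equals $\Cat^+(W)$.

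First I would observe that the equivalence of the first and third bullets of \cref{thm:Cambrian} identifies the reflection isomorphism classes of intervals $[u, uc]_\hB$ (for $u \in W_c^+$) with the fibers of the map $\DD_c \colon W_c^+ \to \Class_c$. Consequently the number of reflection isomorphism classes equals $|\DD_c(W_c^+)|$. Since \cref{thm:unique_decreasing} guarantees that each $\DD_c(u)$ is a $\preceq_c$-decreasing commutation class, I immediately obtain the upper bound $|\DD_c(W_c^+)| \leq \Cat^+(W)$.

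The heart of the argument is then to prove surjectivity of $\DD_c$ onto the set of $\preceq_c$-decreasing commutation classes; once this is done, given any representative chain $\bpi$ of a decreasing class $\CL$ and any concordant $u \in W_c^+$, \cref{thm:unique_decreasing} forces $\DD_c(u) = \CL$. My preferred route is to use the fourth bullet of \cref{thm:Cambrian}. Since $(W_c^+)^{-1} = [e, c w_\circ]_R$ is a lower set in right weak order and each $c^{-1}$-Cambrian class is a right-weak-order interval with a unique minimal $c^{-1}$-sortable element, the number of $c^{-1}$-Cambrian classes meeting $(W_c^+)^{-1}$ equals the number of $c^{-1}$-sortable elements in $[e, c w_\circ]_R$. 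I would then match this subset of sortable elements with the $\Cat^+(W)$ positive clusters via Reading's bijections between Cambrian lattices, clusters, and noncrossing partitions. An alternative direct route would construct $u$ concordant with a chosen representative chain $\bpi$ inductively, starting from $u = e$ and, whenever the next Bruhat cover fails, left-multiplying by a reflection chosen to repair it, exploiting the $\preceq_c$-decreasing structure of $\bpi$ to keep earlier covers intact.

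The main obstacle I anticipate is precisely this surjectivity, and in particular identifying the $c^{-1}$-sortable elements of $[e, c w_\circ]_R$ with positive clusters: the upper bound drops out of the preceding theorems, but the matching lower bound demands fine control over how $(W_c^+)^{-1}$ slices through the $c^{-1}$-Cambrian congruence. Once surjectivity is in hand, the independence of $c$ follows immediately, since $\Cat^+(W)$ depends only on $W$.
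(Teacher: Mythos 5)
Your proposal follows the same high-level strategy as the paper, which records the argument compactly in \cref{rem:positive_Catalan}: Theorems \ref{thm:unique_decreasing} and \ref{thm:Cambrian} reduce the corollary to showing that the image of $\DD_c$ on $W_c^+$ is the whole set of $\preceq_c$-decreasing commutation classes, and you correctly isolate the single external input needed—that the $c^{-1}$-Cambrian classes meeting $(W_c^+)^{-1}$ number exactly $\Cat^+(W)$. The paper imports this from standard Coxeter--Catalan theory (the cluster-fan/Cambrian-fan identification and the subword-complex realization of the positive $c^{-1}$-cluster complex, cited to Pilaud--Stump both in the proof of \cref{prop:region} and in \cref{rem:positive_Catalan}), so the piece you flag as ``the main obstacle'' is precisely what the authors handle by citation rather than from scratch. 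With that input, the counting argument you outline closes the proof, and the independence of $c$ follows because $\Cat^+(W)$ depends only on $W$.

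Two remarks that may sharpen your write-up. First, a cleaner internal route to surjectivity of $\DD_c$ onto decreasing classes is available via \cref{thm:all_chains_appear}: for any chain $\bpi\in\MCh(W,c)$ there is some $u\in W_c^+$ concordant with $\bpi$, so if $\bpi$ is $\preceq_c$-decreasing then the uniqueness in \cref{thm:unique_decreasing} forces $\DD_c(u)=\CL_{\bpi}$. This avoids having to identify the $c^{-1}$-sortable elements of $(W_c^+)^{-1}$ with positive clusters; one still cites the count of $\preceq_c$-decreasing commutation classes being $\Cat^+(W)$, but no further cluster-theoretic input is needed. Second, your ``alternative direct route'' (repair a failing Bruhat cover by left-multiplying by a reflection) is in the spirit of the existence arguments in the paper, but as stated it is not clear that the repair preserves earlier covers or keeps $u\in W_c^+$; the paper's proof of \cref{thm:all_chains_appear} instead works with the half-spaces cut out by the linearly independent roots $\beta_{t_1'},\dots,\beta_{t_r'}$ to get nonemptiness, which is more robust.
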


\subsubsection{The positive cluster complex}
The positive $c^{-1}$-cluster complex $\mathrm{Clus}^+(W,c^{-1})$ is a certain flag simplicial complex on the vertex set $[N]$ (see \cref{subsubsec:positive_preliminaries}).  Let $\MCl(W,c^{-1})$ be the set of maximal simplices in $\mathrm{Clus}^+(W,c^{-1})$; these maximal simplices are naturally in bijection with the $\preceq_c$-decreasing commutation classes of chains in $\MCh(W,c)$. In particular, $|\MCl(W,c^{-1})|=\Cat^+(W)$. For $A\in\MCl(W,c^{-1})$, write $W^+_c(A)$ for the collection of elements $u\in W_c^+$ such that $\DD_c(u)$ is the $\preceq_c$-decreasing class corresponding to $A$ (so that $W^+_c(A)$ is the $c^{-1}$-Cambrian congruence class containing $u^{-1}$).

\subsubsection{The common geometric setting}\label{subsubsec:common} 

Each $t\in T$ acts on $V$ by reflecting through some hyperplane $H_t\in\mathcal H$; let $H_t^-$ be the closed half-space determined by $H_t$ that contains the region $\BB$. Let $\beta_t^\vee\in V^*$ be the coroot of $W$ that is normal to $H_t$ and lies in $H_t^-$, and write $(\Phi^+)^\vee=\{\beta_t^\vee:t\in T\}$. Let $T_\LL(c)=\{t\in T:\ell_S(tc)<\ell_S(c)\}$ denote the set of left inversions of $c$. Our constructions will live in the simplicial cone given by any of the following equivalent definitions (see~\Cref{prop:region}):
\[\Delta_c^+:= \bigcap_{t \in T_\LL(c)} H_{t}^- =\bigcup_{u\in W_c^+}\overline{u^{-1}\BB} = (1-c^{-1})^{-1}\spange (\Phi^+)^\vee \subseteq V.\]
(Here, $\spange$ denotes nonnegative span.) For each $t \in T$, there is a distinguished point ${\dd_t=(1-c^{-1})^{-1}\beta_t^\vee}$ inside $\Delta_c^+$, and $\Delta_c^+=\spange \{\dd_s:s\in S\}$. We will always draw $\Delta_c^+$ as a simplex by deleting the origin and passing to the positive projectivization (identifying points that have the same positive span). We will sometimes refer to the points $\dd_t$ for $t\in T$ as \emph{vertices} since they will naturally correspond to vertices of the positive $c^{-1}$-cluster complex.

\begin{example}\label{ex:a3-0}
Let $W=A_3=\SSS_4$ and $c=s_1s_2s_3=(1\,2\,3\,4)$, where $s_i=(i\,\,i+1)$. The set $\Delta_c^+$ is illustrated in~\Cref{fig:a3arrangement}. We have taken $V$ to be the quotient space $\mathbb R^4/\mathrm{span}\{(1,1,1,1)\}$. We have then passed to the positive projectivization of $V$, which can be identified with a unit $2$-sphere. In this model, the hyperplanes in the braid arrangement $\mathcal H$ are represented as great circles; we have drawn a stereographic projection of these circles onto the plane. Some points are indicated by representatives $(x_1,x_2,x_3,x_4)\in\mathbb R^4$ (with commas and parentheses omitted). Because $T_\LL(c)=\{(1\,2),(1\,3),(1\,4)\}$, we have $\Delta_c^+=H_{12}^- \cap H_{13}^- \cap H_{14}^-$ (we write $H_{ij}$ instead of $H_{(i\,j)}$).   
\end{example}

The following table summarizes our geometric interpretations of $W_c^+$, $\MCl(W,c^{-1})$, $\MCh(W,c)$, and $\Class_c$ by describing maps (by abuse of notation, each denoted $\Delta$) from each object to a simplicial cone inside of $\Delta_c^+$.  
\[
\begin{array}{cccc}
\toprule
\text{Set} & \text{Element} & \text{Description} & \text{Image under }\Delta \\ 
\midrule
W_c^+ & u & \ell_S(uc)=\ell_S(u)+r&  \Delta(u):=\overline{u^{-1}\BB} \\ 
\addlinespace[1em]
\MCl(W,c^{-1}) & A & \begin{array}{c}\text{positive $c^{-1}$-cluster, or}\\ \text{$\preceq_c$-decreasing $T$-word for $c$}\end{array}&  \Delta(A):=\bigcup\limits_{u\in W_c^+(A)}\Delta(u) \\ 
\addlinespace[2em]
\begin{array}{c}\MCh(W,c) \\ \Red_T(c) \\ \Class_c\end{array} & \begin{array}{c}\bpi \\ \rw(\bpi)=t_1\cdots t_r \\ \CL_{\bpi} \end{array} & \begin{array}{c}\text{maximal chain in $\NC(W,c)$}\\ \text{reduced $T$-word for $c$}\\ \text{commutation class of $\bpi$}\end{array} & \Delta(\bpi):=\spange\{\dd_{t_1},\ldots,\dd_{t_r}\} \\ 
\bottomrule
\end{array}
\] 

We note the following fundamental facts about these maps $\Delta$:  
\begin{itemize}
\item For $u\in W_c^+$, $A\in\MCl(W,c^{-1})$, and $\bpi\in\MCh(W,c)$, each of the cones $\Delta(u)$, $\Delta(A)$, and $\Delta(\bpi)$ is $r$-dimensional; it is the nonnegative span of some $r$-subset (determined from $u$, $A$, or $\bpi$) of the set $\{\dd_t:t\in T\}$.

\item The images under $\Delta$ of both $W_c^+$ and $\MCl(W,c^{-1})$ triangulate $\Delta_c^+$, and the image of $W_c^+$ refines the image of $\MCl(W,c^{-1})$ (specifically, $\Delta(u)\subseteq \Delta(A)$ if $u^{-1}$ belongs to the $c^{-1}$-Cambrian class $W_c^+(A)$). 

\item For each $\bpi \in \MCh(W,c)$, the cone $\Delta(\bpi)$ is the union of some of the cones in the image of $\MCl(W,c^{-1})$ under $\Delta$. 
\end{itemize} 

\begin{example}\label{ex:a3-1}
Continuing~\Cref{ex:a3-0}, we have redrawn $\Delta_c^+$ on the left side of~\Cref{fig:a3arrangement2}, showing the refinement of $\MCl(W,c^{-1})$ by $W_c^+$. Each simplicial cone is drawn as a triangle. Each distinguished point $\dd_{(i\,j)}$ (for $(i\,j)\in T$) is represented by a circle filled with $ij$.  Let $u=s_3s_2 \in W_c^+$. On the right side, we have marked $\Delta(u)$ with a white star (this triangle is labeled by $u^{-1}=s_2s_3$ on the left). Let $A \in \MCl(W,c^{-1})$ be the unique positive $c^{-1}$-cluster such that $u\in W_c^+(A)$. We have shaded the triangle $\Delta(A)$ in blue. The union of the blue and purple triangles is $\Delta(\bpi)=\Delta(\bpi')$, where $\rw(\bpi)=(1\,3)(1\,2)(3\,4)$ and $\rw(\bpi')=(1\,3)(3\,4)(1\,2)$. 

\begin{figure}[htbp]
\raisebox{-0.5\height}{\includegraphics[height=7.282cm]{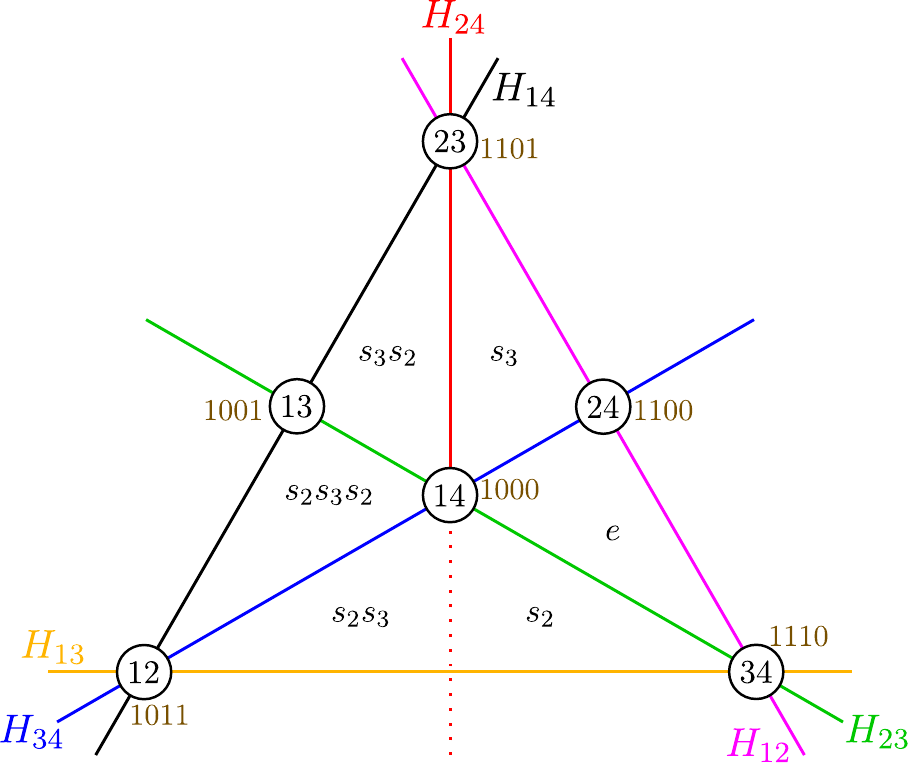}} \qquad\raisebox{-0.5\height}{\includegraphics[height=2.58cm]{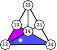}}
\caption{Fix $W=\SSS_4$ and $c=s_1s_2s_3$.  On the left is $\Delta_c^+$, with regions indexed by inverses of elements of $W_c^+$ and with edges from $\mathrm{Clus}^+(W,c^{-1})$ (note that half of the vertical red edge corresponding to a piece of $H_{24}$ is dotted, indicating that it is not an edge of the positive $c^{-1}$-cluster complex). On the right is a simplex representing $(u,\bpi) \in \Omega(W,c)$, with $u=s_3s_2$ and $\bpi$ chosen so that $\rw(\bpi)=(1\,3)(1\,2)(3\,4)$.  The triangle representing $\Delta(u)$ is marked with a white star.  The blue triangle is the image $\Delta(A)$ of the positive cluster $A \in \mathrm{MClus}^+(W,c^{-1})$ whose corresponding $c^{-1}$-Cambrian congruence class $W^+_c(A)$ contains $u^{-1}=s_2s_3$ (note that $s_2$ is also contained in this congruence class).  The triangle representing the cone $\Delta(\bpi)=\Delta(\CL_{\bpi})$ is the union of the blue and purple triangles. 
}
\label{fig:a3arrangement2}
\end{figure} 

\end{example}
\begin{remark}
While $W_c^+$ and $\MCl(W,c^{-1})$ (and even the noncrossing partition lattice $\NC(W,c)$~\cite{BradyWatt2008}) are often interpreted in $\Delta_c^+$, we have not seen $\MCh(W,c)$ encoded in this way before---for example, on the right side of~\Cref{fig:a3arrangement2}, the reader will enjoy finding all 12 collections of small triangles whose union is again a triangle, and then ordering the reflections labeling the vertices of these larger triangles to obtain the 12 commutation classes of reduced $T$-words for $c=(1\,2\,3\,4)$.
\end{remark}

\subsubsection{Concordancy}
We use the preceding geometric interpretations to characterize concordancy.

\begin{theorem}\label{thm:spans_new}
An element $u\in W_c^+$ and a chain $\bpi \in \MCh(W,c)$ are concordant if and only if $\Delta(u) \subseteq \Delta(\bpi)$.
\end{theorem}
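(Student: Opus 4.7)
The plan is to prove the equivalence by first reducing to a statement at the level of positive clusters, and then verifying each direction using the geometric structure developed in Section~\ref{subsubsec:common}.

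\textbf{Reduction.} Both sides of the claimed equivalence factor through the assignment $u \mapsto A_u$, where $A_u\in \MCl(W,c^{-1})$ is the unique positive cluster with $u^{-1}\in W_c^+(A_u)$. Geometrically, $\Delta(u)\subseteq \Delta(A_u)$; since $\Delta(\bpi)$ is a union of cluster cones $\Delta(A)$ with pairwise disjoint interiors and the interior of $\Delta(u)$ is contained in the interior of $\Delta(A_u)$, we have $\Delta(u)\subseteq \Delta(\bpi)$ if and only if $\Delta(A_u)\subseteq \Delta(\bpi)$. On the concordance side, Theorem~\ref{thm:Cambrian} ensures that concordance of $u$ with $\bpi$ depends only on the $c^{-1}$-Cambrian class of $u^{-1}$, equivalently only on $A_u$. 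It therefore suffices to show that, for each $A\in\MCl(W,c^{-1})$ and each $\bpi\in\MCh(W,c)$, some (equivalently every) $u\in W_c^+(A)$ is concordant with $\bpi$ if and only if $\Delta(A)\subseteq \Delta(\bpi)$.

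\textbf{Forward direction.} Assuming concordance of $u\in W_c^+(A)$ with $\bpi$, and writing $\rw(\bpi)=t_1\cdots t_r$ and $\pi_i=t_1\cdots t_i$, each Bruhat cover $u\pi_{i-1}\lessdot_\hB u\pi_{i-1}t_i$ is equivalent to the root-positivity condition $u\pi_{i-1}\alpha_{t_i}\in \Phi^+$. Combining these conditions with the defining identity $(1-c^{-1})\dd_t=\beta_t^\vee$, the plan is to express each extreme ray of the chamber cone $\overline{u^{-1}\BB}=\Delta(u)$ as a nonnegative combination of $\{\dd_{t_1},\ldots,\dd_{t_r}\}$ via a telescoping identity whose coefficients are inner products involving the positive roots $u\pi_{i-1}\alpha_{t_i}$. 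This would yield $\Delta(u)\subseteq \Delta(\bpi)$, and hence $\Delta(A)\subseteq \Delta(\bpi)$ by the reduction.

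\textbf{Reverse direction.} Assume $\Delta(A)\subseteq \Delta(\bpi)$. By Theorems~\ref{thm:unique_decreasing} and~\ref{thm:Cambrian}, every $u\in W_c^+(A)$ is concordant with the unique $\preceq_c$-decreasing chain whose reflection set is $A$, so it suffices to transfer concordance from that chain to $\bpi$. Commutation moves of adjacent commuting reflections preserve concordance, since if $t_i$ and $t_{i+1}$ commute and $\ell_S(u\pi_{i-1}t_it_{i+1})=\ell_S(u\pi_{i-1})+2$, then $\ell_S(u\pi_{i-1}t_i)=\ell_S(u\pi_{i-1}t_{i+1})=\ell_S(u\pi_{i-1})+1$ as well. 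To cross between commutation classes, I would induct on the number of cluster cones $\Delta(A')$ contained in $\Delta(\bpi)$: in the inductive step, a Hurwitz-type move on $\rw(\bpi)$ produces a chain $\bpi'$ with $\Delta(A)\subseteq \Delta(\bpi')\subsetneq \Delta(\bpi)$, and concordance transfers from $\bpi'$ back to $\bpi$ by direct verification of the intermediate Bruhat covers.

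\textbf{Main obstacle.} I expect the forward direction to be the principal technical challenge: translating an abstract sequence of Bruhat-cover conditions into an explicit nonnegative decomposition of each extreme ray of $\Delta(u)$ in the basis $\{\dd_{t_1},\ldots,\dd_{t_r}\}$ is delicate. The identity $(1-c^{-1})\dd_t=\beta_t^\vee$ provides the right dictionary, but ensuring nonnegativity of all coefficients in the telescoping sum may require careful use of the heap structure of $\rw(\bpi)$ and the theory of $c$-sortable words developed earlier. The reverse-direction induction should be more routine once one identifies the right Hurwitz move at each step, but verifying that such a move genuinely reduces complexity while respecting the containment hypothesis will also require some care.
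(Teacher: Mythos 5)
Your proposal is correct and takes a genuinely different route from the paper in both directions; in fact, once the details are filled in, both of your arguments are shorter than the paper's. For the forward direction, the paper first converts to the span statement $\Span_{\geq 0}\RRR(\DD_c(u))\subseteq\Span_{\geq 0}\RRR(\CL_\bpi)$ via \cref{lem:roots_weights} and then runs a case-based induction on rank and length (\cref{lem:span_proof1}), peeling off a right descent of $c$ and passing to parabolic or conjugate Coxeter elements. Your telescoping avoids all of that. Writing $\rw(\bpi)=t_1\cdots t_r$ and $\pi_i=t_1\cdots t_i$, each extreme ray $u^{-1}\lambda_s$ of $\Delta(u)=\overline{u^{-1}\BB}$ satisfies
\[
(1-c^{-1})\,u^{-1}\lambda_s \;=\; \sum_{i=1}^r \bigl\langle\beta_{t_i},\,(u\pi_{i-1})^{-1}\lambda_s\bigr\rangle\,\beta_{t_i}^\vee,
\]
and each coefficient $\langle\beta_{t_i}(u\pi_{i-1})^{-1},\lambda_s\rangle$ is nonnegative: the cover $u\pi_{i-1}\lessdot_\hB u\pi_i$ forces $t_i\notin T_\R(u\pi_{i-1})$, so $\beta_{t_i}(u\pi_{i-1})^{-1}\in\Phi^+$, and $\lambda_s\in\overline{\BB}$. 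Contrary to your closing worry, this needs no heap-theoretic or $c$-sortable input; the identity is already complete. For the reverse direction, you shrink $\Delta(\bpi)$ toward $\Delta(A)$, whereas the paper grows $\Span_{\geq 0}\RRR(\CL)$ toward the full positive cone; the paper's growing step then pays an extra price (the $j=j^*$ argument) to decide, after the Bruhat-diamond flip, which narrower chain is selected. Your direction is cleaner there: when the inductively-obtained $\bpi'$ carries an inner adjacent pair $\{p_j,p_{j+1}\}$ of a nonabelian rank-$2$ parabolic, \cref{lem:basification} alone forces the other middle element of the diamond in $u\bpi'$ to use the canonical pair $\{p_1,p_m\}$, and so concordance passes from $\bpi'$ to $\bpi$ (up to a commutation) without any further hypothesis. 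The one step you should spell out is the existence of the shrinking move: since $\CL_\bpi\neq\DD_c(u)$, after commutation moves there are adjacent positions with $t_k\prec_c t_{k+1}$, and \cref{lem:i-1modk} then forces $\{t_k,t_{k+1}\}$ to be the canonical generators $\{p_1,p_m\}$ of their rank-$2$ parabolic; the cones $\Delta(\bpi'_j)$ obtained by swapping in each adjacent pair $\{p_j,p_{j+1}\}$ partition $\Delta(\bpi)$, and because every $\Delta(\bpi'_j)$ is a union of positive-cluster cones, the single cluster cone $\Delta(A)$ lies in exactly one of them, giving the required $\bpi'$.
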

\cref{thm:spans_new} characterizes the maximal chains that appear in an interval $[u, uc]_\hB$ (for $u\in W_c^+$), so it specifies the subposet of the noncrossing partition lattice $\NC(W,c)$ isomorphic to this interval.

\begin{example}\label{ex:visualization}
Let $W=A_3=\SSS_4$, and let $c=s_1s_2s_3$ (where $s_i=(i\,\,i+1)$). We have ${W_c^+ = \{e,s_2,s_3,s_2s_3,s_3s_2,s_2s_3s_2\}}$. \cref{fig:TrianglesLinear} is a visualization of the adjacency structure of our triangulation.  It shows:

\begin{itemize}
\item a \thuge{} triangle, representing the cone $\Delta_c^+$,
subdivided into
\item six \tlarge{} triangles, representing the cones $\Delta(u)$ for elements $u \in W_c^+$ (the exterior tip of $\Delta(u)$ is labeled by $u^{-1}$).  Each \tlarge{} triangle contains
\item some \tmedium{} triangles, each representing another copy of $\Delta_c^+$, 
each subdivided into 
\item five \tsmall{} triangles, representing the cones $\Delta(A)$ for ${A \in \MCl(W,c^{-1})}$.
\end{itemize}

Some \tsmall{} triangles in each \tmedium{} triangle are colored.  A \tmedium{} triangle and the coloring of its \tsmall{} triangles together represent a concordant pair $(u,\mathcal C)$; equivalently, this pair represents the collection $\{u\bpi\yy:\bpi\in\CL\}$ of commutation equivalent SBDW simplices with the same base point.  The element $u$ in the pair $(u,\mathcal C)$ is recorded in two ways: the \tmedium{} triangle is contained in the \tlarge{} triangle indexed by $u^{-1}$, and $\Delta(u)$ is indicated in the \tmedium{} triangle by a white star.

Two \tmedium{} triangles $M_1$ and $M_2$ corresponding to concordant pairs $(u_1,\CL_1)$ and $(u_2,\CL_2)$ are connected by an edge if and only if there is an SBDW simplex in the collection $\{u_1\bpi\yy:\bpi\in\CL_1\}$ that shares a facet with an SBDW simplex in the collection $\{u_2\bpi\yy:\bpi\in\CL_2\}$. 

\Cref{fig:TrianglesLinear} also illustrates~\Cref{thm:Cambrian} as follows. Let $w=s_2$ and $u=s_3s_2$. The  \tlarge{} triangles labeled by $w^{-1}$ and $u^{-1}$ contain the same \tmedium{} triangles (ignoring the white star).  This indicates that the intervals $[w,wc]_\hB$ and $[u, uc]_\hB$ are reflection isomorphic, which corresponds to the fact that $u^{-1}$ and $w^{-1}$ lie in the same $c^{-1}$-Cambrian class. This, in turn, is encoded by the fact that the white stars marking $\Delta(w)$ and $\Delta(u)$ both lie in the blue \tsmall{} triangle. 
\end{example} 

\begin{figure}[htbp]
\begin{center}\includegraphics[width=\linewidth]{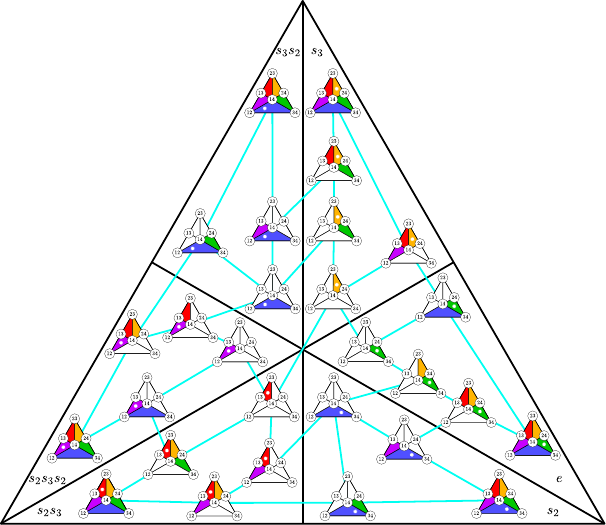}
  \end{center}
\caption{A representation of the pairs $(u,\mathcal C)$ with $\mathcal C\in\Class_c(u)$, where $W=A_3=\SSS_4$ and $c=s_1s_2s_3$. Edges represent adjacencies between corresponding collections of simplices in the SBDW triangulation.}\label{fig:TrianglesLinear}
\end{figure}

\subsubsection{Additional structure}\label{sec:additional}
Each SBDW simplex $u\bpi\yy$ is labeled by the chain $\bpi \in \MCh(W,c)$, but $\bpi$ may occur as the label of multiple simplices (for example, those chains whose corresponding reduced $T$-words use only simple reflections label a simplex for \emph{every} $u \in W_c^+$).  \Cref{thm:all_chains_appear} proves that the set $\{ u \in W_c^+ : (u,\bpi) \in  \Omega(W,c) \}$ of elements concordant with a particular $\bpi$ is a nonempty interval in the left weak order, so this set has a lowest element $\omega_\downarrow(\bpi)$.

With this in mind, it is natural to restrict our triangulation to the simplices corresponding to pairs of the form $(\omega_\downarrow(\bpi),\bpi)$. For this subset of simplices, each $\bpi$ is the label of exactly one simplex. \Cref{thm:trosable_thing} shows that the vertices of the permutahedron used by this subset of simplices correspond exactly to the $c^{-1}$-sortable elements of $W$.

\begin{remark}
Since the vertices of the simplices $\{(\omega_\downarrow(\bpi),\bpi): \bpi \in \MCh(W,c)\}$ coincide with the indexing set of the vertices of a $c^{-1}$-associahedron, and since the indexing set of our simplices can be identified with the indexing set of Reading's pulling triangulation of the associahedron~\cite[Theorem 8.12]{reading2011noncrossing}, one might expect that the collection $\{\conv(\omega_\downarrow(\bpi) \bpi \yy) : \bpi \in \MCh(W,c)\}$ is a new triangulation of a $W$-associahedron.  This is not the case---in fact, the union of these simplices is generally not even convex.
\end{remark}

\subsection{Past and present work} \label{sec:past_and_present} To put our work in context, we briefly survey some related previous and ongoing work (see also~\Cref{rem:new}).
\subsubsection{Bruhat interval polytopes and the permutahedral variety}

For $u,v\in W$ with $u\leqB v$, one can consider the interval $[u,v]_{\mathrm{B}}$. The corresponding \dfn{Bruhat interval polytope} \[\BIP_{u,v}=\mathrm{conv}([u,v]_{\mathrm{B}}\yy)\] is obtained by taking the convex hull of the vertices of $\Perm_\yy$ corresponding to the elements of $[u,v]_{\mathrm{B}}$. Bruhat interval polytopes were introduced by Kodama and Williams \cite{KodamaWilliams} and studied further by Tsukerman and Williams in relation with total positivity and Coxeter matroids \cite{TsukermanWilliams}. There has been interest in regular subdivisions of the $W$-permutahedron into Bruhat interval polytopes. In particular, in type $A$, such subdivisions are in one-to-one correspondence with cones in the tropical positive flag variety, which is equal to the positive flag Dressian \cite{Boretsky,JLLO} (see also \cite{BEW}).

Consider the collection \[\BIPs_c=\{Q_{u,uc}:u\in W_c^+\}\] of Bruhat interval polytopes. Knutson, Sanchez, and Sherman-Bennett \cite{KSSB} show that $\BIPs_c$ is a subdivision of the $W$-permutahedron when $W$ is a Weyl group. Their techniques are algebro-geometric: they show that the permutahedral variety in $G/B$ can be degenerated to the union of Richardson varieties $\bigcup_{u \in W_c^+} R_{u,uc}$. The moment polytope of the permutahedral variety is the $W$-permutahedron, and the moment polytope of $R_{u,uc}$ is the Bruhat interval polytope $\BIP_{u,uc}$. The subdivision $\BIPs_c$ is essentially obtained by applying the moment map to the degeneration result, and it can be viewed as a combinatorial shadow of the degeneration.

Our triangulation $\SBDW_\yy(W,c)$ is a refinement of the subdivision $\BIPs_c$ into simplices. The approach of Knutson--Sanchez--Sherman-Bennett to showing $\BIPs_c$ is a subdivision is quite different from our approach to showing $\SBDW_\yy(W,c)$ is a triangulation, and their results do not seem to imply or follow from ours. 

\begin{remark}
When $W$ is of type $A$ or $B$, Knutson, Sanchez, and Sherman-Bennett show that $\BIPs_c$ is in fact a \emph{regular} subdivision of $\Perm_\yy$. In this case, we could tweak their height function to prove \cref{thm:triangulation} without using tools from representation theory. In fact, in this case, we could prove that $\SBDW_\yy(W,c)$ is a regular triangulation for \emph{every} $\yy\in\BB$. However, this approach does not work in arbitrary types since it is not known in general if $\BIPs_c$ is regular. On the other hand, one could deduce from our \cref{thm:triangulation} that the Bruhat interval polytopes in $\BIPs_c$ do not intersect in their interiors and that $\bigcup\BIPs_c=\Perm_\yy$. However, this does not imply that $\BIPs_c$ is a subdivision since it does not imply that the polytopes in $\BIPs_c$ intersect along common faces. 
\end{remark} 

As the triangulation $\SBDW_\yy(W,c)$ refines the subdivision $\BIPs_c$, restricting $\SBDW_\yy(W,c)$ to any $\BIP_{u,uc}$ yields a triangulation of that Bruhat interval polytope. \cref{thm:Cambrian,cor:reflection-iso-classes-pos-catalan,thm:spans_new} provide information about the structure of these triangulations. For example, \cref{thm:Cambrian} characterizes when $\BIP_{u_1,u_1c}$ and $\BIP_{u_2,u_2c}$ have identical triangulations, along with when their ``translations to the identity" $u_1^{-1}\BIP_{u_1,u_1c}$ and $u_2^{-1} \BIP_{u_2,u_2c}$ coincide. \cref{thm:spans_new} characterizes which simplices appear in the triangulation of $\BIP_{u,uc}$.

\subsubsection{The quasisymmetric flag variety}

Recent work of Nadeau--Spink--Tewari \cite{NTS}, and those authors with Bergeron and Gagnon \cite{BGNTS}, features two (reducible) toric subvarieties of the type $A$ flag variety whose cohomology rings are related to quasisymmetric polynomials. The combinatorics of these toric subvarieties is related to what we present here, as we now explain.

Fix the Coxeter element $c= s_{n-1} s_{n-2}\dots s_2 s_1 = (n, n-1, \dots, 2, 1)$. The \dfn{$\Omega$-flag variety} defined in \cite{NTS} is 
\[\bigcup_{u \in W_c^+} u^{-1} R_{u, uc},\]
a union of translated toric Richardsons. The authors of \cite{NTS} noted that many of these translated Richardsons coincide and that there are exactly $\Cat^+(W)$ distinct translated Richardsons. Applying the moment map, this says there are exactly $\Cat^+(W)$ distinct translated Bruhat interval polytopes $u^{-1}\BIP_{u, uc}$. One can use \cref{cor:reflection-iso-classes-pos-catalan} to give an alternate proof of this fact and to generalize it to other choices of $c$.

The \dfn{quasisymmetric flag variety} $\mathrm{QFl}_n$ of \cite{BGNTS} is defined similarly to the $\Omega$-flag variety, but without ``repeats". The authors of \cite{BGNTS} show that the combinatorics of $\mathrm{QFl}_n$ is governed by noncrossing partitions. One can view the SBDW triangulation as an explanation for this: the moment map sends $\mathrm{QFl}_n$ to the union
\[\bigcup u^{-1} \BIP_{u, uc}\]
where $u$ ranges over representatives for each $c^{-1}$-Cambrian class.
Triangulating each (translated) $\BIP_{u, uc}$ according to the SBDW triangulation, this union becomes the order complex of $\NC(W,c)$.

Bergeron--Gagnon--Nadeau--Spink--Tewari \cite{BGNTS2} have also obtained independent proofs of \cref{thm:Cambrian,cor:reflection-iso-classes-pos-catalan} in the process of generalizing \cite{BGNTS} to other $W$ and $c$.

\subsubsection{Weighted chains in the noncrossing partition lattice}
Josuat-Verg\`es and his coauthors Biane and Douvropoulos have considered related notions coming from combining absolute order with Bruhat order~\cite{BianeJosuatVerges,DouvropoulosJosuatVerges1,JosuatVerges}.  For example, they have shown the elegant generating function identity~\cite[Proposition 3.6]{JosuatVerges} \[\sum_{\bpi \in \MCh(W,c)} q^{\mathrm{wt}(\bpi)} =\frac{r!}{|W|} \prod_{i=1}^r \left(d_i +q(h-d_i)\right),\] where $\mathrm{wt}(\bpi)$ denotes the number of times a maximal chain $\bpi$ of noncrossing partitions decreases in Bruhat order.  As another example,~\cite[Proposition 4.3]{BianeJosuatVerges} gives an elegant geometric description for when one has a Bruhat decrease in a chain of noncrossing partitions.

\subsection{Outline} 
\cref{sec:preliminaries} recalls and establishes necessary background and results about Coxeter groups, braid groups, representation theory, and polyhedral geometry. \cref{sec:subdivisions} is devoted to proving general results about polytopal subdivisions. In~\cref{sec:triangulating}, we construct a height function using totally stable linear stability, proving that $\SBDW_\yy(W,c)$ is a triangulation of $\Perm_\yy(W)$. In~\cref{sec:combinatorics}, we prove the more refined results about the structure of SBDW triangulations stated in \cref{subsec:properties-of-triang}. \cref{sec:presentations} provides details about how this triangulation allows us to derive the Artin presentation from the Bessis dual presentation of the braid group. In \cref{sec:future}, we collect several ideas for future work.  These include conjectural presentations for the pure braid group, the problem of constructing an explicit homotopy between the Salvetti and Bessis--Brady--Watt complexes, and several new partial orders that appear to have rich structure. For example, orienting the $1$-skeleton of the SBDW triangulation defines a partial order on $W$ that we call the \emph{$c$-noncrossing Bruhat order}, while restricting the Cayley graph of the \emph{dual braid monoid} to the image of the \emph{positive Artin monoid} produces another partial order on $W$ that we call the \emph{$c$-noncrossing weak order}. We conjecture that these posets are lattices and are equal to each other.

\section{Preliminaries}\label{sec:preliminaries}  

\subsection{Coxeter combinatorics} 

We assume basic familiarity with the theory of Coxeter groups, which can be found in \cite{BjornerBrenti, Humphreys}. 

\subsubsection{The reflection representation} Let $W$ be a finite Coxeter group of rank $r$ acting (faithfully) in its reflection representation on an $r$-dimensional Euclidean space $V$. Let $\HH$ denote the reflection arrangement of $W$. Let $\Vreg=V\setminus\bigcup\HH$ be the complement of the reflection arrangement in $V$. The connected components of $\Vreg$ are called \dfn{regions}. Fixing a region $\BB$ to call the \dfn{base region} induces a bijection $w\mapsto w\BB$ from $W$ to the set of regions. 

For $H\in\HH$, let $t_H\in W$ denote the element that acts on $V$ via the orthogonal reflection through $H$. The elements of the set $T=\{t_H:H\in\HH\}$ are the \dfn{reflections} of $W$. A reflection $t_H$ is \dfn{simple} if the hyperplane $H$ is a wall of $\BB$. Let $S\subseteq T$ be the set of simple reflections of $W$. We say $W$ is \dfn{simply-laced} if for all $s,s'\in S$, the  order of $ss'$ is at most $3$. Equivalently, $W$ is simply-laced if and only if its irreducible components are each of type $A$, $D$, or $E$. 

\subsubsection{Roots and coroots} Let $V^*$ be the dual space of $V$. Then $V^*$ is a Euclidean space with an inner product $(\cdot,\cdot)$. We write $\langle \cdot,\cdot\rangle$ for the canonical pairing between $V^*$ and $V$. In other words, $\langle\gamma,\xx\rangle=\gamma(\xx)$ for all $\gamma\in V^*$ and $\xx\in V$. Let $\Phi\subseteq V^*$ denote the root system of $W$, and let $\Phi^+$ and $\Phi^-$ denote the sets of positive roots and negative roots, respectively. Thus, $\Phi^-=-\Phi^+$, and $\Phi=\Phi^+\sqcup\Phi^-$.  

Each root $\beta\in\Phi$ has an associated \dfn{coroot} $\beta^\vee\in V$, which is uniquely determined by the property that $\langle\gamma,\beta^\vee\rangle=2\frac{(\beta,\gamma)}{(\beta,\beta)}$ for all $\gamma\in V^*$. Let $(\Phi^+)^\vee=\{\beta^\vee:\beta\in \Phi^+\}$. There is also a natural right action of $W$ on $V^*$, which is defined so that $\langle \gamma w,\xx\rangle=\langle \gamma,w\xx\rangle$ for all $w\in W$, $\xx\in V$, and $\gamma\in V^*$. 

For each $t\in T$, there is a unique $\beta_t\in\Phi^+$ such that \[t\xx=\xx-\langle \beta_t,\xx\rangle\beta_t^\vee\] for all $\xx\in V$. The map $t\mapsto \beta_t$ is a bijection from $T$ to $\Phi^+$. The (right) action of $t$ on $\gamma$ is given by $\gamma t=\gamma-\langle\gamma,\beta_t^\vee\rangle\beta_t$ for all $\gamma\in V^*$. A positive root $\beta_t$ is \dfn{simple} if $t$ is a simple reflection. The set $\{\beta_s:s\in S\}$ of simple roots is a basis of $V^*$, and the set $\{\beta_s^\vee:s\in S\}$ of simple coroots is a basis of~$V$.

\subsubsection{Parabolic subgroups} For $J\subseteq S$, we let $W_J$ denote the subgroup of $W$ generated by $J$; we call $W_J$ a \dfn{standard parabolic subgroup} of $W$. For $s\in S$, we will find it convenient to write $\langle s\rangle$ for the set $S\setminus\{s\}$ so that $W_{\langle s\rangle}$ is the standard parabolic subgroup generated by all simple reflections other than $s$. A \dfn{parabolic subgroup} of $W$ is a subgroup generated by a subset of $T$. A parabolic subgroup has rank $2$ if it is generated by two reflections. 

\subsubsection{Reduced words} A \dfn{reduced $S$-word} for an element $w\in W$ is a word over the alphabet $S$ that represents $w$ and has the minimum possible length among all such words. The \dfn{Coxeter length} of $w$, denoted $\ell_S(w)$, is the length of a reduced $S$-word for $w$. A \dfn{reduced $T$-word} for $w$ is a word over the alphabet $T$ that represents $w$ and has the minimum possible length among all such words. The \dfn{reflection length} of $w$, denoted $\ell_T(w)$, is the length of a reduced $T$-word for $w$. The \dfn{long element} of $W$, denoted $\wo$, is the unique element of $W$ with Coxeter length $|T|$. A \dfn{subword} of a word $\mathsf{w}$ is a word obtained from $\mathsf{w}$ by deleting some letters. 

\subsubsection{Inversions and weak order} A \dfn{right inversion} (respectively, \dfn{left inversion}) of an element $w\in W$ is a reflection $t\in T$ such that $\ell_S(wt)<\ell_S(w)$ (respectively, $\ell_S(tw)<\ell_S(w)$). Let $T_\R(w)$ (respectively, $T_\LL(w)$) denote the set of right inversions (respectively, left inversions) of $w$. For $t\in T$, $w\in W$, and $\yy\in\BB$, we have $\langle\beta_t,w\yy\rangle<0$ if and only if $t$ is a left inversion of $w$. A \dfn{right descent} (respectively, \dfn{left descent}) of $w$ is a right inversion (respectively, left inversion) that is a simple reflection. A simple reflection $s$ is a right (respectively left) descent of $w$ if and only if there is a reduced $S$-word for $w$ that ends (respectively, begins) with $s$.   We define the \dfn{inversion sequence} of a reduced $S$-word $\mathsf{w}=s_{1}\cdots s_{k}$ to be the tuple 
$\inv(\mathsf{w})=(t_1,t_2,\ldots, t_k)$, where $t_i=s_1\cdots s_{i-1}s_is_{i-1}\cdots s_1$. If $\mathsf{w}$ is a reduced $S$-word for $w$, then $\inv(\mathsf{w})$ lists each left inversion of $w$ exactly once. 

The \dfn{left weak order} is the partial order $\leq_\LL$ on $W$ defined so that $u\leq_\LL v$ if and only if ${T_\R(u)\subseteq T_\R(v)}$ if and only if $v$ has a reduced $S$-word with a word for $u$ as a suffix. The \dfn{right weak order} is the partial order $\leq_{\R}$ on $W$ defined so that $u\leq_\R v$ if and only if $T_\LL(u)\subseteq T_\LL(v)$ if and only if $v$ has a reduced $S$-word with a word for $u$ as a prefix. We have $u\leq_\R v$ if and only if $u^{-1}\leq_\LL v^{-1}$. 

\subsection{Coxeter--Catalan combinatorics} 
Fix a standard Coxeter element $c$ of $W$. Note that ${\ell_S(c)=\ell_T(c)=r}$. 

\subsubsection{Reduced reflection words}
A lemma due to Carter~\cite[Lemma~3]{carter1972conjugacy} states that if $t_1\cdots t_r$ is a reduced $T$-word for $c$, then the roots $\beta_{t_1},\ldots,\beta_{t_r}$ are linearly independent.  In fact, one can strengthen this statement to a full characterization of the sets of reflections that can appear in a reduced $T$-word for $c$.  Say two reflections $t,t'$ are \defn{$c$-noncrossing} if they appear simultaneously in some reduced word for $c$---we can move this pair to be a prefix of the word by applying Hurwitz moves, so $t,t'$ are $c$-noncrossing if and only if $tt' \leq_T c$ or $t't \leq_T c$. The following lemma combines \cite[Lemma~3]{carter1972conjugacy} with \cite[Proposition~4.1.2]{stump2025cataland} (see also \cite[Lemma~4.8]{BradyWatt2008}).

\begin{lemma}\label{lem:linearly_independent} 
A set of reflections can be ordered to give a reduced $T$-word for $c$ if and only if they are pairwise $c$-noncrossing and the corresponding positive roots are linearly independent.  This ordering is unique, up to commutations. 
\end{lemma}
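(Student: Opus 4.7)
I would split the argument into three pieces: the forward direction, uniqueness up to commutation, and the substantive reverse direction.

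The forward direction is immediate. If $t_1\cdots t_r$ is a reduced $T$-word for $c$, then Carter's lemma (already quoted in the preceding sentence) gives linear independence of $\beta_{t_1},\dots,\beta_{t_r}$, and any two letters $t_i,t_j$ trivially appear together in this reduced word, so they are $c$-noncrossing by definition.

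For uniqueness, I would invoke transitivity of the Hurwitz action on $\Red_T(c)$, a standard consequence of Bessis's dual braid theory. Any two reduced $T$-words for $c$ with the same underlying multiset of reflections are connected by a sequence of Hurwitz moves. A single Hurwitz move $(u,v)\mapsto(uvu^{-1},u)$ preserves this multiset precisely when $uvu^{-1}=v$, i.e.\ when $u$ and $v$ commute. Hence any multiset-preserving sequence of Hurwitz moves is a sequence of commutations, establishing uniqueness up to commutation equivalence.

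For the reverse direction, given $X=\{t_1,\dots,t_r\}$ pairwise $c$-noncrossing with linearly independent roots, I would produce a reduced $T$-word for $c$ using exactly these reflections by induction on $r=\ell_T(c)$, the base case $r=1$ being trivial. For the inductive step, I would locate a reflection $t^\star\in X$ whose root $\beta_{t^\star}$ is an extremal ray of the positive cone $\spange\{\beta_t:t\in X\}$. Extremality is used in two ways: (i) it identifies $t^\star$ as a viable first letter of some reduced $T$-word for $c$, since Hurwitz moves on any common reduced word can bring $t^\star$ leftward without conjugating the other letters of $X$; and (ii) after setting $c':=(t^\star)^{-1}c$, a parabolic Coxeter element of reflection length $r-1$, the residual set $X\setminus\{t^\star\}$ lies in the corresponding standard parabolic subgroup and remains pairwise $c'$-noncrossing with linearly independent roots. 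Induction then supplies a reduced $T$-word $t^\star t_{\sigma(2)}\cdots t_{\sigma(r)}$ for $c$.

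The main obstacle is this inductive step---specifically, the rigorous verification that extremality of $\beta_{t^\star}$ suffices for (i) and (ii) and that pairwise $c'$-noncrossingness descends to the residual set. A concrete rank-$3$ example ($W=\mathfrak{S}_4$, $c=(1\,2\,3\,4)$, $X=\{(1\,2),(1\,3),(1\,4)\}$) shows that not every $t\in X$ can serve as $t^\star$: only $(1\,2)$ admits $\{(1\,3),(1\,4)\}$ as a reduced $T$-word for $c'=(1\,3\,4)$, reflecting the fact that $\beta_{(1\,2)}$ is extremal in the cone spanned by the three roots while $\beta_{(1\,3)}$ lies in its interior. Making this extremality criterion precise and type-uniform---likely through the parabolic recursion of $\NC(W,c)$ together with its Bessis--Brady--Watt geometric realization---is where the main effort lies.
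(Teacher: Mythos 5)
The paper does not actually prove this lemma; it attributes it to Carter~\cite[Lemma~3]{carter1972conjugacy} together with \cite[Proposition~4.1.2]{stump2025cataland} and \cite[Lemma~4.8]{BradyWatt2008}, so there is no in-house proof to compare against. Still, your proposal has two genuine gaps beyond the one you acknowledge. Your forward direction is fine. Your uniqueness argument breaks at the crucial step: transitivity of the Hurwitz action gives a Hurwitz path from one ordering to the other, and each \emph{single} multiset-preserving Hurwitz move is indeed a commutation, but a composite sequence of Hurwitz moves whose \emph{net} effect preserves the multiset need not decompose into multiset-preserving moves---the intermediate words may involve entirely different reflections, so the path need not stay in the fiber over $X$. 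What one actually needs is that the orderings of $X$ that yield reduced $T$-words form a single commutation class, and the standard route is structural (e.g.\ via \cref{lem:i-1modk}, which pins down, within each rank-$2$ parabolic, which of two noncommuting reflections must precede the other in \emph{any} reduced $T$-word for $c$).

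For the reverse direction, the extremality heuristic is actually incorrect, as your own example shows. With $X=\{(1\,2),(1\,3),(1\,4)\}$ and $c=(1\,2\,3\,4)$, the roots $\beta_{(1\,2)}=e_1-e_2$, $\beta_{(1\,3)}=e_1-e_3$, $\beta_{(1\,4)}=e_1-e_4$ are linearly independent, so they span a simplicial cone in which \emph{all three} are extremal rays; $\beta_{(1\,3)}$ does not lie in the interior. Yet only one of the six orderings of $X$ is a reduced $T$-word for $c$, namely $(1\,4)(1\,3)(1\,2)$, so only $(1\,4)$ can be the initial letter (equivalently, only $(1\,2)$ can be the final letter---which is what your computation $c'=(1\,3\,4)$ implicitly uses, even though you wrote the first-letter convention $c'=(t^\star)^{-1}c$). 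Extremality within the cone spanned by $X$ therefore fails to identify the right reflection to peel off. The criterion that actually does the work is the $\preceq_c$-order within rank-$2$ parabolics coming from \cref{lem:i-1modk} (equivalently, the Auslander--Reiten structure in simply-laced type), and making the recursion precise and type-uniform essentially reproduces the cited results rather than avoiding them.
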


\subsubsection{Coxeter-sorting words}\label{subsec:Coxeter_sorting}
Now fix a reduced $S$-word $\mathsf c$ for $c$, and consider the infinite word $\mathsf c^\infty=\mathsf{ccc}\cdots$ obtained by concatenating $\mathsf{c}$ with itself infinitely many times. Following Reading~\cite{reading2007sortable,reading2007clusters}, we define the \dfn{c-sorting word} of $w$, denoted $\w(c)$, to be the lexicographically first subword of ${\sf c}^\infty$ that is a reduced $S$-word for $w$. While this word depends on the choice of the reduced $S$-word ${\sf c}$, it is well defined up to commutation equivalence. We define a partial order $\preceq_c$ on $T$ by declaring that $t\preceq_c t'$ if $t$ appears to the left of $t'$ in every word that is commutation equivalent to $\inv(\w_\circ(c))$ (the inversion sequence of the $c$-sorting word of $w_\circ$). 

It is known (see \cite[Lemma 2.9.1]{stump2025cataland} and \cite[Lemmas 3.7 \& 3.8]{reading2011sortable}) that $t\preceq_c t'$ if and only if $t'\preceq_{c^{-1}}t$. This implies the following lemma. 

\begin{lemma}\label{lem:preceq_c}
Let $c$ be a standard Coxeter element of $W$, and let $s$ be a right descent of $c$. Then $s$ is a maximal element of the poset $(T,\preceq_c)$.
\end{lemma}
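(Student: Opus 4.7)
The plan is to leverage the stated duality $t \preceq_c t' \Leftrightarrow t' \preceq_{c^{-1}} t$, which reduces the claim to showing that $s$ is a \emph{minimal} element of $(T, \preceq_{c^{-1}})$.

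First, I would translate the hypothesis from $c$ to $c^{-1}$: a simple reflection $s$ is a right descent of $c$ if and only if it is a left descent of $c^{-1}$, since $\ell_S(cs) = \ell_S((cs)^{-1}) = \ell_S(sc^{-1})$. Thus the problem becomes: if $s$ is a left descent of $c^{-1}$, then $s$ is minimal in $(T, \preceq_{c^{-1}})$.

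Second, since $s$ is a left descent of $c^{-1}$, I would choose a reduced $S$-word $\mathsf{c}^{-1}$ for $c^{-1}$ that begins with $s$ and compute the $c^{-1}$-sorting word of $w_\circ$ with respect to this choice. Because every simple reflection (including $s$) is a left descent of the longest element $w_\circ$, the greedy selection that produces the lexicographically first subword of $(\mathsf{c}^{-1})^\infty$ spelling $w_\circ$ will include the initial $s$ of $\mathsf{c}^{-1}$. Consequently, some word in the commutation class of $\w_\circ(c^{-1})$ begins with $s$, so some word in the commutation class of $\inv(\w_\circ(c^{-1}))$ has the reflection $s$ as its very first entry.

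Finally, having exhibited a representative of this commutation class whose first entry is $s$, I can conclude that no other reflection $t \neq s$ satisfies $t \preceq_{c^{-1}} s$; the element $s$ is a source of the heap of $\inv(\w_\circ(c^{-1}))$. This is exactly the statement that $s$ is minimal in $(T, \preceq_{c^{-1}})$, and the duality then yields maximality in $(T, \preceq_c)$. The main subtlety---rather than a true obstacle---is the reliance on the heap-theoretic interpretation of $\preceq_c$: one must verify that choosing a particular reduced word $\mathsf{c}^{-1}$ starting with $s$ to produce a commutation representative of $\inv(\w_\circ(c^{-1}))$ is compatible with the well-definedness of this commutation class. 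This is precisely what is guaranteed by the standing remark in the excerpt that the sorting word is well defined up to commutation equivalence.
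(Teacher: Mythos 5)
Your proof is correct and takes the same route as the paper, which simply cites the known duality $t \preceq_c t' \Leftrightarrow t' \preceq_{c^{-1}} t$ and says "this implies the following lemma." You have fleshed out the implicit step: since $s$ is a left descent of $c^{-1}$ (and hence of $w_\circ$), the greedy construction of $\w_\circ(c^{-1})$ from a reduced word $\mathsf{c}^{-1}$ beginning with $s$ places $s$ first in $\inv(\w_\circ(c^{-1}))$, making $s$ minimal for $\preceq_{c^{-1}}$ and therefore maximal for $\preceq_c$.
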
 

\subsubsection{Sorting orders}
Let $\sw_\circ(c^{-1})=s_1\cdots s_N$ be the $c^{-1}$-sorting word for $w_\circ$, and let \begin{equation}\label{eq:woinv}\inv(\w_\circ(c^{-1})) = (a_1,a_2,\ldots,a_N).\end{equation}
Every reflection in $T$ appears exactly once in $\inv(\w_\circ(c^{-1}))$. 

While the ordering $\inv(\w_\circ(c^{-1}))$ on reflections is useful to index hyperplanes, we also need to be able to index vertices.  To this end, let $\{\lambda_s\}_{s \in S} \subset V$ be the dual basis to the basis of simple roots $\{\beta_s\}_{s \in S} \subset V^*$ (if $W$ is crystallographic, then $\lambda_{s}$ is the fundamental weight dual to the simple root $\beta_s$).  Then $\lambda_s \in \bigcap_{s'\in S\setminus\{s\}}H_{s'}$, so $s'(\lambda_s)=\lambda_s$ for every $s' \in S \setminus \{s\}$.  Let $\dd_{a_i}=(s_1\cdots s_{i-1})(\lambda_{s_i})$, and let $\dd(\w_\circ(c^{-1}))=(\dd_{a_1},\ldots,\dd_{a_N})$. 
Recall that $H_t^-$ denotes the half-space bounded by $H_t$ that contains $\BB$. We define \[\Delta_c^+:=\bigcap_{t\in T_\LL(c)}H_t^-\subseteq V.\]

We now give several characterizations of the simplicial cone $\Delta_c^+$.
\begin{lemma}\label{lem:deltabeta}
For every $i\in[N]$, we have $(1-c^{-1})\dd_{a_i}=\beta_{a_i}^\vee$. 
\end{lemma}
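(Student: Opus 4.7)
The plan is to reduce the identity to a statement about the conjugate $\gamma_{i-1}:=(s_1\cdots s_{i-1})^{-1}c^{-1}(s_1\cdots s_{i-1})$ of $c^{-1}$, and then prove that statement by induction using the rotation structure of the $c^{-1}$-sorting word.

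Since $a_i=(s_1\cdots s_{i-1})\,s_i\,(s_1\cdots s_{i-1})^{-1}$, the conjugation formula for coroots gives $\beta_{a_i}^\vee=(s_1\cdots s_{i-1})\beta_{s_i}^\vee$. Combining this with the definition $\dd_{a_i}=(s_1\cdots s_{i-1})\lambda_{s_i}$ and the identity $(1-s_i)\lambda_{s_i}=\beta_{s_i}^\vee$ (which follows from the reflection formula applied to $\lambda_{s_i}$ together with $\langle\beta_{s_i},\lambda_{s_i}\rangle=1$), and factoring $(s_1\cdots s_{i-1})$ out of $(1-c^{-1})$, the lemma reduces to $\gamma_{i-1}\lambda_{s_i}=s_i\lambda_{s_i}$. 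Because the $W$-stabilizer of $\lambda_{s_i}$ is the maximal parabolic subgroup $W_{\langle s_i\rangle}$, this is in turn equivalent to
\[(*)\qquad c^{-1}(s_1\cdots s_{i-1})\;=\;(s_1\cdots s_i)\,h_i\qquad\text{for some } h_i\in W_{\langle s_i\rangle}.\]

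I would establish $(*)$ by proving the following stronger structural claim by induction on $i$: $\gamma_{i-1}$ is a standard Coxeter element of $W$ admitting a reduced $S$-expression that begins with $s_i$, so that $\gamma_{i-1}=s_ih_i$ with $h_i$ a standard Coxeter element of $W_{\langle s_i\rangle}$. The base case is immediate: every simple reflection is a left descent of $w_\circ$, so the greedy construction of $\sw_\circ(c^{-1})$ cannot skip the first position of $({\sf c}^{-1})^\infty$, making $s_1$ the first letter of ${\sf c}^{-1}$ and giving $c^{-1}=s_1 h_1$ with $h_1$ the Coxeter element of $W_{\langle s_1\rangle}$ obtained by deleting $s_1$ from ${\sf c}^{-1}$. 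For the inductive step, the hypothesis $\gamma_{i-1}=s_ih_i$ yields $\gamma_i=s_i\gamma_{i-1}s_i=h_is_i$, and this expression is reduced of length $r$ and uses each simple reflection exactly once (since $h_i\in W_{\langle s_i\rangle}$ cannot have $s_i$ as a right descent), so $\gamma_i$ is again a standard Coxeter element of~$W$. The step therefore reduces to showing that $s_{i+1}$ is a left descent of~$\gamma_i$.

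The main obstacle is this descent claim. I would extract it from Reading's recursive (``rotation'') characterization of $c^{-1}$-sortable elements~\cite{reading2007sortable,reading2007clusters}: the suffix $s_{i+1}\cdots s_N$ of $\sw_\circ(c^{-1})$ is itself a Coxeter-sorting word of $w_i^{-1}w_\circ$ with respect to a reduced expression for $\gamma_i$ produced by tracking the successive rotations applied at positions $1,\ldots,i$. When Reading's Case~1 applies at position $i+1$, the letter $s_{i+1}$ is by construction the first letter of that rotated reduced expression of $\gamma_i$ and is hence a left descent of~$\gamma_i$. When Case~2 applies first, the skipped simple reflections commute with $s_{i+1}$ inside $\gamma_i$---a feature particular to the sorting of $w_\circ$ that ensures every ``exhausted'' letter can be freely commuted past $s_{i+1}$ in some reduced expression of $\gamma_i$---so $s_{i+1}$ is again a left descent of~$\gamma_i$. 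This closes the induction, establishes~$(*)$, and proves the lemma.
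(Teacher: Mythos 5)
Your reduction in the first two paragraphs is correct, and the conversion of the inductive step to the descent claim ``$s_{i+1}$ is a left descent of $\gamma_i$'' is the right crux: it is exactly what underlies the paper's proof, which conjugates by the first letter of $\sw_\circ(c^{-1})$, invokes the fact that the $c'$-sorting word of $w_\circ$ (for $c'=s_1c^{-1}s_1$) has inversion sequence commutation-equivalent to $(s_1a_2s_1,\ldots,s_1a_Ns_1,\,s_1)$, and shifts the index down by one. Iterating that rotation shows that $\gamma_{k}$ is the Coxeter element whose sorting word of $w_\circ$ begins with $s_{k+1}$, which is precisely your descent claim, so the two approaches are essentially the same modulo this structural input.

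The gap is in how you argue the descent claim. Your framing asserts that the suffix $s_{i+1}\cdots s_N$ is a Coxeter-sorting word ``with respect to a reduced expression for $\gamma_i$ produced by tracking the successive rotations.'' This is not accurate after a skip. The suffix is the sorting word with respect to the Coxeter element obtained by rotating the word $\sf c^{-1}$ through \emph{all} positions visited (skipped and included), and that element can differ, as a group element, from $\gamma_i$, which conjugates $c^{-1}$ only by the included letters. For instance, with $W=\SSS_4$, $c=s_1s_2s_3$, $\sw_\circ(c^{-1})=s_3s_2s_1s_3s_2s_3$: at the transition from $s_5=s_2$ to $s_6=s_3$ there is a skip of $s_1$, and one computes $\gamma_5=s_1s_3s_2=(1243)$ whereas the rotated word at the position of $s_6$ is $s_3s_2s_1=(1432)\neq\gamma_5$. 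So ``Case 1 vs.\ Case 2'' is not a dichotomy about whether $s_{i+1}$ is literally the first letter of a pre-supplied reduced expression for $\gamma_i$. Your Case~2 fix---that ``the skipped simple reflections commute with $s_{i+1}$ inside $\gamma_i$,'' characterized as ``a feature particular to the sorting of $w_\circ$''---is the real content and is asserted without proof; it is a nontrivial structural statement about the $c^{-1}$-sorting word of $w_\circ$ that requires a separate argument. As written, the proof does not close. By contrast, the paper's proof imports the needed structure wholesale through the single rotation fact about $\inv(\sw_\circ(c'))$, so it has no residual case analysis; if you want to salvage your version, you should prove the commutation claim (or better, the rotation fact itself, from which your stronger claim about $\gamma_{i-1}$ follows immediately as I sketched above).
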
 
\begin{proof}
First suppose $i=1$. Let $s=s_1$. Note that $s$ is a left descent of $c^{-1}$ and that it is the first entry in each of  $\w_\circ(c^{-1})$ and $\inv(\w_\circ(c^{-1}))$. Then $\dd_{a_1}=\lambda_{s}$ and $\beta_{a_1}=\beta_{s}$. Since $\delta_{s}$ is fixed by all simple reflections other than $s$, we find that \[(1-c^{-1})(\dd_{s})=\dd_s-c^{-1}(\dd_{s})=\dd_{s}-s(\dd_{s})=\langle \beta_{s},\dd_{s}\rangle \beta_{s}^\vee=\beta_{s}^\vee.\]

Now suppose $i\geq 2$ and proceed by induction on $i$. Consider the standard Coxeter element $c'=sc^{-1} s$. For $2\leq j\leq N$, let $a_j'=sa_js$. The word $\w_\circ(c')$ is commutation equivalent to the word  $(a_2',\ldots,a_N',s)$.  By induction, we have that $(1-c')\dd_{a_i'}=\beta_{a_i'}^\vee$, so \[(1-c^{-1})\dd_{a_i} = (1-sc's)\dd_{a_i} = \dd_{a_i}-sc's\dd_{a_i}=s\dd_{a_i'}-sc'\dd_{a_i'}=s(1-c')\dd_{a_i'}=s\beta_{a_i'}^\vee=\beta_{a_i}^\vee.\qedhere\] 
\end{proof}

\begin{proposition}\label{prop:region}
We have
\[\Delta_c^+ =\bigcup_{u\in W_c^+}\overline{u^{-1}\BB} = \spange \{\dd_t:t\in T\} = (1-c^{-1})^{-1}\spange (\Phi^+)^\vee = \spange \{\dd_s:s\in S\}.\] 
\end{proposition}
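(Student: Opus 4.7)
The plan is to prove the five-way equality through a chain of separate identifications, starting with the easier pieces and leaving the identification of $\Delta_c^+$ with $\spange\{\dd_s:s\in S\}$ for last.

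Since a standard Coxeter element has no $+1$-eigenvector on $V$, the linear map $1-c^{-1}$ is invertible. Together with Lemma~\ref{lem:deltabeta}, this gives $\dd_t=(1-c^{-1})^{-1}\beta_t^\vee$ for every $t\in T$; since every reflection appears exactly once as some $a_i$, the map $t\mapsto\dd_t$ identifies $(\Phi^+)^\vee$ with $\{\dd_t:t\in T\}$, yielding $\spange\{\dd_t:t\in T\}=(1-c^{-1})^{-1}\spange(\Phi^+)^\vee$. Similarly $\spange\{\dd_s:s\in S\}=(1-c^{-1})^{-1}\spange\{\beta_s^\vee:s\in S\}$, and these two right-hand sides agree because every positive coroot of a finite Coxeter group lies in the nonnegative span of the simple coroots (an immediate consequence of the analogous fact for positive roots combined with the relation $\beta^\vee = (\beta_s,\beta_s)/(\beta,\beta)\cdot\beta_s^\vee$-style rescalings).

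For $\bigcup_{u\in W_c^+}\overline{u^{-1}\BB}=\Delta_c^+$, I would use the standard fact that $u^{-1}\BB\subseteq H_t^-$ iff $t\notin T_\LL(u^{-1})=T_\R(u)$, so $\overline{u^{-1}\BB}\subseteq \Delta_c^+$ iff $T_\R(u)\cap T_\LL(c)=\emptyset$. A standard application of the Exchange Condition equates this latter condition with $\ell_S(uc)=\ell_S(u)+\ell_S(c)$, i.e., with $u\in W_c^+$; since $\Delta_c^+$ is a closed union of full-dimensional chambers (and parts of hyperplanes), this equivalence yields the equality.

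The main obstacle is the remaining equality $\Delta_c^+=\spange\{\dd_s:s\in S\}$. Both sides are $r$-dimensional simplicial cones -- the latter because $\{\dd_s:s\in S\}$ is the image of the simple-coroot basis under the invertible map $(1-c^{-1})^{-1}$, and the former because the facet normals $\{\beta_t:t\in T_\LL(c)\}$ form a linearly independent set of $r$ positive roots (they are the roots of the inversion sequence of any reduced $S$-word for $c$, which is upper-triangular in the simple-root basis). My plan is to match extreme rays by constructing a canonical bijection $s\leftrightarrow t_s$ between $S$ and $T_\LL(c)$ and verifying that for each $s\in S$, $\dd_s$ lies on every facet hyperplane $H_{t'}$ with $t'\in T_\LL(c)\setminus\{t_s\}$ while $\langle\beta_{t_s},\dd_s\rangle>0$; this would force $\dd_s$ to be the extreme ray of $\Delta_c^+$ opposite $H_{t_s}$ and thereby identify the two cones. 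Each inner product $\langle\beta_t,\dd_s\rangle$ can be evaluated using the explicit formula $\dd_{a_i}=(s_1\cdots s_{i-1})\lambda_{s_i}$ coming from the $c^{-1}$-sorting word of $w_\circ$, combined with the orthogonality $\langle\beta_{s_j},\lambda_{s_i}\rangle=\delta_{ij}$. The hardest part will be the combinatorial bookkeeping needed to produce the bijection $s\mapsto t_s$ and to control these inner products uniformly across choices of $c^{-1}$-sorting word (which is only well-defined up to commutation).
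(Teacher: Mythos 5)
Your argument for the first equality ($\Delta_c^+ = \bigcup_{u\in W_c^+}\overline{u^{-1}\BB}$ via the characterization $W_c^+=\{u: T_\R(u)\cap T_\LL(c)=\emptyset\}$) and for the equalities among the last three sets (via \cref{lem:deltabeta}, invertibility of $1-c^{-1}$, and the fact that $\spange(\Phi^+)^\vee=\spange\{\beta_s^\vee:s\in S\}$) both match what the paper does. The difference lies in how the two remaining clusters are joined. The paper connects $\bigcup_{u\in W_c^+}\overline{u^{-1}\BB}$ to $\spange\{\dd_t:t\in T\}$ by invoking established Coxeter--Catalan machinery from \cite[Section 6]{pilaud2015brick} --- the subword-complex description of the $c^{-1}$-cluster complex, the bijection between $c^{-1}$-clusters and $c^{-1}$-sortable elements, and the restriction to full-support elements. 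You instead propose a direct, elementary linear-algebra argument: show that $\Delta_c^+$ and $\spange\{\dd_s:s\in S\}$ are both $r$-dimensional simplicial cones, then match extreme rays by exhibiting a bijection $s\leftrightarrow t_s$ between $S$ and $T_\LL(c)$ under which $\dd_s$ lies on $H_{t'}$ for all $t'\in T_\LL(c)\setminus\{t_s\}$ while $\langle\beta_{t_s},\dd_s\rangle>0$. If carried out, that would be a more self-contained proof than the citation.

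The gap, which you flag yourself, is that this crucial step is only a plan. The bijection $s\mapsto t_s$ is never constructed, the inner products $\langle\beta_t,\dd_s\rangle$ for $t\in T_\LL(c)$ are never evaluated, and you note that the ``hardest part will be the combinatorial bookkeeping.'' This is not minor: the $\dd_{a_i}$ are defined via the $c^{-1}$-sorting word for $w_\circ$, so the simple reflections $s$ appear at scattered positions $a_{i_s}=s$ in $\inv(\w_\circ(c^{-1}))$ rather than as the initial segment, while $\Delta_c^+$ is cut out by the left inversions of $c$ (not $c^{-1}$). Reconciling these two orderings and proving the vanishing pattern $\langle\beta_{t'},\dd_s\rangle=0$ uniformly --- in a way robust under commutation moves on the sorting word --- is exactly the nontrivial Coxeter--Catalan content that the paper outsources to \cite{pilaud2015brick}. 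Until you carry out that computation (or supply the same citation), the chain of equalities is not closed.
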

\begin{proof}
It is straightforward to show that $W_c^+=\{u\in W:T_\LL(u^{-1})\cap T_\LL(c)=\emptyset\}$. This implies the first equality. The second equality follows from standard Coxeter--Catalan theory~\cite[Section 6]{pilaud2015brick}, where we combine the description of the $c^{-1}$-cluster complex using the subword complex, the bijection between $c^{-1}$-clusters and $c^{-1}$-sortable elements, and the restriction of both sets to those elements of full support. The third and fourth equalities now follow from \cref{lem:deltabeta} because $\spange(\Phi^+)^\vee=\spange\{\beta_s^\vee:s\in S\}$.
\end{proof}

For $u\in W_c^+$, we define \[\Delta(u):=\overline{u^{-1}\BB}.\]

\begin{example}\label{ex:a3-1'}
Continuing~\Cref{ex:a3-0}, we describe the data of this section.  With $s_i$ acting on $\mathbb{R}^4$ by permuting the $i$th and $(i+1)$st coordinates, we have chosen $\lambda_{s_1}=(1,0,0,0)$, $\lambda_{s_2}=(1,1,0,0)$, and $\lambda_{s_3}=(1,1,1,0)$. The relevant data is summarized in the following table: \[\begin{array}{l|cccccc}\toprule
w_\circ(c^{-1}) & s_3 & s_2 & s_1 & s_3 & s_2 &s_3 \\
\inv(\w_\circ(c^{-1})) & (34) & (24) & (14) & (23) & (13) & (12) \\
\dd(\w_\circ(c^{-1})) & (1,1,1,0) & (1,1,0,0) & (1,0,0,0) & (1,1,0,1) & (1,0,0,1) & (1,0,1,1)\\ \bottomrule
\end{array}
\]
Note that $\{\dd_{(1\,2)},\dd_{(2\,3)},\dd_{(3\,4)}\}=\{(1,0,1,1),(1,1,0,1),(1,1,1,0)\}$, so that \[\Delta_c^+=H_{12}^- \cap H_{13}^- \cap H_{14}^- = \spange\{\dd_{(1\,2)},\dd_{(2\,3)},\dd_{(3\,4)}\}.\]  Finally, observe that $c^{-1}=(1\,4\,3\,2)$ rotates vectors to the left, so that
\begin{align*}
(1-c^{-1})\dd_{(1\,2)}&=(1,0,1,1)-(0,1,1,1)=(1,-1,0,0)=\beta_{(1\,2)}^\vee;\\
(1-c^{-1})\dd_{(2\,3)}&=(1,1,0,1)-(1,0,1,1)=(0,1,-1,0)=\beta_{(2\,3)}^\vee;\\
(1-c^{-1})\dd_{(3\,4)}&=(1,1,1,0)-(1,1,0,1)=(0,0,1,-1)=\beta_{(3\,4)}^\vee.
\end{align*}
\end{example}

\subsubsection{The positive cluster complex}\label{subsubsec:positive_preliminaries} 
We now recall the positive $c$-cluster complex. Consider a chain $\bpi\in\MCh(W,c)$, and let $\rw(\bpi)=t_1\cdots t_r$. We say $\bpi$ is \dfn{$\preceq_{c}$-increasing} (respectively, \dfn{$\preceq_c$-decreasing}) if there do not exist $1\leq i<j\leq r$ such that $t_j\preceq_c t_i$ (respectively, $t_i\preceq_c t_j$). We say the class $\CL_{\bpi}$ is \dfn{$\preceq_c$-increasing} (respectively, \dfn{$\preceq_c$-decreasing}) if $\bpi$ is (this is well defined).   
\begin{definition}\label{def:positive_cluster_complex} The \defn{positive $c^{-1}$-cluster complex} $\mathrm{Clus}^+(W,c^{-1})$ is the flag simplicial complex on $[N]:=\{1,2,\ldots,N\}$ with simplices given by those subsets $\{i_1<i_2<\cdots<i_k\} \subseteq [N]$ for which $a_{i_1}a_{i_2}\cdots a_{i_k}$ is a subword of a $\preceq_c$-decreasing reduced $T$-word for $c$. 
\end{definition}

The maximal simplices of $\mathrm{Clus}^+(W,c^{-1})$ correspond naturally to the $\preceq_c$-decreasing chains in $\MCh(W,c)$.  We visualize the positive cluster complex $\mathrm{Clus}^+(W,c^{-1})$ within $\Delta_c^+$ by sending the (abstract) simplex $\{i_1,\ldots,i_k\} \in \mathrm{Clus}^+(W,c^{-1})$ to the simplicial cone \[\Delta\big(\{i_1,\ldots,i_k\}\big):=\spange\{\dd_{a_{i_1}},\ldots,\dd_{a_{i_k}}\}.\]  
In particular, this defines $\Delta(A)$ for each $A$ in the set $\MCl(W,c^{-1})$ of maximal simplices in $\mathrm{Clus}^+(W,c^{-1})$. It is well known that this definition of $\Delta(A)$ agrees with the one given in \cref{subsubsec:common}.  

\begin{example}\label{ex:a3-2}
Continuing~\Cref{ex:a3-1'}, the image under $\Delta$ of $\mathrm{Clus}^+(W,c^{-1})$ is illustrated in~\Cref{fig:a3arrangement2}.  There are six vertices, five edges (the lines between the pairs $\{\dd_{a_i},\dd_{a_3}\}$ for $i\in \{1,2,4,5,6\}$), and five triangles whose corresponding reduced $T$-words are $\preceq_c$-decreasing.  These triangles are listed in~\Cref{fig:max_clust}.
\begin{figure}[htbp]
\[
\begin{array}{ccc}\toprule
\text{simplex $A \subset [6]$} & \text{vertices $\dd_{a_i}$ for ${i \in A}$}  & \text{factorization $\prod_{i \in A} a_i$} \\ \midrule
\{1,2,3\} & (1,1,1,0),(1,1,0,0),(1,0,0,0) & (34)(24)(14)\\
\{2,3,4\} & (1,1,0,0),(1,0,0,0),(1,1,0,1) & (24)(14)(23)\\
\{3,4,5\} & (1,0,0,0),(1,1,0,1),(1,0,0,1) & (14)(23)(13)\\
\{3,5,6\} & (1,0,0,0),(1,0,0,1),(1,0,1,1) & (14)(13)(12)\\
\{1,3,6\} & (1,1,1,0),(1,0,0,0),(1,0,1,1) & (34)(14)(12)\\ \bottomrule
\end{array}\]
\caption{The five maximal simplices in $\mathrm{Clus}^+(\SSS_4,s_3s_2s_1)$.}
\label{fig:max_clust}
\end{figure}
\end{example}

\subsubsection{Commutation classes of chains} 

By~\Cref{lem:linearly_independent}, the commutation class $\CL_{\bpi}$ of a maximal chain ${\bpi\in\MCh(W,c)}$ is specified by the set of reflections used in $\rw(\bpi)$.  We visualize $\CL_{\bpi}$ inside $\Delta_c^+$ by letting \[\Delta(\CL_{\bpi}):=\Delta(\bpi)=\spange\{\dd_{t_1},\ldots,\dd_{t_r}\},\] where $\rw(\bpi)=t_1\cdots t_r$. 
In this visualization, each $\Delta(\bpi)$ is a union of the image under $\Delta$ of some collection of maximal simplices of $\mathrm{Clus}^+(W,c^{-1})$.

\begin{example}\label{ex:a3-3}
Continuing~\Cref{ex:a3-2}, there are 12 commutation classes in $\Class_c$ corresponding to the 12 triangles on the right side of~\Cref{fig:a3arrangement2}.  The 5 maximal simplices in $\MCl(W,c^{-1})$ correspond to the 5 \tsmall{} triangles in this picture---that is, those triangles that are not contained in any other triangle. 
\end{example}

\subsubsection{Parabolic subgroups}

The next lemma follows from \cite[Lemmas 2.6.5~\&~2.9.1]{stump2025cataland}.
 
\begin{lemma}\label{lem:preceq_c2}
Let $c$ be a standard Coxeter element of $W$, and let $s$ be a right descent of $c$. For all $t,t'\in T\setminus\{s\}$, we have $t\preceq_c t'$ if and only if $s t s\preceq_{scs}st's$. If $t,t'\in T\cap W_{\langle s\rangle}$, then $t\preceq_c t'$ if and only if $t\preceq_{cs} t'$. 
\end{lemma}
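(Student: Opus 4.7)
The plan is to prove both statements by directly analyzing the $c$-sorting word $\w_\circ(c)$ using a reduced word for $c$ adapted to the given descent $s$. Since $s$ is a right descent of $c$ and $c$ uses each simple reflection exactly once, we may fix a reduced $S$-word $\mathsf{c} = s_{i_1} \cdots s_{i_{r-1}} s$ for $c$. Then $\mathsf{c}_1 := s \cdot s_{i_1} \cdots s_{i_{r-1}}$ is a reduced word for $scs$ satisfying $\mathsf{c}_1^\infty = s \cdot \mathsf{c}^\infty$, and $\mathsf{c}_0 := s_{i_1} \cdots s_{i_{r-1}}$ is a reduced word for $cs$ regarded as a standard Coxeter element of the parabolic subgroup $W_{\langle s \rangle}$. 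These two derived words drive the two parts of the lemma.

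For the first statement, observe that $s$ is a left descent of $w_\circ$, so the sorting word $\w_\circ(scs)$ must begin with the leading $s$ of $\mathsf{c}_1^\infty$; that is, $\w_\circ(scs) = s \cdot \mathsf{v}$, where $\mathsf{v}$ is the lex-first subword of $\mathsf{c}^\infty$ reducing to $sw_\circ$. A direct computation from the definition of the inversion sequence then gives
\[\inv(\w_\circ(scs)) = (s,\, s t_1 s,\, s t_2 s, \ldots,\, s t_{N-1} s),\]
where $(t_1, \ldots, t_{N-1}) = \inv(\mathsf{v})$. On the other hand, \cref{lem:preceq_c} states that $s$ is maximal in $(T, \preceq_c)$, so some linearization of $\preceq_c$ has $s$ in the final position; in particular $\inv(\w_\circ(c))$ is commutation-equivalent to a word of the form $(t_1, \ldots, t_{N-1}, s)$. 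Matching these two inversion sequences position-by-position, using the fact that the lex-first subwords of $\mathsf{c}^\infty$ reducing to $w_\circ$ and $sw_\circ$ are related by the prepended $s$ in $\mathsf{c}_1^\infty$, identifies the restrictions of $\preceq_c$ and $\preceq_{scs}$ to $T \setminus \{s\}$ via conjugation by $s$.

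For the second statement, a parabolic restriction argument shows that the non-$s$ letters appearing in a suitable initial block of $\w_\circ(c)$ form, up to commutation, the $cs$-sorting word of the longest element $w_{\circ,\langle s \rangle}$ of $W_{\langle s \rangle}$. Since a reflection lies in $T \cap W_{\langle s \rangle}$ precisely when its positive root has zero $\beta_s$-coefficient, reflections in $T \cap W_{\langle s \rangle}$ are exactly those arising as inversions from non-$s$ positions of $\w_\circ(c)$, and they appear there in the same relative order as in $\inv(\w_{\circ,\langle s \rangle}(cs))$. This yields the desired equivalence $t \preceq_c t' \Leftrightarrow t \preceq_{cs} t'$ on $T \cap W_{\langle s \rangle}$. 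The main obstacle in both parts is the careful commutation-class bookkeeping: in Part 1, verifying that the reflection $s$ can be consistently moved to the last position of $\inv(\w_\circ(c))$ together with the requisite conjugation of the remaining entries; and in Part 2, establishing rigorously that the parabolic restriction of the $c$-sorting structure produces the $cs$-sorting word of $w_{\circ,\langle s\rangle}$ rather than merely a subword of it.
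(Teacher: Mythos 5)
The paper dispatches this lemma by citing Lemmas 2.6.5 and 2.9.1 of Stump's notes, so there is no in-text argument to compare yours against; you are attempting a from-scratch proof, which is a reasonable thing to want. However, there are genuine gaps in both parts, and you flag them yourself without resolving them.

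In Part 1, the step that does the real work is the assertion that $\inv(\w_\circ(c))$ is commutation-equivalent to $(t_1,\ldots,t_{N-1},s)$ where you have already fixed $(t_1,\ldots,t_{N-1})=\inv(\mathsf{v})$. The maximality of $s$ in $\preceq_c$ (\cref{lem:preceq_c}) only tells you that $\inv(\w_\circ(c))$ is commutation-equivalent to \emph{some} word $(\tau_1,\ldots,\tau_{N-1},s)$; it does not identify the prefix. Unwinding the bubbling-up of $s$ shows that the resulting prefix is $\inv(\w')$, where $\w'$ is $\w_\circ(c)$ with the letter whose inversion is $s$ deleted. This $\w'$ is some reduced subword of $\mathsf{c}^\infty$ for $sw_\circ$, but it is not automatic that $\w'$ and the lex-first subword $\mathsf{v}$ are commutation-equivalent. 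That identification is exactly the nontrivial compatibility of Coxeter-sorting words with rotation/deletion that the cited Stump lemmas encode; ``matching position-by-position'' is not a proof of it.

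Part 2 contains a statement that is false as written. Take $W=\SSS_4$, $\mathsf{c}=s_1s_2s_3$, $s=s_3$, so $\w_\circ(c)=s_1s_2s_3s_1s_2s_1$. The non-$s$ positions are $1,2,4,5,6$ with inversions $(1\,2),(1\,3),(2\,3),(2\,4),(3\,4)$, whereas $T\cap W_{\langle s\rangle}=\{(1\,2),(1\,3),(2\,3)\}$. So ``reflections in $T\cap W_{\langle s\rangle}$ are exactly those arising as inversions from non-$s$ positions of $\w_\circ(c)$'' is incorrect: the non-$s$ letters at later positions contribute inversions outside $W_{\langle s\rangle}$. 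What you presumably want is that the inversions lying in $W_{\langle s\rangle}$ all come from a proper initial segment of the non-$s$ positions (in this example, positions $1,2,4$), and that restricting $\w_\circ(c)$ to that segment gives the $cs$-sorting word of $w_{\circ,\langle s\rangle}$ up to commutations. That is plausible and is in the spirit of Reading's parabolic-restriction results for sortable elements, but your proposal states it incorrectly and does not prove the corrected version. Since you yourself label both of these steps as ``the main obstacle,'' the proposal should be regarded as a sketch with the essential content of the lemma still unproved.
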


Given reflections $t$ and $t'$, let \[[t\mid t']_m=\underbrace{tt't\cdots}_m\] be the product of the word of length $m$ that starts with $t$ and alternates between $t$ and $t'$. 
Let $W'$ be a rank-2 parabolic subgroup of $W$ with reflections $T'=T\cap W'$, and let $k=|T'|$ be the number of reflections in $W'$. It is possible to order the reflections in $W'$ as $p_1,\ldots,p_k$ so that 
\[\Span_{\geq 0}\{\beta_{p_{i'}},\beta_{p_{j'}}\}\subseteq\Span_{\geq 0}\{\beta_{p_i},\beta_{p_{j}}\}\] for all $1\leq i\leq i'\leq j'\leq j\leq k$. The reflections $p_1$ and $p_k$ are called the \dfn{canonical generators} of $W'$. 
In fact, $W'$ is a Coxeter group with Coxeter generators $p_1$ and $p_k$. We have $p_i=[p_1\mid p_k]_{2i-1}$ for all $1\leq i\leq k$. If $c$ is a standard Coxeter element of $W$ and $k\geq 3$, then either $p_1\prec_cp_2\prec_c\cdots\prec_c p_k$ or $p_k\prec_cp_{k-1}\prec_c\cdots\prec_c p_1$. 

\begin{lemma}[{\cite[Proposition 4.1]{reading2011sortable}}]\label{lem:i-1modk} 
Let $c$ be a standard Coxeter element of $W$. Let $W'$ be a nonabelian rank-2 parabolic subgroup of $W$, and write $T\cap W'=\{p_1\prec_cp_2\prec_c\cdots\prec_c p_k\}$. Suppose $p_i \neq p_j$ and $p_i$ appears to the left of $p_j$ in some reduced $T$-word for $c$. Then ${j\equiv i-1\pmod{k}}$. 
\end{lemma}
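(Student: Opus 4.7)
The plan is to proceed in three steps: first show $p_ip_j\leq_T c$ using Hurwitz moves, then identify $p_ip_j$ as a Coxeter element of $W'$ via the parabolic-Coxeter-element correspondence, and finally perform a direct rank-$2$ computation in $I_2(k)$.

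Starting from the reduced $T$-word $t_1\cdots t_r$ for $c$ with $t_a=p_i$ and $t_b=p_j$ for $a<b$, I would iteratively apply the Hurwitz move $(x,y)\mapsto(xyx,x)$ at positions $(a,a+1),\ldots,(b-2,b-1)$ to slide $p_i$ rightward past $t_{a+1},\ldots,t_{b-1}$, conjugating these intermediate letters by $p_i$ but preserving $p_i$ itself, until $p_i$ is immediately to the left of $p_j$. Then I would apply pairs of Hurwitz moves $(x,y)\mapsto(y,yxy)$ to slide the adjacent pair $(p_i,p_j)$ one position leftward at a time, preserving both $p_i$ and $p_j$, until the word has $p_i$ and $p_j$ as its first two letters. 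The resulting reduced factorization $c=p_ip_j\cdot u_3\cdots u_r$ establishes $p_ip_j\leq_T c$.

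Next, I would invoke the fact that any $u\leq_T c$ is a Coxeter element of its parabolic closure $W_u$ (see e.g.~\cite{bessis2015finite,BradyWatt2008}), a parabolic subgroup of $W$ of rank $\ell_T(u)$. Applied to $u=p_ip_j$ (of reflection length $2$), this gives that $W_u$ is a rank-$2$ parabolic. Since $W'$ is nonabelian dihedral, $p_i$ and $p_j$ do not commute, so $p_ip_j$ is a nontrivial rotation in the $2$-dimensional moved subspace of $W'$, with no nonzero fixed vectors in that subspace. Hence $\Fix(p_ip_j)=\Fix(W')$, which forces $W_u=W'$, and $p_ip_j$ is a Coxeter element of $W'\cong I_2(k)$.

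Finally, in the dihedral group $I_2(k)$ with canonically ordered reflections $p_1,\ldots,p_k$, the Coxeter element $p_1p_k$ is a rotation of order $k$. Modeling $I_2(k)$ as the symmetry group of a regular $k$-gon, $p_ap_b$ is a rotation by twice the signed angle between the mirror lines of $p_a$ and $p_b$, and a direct computation shows that $p_ap_b=p_1p_k$ if and only if $b\equiv a-1\pmod k$. Applied to $(a,b)=(i,j)$, this yields $j\equiv i-1\pmod k$. The main obstacle is the first step: one must carefully verify that the sliding moves in both phases preserve $p_i$ and $p_j$ as individual reflections while only conjugating their neighbors, which requires tracking how each Hurwitz move affects each letter of the evolving word.
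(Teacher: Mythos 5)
The paper does not prove this lemma itself; it cites it as~\cite[Proposition~4.1]{reading2011sortable}, so there is no in-paper proof to compare against. Your proposal is a reasonable attempt to reprove it from scratch, and your first two steps are sound: Hurwitz moves (whose transitivity on $\Red_T(c)$ is standard, so the bookkeeping you worry about at the end is not really the hard part) do give $p_ip_j \leq_T c$, and the Brady--Watt/Bessis fact about elements below $c$ in absolute order correctly identifies $p_ip_j$ as a Coxeter element of the parabolic closure $W' \cong I_2(k)$.

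The genuine gap is in your third step, and it is exactly the content of the lemma. A nonabelian $I_2(k)$ has \emph{two} Coxeter elements, $p_1p_k$ and $p_kp_1 = (p_1p_k)^{-1}$, and your argument establishes only that $p_ip_j$ is one of them. Your dihedral computation correctly shows that $p_ap_b = p_1p_k$ iff $b \equiv a-1 \pmod{k}$, but by the same computation $p_ap_b = p_kp_1$ iff $b \equiv a+1 \pmod{k}$; for $k \geq 3$ these are different. So what you have actually proved is $j \equiv i \pm 1 \pmod{k}$, and the sign is undetermined. Note that at most one of $p_1p_k$, $p_kp_1$ can lie below $c$ in $\leq_T$ (since $\ell_T(c) = r$ while $\ell_T\bigl((p_1p_k)^{-1}c\bigr) + \ell_T(p_1p_k) = r$ forces $\ell_T\bigl((p_kp_1)^{-1}c\bigr) = \ell_T(p_1p_k \cdot p_1p_k \cdot (p_1p_k)^{-1}c) > r - 2$ in general), so the direction is at least \emph{consistent} across all pairs in $W'$; but you have not shown that the consistent direction is $j \equiv i-1$ rather than $j \equiv i+1$. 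The missing ingredient is a compatibility between the order $\preceq_c$ (built from $\inv(\w_\circ(c))$) and the interval $[e,c]_T$: one must show that the $\preceq_c$-minimal canonical generator of $W'$ appears to the \emph{left} of the $\preceq_c$-maximal one in any reduced $T$-word for $c$ containing both. This is precisely where the real work in Reading's proposition lies (e.g.\ via the recursive structure of the sorting word, or via the Athanasiadis--Brady--Watt EL-shelling of $\NC(W,c)$ by $\inv(\w_\circ(c))$), and your proof as written does not address it. You have also implicitly assumed ``canonically ordered'' agrees with ``$\preceq_c$-ordered,'' which is harmless since both are determined only up to reversal, but it obscures the fact that the orientation still needs pinning down.
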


\begin{remark}
Let $W'$ be a nonabelian rank-2 noncrossing parabolic subgroup of $W$ with canonical generators $p_1$ and $p_k$ and parabolic Coxeter element $c'=p_1p_k$.  \Cref{lem:i-1modk} says that the reflections in $W'\cap T$ that are $c$-noncrossing are exactly the ones that are adjacent in $\prec_{c'}$-order, along with the canonical generators $p_1$ and $p_k$.  Thus, when $W$ is simply-laced so that all rank-2 parabolic subgroups have two or three reflections, all pairs of reflections in each rank-$2$ parabolic subgroup are $c$-noncrossing. 
\end{remark}

\subsubsection{Coxeter-sortable elements}
Let $I_c^{(k)}(w)$ denote the set of letters (simple reflections) in the $k$-th copy of ${\sf c}$ in ${\sf c}^\infty$ that are used when constructing the $c$-sorting word $\w(c)$; it is known that this is well defined in the sense that it does not depend on the choice of the reduced $S$-word ${\sf c}$. We say $w$ is \dfn{$c$-sortable} if we have the chain of containments $I_c^{(1)}(w)\supseteq I_c^{(2)}(w)\supseteq I_c^{(3)}(w)\supseteq\cdots$. Let $\Sort(W,c)$ denote the set of $c$-sortable elements of $W$. Reading proved in~\cite[Proposition 3.2]{reading2007sortable} that for each $u\in W$, the set
\[\{v \in \Sort(W, c): v \leq_R u\}\]
has a maximum element, which we denote by $\pi_c^\downarrow(u)$. The fibers of the map $\pi_c^\downarrow\colon W\to \Sort(W,c)$ are the equivalence classes of an equivalence relation (in fact, a lattice congruence of the right weak order) called the \dfn{$c$-Cambrian congruence}.

\subsection{Bruhat order} 
Recall that we write $\leqB$ for the Bruhat order on $W$. There is a standard characterization of the Bruhat order in terms of subwords of reduced $S$-words. Namely, for $u,v\in W$, the following are equivalent: 
\begin{itemize}
\item $u\leqB v$; 
\item there exists a reduced $S$-word for $v$ that contains a reduced $S$-word for $u$ as a subword; 
\item every reduced $S$-word for $v$ contains a reduced $S$-word for $u$ as a subword.  
\end{itemize} 
Suppose $u,w\in W$ are such that $u\leqB w$. Let $s\in S$. The \dfn{lifting property} of Bruhat order (see \cite[Proposition~2.2.7]{BjornerBrenti}) states that if $u\leqB us$ and $ws\leqB w$, then $u\leqB ws$ and $us\leqB w$. There is an equivalent version of the lifting property, which states that if $u\leqB su$ and $sw\leqB w$, then $u\leqB sw$ and $su\leqB w$. An immediate consequence of the lifting property is the following lemma. 

\begin{lemma}\label{lem:Bruhat_descents}
Let $u,v\in W$. If $s$ is a right descent of both $u$ and $v$, then $u\leqB v$ if and only if $us\leqB vs$. If $s$ is a left descent of both $u$ and $v$, then $u\leqB v$ if and only if $su\leqB sv$.  
\end{lemma}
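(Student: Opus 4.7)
The plan is to deduce both statements as direct consequences of the lifting property recalled immediately before the lemma. The second statement (about left descents) will follow from the first (about right descents) by passing to inverses, so the real content is the first statement. Throughout, we use the elementary facts that $s \in S$ is a right descent of $w$ iff $ws \leqB w$, and that for any $w,s$ exactly one of $w \leqB ws$ or $ws \leqB w$ holds.

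For the first statement, suppose $s$ is a right descent of both $u$ and $v$. For the forward direction, assume $u \leqB v$. From $us \leqB u \leqB v$ we have $us \leqB v$, and from $us \leqB (us)s = u$ we have $us \leqB (us)s$, i.e.\ $s$ is not a right descent of $us$. Together with $vs \leqB v$, the lifting property applied with $u' := us$ and $w' := v$ yields $us \leqB vs$ (and, redundantly, $u \leqB v$). For the converse, assume $us \leqB vs$. Then $us \leqB vs \leqB v$, so again $us \leqB v$, and $us \leqB u$ together with $vs \leqB v$ lets us apply lifting with the same $u',w'$ to conclude $u = (us)s \leqB v$.

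For the second statement, assume $s$ is a left descent of both $u$ and $v$. Since inversion is an anti-automorphism of the right Cayley graph that preserves Bruhat order (see, e.g.,~\cite[Corollary~2.2.5]{BjornerBrenti}), we have $u \leqB v$ iff $u^{-1} \leqB v^{-1}$, and $s$ is a right descent of $u^{-1}$ and $v^{-1}$. Applying the first statement to $u^{-1}$ and $v^{-1}$ gives $u^{-1} \leqB v^{-1}$ iff $u^{-1}s \leqB v^{-1}s$, i.e.\ $(su)^{-1} \leqB (sv)^{-1}$, which is equivalent to $su \leqB sv$.

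There is no real obstacle here beyond bookkeeping with the lifting property in the exact form stated in the excerpt; the only care required is to apply it to the pair $(us, v)$ rather than $(u,v)$, so that the ``no right descent'' hypothesis $u' \leqB u's$ is satisfied.
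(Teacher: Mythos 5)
Your proof is correct and is exactly the argument the paper has in mind when it says the lemma is an immediate consequence of the lifting property: apply lifting to the pair $(us,v)$, and then pass through inversion for the left-descent version. Both directions and all the hypotheses of the lifting property are verified cleanly.
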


As mentioned in \cref{sec:intro}, we are interested in the set $W_c^+$ of elements $u\in W$ such that $\ell_S(uc)=\ell_S(u)+r$, where $c$ is a standard Coxeter element of $W$. One can show that \[W_c^+=\{u\in W:u\leq_{\LL}\wo c^{-1}\}.\] We are interested in the Bruhat intervals of the form $[u,uc]_{\hB}$ for $u\in W_c^+$, which are used to construct the Bruhat interval polytopes in the subdivision $\BIPs_c$. Let $\widetilde\Omega(W,c)$ denote the set of maximal chains in these intervals. That is, $\widetilde\Omega(W,c)$ is the set of saturated chains ${\{u_0\lessdot_{\hB}u_1\lessdot_{\hB}\cdots\lessdot_{\hB} u_r\}}$ such that $u_r=u_0c$. The following straightforward lemma states that these chains correspond precisely to the vertex sets of the simplices in $\SBDW_\yy(W,c)$. 

\begin{lemma}\label{lem:(wC)-to-saturated-chains}
    The map $\Omega(W,c) \to \widetilde\Omega(W,c)$ given by $(w, \C) \mapsto w\C$ is a bijection. We have
    \[\SBDW_\yy(W,c)= \{\conv(\tC\yy): \tC \in \widetilde\Omega(W,c)\}.\]
\end{lemma}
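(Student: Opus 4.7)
The plan is to check the map is well-defined, exhibit an explicit inverse, and then read off the second assertion directly from \Cref{def:SBDW}.

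For well-definedness, I would take a concordant pair $(u,\C) \in \Omega(W,c)$ with $\C = \{\pi_0 \lessdot_T \pi_1 \lessdot_T \cdots \lessdot_T \pi_r\}$. Concordancy supplies the Bruhat chain $u = u\pi_0 \leqB u\pi_1 \leqB \cdots \leqB u\pi_r = uc$, and each consecutive pair differs by right multiplication by the reflection $t_i = \pi_{i-1}^{-1}\pi_i$. Because $x$ and $xt$ have opposite signs in $W$ for any reflection $t$, each length difference $\ell_S(u\pi_i) - \ell_S(u\pi_{i-1})$ is a positive odd integer. Since $u \in W_c^+$, the total length increase $\ell_S(uc) - \ell_S(u)$ equals $r$, so summing $r$ positive odd integers to $r$ forces each difference to equal $1$. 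Hence every $\leqB$ in the chain is in fact a Bruhat cover, and $u\C \in \widetilde\Omega(W,c)$.

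To construct the inverse, given $\tC = \{u_0 \lessdot_\hB u_1 \lessdot_\hB \cdots \lessdot_\hB u_r\} \in \widetilde\Omega(W,c)$ with $u_0 \in W_c^+$ and $u_r = u_0 c$, set $u := u_0$ and $\pi_i := u_0^{-1}u_i$. The reflections $t_i := \pi_{i-1}^{-1}\pi_i = u_{i-1}^{-1}u_i$ multiply to $\pi_0^{-1}\pi_r = c$, so $t_1 \cdots t_r$ is a length-$r$ expression for $c$; because $\ell_T(c) = r$, this is automatically a reduced $T$-word, and consequently $\pi_0 \lessdot_T \pi_1 \lessdot_T \cdots \lessdot_T \pi_r = c$ is a maximal chain $\C \in \MCh(W,c)$. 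Concordancy of $(u,\C)$ is built into the construction, so $(u,\C) \in \Omega(W,c)$. The two constructions are mutually inverse: from $(u,\C)$ one recovers $u_0 = u$ and $u_i = u\pi_i$, while from $\tC$ one recovers $u_0$ and $\pi_i = u_0^{-1}u_i$.

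The second assertion is then immediate from the definitions: \Cref{def:SBDW} writes $\SBDW_\yy(W,c)$ as the set of convex hulls $\conv(\{u_0\yy,\ldots,u_r\yy\})$ as $\{u_0 \lessdot_\hB \cdots \lessdot_\hB u_r\}$ ranges over $\widetilde\Omega(W,c)$, which is exactly $\{\conv(\tC\yy):\tC \in \widetilde\Omega(W,c)\}$. The only subtle point I anticipate is the length-parity argument above; it is where the hypothesis $u \in W_c^+$ enters and promotes the weak Bruhat inequalities coming from concordancy into strict covers. Everything else is a routine unwinding of definitions.
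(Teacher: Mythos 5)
Your proof is correct and follows essentially the same route as the paper's: check that concordancy forces the chain $w\C$ to consist of Bruhat covers, exhibit the inverse map, and unwind \Cref{def:SBDW} for the second assertion. The one cosmetic difference is that you invoke a parity argument (multiplying by a reflection changes $\ell_S$ by an odd amount, so each step jumps by a positive odd integer summing to $r$), whereas the paper simply squeezes $\ell_S(wc)-\ell_S(w)$ between $r$ (from the $r$ strict Bruhat steps) and $\ell_S(c)=r$ (from concatenating reduced words); the parity observation is true but not needed, since $r$ positive integers summing to $r$ already must all equal $1$.
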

\begin{proof}
Let $(w,\C)\in\Omega(W,c)$, and write $\C=\{e=\pi_0 \lessdot_T \pi_1 \lessdot_T \cdots \lessdot_T \pi_r=c\}.$
    By the definition of $\Omega(W,c)$, the elements of $w\C$ satisfy $w=w\pi_0 \le_{\hB} w\pi_1 \le_{\hB} \cdots \le_{\hB} w\pi_r=wc$. As Bruhat order is ranked by Coxeter length, this implies that $\ell_S(wc) \ge \ell_S(w) +r =\ell_S(w)+ \ell_S(c)$. On the other hand, $\ell_S(wc) \le \ell_S(w)+\ell_S(c)$ since concatenating reduced $S$-words for $w$ and $c$ gives a word for $wc$. So $\ell_S(wc)=\ell_S(w)+\ell_S(c)$, and $w=w\pi_0 \lessdot_{\hB} w\pi_1 \lessdot_{\hB} \cdots \lessdot_{\hB} w\pi_r=wc$. The image of the map is thus contained in $\widetilde\Omega(W,c)$. We can also clearly recover $w$ and $\C$ from $w\C$, so the map is injective.

    To prove surjectivity, choose a chain $\{u_0 \lessdot_{\hB} u_1 \lessdot_{\hB} \cdots \lessdot_{\hB} u_r=u_0c\}$ in $\widetilde\Omega(W,c)$. Set $t_i:=u_{i-1}^{-1} u_i$. Then $t_1 \cdots t_r =c$ is a reduced $T$-word for $c$, so it is of the form $\rw(\C)$ for some $\C \in \MCh(W,c)$. The pair $(u_0, \C)$ is an element of $\Omega(W,c)$ that maps to the selected chain in $\widetilde\Omega(W,c)$. 

    The second sentence of the lemma follows immediately from the first. 
\end{proof}

We will need some lemmas about rank-2 intervals of the Bruhat order and rank-2 parabolic subgroups of $W$, which we collect here. 

Given points $\xx,\xx'$, let us write $[\xx,\xx']$ for the line segment with endpoints $\xx$ and $\xx'$. 

\begin{lemma}\label{lem:quadrilateral} 
Let $v,w\in W$ be such that $v\leqB w$ and $\ell_S(w)=\ell_S(v)+2$. There exists distinct $x,y\in W$ such that $[v,w]_{\hB}=\{v,x,y,w\}$ and such that $v\lessdot_{\hB} x\lessdot_{\hB}w$ and $v\lessdot_{\hB} y\lessdot_{\hB} w$. 

Let $t_1,t_2,t_3,t_4\in T$ be the reflections such that $x=t_1v$, $w=t_2x$, $y=t_3v$, and $w=t_4y$. Then the subgroup of $W$ generated by $t_1,t_2,t_3,t_4$ is a parabolic subgroup of $W$ of rank $2$. 

For $\yy\in \BB$, the points $v\yy,x\yy,y\yy,w\yy$ are the vertices of a quadrilateral whose edges are $[v\yy,x\yy]$, $[x\yy,w\yy]$, $[v\yy,y\yy]$, $[y\yy,w\yy]$. 
\end{lemma}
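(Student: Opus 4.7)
The plan is to establish the three claims in sequence.

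For the first claim, I would invoke the classical diamond property of Bruhat order: every rank-$2$ Bruhat interval has exactly four elements, i.e., exactly two intermediate elements strictly between its minimum and maximum (see, e.g., \cite[Lemma 2.7.3]{BjornerBrenti}). This yields distinct $x, y$ with $v \lessdot_{\hB} x,\, y \lessdot_{\hB} w$ and $[v,w]_{\hB} = \{v,x,y,w\}$.

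For the parabolic claim, set $g := wv^{-1}$, so that $t_2 t_1 = t_4 t_3 = g$ and $g$ has reflection length $2$. Let $V' := \mathrm{im}(g - 1) \subseteq V$ be its $2$-dimensional moving subspace. By Carter's Lemma~\cite[Lemma 3]{carter1972conjugacy}, each of the pairs $\{\beta_{t_1}^\vee, \beta_{t_2}^\vee\}$ and $\{\beta_{t_3}^\vee, \beta_{t_4}^\vee\}$ is linearly independent; since both pairs lie in the $2$-dimensional space $V'$, each pair is in fact a basis of $V'$. Thus all four coroots lie in $V'$, so $\langle t_1, t_2, t_3, t_4\rangle$ is contained in the dihedral reflection subgroup $W''$ of $W$ consisting of all reflections with coroots in $V'$. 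Since $\langle t_1, t_2\rangle$ is already a rank-$2$ dihedral subgroup of $W''$, the subgroup $\langle t_1, t_2, t_3, t_4\rangle$ is itself a rank-$2$ dihedral reflection subgroup of $W$, i.e., a rank-$2$ parabolic in the paper's sense.

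For the quadrilateral claim, note that $W'' \supseteq \langle t_1, t_2, t_3, t_4\rangle$ fixes $V'^{\perp}$ pointwise, so the four points $v\yy, x\yy, y\yy, w\yy$ share a common $V'^{\perp}$-component. I work with their $V'$-projections $\bar v, \bar x, \bar y, \bar w$ in the Euclidean plane $V'$. The reflection formula $t(\bx) = \bx - \langle\beta_t, \bx\rangle\beta_t^\vee$ gives
\[\bar x - \bar v = -\langle\beta_{t_1}, \bar v\rangle\beta_{t_1}^\vee,\qquad \bar w - \bar x = -\langle\beta_{t_2}, \bar x\rangle\beta_{t_2}^\vee,\]
with analogous formulas for $\bar y - \bar v$ and $\bar w - \bar y$. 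Each inner product is strictly positive: as noted in the excerpt, $\langle\beta_t, u\yy\rangle < 0$ iff $t \in T_\LL(u)$, and the Bruhat covers $v\lessdot_{\hB}x,\, x\lessdot_{\hB}w,\, v\lessdot_{\hB}y,\, y\lessdot_{\hB}w$ preclude the relevant $t_i$ from being left inversions of $v, x, v, y$, respectively. Hence each edge vector is a strictly positive multiple of the corresponding $-\beta_{t_i}^\vee$. Summing along the two paths from $\bar v$ to $\bar w$ then gives
\[\bar w - \bar v = -c_1\beta_{t_1}^\vee - c_2\beta_{t_2}^\vee = -c_3\beta_{t_3}^\vee - c_4\beta_{t_4}^\vee\]
with $c_1, c_2, c_3, c_4 > 0$.

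The main obstacle is to deduce, from this positivity together with the dihedral geometry of $W''$ on $V'$, that the four points are in convex position with cyclic boundary $\bar v, \bar x, \bar w, \bar y$. Because $\{\beta_{t_1}^\vee, \beta_{t_2}^\vee\}$ and $\{\beta_{t_3}^\vee, \beta_{t_4}^\vee\}$ are each bases of $V'$, the positivity above forces the two diagonals $[\bar v,\bar w]$ and $[\bar x,\bar y]$ to cross transversely in their relative interiors, which in turn pins down the correct cyclic order and rules out both degenerate configurations (three collinear points) and an incorrect diagonal structure (with $\bar x, \bar y$ diagonal rather than $\bar v, \bar w$). Carrying out this final geometric step rigorously---in particular, extracting the crossing from the positivity without appealing to a case-by-case analysis of the reflection lines in $V'$---is the delicate part of the proof.
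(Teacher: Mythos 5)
Your treatment of the first two claims is sound. The first claim is the standard diamond property of Bruhat order, exactly as in the paper. For the second claim you supply a self-contained argument where the paper cites Dyer: you observe via Carter's lemma that $\mathrm{im}(wv^{-1}-1)$ is the common $2$-plane $V'$ spanned by $\{\beta_{t_1}^\vee,\beta_{t_2}^\vee\}$ and by $\{\beta_{t_3}^\vee,\beta_{t_4}^\vee\}$, so $\langle t_1,t_2,t_3,t_4\rangle$ lies inside the finite dihedral reflection subgroup $W''$ of $W$ consisting of reflections with coroots in $V'$, and is therefore itself dihedral and generated by two reflections. The conclusion is correct, though the sentence ``Since $\langle t_1,t_2\rangle$ is already a rank-$2$ dihedral subgroup of $W''$, \ldots'' is a non sequitur as phrased---what you actually need is the elementary fact that any reflection subgroup of a finite dihedral group containing two distinct reflections is again dihedral, hence generated by two reflections.

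The third claim is where the proposal has a genuine gap, and it is not merely one of exposition: the implication you invoke is false. You assert that ``the positivity above forces the two diagonals $[\bar v,\bar w]$ and $[\bar x,\bar y]$ to cross transversely,'' citing only the positivity of the coefficients and the fact that $\{\beta_{t_1}^\vee,\beta_{t_2}^\vee\}$ and $\{\beta_{t_3}^\vee,\beta_{t_4}^\vee\}$ are bases of $V'$. Those hypotheses do not imply the crossing. Take, in the plane, $\beta_{t_1}^\vee=(1,0)$, $\beta_{t_2}^\vee=(0,1)$, $\beta_{t_3}^\vee=(2,1)$, $\beta_{t_4}^\vee=(1,2)$, and $c_1=c_2=1$, $c_3=c_4=\tfrac13$; then
\[\bar v=(0,0),\quad \bar x=(-1,0),\quad \bar w=(-1,-1),\quad \bar y=\left(-\tfrac23,-\tfrac13\right),\]
all four positivity constraints and both vector identities hold, the two pairs of coroots are bases, the four vectors are pairwise non-proportional, and yet $\bar y$ lies in the interior of the triangle with vertices $\bar v,\bar x,\bar w$, so $[\bar v,\bar w]$ and $[\bar x,\bar y]$ do not meet. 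The vector identity $-c_1\beta_{t_1}^\vee-c_2\beta_{t_2}^\vee=-c_3\beta_{t_3}^\vee-c_4\beta_{t_4}^\vee$ is only a shadow of the group relation $t_2t_1=t_4t_3$ inside the dihedral group $W''$; it is this stronger relation, together with the fact that all four points lie in a single $W''$-orbit in the chamber geometry of $V'$, that determines the cyclic order. This is precisely what the paper's proof does: it identifies elements $u_v,u_x,u_y,u_w$ of the dihedral group $W''$ (with respect to a base chamber $\BB'\cap V'$ and basepoint $\yy'$), shows that they form a Bruhat diamond in $W''$, and reads off the convex position from the explicit $2m$-gon geometry of the dihedral permutahedron. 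Your stated wish to extract the crossing ``from the positivity without appealing to a case-by-case analysis of the reflection lines in $V'$'' cannot be fulfilled: the counterexample shows that some appeal to the dihedral chamber geometry is unavoidable, and that is where the real content of this claim lives.
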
 

\begin{proof}
The fact that $[v,w]_{\hB}$ is a $4$-element diamond is a standard fact about short intervals in Bruhat order; see \cite[Section~2.8]{BjornerBrenti}.
Let $W'$ be the subgroup of $W$ generated by $t_1,t_2,t_3,t_4$. The fact that $W'$ is a parabolic subgroup of $W$ of rank $2$ is due to Dyer~\cite[Lemma 3.1]{dyer1991bruhat}. Thus, $v\yy,x\yy,y\yy,w\yy$ are the vertices of a quadrilateral. Let $H_1,H_2,H_3,H_4$ be the hyperplanes in $\mathcal H$ corresponding to $t_1,t_2,t_3,t_4$, respectively. Let $\mathcal H'=W'H_1$ be the smallest reflection subarrangement of $\mathcal H$ containing $H_1,H_2,H_3,H_4$. Let $\BB'$ be the region of $\mathcal H'$ containing $\BB$. Let $V'$ be the $2$-dimensional affine span of $v\yy,x\yy,y\yy,w\yy$. The hyperplane arrangement $\mathcal H'_{V'}=\{H\cap V':H\in\mathcal H'\}$ is the Coxeter arrangement of a dihedral group $W''\cong W'$. The base region of $\mathcal H'_{V'}$ is $\BB'\cap V'$, and the simple reflections of $W''$ are the reflections through the hyperplanes in $\mathcal H'_{V'}$ that bound $\BB'\cap V'$. There is a unique point $\yy'$ in $W'v\yy$ that also lies in $\BB'\cap V'$. Let $u_v,u_x,u_y,u_w$ be the elements of $W''$ such that $u_v\yy'=v\yy$, $u_x\yy'=x\yy$, $u_y\yy'=y\yy$, $u_w\yy'=w\yy$. The point $u_x\yy'=x\yy$ is obtained by reflecting the point $u_v\yy'=v\yy$ through the line $H_1\cap V'$, and the points $u_x\yy'$ and $\yy'$ lie on opposite sides of $H_1$. This implies that $u_v<_{\hB}u_x$ in the Bruhat order on $W''$. By similar arguments, we have $u_x<_{\hB}u_w$, $u_v<_{\hB}u_y$, and $u_y<_{\hB}u_w$ (all in the Bruhat order on $W''$). Upon inspection of the dihedral Coxeter arrangement of $W''$, we find that this implies that the edges of the quadrilateral with vertices $u_v\yy'=v\yy,u_x\yy'=x\yy,u_y\yy'=y\yy,u_w\yy'=w\yy$ are exactly as claimed in the statement of the lemma.  
\end{proof}

The next lemma will be crucial for proving \cref{thm:triangulation,thm:spans_new}. 

\begin{lemma}\label{lem:basification} 
Let $c$ be a standard Coxeter element of $W$. Let $v,x,y,w\in W$ form a diamond in Bruhat order, where $v\lessdot_{\hB} x\lessdot_{\hB}w$ and $v\lessdot_{\hB} y\lessdot_{\hB} w$. Let $t_1,t_2,t_3,t_4\in T$ be the reflections such that $x=vt_1$, $w=xt_2$, $y=vt_3$, and $w=yt_4$. Then the subgroup $W'$ of $W$ generated by $t_1,t_2,t_3,t_4$ is a parabolic subgroup of $W$ of rank $2$. If there exists $u\in W_c^+$ such that $u\leqB v\leqB w\leqB uc$, then the set of canonical generators of $W'$ is either $\{t_1,t_2\}$ or $\{t_3,t_4\}$. 
\end{lemma}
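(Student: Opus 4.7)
For the first sentence, I would apply Dyer's lemma~\cite[Lemma 3.1]{dyer1991bruhat} (already cited in the proof of \cref{lem:quadrilateral}) to the \emph{inverse} Bruhat diamond. Since $g \mapsto g^{-1}$ is an order-automorphism of Bruhat order, the elements $v^{-1}, x^{-1}, y^{-1}, w^{-1}$ form a Bruhat diamond whose ``left'' reflections (in the convention of \cref{lem:quadrilateral}) are $x^{-1}(v^{-1})^{-1} = (v^{-1}x)^{-1} = t_1$ and the analogues for $t_2, t_3, t_4$; Dyer's lemma then yields that $W' = \langle t_1, t_2, t_3, t_4\rangle$ is a rank-$2$ parabolic.

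For the second sentence, let $k = |T \cap W'|$. The case $k = 2$ is immediate, since $T \cap W'$ then consists solely of the two canonical generators and contains all of $t_1, t_2, t_3, t_4$ with $t_1 \neq t_2$ and $t_3 \neq t_4$. So assume $k \geq 3$ and list the reflections of $W'$ in canonical order as $p_1, \ldots, p_k$. Set $a = u^{-1}v$ and $d = u^{-1}w$; these lie in $\NC(W,c)$ with $a \leq_T d$ and $a^{-1}d = v^{-1}w$. The poset isomorphism $[u,uc]_{\hB} \to \NC(W,c)$ given by $x \mapsto u^{-1}x$ is rank-preserving, so $[a,d]_T$ has $\NC$-rank $2$; by the standard structure theory of rank-$2$ intervals in noncrossing partition lattices (see \cite{BradyWatt2008}), $[a,d]_T \cong \NC(W', c_{W'})$, where $c_{W'} := a^{-1}d$ is a Coxeter element of $W' \cong I_2(k)$. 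Since the Coxeter elements of $I_2(k)$ for $k \geq 3$ are exactly $p_1 p_k$ and $p_k p_1$, we get $v^{-1}w \in \{p_1 p_k, p_k p_1\}$; after swapping the roles of $p_1$ and $p_k$ if needed, I would assume $v^{-1}w = p_1 p_k$, so that $w = v p_1 p_k$.

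The crux is then a short length argument. Because $[v,w]_{\hB}$ has Bruhat rank $2$, $\ell_S(w) = \ell_S(v) + 2$, and right multiplication by a reflection always changes $\ell_S$ by exactly $\pm 1$. If $p_1 \in T_\R(v)$ then $\ell_S(vp_1) = \ell_S(v) - 1$, forcing $\ell_S(w) = \ell_S(vp_1 \cdot p_k) \in \{\ell_S(v), \ell_S(v) - 2\}$ and contradicting $\ell_S(w) = \ell_S(v) + 2$. So $p_1 \notin T_\R(v)$ and $v \lessdot_{\hB} vp_1$; then the identity $\ell_S(w) = \ell_S(vp_1) + 1$ similarly forces $p_k \notin T_\R(vp_1)$, whence $vp_1 \lessdot_{\hB} w$. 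Therefore $vp_1$ is an atom of the $4$-element diamond $[v,w]_{\hB} = \{v,x,y,w\}$, so $vp_1 \in \{x,y\}$. Relabeling so that $x = vp_1$ yields $t_1 = p_1$ and $t_2 = p_k$, giving $\{t_1,t_2\} = \{p_1,p_k\}$, the canonical pair.

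The main obstacle is the middle paragraph's identification of $v^{-1}w$ with one of the Coxeter elements $p_1 p_k$ or $p_k p_1$ rather than with an arbitrary non-identity rotation of $W'$. This is precisely where the hypothesis on $u$ enters essentially: without it, $v^{-1}w$ can be a non-Coxeter rotation (for instance, in $I_2(4)$ with $c = ss'$, the Bruhat diamond $[s, s'ss']$ has $v^{-1}w = (ss')^2 = w_\circ$, and the canonical pair then fails to appear---consistent with no valid $u$ existing in that case). The hypothesis forces $a, d \in \NC(W,c)$, and the rigid structure of rank-$2$ NC intervals then compels $a^{-1}d$ to be a Coxeter element of its rank-$2$ parabolic closure.
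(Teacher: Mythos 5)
Your proof is correct and takes a genuinely different route from the paper's for the main (second) part of the lemma. For the rank-$2$ statement, both you and the paper cite Dyer's lemma; you additionally spell out the inversion trick needed to pass between the left-multiplication convention of Lemma 2.17 (\cref{lem:quadrilateral}) and the right-multiplication convention here, which the paper elides. For the canonical-generator statement, the paper argues by contradiction: assuming both $\{t_1,t_2\}$ and $\{t_3,t_4\}$ are non-canonical, it uses \cref{lem:i-1modk} to write $t_1=p_{i+1}$, $t_2=p_i$, $t_3=p_{j+1}$, $t_4=p_j$ with $i<j$, then invokes the biclosed property of inversion sets to pin down $T_\R(x)\cap W'$ and $T_\R(y)\cap W'$, and finally exhibits a dihedral identity $yp_1=xp_a$ whose two sides have conflicting Coxeter lengths. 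You instead give a direct argument: from the fact that $u^{-1}v\le_T u^{-1}w\le_T c$ with $\ell_T$-difference $2$, the element $v^{-1}w$ is a Coxeter element of its parabolic closure $W'$, hence equal (as an element, not merely conjugate) to $p_1p_k$ or $p_kp_1$; a short length computation then shows $v\lessdot_\hB vp_1\lessdot_\hB w$, so $vp_1\in\{x,y\}$ and the corresponding reflection pair is precisely $\{p_1,p_k\}$. Both approaches ultimately hinge on the same noncrossing rigidity---you via the cited Brady--Watt structure of rank-$2$ $\NC$ intervals, the paper via \cref{lem:i-1modk}, and these are essentially equivalent in this rank---but your assembly avoids the biclosed inversion-set computation and the dihedral identity entirely, which makes for a cleaner argument. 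One small imprecision: the map $z\mapsto u^{-1}z$ is an embedding of $[u,uc]_\hB$ onto a proper subposet of $\NC(W,c)$ (not an isomorphism onto all of it), but you only use that $a,d$ land in $\NC(W,c)$ with the correct $\ell_T$-ranks, which does hold.
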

\begin{proof} 
The fact that $W'$ has rank $2$ follows from \cite[Lemma 3.1]{dyer1991bruhat}. Now suppose $u\in W_c^+$ is such that $u\leqB v\leqB w\leqB uc$. Suppose by way of contradiction that the set of canonical generators of $W'$ is neither $\{t_1,t_2\}$ nor $\{t_3,t_4\}$. Then $W'$ has more than two reflections, so it is nonabelian. Let $p_1,\ldots,p_k$ be the reflections in $W'$, listed so that $p_1\preceq_c p_2\preceq_c\cdots\preceq_c p_k$. The assumption that $u\leqB v\leqB w\leqB uc$ ensures that $t_1$ and $t_2$ appear in a reduced $T$-word for $c$, with $t_1$ preceding $t_2$. Similarly, $t_3$ and $t_4$ appear in a reduced $T$-word for $c$, with $t_3$ preceding $t_4$. It follows from \cref{lem:i-1modk} that there exist $i,j\in[k-1]$ such that $t_1=p_{i+1}$, $t_2=p_{i}$, $t_3=p_{j+1}$, and $t_4=p_j$. Without loss of generality, we may assume $i<j$. Let $a=k+i-j+1$, and note that $i+1\leq a\leq k$.

Observe that $t_1=p_{i+1}\in T_\R(x)$ and $t_2=p_i\not\in T_\R(x)$. Because the set of positive roots associated to the right inversions of $x$ is biclosed (see \cite[Proposition~2.2]{Hohlweg} and \cite[Proposition~5.10]{Kostant}), we have $T_\R(x)\cap W'=\{p_m:i+1\leq m\leq k\}$. Similarly, we have $t_3=p_{j+1}\in T_\R(y)$ and $t_4=p_j\not\in T_\R(y)$, so $T_\R(y)\cap W'=\{p_m:j+1\leq m\leq k\}$. In particular, we have $p_{a}\in T_\R(x)$ and $p_{1}\not\in T_\R(y)$. Hence, 
\[\ell_S(yp_1)>\ell_S(y)=\ell_S(x)>\ell_S(xp_{a}).\] 
To obtain our desired contradiction, we will show that $yp_1=xp_{a}$. We have 
\[yp_1=vp_{j+1}p_1=v[p_1\mid p_k]_{2j+1}p_1=v[p_1\mid p_k]_{2j}\] and 
\[xp_a=vp_{i+1}p_a=v[p_1\mid p_k]_{2i+1}[p_1\mid p_k]_{2a-1}=v[p_k\mid p_1]_{2a-2i-2}=v[p_k\mid p_1]_{2k-2j}.\] Because $W'$ is a dihedral Coxeter group with $k$ reflections whose canonical generators are $p_1$ and $p_k$, we have $[p_1\mid p_k]_{b}=[p_k\mid p_1]_{2k-b}$ for all $0\leq b\leq k$. Hence, $[p_1\mid p_k]_{2j}=[p_k\mid p_i]_{2k-2j}$, so $yp_1=xp_a$. 
\end{proof} 

\subsection{Quivers and stability functions}\label{subsec:quivers} 
One ingredient in our proof of \cref{thm:triangulation} are certain \emph{slope functions} which arise in quiver representation theory. We briefly review background here, and give some combinatorial translations of representation theoretic concepts which will be sufficient for our purposes.

Let $\Q$ be a quiver (i.e., a finite directed graph) with vertex set $\Q_0$ and arrow set $\Q_1$. A \dfn{path} in $\Q$ is a finite sequence $a_0\to a_1\to\cdots \to a_k$ of arrows of $\Q$. 
A \dfn{representation} $M$ of $\Q$ (over $\CC$) is an assignment of a finite-dimensional complex vector space $M_a$ to each vertex $a\in\Q_0$ and a linear map $M_{a\to a'}\colon M_a\to M_{a'}$ to each arrow $a\to a'$ in $\Q_1$. There is a natural notion of the direct sum of two representations of $\Q$; a representation is \dfn{indecomposable} if it is not isomorphic to a direct sum of two nonzero representations.

Let us assume in this subsection that the finite Coxeter group $W$ is simply-laced. Let $s_1,\ldots,s_r$ be the simple reflections of $W$, and let $\alpha_i=\beta_{s_i}$ be the simple root corresponding to $s_i$. The \dfn{Dynkin diagram} of $W$ is the (undirected graph) with vertices $a_{1},\ldots,a_r$ in which two vertices $a_i$ and $a_{i'}$ are adjacent if and only if $s_i$ and $s_{i'}$ do not commute (equivalently, $s_is_{i'}$ has order $3$). A \dfn{Dynkin quiver of type $W$} is a quiver obtained by orienting each edge in the Dynkin diagram of $W$ into an arrow. There is a bijection $c\mapsto\Q^c$ from the set of standard Coxeter elements of $W$ to the set of Dynkin quivers of type $W$. In the quiver $\Q^c$, an edge with vertices $a_i,a_{i'}$ is oriented to form an arrow $a_i\to a_{i'}$ if and only if $s_i$ appears to the left of $s_{i'}$ in some (equivalently, every) reduced $S$-word for~$c$. 

\begin{example}
Suppose $W$ is of type $A_r$, and order the simple reflections of $W$ so that $s_is_{i+1}$ has order $3$ for every $1\leq i\leq r-1$. The Dynkin diagram of $W$ is a path graph with vertices $a_{1},\ldots,a_{r}$ and with $a_{i}$ adjacent to $a_{i+1}$ for all $1\leq i\leq r-1$. For $r=6$ and $c=s_1s_2s_4s_3s_6s_5$, $\Q^c$ is the quiver 
\[\begin{array}{l}\includegraphics[height=0.595cm]{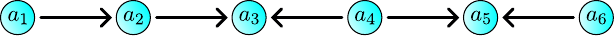}\end{array}.\]
\end{example} 

The \dfn{dimension vector} of a representation $M$ of $\Q^c$ is the vector \[\dim(M)=\sum_{i=1}^r\dim(M_{a_i})\alpha_i\in V^*.\] 
The \dfn{Auslander--Reiten quiver} of $\Q^c$ is a quiver $\mathrm{AR}(\Q^c)$ whose vertices correspond to isomorphism classes of indecomposable representations of $\Q^c$ and whose arrows correspond to irreducible morphisms. The map $\dim$ restricts to a one-to-one correspondence from the set of (isomorphism classes of) indecomposable representations of $\Q^c$ to the set $\Phi^+$ of positive roots, so one can also identify the vertex set of $\mathrm{AR}(\Q^c)$ with $\Phi^+$. With this identification, there is a purely combinatorial description of the arrows in $\mathrm{AR}(\Q^c)$. Namely, for reflections $t,t'\in T$, there is a path from $\beta_t$ to $\beta_{t'}$ in $\mathrm{AR}(\Q^c)$ if and only if $t\preceq_c t'$ \cite[Theorem~9.3.1]{stump2025cataland}. When $\beta$ and $\beta c^{-1}$ are both positive roots, $\beta c^{-1}$ is the \dfn{Auslander--Reiten translation} of $\beta$. We refer to \cite{ASS,stump2025cataland} for the more standard representation-theoretic definition of Auslander--Reiten translation in terms of indecomposable modules; our need for this operation is restricted to the next lemma. 

\begin{lemma}[{\cite[Theorem~9.3.1]{stump2025cataland}}]\label{lem:AR_translation}
Let $c$ be a standard Coxeter element of $W$. If $t\in T$ is such that $\beta_t$ and $\beta_t c^{-1}$ are both positive roots, then $t\preceq_c ctc^{-1}$. Equivalently, there is a path from $\beta_t$ to $\beta_t c^{-1}$ in $\mathrm{AR}(\Q^c)$. 
\end{lemma}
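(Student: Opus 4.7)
The plan is to carry out a combinatorial analysis of the $c$-sorting word $\sw_\circ(c)$ and its inversion sequence, bypassing the representation-theoretic content by working directly with reduced subwords of $\mathsf{c}^\infty$. Fix a reduced $S$-word $\mathsf{c} = s_{i_1} \cdots s_{i_r}$ for $c$, and write $\sw_\circ(c) = s_{j_1} \cdots s_{j_N}$ as a reduced subword of $\mathsf{c}^\infty$. Each letter $s_{j_k}$ carries a position label $(a_k, b_k)$ recording which copy of $\mathsf{c}$ (indexed by $a_k \ge 1$) and which column (indexed by $b_k \in [r]$) it occupies, and the inversions $t_1, \ldots, t_N$ of $\sw_\circ(c)$ list every reflection of $W$ exactly once. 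The $c$-sortable property of $w_\circ$ yields a nested chain of column sets $K_1 \supseteq K_2 \supseteq \cdots$, where $K_a = \{b_k : a_k = a\}$.

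The first step will be a shift formula: if $b \in K_{a+1}$, so that both $(a, b)$ and $(a+1, b)$ are positions used by $\sw_\circ(c)$, then the corresponding inversions $t_k$ and $t_{k'}$ satisfy $t_{k'} = c\, t_k\, c^{-1}$. This should come from a direct calculation with the reduced factorization $\sw_\circ(c) = \cdots s_{j_k}\, u\, s_{j_{k'}} \cdots$: the columns of $\mathsf{c}$ skipped between the two occurrences (the complement of $K_a \cap K_{a+1}$) correspond to simple reflections that commute past $s_{i_b}$, so the effective product $u$ reconstructs a copy of $c$ modulo commutations with $s_{i_b}$, giving the claimed conjugation.

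Next, I would characterize the reflections $t$ with $\beta_t c^{-1} \in \Phi^+$ as exactly those whose position $(a, b)$ in $\sw_\circ(c)$ satisfies $b \in K_{a+1}$. The ``if'' direction is immediate from the shift formula, since $ctc^{-1}$ is then a reflection whose positive root is $\beta_t c^{-1}$. For ``only if'', I would work inside the rank-two parabolic generated by $t$ together with a neighboring reflection in the column pattern: when $b \notin K_{a+1}$, the reflection $t$ is the last occurrence of its column in $\sw_\circ(c)$, and the nesting $K_1 \supseteq K_2 \supseteq \cdots$ forces $\beta_t c^{-1}$ out of $\Phi^+$ by a dihedral calculation. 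This mirrors the projective--non-projective dichotomy of indecomposables in $\mathrm{AR}(\Q^c)$.

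Combining these, whenever $\beta_t$ and $\beta_t c^{-1}$ are both positive, $t$ and $ctc^{-1}$ occur as $t_k$ and $t_{k'}$ in $\inv(\sw_\circ(c))$ with $k < k'$. The hard part will be upgrading this to $t \preceq_c ctc^{-1}$, which demands that $t$ precede $ctc^{-1}$ in \emph{every} rearrangement commutation-equivalent to $\inv(\sw_\circ(c))$. My plan is to construct a non-commuting bridge $t_{k_0} = t, t_{k_1}, \ldots, t_{k_m} = ctc^{-1}$ in the inversion sequence whose consecutive entries fail to commute; in the simply-laced setting such a bridge can be built from the columnwise shift pattern of the first step, using that conjugation by $c$ on a reflection $t$ with $\beta_t, \beta_tc^{-1} \in \Phi^+$ always traverses a Dynkin-connected chain of non-commuting reflections. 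The cleanest rigorous realization, and the main obstacle to avoid, is the identification of $\preceq_c$ with the path order on $\mathrm{AR}(\Q^c)$ from \cite[Theorem~9.3.1]{stump2025cataland}, under which the desired inequality is literally the existence of a path from $\beta_t$ to $\tau(\beta_t) = \beta_t c^{-1}$ in the Auslander--Reiten quiver.
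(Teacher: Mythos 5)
The paper does not supply a proof of this lemma: the bracketed attribution in the lemma header marks it as imported from the reference, and the surrounding discussion makes clear the authors are leaning on the identification of $(\Phi^+,\preceq_c)$ with the Auslander--Reiten quiver $\mathrm{AR}(\Q^c)$ established there, after which the statement is the standard representation-theoretic fact that an almost split sequence connects an indecomposable to its translate by a length-two path. So there is no in-paper proof for me to compare your argument against; what you are attempting is an independent combinatorial proof.

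Read on those terms, the proposal has a genuine gap. The shift formula is more delicate than your sketch suggests: what one actually needs is that the ``discrepancy'' element $\pi_{k-1}^{-1}c^{-1}\pi_{k'-1}$ (where $\pi_j$ is the length-$j$ prefix of $\w_\circ(c)$) commutes with $s_{i_b}$, and the columns skipped between positions $(a,b)$ and $(a+1,b)$ come from two different sets --- those past column $b$ in copy $a$ that lie outside $K_a$, and those before column $b$ in copy $a+1$ that lie outside $K_{a+1}$ --- not simply from the complement of $K_a\cap K_{a+1}$; this requires an induction on the nesting $K_1\supseteq K_2\supseteq\cdots$ that you do not carry out. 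But the decisive problem is your step five. Knowing that $t$ precedes $ctc^{-1}$ in the one linear order $\inv(\w_\circ(c))$ does not give $t\preceq_c ctc^{-1}$; you must exhibit a chain of positions with consecutive entries pairwise noncommuting connecting them (the heap order), and the obvious chain fails in general. For example in $W=A_3$ with $c=s_1s_2s_3$, the reflection $t=(1\,3)$ has translate $ctc^{-1}=(2\,4)$, and $(1\,3)$ and $(2\,4)$ \emph{commute}, so there is no direct edge: one must route through $(1\,4)$. Producing the intermediate noncommuting reflections uniformly is the whole content of the lemma, and your proposal only asserts that the bridge ``can be built from the columnwise shift pattern.'' Your own closing sentence acknowledges that the clean route is to invoke the $\preceq_c\leftrightarrow\mathrm{AR}(\Q^c)$ identification from the cited theorem --- but that is not an obstacle to be avoided, it is exactly what the paper does: the lemma is quoted, not proved, and the almost split sequence then supplies the path.
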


Let $K_0(\Q^c)$ denote the Grothendieck group of the category of finite-dimensional representations of $\Q^c$. The map $\dim$ descends to a group isomorphism from $K_0(\Q^c)$ to the lattice $\mathrm{span}_{\ZZ}\{\alpha_1,\ldots,\alpha_r\}$ in $V^*$. It is typical to define stability functions as maps from $K_0(\Q^c)$ to $\CC$; however, we will find it more convenient to use dimension vectors in order to work directly with $V^*$ rather than $K_0(\Q^c)$. Thus, a \dfn{linear stability function} is a linear function $Z\colon V^*\to\CC$. 

Given a point $\yy\in\BB$ and a vector $\gamma\in V^*$, we obtain a linear stability function $Z_{\yy,\gamma}$ given by 
\[Z_{\yy,\gamma}(\beta)=\langle\gamma,\beta^\vee\rangle+ \langle\beta,\yy\rangle\sqrt{-1}.\]
Associated to $Z_{\yy,\gamma}$ is the \dfn{slope function} $\mu_{\yy,\gamma}\colon V^*\to\RR$ defined by 
\[\mu_{\yy,\gamma}(\beta)=\frac{\langle\gamma,\beta^\vee\rangle}{\langle\beta,\yy\rangle}.\] 
A representation $M$ of $\Q^c$ is said to be \dfn{$Z_{\yy,\gamma}$-stable} if $\mu_{\yy,\gamma}(\dim(M'))>\mu_{\yy,\gamma}(\dim(M))$ for every nonzero proper subrepresentation $M'$ of $M$. The function $Z_{\yy,\gamma}$ is \dfn{totally stable} for $\Q^c$ if every indecomposable representation of $\Q^c$ is $Z_{\yy,\gamma}$-stable. 

\begin{lemma}[{\cite[Lemma~2.9]{diaz2022total}}] 
Let $c$ be a standard Coxeter element of $W$, and fix $\yy\in\BB$ and $\gamma\in V^*$. The linear stability function $Z_{\yy,\gamma}$ is totally stable for $\Q^c$ if and only if $\mu_{\yy,\gamma}(\beta_t)<\mu_{\yy,\gamma}(\beta_{t'})$ for all $t,t'\in T$ such that $t\preceq_c t'$. 
\end{lemma}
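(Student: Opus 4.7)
The plan is to translate the representation-theoretic condition of total stability into a combinatorial condition on $\mu_{\yy,\gamma}$ using the dictionary between $\preceq_c$ and the Auslander--Reiten quiver $\mathrm{AR}(\Q^c)$ recalled just above, combined with one elementary observation about $\mu_{\yy,\gamma}$. That observation is: since $\yy\in\BB$ and every positive root $\beta$ satisfies $\langle\beta,\yy\rangle>0$, the value $\mu_{\yy,\gamma}(\beta_1+\beta_2)$ is a convex combination of $\mu_{\yy,\gamma}(\beta_1)$ and $\mu_{\yy,\gamma}(\beta_2)$ with strictly positive weights, and therefore lies strictly between the two when they differ. I will prove each implication separately.

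For $(\Leftarrow)$, assume $\mu_{\yy,\gamma}(\beta_t)<\mu_{\yy,\gamma}(\beta_{t'})$ whenever $t\preceq_c t'$. Fix an indecomposable $M$ with $\dim M=\beta_t$ and a nonzero proper subrepresentation $M'\subsetneq M$. Decompose $M'=\bigoplus_i M_i$ into indecomposable summands with $\dim M_i=\beta_{t_i}$. The key representation-theoretic input is that for a Dynkin quiver, a nonzero morphism between indecomposable representations (such as the inclusion $M_i\hookrightarrow M$) forces a directed path between the corresponding vertices in $\mathrm{AR}(\Q^c)$, which under the identification with $\preceq_c$ yields $t\preceq_c t_i$. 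The hypothesis then gives $\mu_{\yy,\gamma}(\beta_t)<\mu_{\yy,\gamma}(\beta_{t_i})$ for every $i$, and iterating the weighted-average property to the sum $\dim M'=\sum_i\beta_{t_i}$ shows $\mu_{\yy,\gamma}(\dim M')>\mu_{\yy,\gamma}(\dim M)$. Hence $M$ is $Z_{\yy,\gamma}$-stable.

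For $(\Rightarrow)$, assume total stability. By transitivity of $\preceq_c$, it suffices to prove the inequality on covers $t\lessdot_{\preceq_c}t'$; each such cover corresponds to an arrow $\beta_t\to\beta_{t'}$ in $\mathrm{AR}(\Q^c)$, hence to an irreducible morphism $M_t\to M_{t'}$, which appears in an Auslander--Reiten short exact sequence $0\to\tau M_{t'}\to E\to M_{t'}\to 0$ (or a dual sequence when $M_{t'}$ is projective). One then extracts $\mu_{\yy,\gamma}(\beta_t)<\mu_{\yy,\gamma}(\beta_{t'})$ by applying stability of the appropriate indecomposable in this sequence: the slope of a proper nonzero subrepresentation of a stable module strictly exceeds the slope of the whole, and dually the slope of a proper nonzero quotient is strictly less. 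The main obstacle is the representation-theoretic bookkeeping, in particular selecting the correct AR sequence and summand that witness each cover, and being careful about orientation conventions so that ``subrepresentation of $M$'' translates correctly into ``$\preceq_c$-greater than $\dim M$''. Once this identification is pinned down, both implications become transparent consequences of the weighted-average structure of $\mu_{\yy,\gamma}$.
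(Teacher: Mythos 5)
The paper does not prove this lemma; it is imported verbatim from \cite[Lemma~2.9]{diaz2022total}, so there is no proof in the paper for me to compare against. Your strategy---exploit the strict weighted-average property of $\mu_{\yy,\gamma}$ together with the dictionary between $\preceq_c$ and paths in $\mathrm{AR}(\Q^c)$---is the natural one and presumably close to what the cited source does. But the sketch has a genuine direction error that you flag as ``bookkeeping'' without resolving, and it is exactly where the proof would break.

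The problem is most visible in your $(\Rightarrow)$ direction. You write that a cover $t\lessdot_{\preceq_c}t'$ gives an arrow $\beta_t\to\beta_{t'}$ in $\mathrm{AR}(\Q^c)$, hence an irreducible morphism $M_t\to M_{t'}$. But with the paper's stability convention (proper nonzero subrepresentations have strictly \emph{larger} slope), a nonzero morphism between stable indecomposables $X\to Y$ always forces $\mu(\dim X)>\mu(\dim Y)$: either $X$ embeds properly into $Y$, or $\mathrm{Ker}\,\phi\subsetneq X$ forces $\mu(\mathrm{Im}\,\phi)<\mu(X)$, and in both cases $\mu(X)>\mu(Y)$. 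So an irreducible morphism $M_t\to M_{t'}$ yields $\mu(\beta_t)>\mu(\beta_{t'})$, the \emph{opposite} of what you want. You can see the error concretely in $A_2$ with $c=s_1s_2$: here $\preceq_c$ reads $\alpha_1\prec\alpha_1+\alpha_2\prec\alpha_2$, the standard AR quiver of $\Q^c$ ($1\to 2$) has arrows $\alpha_2\to\alpha_1+\alpha_2\to\alpha_1$ (projective $S_2$ at the source, injective $S_1$ at the sink), and total stability (stability of $P_1$, whose unique proper nonzero subrepresentation is $S_2$) forces $\mu(\alpha_1)<\mu(\alpha_1+\alpha_2)<\mu(\alpha_2)$. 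So the irreducible morphism witnessing the cover $t\lessdot_{\preceq_c}t'$ is $M_{t'}\to M_t$, not $M_t\to M_{t'}$. (This $A_2$ check also shows that the paper's recollection ``path from $\beta_t$ to $\beta_{t'}$ in $\mathrm{AR}(\Q^c)$ iff $t\preceq_c t'$'' is stated with the standard AR-arrow convention reversed; Lemma~4.15 itself is fine, but that dictionary, read literally, is not.) Your $(\Leftarrow)$ direction happens to land on the correct conclusion $t\preceq_c t_i$, but for the wrong stated reason: reading the paper's dictionary literally, $M_i\hookrightarrow M$ gives a path from $\beta_{t_i}$ to $\beta_t$, hence $t_i\preceq_c t$, the opposite of what you wrote. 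So the two halves of your sketch use \emph{inconsistent} readings of the dictionary, and only the $(\Leftarrow)$ one is right. Fixing $(\Rightarrow)$ is easy once this is noticed: a cover $t\lessdot_{\preceq_c}t'$ corresponds to a nonzero irreducible morphism $M_{t'}\to M_t$, and then the general ``nonzero morphism between stable indecomposables decreases slope'' fact (which you do not actually need the full AR sequence for) gives $\mu(\beta_{t'})>\mu(\beta_t)$ immediately.
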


Let $\rho\in V$ denote the sum of the fundamental weights; in other words, $\rho$ is determined by the fact that $\langle\alpha_i,\rho\rangle=1$ for every $i\in[r]$. Reineke \cite{reineke2003harder} conjectured that for every standard Coxeter element $c$, there exists $\gamma\in V^*$ such that $Z_{\rho,\gamma}$ is totally stable for $\Q^c$. In this situation, the denominator of the slope function is just $\dim(M)$. When $W$ is of type~$A$, this conjecture was proven independently by Apruzzese--Igusa~\cite{apruzzese2020stability}, Huang--Hu~\cite{HuangHu}, and Kinser~\cite{kinser2022total}. Reineke's conjecture was also proven when $c$ is a bipartite Coxeter element in unpublished work by Hille–Juteau~\cite[Corollary 1.7]{keller2011cluster} (see also~\cite[Section 7.4]{qiu2015stability}).\footnote{\label{foot:bipartite}In his lecture at the Banff International Research Station workshop 18w5178 on ``Stability Conditions and Representation Theory of Finite-Dimensional Algebras''~\cite{hillevideo}), Hille appears to indicate in representation-theoretic language that when $c$ is bipartite, one should fold the arrangement $\mathcal H$ to the real Coxeter plane (see, e.g.,~\cite{steinberg:FiniteReflectionGroups} or~\Cref{sec:folding}). The key point is that, for bipartite $c$, the hyperplanes for the simple reflections of $W$ fold to the simple reflections for the dihedral group $I_2(h)$.  With a small perturbation, one can then see the hyperplanes in the Coxeter plane appearing in the same order as they do in $\inv(w_\circ(c))$---total stability follows from an analysis of rank-$2$ arrangements.  For further discussion of the relation of stability conditions to the Coxeter plane, see~\cite[Lemma 2.7]{qiu2025geometric}.}
However, Diaz--Gilbert--Kinser \cite{diaz2022total} and Marczinzik \cite{marczinzik2023total} found counterexamples showing that the conjecture fails for certain choices of $c$ when $W$ is of type $D_n$ for $n\geq 9$ or of type $E_7$ or $E_8$. The following weaker version of Reineke's conjecture, which is sufficient for our purposes, was proven by Chang--Qiu--Zhang \cite{chang2024geometric}. 

\begin{proposition}[{\cite[Theorem~1.4]{chang2024geometric}}]\label{prop:Chang} 
Let $c$ be a standard Coxeter element of $W$. There exist $\yy^\star\in\BB$ and $\gamma^\star\in V^*$ such that the linear stability function $Z_{\yy^\star,\gamma^\star}$ is totally stable. 
\end{proposition}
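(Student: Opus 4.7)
The plan is to reformulate total stability as the feasibility of a finite system of strict open linear inequalities, and then to construct $(\yy^\star,\gamma^\star)$ inductively on $r=\mathrm{rank}(W)$. For any $\yy\in\BB$, each $\langle\beta_t,\yy\rangle$ is positive, so the inequality $\mu_{\yy,\gamma}(\beta_t)<\mu_{\yy,\gamma}(\beta_{t'})$ is equivalent, after clearing denominators, to the open bilinear condition
\[
\langle\gamma,\beta_t^\vee\rangle\,\langle\beta_{t'},\yy\rangle \;-\; \langle\gamma,\beta_{t'}^\vee\rangle\,\langle\beta_t,\yy\rangle \;<\; 0.
\]
Since total stability reduces to the finite collection of such conditions indexed by the cover relations of $(T,\preceq_c)$, and each cuts out an open subset of $\BB\times V^*$, it suffices to exhibit a single feasible pair.

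I would induct on $r$. The base cases $r\leq 2$ are immediate: when $W$ is dihedral, $\preceq_c$ totally orders $T$ compatibly with the Coxeter axis, and a generic choice of $(\yy,\gamma)$ already works (this is the computation underlying \cref{foot:bipartite}). For the inductive step, fix a right descent $s$ of $c$. By \cref{lem:preceq_c}, $s$ is $\preceq_c$-maximal, and by \cref{lem:preceq_c2}, the restriction of $\preceq_c$ to $T\cap W_{\langle s\rangle}$ agrees with $\preceq_{cs}$; moreover $cs$ is a standard Coxeter element of the parabolic subgroup $W_{\langle s\rangle}$. Applying the induction hypothesis to $(W_{\langle s\rangle},cs)$ produces $(\yy',\gamma')$ yielding total stability there. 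I would then set $\yy^\star=\yy'+\epsilon\lambda_s$ and $\gamma^\star=\gamma'+N\beta_s$ for small $\epsilon>0$ (so that $\yy^\star\in\BB$) and large $N>0$ (to make $\mu(\beta_s)$ dominate, as forced by $s$ being $\preceq_c$-maximal); smallness of $\epsilon$ and the appropriate scaling of $N$ should preserve the inductively established inequalities among reflections in $W_{\langle s\rangle}$.

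The main obstacle is controlling those $t\in T$ whose positive root $\beta_t$ lies \emph{outside} $W_{\langle s\rangle}$, i.e., has nonzero $\beta_s$-coefficient: for such $t$, both the numerator and the denominator of $\mu_{\yy^\star,\gamma^\star}(\beta_t)$ depend on the new parameters $(N,\epsilon)$, and there is no a priori reason the resulting slope ends up in the correct place relative to the slopes on $W_{\langle s\rangle}\cap T$. To control these ``boundary'' reflections, I would exploit \cref{lem:AR_translation}: whenever $\beta_tc^{-1}$ is again positive, $t\preceq_c ctc^{-1}$, so $\Phi^+$ is organized into the $c$-orbits of the Auslander--Reiten quiver $\mathrm{AR}(\Q^c)$. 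Choosing $Z_{\yy^\star,\gamma^\star}$ so that its argument decreases monotonically along these orbits---as in the bipartite construction via folding to the Coxeter plane discussed in \cref{foot:bipartite}---yields a coherent global ordering. This is essentially the strategy of Chang--Qiu--Zhang, who produce such a central charge as a generic perturbation, in the Bridgeland stability space on the derived category of $\mathrm{rep}(\Q^c)$, of the one coming from the Coxeter plane.
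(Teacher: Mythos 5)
The paper does not prove this proposition: it is imported verbatim as Theorem~1.4 of Chang--Qiu--Zhang and is used as a black box, so there is no in-paper proof to compare against. Your proposal does try to give an argument, but the inductive step has a real gap, which your own final paragraph effectively concedes by outsourcing the hard part back to Chang--Qiu--Zhang.

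Concretely: with $\yy^\star=\yy'+\epsilon\lambda_s$ and $\gamma^\star=\gamma'+N\beta_s$, take any $t,t'\in T\cap W_{\langle s\rangle}$ with $t\preceq_c t'$. Since $\beta_t$ lies in $\mathrm{span}\{\beta_{s'}:s'\neq s\}$ and $\lambda_s$ annihilates that span, the denominators $\langle\beta_t,\yy^\star\rangle=\langle\beta_t,\yy'\rangle$ are untouched, as you say. But the numerators are not: $\langle\gamma^\star,\beta_t^\vee\rangle=\langle\gamma',\beta_t^\vee\rangle+N\langle\beta_s,\beta_t^\vee\rangle$, and $\langle\beta_s,\beta_t^\vee\rangle\le 0$ is nonzero whenever $\beta_t$ involves a simple root adjacent to $s$ in the Dynkin diagram. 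Dividing the cleared-denominator form of $\mu_{\yy^\star,\gamma^\star}(\beta_t)<\mu_{\yy^\star,\gamma^\star}(\beta_{t'})$ by $N$ and letting $N\to\infty$, the inequality degenerates to $\mu_{\yy',\beta_s}(\beta_t)<\mu_{\yy',\beta_s}(\beta_{t'})$, which is a different inequality from the one furnished by the inductive hypothesis (which concerns $\mu_{\yy',\gamma'}$) and need not hold. So the assertion that ``smallness of $\epsilon$ and the appropriate scaling of $N$ should preserve the inductively established inequalities among reflections in $W_{\langle s\rangle}$'' is unjustified even before you get to the boundary roots, and you offer no mechanism to fix it. Given that the stronger Reineke conjecture (fixing $\yy=\rho$) is actually \emph{false} in types $D_n$ for $n\ge 9$, $E_7$, and $E_8$, it would be surprising if a one-step parabolic perturbation sufficed; the paper therefore treats the statement as an external input, and absent a complete argument you should too.
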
 

\begin{remark}
When $W$ is of type $A$, the article \cite{kinser2022total} completely classifies all totally stable linear stability functions by a system of inequalities; this line of investigation is extended in~\cite{diaz2022total} to other simply-laced types. 
\end{remark}

\subsection{Polyhedral geometry}\label{subsec:subdivisions}  
Here we recall the necessary background on polytopes and their subdivisions.

Let $P$ be a $d$-dimensional polytope in a Euclidean space. Given a linear functional $f$ on the space, we write $P_f$ for the face of $P$ on which $f$ is minimized. We write $\VV(P)$ for the set of vertices of $P$. The \dfn{face lattice} of $P$ is the collection $\Faces(P)$ of faces of $P$ partially ordered by containment. Two polytopes $P$ and $P'$ are \dfn{combinatorially isomorphic} if there is a poset isomorphism  from $\Faces(P)$ to $\Faces(P')$. 

A \dfn{subdivision} of a $d$-dimensional polytope $P$ is a collection $\Theta$ of $d$-dimensional polytopes with vertices contained in $\VV(P)$ such that $\bigcup\Theta=P$ and such that for all $P_1,P_2\in\Theta$, the intersection $P_1\cap P_2$ is a (possibly empty) common face of $P_1$ and $P_2$. A subdivision $\Theta$ is a \dfn{triangulation} if all polytopes in $\Theta$ are simplices. We sometimes call the polytopes in a subdivision the \dfn{cells} of the subdivision.

The following lemma is an immediate consequence of the main result of \cite{facet-to-facet}. 

\begin{lemma}[\cite{facet-to-facet}]\label{lem:facet-to-facet}
Let $P$ be a $d$-dimensional polytope in a Euclidean space, and let $\Theta$ be a collection of $d$-dimensional polytopes with vertices in $\VV(P)$ such that $\bigcup \Theta=P$.  
Suppose that the polytopes in $\Theta$ have pairwise disjoint interiors. Suppose that for all $R,R'\in\Theta$ such that $R\cap R'$ has dimension $d-1$, the intersection $R\cap R'$ is a common facet of $R$ and $R'$. Then $\Theta$ is a subdivision of $P$. 
\end{lemma}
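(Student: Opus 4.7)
The plan is to appeal directly to the main theorem of \cite{facet-to-facet}. That paper establishes precisely the principle at stake here: a collection of full-dimensional polytopes that covers a region with pairwise disjoint interiors and meets the facet-to-facet condition in codimension one automatically meets along common faces in \emph{all} codimensions. Since the conclusion we want is exactly the upgrade from ``codimension-one agreement'' to ``agreement in every codimension'', there is essentially nothing to prove beyond matching hypotheses.

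First I would verify the setup. Each $R \in \Theta$ is a $d$-dimensional polytope with $\VV(R) \subseteq \VV(P) \subseteq P$, and $R \subseteq \bigcup \Theta = P$, so every cell is a subpolytope of $P$. The pairwise disjoint interior condition is assumed outright, and the facet-to-facet condition is the third displayed hypothesis. These are precisely the input hypotheses of the theorem in \cite{facet-to-facet}.

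Then I would apply the cited theorem to conclude that for any two cells $R, R' \in \Theta$, the intersection $R \cap R'$ is a (possibly empty) common face of $R$ and $R'$. Together with the covering condition $\bigcup \Theta = P$, this is exactly the definition of subdivision given earlier in \cref{subsec:subdivisions}. This completes the argument.

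The main obstacle, such as it is, is purely notational: ensuring that the formulation in \cite{facet-to-facet} (which is typically stated for coverings of a convex region of Euclidean space, or sometimes for normal polytopal complexes) matches the combinatorial formulation we use here. Since $P$ is given as a $d$-dimensional polytope and $\Theta$ is a finite collection of $d$-dimensional subpolytopes, no additional regularity or boundary conditions need to be checked, and the translation is immediate.
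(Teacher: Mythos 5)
Your proposal matches the paper's approach exactly: the paper offers no proof of its own and simply notes that the lemma ``is an immediate consequence of the main result of \cite{facet-to-facet},'' which is precisely your citation-based argument with a hypothesis check. Nothing further is required.
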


Let $U$ be a $d$-dimensional Euclidean space. Consider the $(d+1)$-dimensional space $U\oplus\RR$. We write the elements of this space as $(\zz,a)$, where $\zz\in U$ and $a\in\RR$. We will sometimes write ${\bf 0}$ for the zero vector in $U$. There is a natural projection $\ppp\colon U\oplus \RR\to U$ given by $\ppp(\zz,a)=\zz$. 

Let $P$ be a $d$-dimensional polytope in $U$. A \dfn{height function} on $P$ is a map $\hgt\colon\VV(P)\to\RR$. For $\vvv\in\VV(P)$, let $\vvv^\hgt=(\vvv,\hgt(\vvv))\in U\oplus\RR$ denote its \dfn{lift} via the height function $\hgt$. For $\mathfrak X\subseteq\VV(P)$, let $\mathfrak X^\hgt=\{\vvv^\hgt:\vvv\in\mathfrak X\}$. The height function $\hgt$ gives rise to the \dfn{lifted polytope} 
\[P^\hgt=\conv(\VV(P)^\hgt)\subseteq U\oplus\RR,\] which is obtained as the convex hull of all of the lifts of vertices of $P$. A \dfn{lower face} of $P^\hgt$ is a face $P_f^\hgt$ where $f$ is a linear functional on $U\oplus\RR$ such that $f({\bf 0},1)>0$. The collection
\[\{\ppp(P_f^\hgt) \colon P_f^\hgt \text{ a lower face of dimension } d  \}\]
is a subdivision $\Theta_\hgt$ of $P$. A subdivision obtained in this manner is said to be \dfn{regular}. We say the regular subdivision $\Theta_\hgt$ is \dfn{induced} by the height function $\hgt$.  

Recall that a \dfn{cone} in the vector space $U$ is a set that is closed under addition and multiplication by nonnegative scalars. The \dfn{lineality space} of a cone is the largest linear subspace contained in the cone. 

In our proof of \cref{thm:triangulation}, we will need an appropriate version of the Hyperplane Separation Theorem for intersecting polytopes. While this result seems similar to many well-known separation theorems, we were unable to find it in the literature, so we provide a proof here. First, we need the following lemma about cones. 

\begin{lemma}\label{lem:hyperplane_cones} 
Let $C$ and $C'$ be polyhedral cones in $U$ that have the same lineality space $L$. If $C\cap C'=L$, then there exists a hyperplane $H$ in $U$ such that $H \cap C=H\cap C'=L$. 
\end{lemma}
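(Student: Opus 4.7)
The plan is to reduce to the case where the cones are pointed by quotienting out by the common lineality space $L$, strictly separate the resulting pointed cones by a linear functional, and then lift the separating hyperplane back to $U$.

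First, let $\pi\colon U \to U/L$ be the quotient projection. Since $L$ is the lineality space of both $C$ and $C'$, we have $L \subseteq C$ and $L \subseteq C'$, from which $C = C + L = \pi^{-1}(\pi(C))$ and likewise $C' = \pi^{-1}(\pi(C'))$. The images $\pi(C)$ and $\pi(C')$ are polyhedral cones in $U/L$ (images of polyhedral cones under linear maps are polyhedral), and a short unwinding of definitions shows each has trivial lineality space, i.e., is pointed. The hypothesis $C \cap C' = L$ translates to $\pi(C) \cap \pi(C') = \{\mathbf{0}\}$ in $U/L$.

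Next, I would form the polyhedral cone $D := \pi(C) - \pi(C')$ in $U/L$ and show that $D$ is pointed. Indeed, if $v, -v \in D$, write $v = x_1 - y_1$ and $-v = x_2 - y_2$ with $x_i \in \pi(C)$ and $y_i \in \pi(C')$; adding gives $x_1 + x_2 = y_1 + y_2 \in \pi(C) \cap \pi(C') = \{\mathbf{0}\}$, and then the pointedness of $\pi(C)$ and $\pi(C')$ forces each of $x_1, x_2, y_1, y_2$ to be $\mathbf{0}$, so $v = \mathbf{0}$. With $D$ pointed and polyhedral in a finite-dimensional space, its dual cone has nonempty interior, so there exists a linear functional $f$ on $U/L$ with $f(v) > 0$ for every $v \in D \setminus \{\mathbf{0}\}$. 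Specializing to $v = x$ (with $y = \mathbf{0}$) and $v = -y$ (with $x = \mathbf{0}$) yields $f > 0$ on $\pi(C) \setminus \{\mathbf{0}\}$ and $f < 0$ on $\pi(C') \setminus \{\mathbf{0}\}$. Thus $\bar{H} := \ker f$ is a hyperplane in $U/L$ with $\bar{H} \cap \pi(C) = \bar{H} \cap \pi(C') = \{\mathbf{0}\}$.

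Finally, I would set $H := \pi^{-1}(\bar{H})$, which is a hyperplane in $U$ (since $\bar{H}$ is a hyperplane in $U/L$ and $H$ contains $L$). Using $C = \pi^{-1}(\pi(C))$ gives $H \cap C = \pi^{-1}(\bar{H} \cap \pi(C)) = \pi^{-1}(\{\mathbf{0}\}) = L$, and the same computation applies to $C'$, completing the proof.

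The routine technicalities---that $\pi(C)$ is polyhedral and pointed, that $C$ equals $\pi^{-1}(\pi(C))$---are straightforward. The main substantive step is verifying that $D$ is pointed, since this is exactly what powers the strict-separation argument; once pointedness is in hand, the existence of a strictly positive functional on a pointed polyhedral cone is a classical fact (interior of the dual cone).
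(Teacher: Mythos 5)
Your proof is correct, and it takes a genuinely different route from the paper's. Both arguments begin by quotienting out the common lineality space $L$ to reduce to the pointed case $C \cap C' = \{\mathbf{0}\}$. The paper then intersects the cones with the unit sphere and invokes a strict separation theorem for disjoint closed spherically convex subsets of the sphere (citing Theorem~1 of Han's paper on spherical convexity) to produce the functional $\psi$. You instead form the Minkowski difference $D = \pi(C) - \pi(C')$, verify that $D$ is pointed (using $\pi(C) \cap \pi(C') = \{\mathbf{0}\}$ and pointedness of each image), and then appeal to the classical polyhedral fact that a pointed polyhedral cone has a full-dimensional dual cone, whose interior points give the required strictly separating functional. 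Your version is more self-contained and rests only on standard polyhedral duality rather than on a spherical separation result from the literature; note that polyhedrality is doing real work in your argument, both in guaranteeing that $D$ is a (closed) polyhedral cone and in making the interior-of-dual step elementary, whereas the paper's spherical-convexity approach needs only closedness and conicity. The one routine detail you could spell out is the pointedness step $x_1 + x_2 = \mathbf{0}$ with $x_1, x_2 \in \pi(C)$ forcing $x_1 = x_2 = \mathbf{0}$ (since $x_1 = -x_2$ puts both $\pm x_1$ in the pointed cone $\pi(C)$), but this is exactly the kind of unwinding you flagged as straightforward, and it is.
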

\begin{proof}
By passing to the quotient space $U/L$ if necessary, we may assume that $L=\{{\bf 0}\}$. Thus, we assume that $C\cap C'=\{{\bf 0}\}$. Let $\mathbb S$ be the unit sphere in $U$. The sets $C\cap \mathbb S$ and $C'\cap \mathbb S$ are disjoint and closed. Moreover, these sets are \emph{spherical convex} in the sense of~\cite{han2020spherical}. According to \cite[Theorem~1]{han2020spherical}, there is a linear functional $\psi\colon U\to\RR$ such that $\psi(\zz)>0>\psi(\zz')$ for all $\zz\in C\cap\mathbb S$ and $\zz'\in C'\cap \mathbb S$. Let $H$ be the kernel of $\psi$. Then $H \cap C=H\cap C'=\{{\bf 0}\}=L$, as desired. 
\end{proof}

\begin{lemma}\label{lem:hyperplane_polytopes}
Let $R$ and $R'$ be polytopes in $U$ such that the set $F=R\cap R'$ is a (possibly empty) common face of $R$ and $R'$. There exists a hyperplane $H$ in $U$ such that $H \cap R=H\cap R'=F$.  
\end{lemma}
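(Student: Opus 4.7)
The plan is to reduce the statement to \cref{lem:hyperplane_cones} by passing to tangent cones at a point in the relative interior of $F$, then translating the resulting hyperplane back to $U$.

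First, handle the trivial case $F=\emptyset$: then $R$ and $R'$ are disjoint compact convex subsets of $U$, so the classical Hyperplane Separation Theorem produces a hyperplane $H$ that strictly separates them, and $H\cap R=H\cap R'=\emptyset=F$.

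From now on assume $F\neq\emptyset$, and choose any point $p$ in the relative interior of $F$. Write $C$ and $C'$ for the tangent cones to $R$ and $R'$ at $p$; these are polyhedral cones since $R$ and $R'$ are polytopes. Let $L=\mathrm{aff}(F)-p$, the linear subspace obtained by translating the affine hull of $F$ to the origin. I would then check three facts: (i) $L$ is the lineality space of $C$; (ii) $L$ is the lineality space of $C'$; and (iii) $C\cap C'=L$. For (i) and (ii), the key point is the general fact that the lineality space of the tangent cone to a polytope at a point in the relative interior of a face $F'$ equals $\mathrm{aff}(F')-p$; this is where the hypothesis that $F$ is a face of each of $R$ and $R'$ (rather than merely the set-theoretic intersection) is used. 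For (iii), the inclusion $L\subseteq C\cap C'$ is immediate from (i) and (ii); conversely, if $v\in C\cap C'$, then $p+\epsilon v\in R\cap R'=F$ for all sufficiently small $\epsilon>0$, so $\epsilon v\in \mathrm{aff}(F)-p=L$, whence $v\in L$.

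With (i)--(iii) established, \cref{lem:hyperplane_cones} supplies a hyperplane $H_0$ through the origin with $H_0\cap C=H_0\cap C'=L$. Set $H=p+H_0$. To verify $H\cap R=F$: the inclusion $F\subseteq H\cap R$ is clear since $F-p\subseteq L\subseteq H_0$ and $F\subseteq R$. Conversely, if $q\in H\cap R$, then $q-p\in H_0$, and by convexity of $R$ we have $q-p\in R-p\subseteq C$; hence $q-p\in H_0\cap C=L$, which gives $q\in\mathrm{aff}(F)$. Using the standard fact that a face $F$ of a polytope $R$ satisfies $R\cap\mathrm{aff}(F)=F$ (since $F$ is cut out from $R$ by a supporting linear equation whose zero set contains $\mathrm{aff}(F)$), we conclude $q\in F$. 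The identical argument with $R'$ in place of $R$ gives $H\cap R'=F$.

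The main obstacle I expect is the clean identification of the common lineality space of $C$ and $C'$ with $L$ and the equality $C\cap C'=L$; this is precisely where the hypothesis that $F$ is a common \emph{face} (and not just the set-theoretic intersection) does its real work. Once these are in place, the remainder of the argument is a routine translation of \cref{lem:hyperplane_cones} from the cone setting to the polytope setting via the convexity inclusion $R-p\subseteq C$ and the standard relation $R\cap\mathrm{aff}(F)=F$.
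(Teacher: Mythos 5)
Your proof is correct and follows essentially the same route as the paper's: both handle $F=\emptyset$ via the Hyperplane Separation Theorem, then translate a relative-interior point of $F$ to the origin, pass to the cones generated by $R$ and $R'$ (your "tangent cones at $p$" coincide with the paper's "cones generated by $R$ and $R'$" after translation), invoke \cref{lem:hyperplane_cones}, and translate back. You supply more detail than the paper on the lineality-space identification and on the final verification that $H\cap R=F$, but the underlying argument is the same.
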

\begin{proof}
If $F=\emptyset$, then the desired result follows immediately from the Hyperplane Separation Theorem. Now suppose $F\neq\emptyset$. By translating if necessary, we may assume ${\bf 0}$ is in the relative interior of $F$. Let $C$ and $C'$ be the cones generated by $R$ and $R'$, respectively. These cones have the same lineality space $L$, which is the linear span of $F$. We have $C\cap C'=L$, so by \cref{lem:hyperplane_cones}, there is a hyperplane $H$ such that $H \cap C=H\cap C'=L$. It follows that $H \cap R=H\cap R'=F$. 
\end{proof}

\section{Results about Subdivisions}\label{sec:subdivisions}  

The purpose of this section is to collect and prove some results about polytopal subdivisions that we will need in \cref{sec:triangulating}. As in \cref{subsec:subdivisions}, we let $U$ denote a $d$-dimensional Euclidean space. 

\subsection{Regular subdivisions} 
The following lemma provides a sufficient condition to guarantee that a collection of polytopes covers a given polytope. 

\begin{lemma}\label{lem:union-covers-real} 
Let $P\subseteq U$ be a $d$-dimensional polytope, and let $\Theta$ be a nonempty finite collection of $d$-dimensional polytopes with vertices contained in $\VV(P)$. Suppose that for every $R\in\Theta$ and every facet $F$ of $R$, either $F$ is contained in the boundary of $P$ or $F=R\cap R'$ for some $R'\in\Theta$. Then $P=\bigcup\Theta$. 
\end{lemma}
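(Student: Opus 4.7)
The plan is to prove the nontrivial containment $P \subseteq Y$, where $Y := \bigcup \Theta$; the reverse containment $Y \subseteq P$ is immediate because each $R \in \Theta$ is convex with vertices in $\VV(P) \subseteq P$. To establish $P \subseteq Y$, I would argue by contradiction, assuming some $p \in P \setminus Y$ exists. Since $Y$ is closed in $U$ and $P$ is $d$-dimensional, $P \setminus Y$ is relatively open in $P$ and contains a neighborhood of $p$ in $P$, so I may replace $p$ by a nearby point lying in $\interior(P) \setminus Y$.

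Pick any $R_0 \in \Theta$ and any $q$ in the relative interior of $R_0$; then $q \in \interior(P)$ and $q$ has a neighborhood in $U$ entirely contained in $Y$. The core of the argument is to walk along the line segment from $q$ to $p$ (which lies in $\interior(P)$ by convexity) and isolate a first moment at which we exit $Y$. To keep the local geometry at that moment tractable, the key preparatory step is to perturb $q$ slightly inside $\interior(R_0)$ so that the open segment $(q,p)$ meets no face of codimension $\geq 2$ of any polytope in $\Theta$. Only finitely many such faces exist, and for each face $G$ of dimension at most $d-2$, the set of $q \in \interior(R_0)$ for which $(q,p)$ meets $G$ sweeps out a set of dimension at most $d-1$ inside the $d$-dimensional set $\interior(R_0)$, so a generic $q$ works.

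Parametrize $\sigma(t) = (1-t)q + tp$ and let $t^\star = \sup\{t \in [0,1] : \sigma(t) \in Y\}$. Closedness of $Y$ gives $\sigma(t^\star) \in Y$; the neighborhood of $q$ in $Y$ gives $t^\star > 0$; and $p \notin Y$ gives $t^\star < 1$. Fix any $R \in \Theta$ with $\sigma(t^\star) \in R$. By genericity, $\sigma(t^\star)$ lies either in the relative interior of $R$ or in the relative interior of some facet $F$ of $R$. In the first case, a whole neighborhood of $\sigma(t^\star)$ lies in $R \subseteq Y$, contradicting the definition of $t^\star$. In the second case, $F \not\subseteq \partial P$ since $\sigma(t^\star) \in \interior(P)$, so by hypothesis $F = R \cap R'$ for some $R' \in \Theta$; then a neighborhood of $\sigma(t^\star)$ is contained in $R \cup R' \subseteq Y$, again contradicting $t^\star < 1$.

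The main obstacle I anticipate is precisely this case analysis at $\sigma(t^\star)$: without the generic perturbation, $\sigma(t^\star)$ could land on a codimension $\geq 2$ face shared among several polytopes of $\Theta$, where the hypothesis offers no direct local covering statement. Arranging the perturbation and verifying that it reduces everything to the facet-sharing condition supplied by the hypothesis is the only subtle point; the remainder is routine topology-of-polytopes bookkeeping.
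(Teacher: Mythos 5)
Your argument is correct and is essentially the same as the paper's: a generic line segment from a point in the interior of some cell to the target point, avoiding the codimension-$\geq 2$ faces, followed by a supremum argument and the facet hypothesis to push past the exit point. The only cosmetic differences are that you frame it as a proof by contradiction and initially move the target point into the interior of $P$, whereas the paper proves membership directly for an arbitrary $\xx \in P$ and observes that $\tau^* < 1$ already forces the exit point into the interior of $P$.
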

\begin{proof} 
Let $X$ be the union of the $(d-2)$-dimensional faces of the polytopes in $\Theta$. Thus, $X$ is a codimension-$2$ subset of $P$. 

Fix $\xx\in P$; we will show that $\xx$ belongs to some polytope in $\Theta$. Choose a point $\xx'$ in the interior of some polytope in $\Theta$. Let us choose $\xx'$ generically so that the line segment with endpoints $\xx$ and $\xx'$ does not intersect $X$. We can parameterize this line segment via the function $\psi\colon[0,1]\to P\setminus X$ defined by $\psi(\tau)=\tau\xx'+(1-\tau)\xx$. The set $\bigcup\Theta$ is closed, so its preimage $\psi^{-1}(\bigcup\Theta)$ is a closed subset of $[0,1]$. It follows that  $\psi^{-1}(\bigcup\Theta)$ contains its supremum, which we denote by $\tau^*$. Our goal is to show that $\tau^*=1$. 

Suppose by way of contradiction that $\tau^*\neq 1$. Note that $\psi(\tau^*)$ cannot lie in the interior of any of the polytopes in $\Theta$. Hence, there exists $R\in\Theta$ such that $\psi(\tau^*)$ is in the boundary of $R$. Since the point $\psi(\tau^*)$ is not in $X$, it must be in the relative interior of some facet $F$ of $R$. Because $\xx'$ is in the interior of $P$ and $\tau^*<1$, we know that $\psi(\tau^*)$ is not in the boundary of $P$. The hypothesis of the lemma ensures that $F= R\cap R'$ for some $R'\in\Theta$. It follows that there exists some $\epsilon>0$ such that $\psi(\tau^*+\epsilon)$ is in the interior of $R'$; this contradicts the definition of $\tau^*$. 
\end{proof} 

The next proposition provides sufficient conditions for a collection of polytopes to form the regular subdivision induced by a height function. The proof is very similar to that of \cite[Theorem~2.3.20]{DLRS-triang-book}, but since the assumptions of that theorem are slightly different, we repeat it here for the reader's convenience. Below, if $H$ is a hyperplane in $U\oplus\RR$ that is not orthogonal to a vector of the form $(\zz,0)$, then we say a point $(\xx,a)$ \dfn{lies strictly above} $H$ if it can be obtained from a point in $H$ by increasing the final coordinate. 

\begin{proposition}\label{prop:conditions-for-max-cells-in-reg-sub}
Let $P$ be a $d$-dimensional polytope, and let $\hgt\colon\VV(P)\to\RR$ be a height function. Suppose $\Theta$ is a collection of polytopes such that the following conditions hold:
	\begin{enumerate}
		\item[\emph{(i)}] Each polytope $R \in \Theta$ is $d$-dimensional and satisfies $\VV(R) \subset \VV(P)$. 
		\item[\emph{(ii)}] For each $R\in\Theta$, every facet of $R$ either lies on the boundary of $P$ or is equal to $R\cap R'$ for some $R'\in\Theta$.
		\item[\emph{(iii)}] For each $R\in\Theta$, there exists a hyperplane $H_R$ in $U\oplus \RR$ containing $\VV(R)^\hgt$.
            \item[\emph{(iv)}]\label{local_folding} If $R,R'\in\Theta$ are such that the set $F=R \cap R'$ is a common facet of $R$ and $R'$, then each vertex $\vvv\in\VV(R)\setminus F$ lifts to a point $\vvv^\hgt$ in $U \oplus \RR$ lying strictly above $H_{R'}$, and each vertex $\vvv'\in\VV(R')\setminus F$ lifts to a point $\vvv'^\hgt$ lying strictly above $H_R$.
	\end{enumerate}
	Then $\Theta$ is the regular subdivision of $P$ induced by $\hgt$.
\end{proposition}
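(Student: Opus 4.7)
The plan is to show that $\Theta$ coincides with the regular subdivision $\Theta_{\hgt}$ induced by $\hgt$ by proving that for each $R \in \Theta$, the lifted polytope $R^{\hgt} := \conv(\VV(R)^{\hgt})$ is a $d$-dimensional lower face of $P^{\hgt}$. By (i) and (iii), the hyperplane $H_R$ is non-vertical (it contains the lift $\VV(R)^{\hgt}$, and $\VV(R)$ has $d$-dimensional affine span), so $H_R$ is the graph of a unique affine function $g_R \colon U \to \RR$ with $g_R(\vvv) = \hgt(\vvv)$ for all $\vvv \in \VV(R)$. The functional $\phi_R(\zz,a) := a - g_R(\zz)$ then satisfies $\phi_R({\bf 0},1) = 1 > 0$ and vanishes on $\VV(R)^{\hgt}$, so identifying $R^{\hgt}$ with the lower face $P^{\hgt}_{\phi_R}$ reduces to the following strengthening of (iv).

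\emph{Key claim.} For every $R \in \Theta$ and every $\vvv \in \VV(P) \setminus \VV(R)$, $\hgt(\vvv) > g_R(\vvv)$.

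Once the Key claim is established, $\phi_R \ge 0$ on $\VV(P)^{\hgt}$ with equality precisely on $\VV(R)^{\hgt}$, so $R^{\hgt} = P^{\hgt}_{\phi_R}$ is a $d$-dimensional lower face of $P^{\hgt}$ and $R \in \Theta_{\hgt}$. Applying \cref{lem:union-covers-real} via (i) and (ii) yields $\bigcup \Theta = P = \bigcup \Theta_{\hgt}$; combined with $\Theta \subseteq \Theta_{\hgt}$ and the interior-disjointness of the cells of $\Theta_{\hgt}$, this forces $\Theta = \Theta_{\hgt}$.

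To prove the Key claim, I would assemble the affine functions $\{g_R\}_{R \in \Theta}$ into a single piecewise affine function $g \colon P \to \RR$ by declaring $g(\xx) = g_R(\xx)$ whenever $\xx \in R \in \Theta$. Condition (iv) makes $g$ well-defined at each internal facet: if $R, R' \in \Theta$ share a facet $F$, then $g_R$ and $g_{R'}$ agree at the vertices of $F$ (both equal $\hgt$ there), hence coincide on the $(d-1)$-dimensional affine span of $F$; moreover (iv) forces $g_R > g_{R'}$ on $R \setminus F$ and $g_{R'} > g_R$ on $R' \setminus F$, so $g$ is locally the pointwise maximum of two affine functions across $F$, and therefore locally convex there. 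A standard piecewise-linear argument then promotes this to global convexity of $g$ on the convex polytope $P$. Convexity, in turn, implies by a line-segment argument through the interior of $R$ that the affine function $g_R$ satisfies $g_R \leq g$ on all of $P$; and the inequality must be strict off $R$, since otherwise the convex set $\{g_R = g\}$ (the zero locus of the nonnegative convex function $g - g_R$) would properly contain $R$ and thus contain a point of some adjacent cell $R'$ off the shared facet, forcing $g_R = g_{R'}$ at that point and contradicting the strict inequality $g_R > g_{R'}$ on $R \setminus F$ supplied by (iv). Finally, any $\vvv \in \VV(P)$ that lies in some cell $R \in \Theta$ must be a vertex of that cell (being extremal in $P$), so $\vvv \in \VV(P) \setminus \VV(R)$ implies $\vvv \in P \setminus R$, yielding $g_R(\vvv) < g(\vvv) = \hgt(\vvv)$.

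The main technical obstacle is the passage from local to global convexity of $g$. Local convexity at each internal facet is immediate from (iv), but globalizing it also requires verifying well-definedness and consistency of $g$ on lower-dimensional intersections of (possibly non-adjacent) cells of $\Theta$. Carefully deducing this consistency from (ii) and (iv) -- and ruling out degenerate configurations in which two non-adjacent cells could carry incompatible affine data -- is where the conditions interact most delicately and is the technical heart of the proof.
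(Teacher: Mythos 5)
Your reduction to the Key Claim is correct and identical to the paper's: once you know that every vertex of $P$ not in $\VV(R)$ lifts strictly above $H_R$, it follows that each $R^{\hgt}$ is a $d$-dimensional lower face of $P^{\hgt}$, hence $\Theta \subseteq \Theta_{\hgt}$, and equality then follows from $\bigcup\Theta = P$ together with interior-disjointness of regular-subdivision cells. But your proof of the Key Claim has a genuine gap, and, to your credit, you have named it yourself. The assertion that ``a standard piecewise-linear argument then promotes this to global convexity'' does not apply in the present generality. The usual local-to-global convexity theorem for piecewise-linear functions presupposes that the cells already form a polytopal \emph{subdivision} (pairwise face-to-face, interior-disjoint), precisely so that $g$ is well-defined and so that an arbitrary segment in $P$ decomposes into subintervals on which $g$ is affine. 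Here $\Theta$ is only assumed to satisfy (i)--(iv); it is not given to be a subdivision, and concluding that it is one is the entire point of the proposition. Invoking global convexity of $g$ is therefore circular, and the ``degenerate configurations'' you defer to the end are not a technicality to be dispatched later but the actual substance of the argument.

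The paper sidesteps building a global $g$ altogether and proves the Key Claim directly by a walk-along-a-segment induction. Fix $R$ and $\vvv \in \VV(P) \setminus \VV(R)$, choose a generic $\xx$ in the interior of $R$ so that the segment $[\vvv,\xx]$ misses the union $X$ of $(d-2)$-dimensional faces of cells in $\Theta$, and let $R_0, R_1, \ldots, R_k = R$ be the chain of cells the segment passes through (this uses $P = \bigcup\Theta$ from \cref{lem:union-covers-real} and condition (ii) at each facet crossing). One then shows by induction on $k$ that $\vvv^{\hgt}$ lies strictly above $H_{R_k}$; the base case $k=1$ is literally (iv), and the inductive step combines the bound on $H_{R_{k-1}}$ with the fact that $\vvv$ and $\VV(R_{k-1})\setminus F$ lie on the same side of the vertical hyperplane through the shared facet $F$. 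This is the concrete, one-segment-at-a-time realization of the convexity you gesture at, carried out without ever needing a globally well-defined $g$, and it is where (ii) and (iv) do their real work. To make your proposal rigorous you would first have to establish from (i)--(iv) alone that $\Theta$ is a subdivision, which is already most of what the proposition asserts.
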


\begin{proof}
	We will first show that for each $R\in\Theta$, the vertices of $P$ that are not vertices of $R$ lift to points strictly above the hyperplane $H_R$. 
	
	Choose $R\in\Theta$, and let $\vvv$ be a vertex of $P$ that is not in $R$. As in the proof of \cref{lem:union-covers-real}, let $X$ be the union of the $(d-2)$-dimensional faces of the polytopes in $\Theta$, and choose a point $\xx$ in the interior of $R$ such that the line segment with endpoints $\vvv$ and $\xx$ does not intersect $X$. 
	
	By \cref{lem:union-covers-real}, conditions (i) and (ii) imply that $P=\bigcup\Theta$, so $\vvv$ is in some polytope $R_0 \in\Theta$ with $R_0\neq R$. Thus, there are polytopes $R_0,R_1,\ldots,R_k\in\Theta$ with $R_k=R$ such that, as we traverse the line segment from $\vvv$ to $\xx$, we start at $R_{0}$, then cross a shared facet of $R_{0}$ and $R_{1}$ into $R_1$, then cross a shared facet of $R_1$ and $R_2$ into $R_2$, and so on, until ending at $\xx$ in $R_k$. We will prove by induction on $k$ that $\vvv^\hgt$ is strictly above the hyperplane $H_R$. If $k=1$, then this is assumption (iv). Now, suppose $k>1$. By induction, $\vvv^\hgt$ is strictly above the hyperplane $H_{R_{k-1}}$. Let $F$ be the facet equal to $R_{k-1} \cap R_k$, let $\nu$ be a normal vector of $F$, and let $\mathfrak C=\VV(R_{k-1})\setminus F$ be the set of vertices of $R_{k-1}$ that are not in $F$. Let $H \subset U \oplus \RR$ be the hyperplane spanned by $F$ and the vector $({\bf 0},1)$, which has normal vector $(\nu,0)$. By construction, the line segment from $\vvv$ to $\xx$ passes through $F$, so in $U$, the point $\vvv$ and the vertices in $\mathfrak C$ lie on the same side of the hyperplane spanned by $F$. By lifting, we find that $\vvv^\hgt$ lies on the same side of the hyperplane $H$ as $\mathfrak C^\hgt$. 
	
	Applying assumption (iv) to $R_{k-1}$ and $R_{k}$ tells us that $\mathfrak C^\hgt$ is above the hyperplane $H_{k}$. In particular, all points of $H_{R_{k-1}}$ that are on the same side of $H$ as $\mathfrak C^\hgt$ lie strictly above the hyperplane $H_{R_k}$. The point $\vvv^\hgt$ lies strictly above $H_{R_{k-1}}$ and on the same side of $H$ as $\mathfrak C^\hgt$. By decreasing the last coordinate of $\vvv^\hgt$, we obtain a point $\zz \in H_{R_{k-1}}$ that lies on the same side of $H$ as $\mathfrak C^\hgt$ and, therefore, lies strictly above $H_{R_k}$. Since $\vvv^{\hgt}$ lies above $\zz$, it also lies strictly above $H_{R_k}=H_R$.
	
	The above shows that each hyperplane $H_R$ for $R\in\Theta$ is a supporting hyperplanes of the lifted polytope $P^\hgt$, since all vertices of $P^\hgt$ are weakly above $H_R$. Moreover, $H_R \cap P^\hgt$ is exactly the lifted polytope $R^\hgt$, since all vertices of $P$ not in $R$ lift to points strictly above $H_R$. This implies that $R$ is a cell of the regular subdivision of $P$ induced by $\hgt$. Since $P$ is the union of the polytopes in $\Theta$ and cells of a regular subdivision have disjoint interiors, there are no other cells. 
\end{proof}

We call condition (iv) in \cref{prop:conditions-for-max-cells-in-reg-sub} the \dfn{local folding condition}. As noted in the sentence after \cite[Theorem~2.3.20]{DLRS-triang-book}, to show assumption (iv) holds for particular polytopes $R,R'\in \Theta$, it suffices to check that a single vertex of $R$ that is not in $F$ lifts to a point strictly above $H_{R'}$. 

\subsection{Deforming subdivisions} 

Our main result in this subsection is the following proposition, which allows us to continuously deform triangulations under certain conditions. 

\begin{proposition}\label{prop:deform}
Let $(P_\tau)_{\tau\in[0,1]}$ be a collection of $d$-dimensional polytopes in $U$. Let $\JJ_1,\ldots,\JJ_m\subseteq\VV(P_0)$ be $(d+1)$-element subsets of the vertex set of $P_0$. Suppose that for each $\tau\in[0,1]$, there is a bijection $\varphi_\tau\colon\VV(P_0)\to\VV(P_\tau)$. Assume the following conditions hold: 
\begin{itemize}
\item The map $\varphi_0$ is the identity map on $\VV(P_0)$. 
\item For each $\vvv\in\VV(P_0)$, the map $\tau\mapsto\varphi_\tau(\vvv)$ is a continuous function from $[0,1]$ to $U$. 
\item For every $\tau\in[0,1]$, there is a poset isomorphism from $\Faces(P_0)$ to $\Faces(P_\tau)$ that agrees with $\varphi_\tau$ on the set of vertices of $P_0$. 
\item For every ${\tau\in[0,1]}$, the polytopes in the set $\Theta_\tau=\{\conv(\varphi_\tau(\JJ_1)),\ldots,\conv(\varphi_\tau(\JJ_m))\}$ are $d$-dimensional simplices. 
\end{itemize} 
 If $\Theta_0$ is a triangulation of $P_0$, then $\Theta_\tau$ is a triangulation of $P_\tau$ for every $\tau\in[0,1]$.  
\end{proposition}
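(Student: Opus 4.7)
The plan is to define, for each $\tau\in[0,1]$, the piecewise-affine map $\phi_\tau:P_0\to U$ that extends $\varphi_\tau$ linearly over each simplex $\Delta_i^0:=\conv(\JJ_i)\in\Theta_0$. Because $\Theta_0$ triangulates $P_0$, the map is well-defined: on a shared face $\Delta_i^0\cap\Delta_j^0=\conv(\JJ_i\cap\JJ_j)$ the unique convex-combination representation uses only vertices in $\JJ_i\cap\JJ_j$, so the two affine formulas agree. The strategy is to show that $\phi_\tau$ is a homeomorphism onto $P_\tau$ for every $\tau$; since $\phi_\tau(\Delta_i^0)=\Delta_i^\tau=\conv(\varphi_\tau(\JJ_i))$, the triangulation structure of $\Theta_0$ then transfers to $\Theta_\tau$.

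I will induct on $d$, the base case $d=0$ being trivial. For the inductive step, first show that $\phi_\tau|_{\partial P_0}:\partial P_0\to\partial P_\tau$ is a homeomorphism. For each facet $G$ of $P_0$, the $(d-1)$-facets of the simplices of $\Theta_0$ that lie on $G$ form a triangulation of $G$ into non-degenerate $(d-1)$-simplices (non-degeneracy of $\Delta_i^\tau$ descends to each of its facets), and the hypotheses of the proposition restrict correctly to $G$ and to its image under the poset isomorphism; the inductive hypothesis then gives a homeomorphism from $G$ onto the corresponding facet of $P_\tau$. Gluing these homeomorphisms across the facets of $P_0$ yields the boundary homeomorphism.

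Next, apply topological degree. Each restriction $\phi_\tau|_{\Delta_i^0}$ is an affine isomorphism onto $\Delta_i^\tau$ and equals the identity at $\tau=0$; its Jacobian determinant varies continuously in $\tau$ and never vanishes (by non-degeneracy), so its orientation is preserved throughout $[0,1]$. Consequently, for generic $q\in\interior(P_\tau)$, the topological degree $\deg(\phi_\tau,q)$ equals the multiplicity $\mu_\tau(q):=|\{i:q\in\interior(\Delta_i^\tau)\}|$. The open set $\Omega:=\{(\tau,q)\in[0,1]\times U:q\in\interior(P_\tau)\}$ is path-connected (bridge any two slices by first moving $q$ to the centroid of $P_\tau$, then following the continuous path of centroids), and on $\Omega$ the degree is locally constant; evaluating at $(0,q_0)$ with $q_0\in\interior(P_0)$ gives $\deg=1$. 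Hence $\mu_\tau\equiv 1$ on an open dense subset of $\interior(P_\tau)$, which immediately implies that the simplices in $\Theta_\tau$ have pairwise disjoint interiors and that $\bigcup_i\Delta_i^\tau=P_\tau$.

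To conclude via \cref{lem:facet-to-facet}, it remains to verify that each codimension-$1$ intersection $\Delta_i^\tau\cap\Delta_j^\tau$ is a common facet. The combinatorial quantity $|\JJ_i\cap\JJ_j|$ is independent of $\tau$, and the extremality of the vertices of $P_\tau$ (no vertex of $P_\tau$ lies in the convex hull of a subset of other vertices) forces any vertex of $\Delta_i^\tau$ that lies inside $\Delta_j^\tau$ to belong to $\varphi_\tau(\JJ_j)$; combined with the disjoint-interiors conclusion, this forces $|\JJ_i\cap\JJ_j|=d$ and $\Delta_i^\tau\cap\Delta_j^\tau=\conv(\varphi_\tau(\JJ_i\cap\JJ_j))$, which is a common facet. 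The main technical obstacle is the rigorous setup of the degree argument with both the domain $P_0$ and the codomain $P_\tau$ varying with $\tau$; this is handled cleanly by running homotopies inside $\Omega$ back to a fixed reference point at $\tau=0$. The remaining combinatorial verifications are then routine.
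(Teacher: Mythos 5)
Your approach is genuinely different from the paper's. The paper's proof is a connectedness argument on $[0,1]$: it shows the set of $\tau$ for which $\Theta_\tau$ fails to be a triangulation is closed (via \cref{lem:Z-open}) and open (via \cref{lem:facet-to-facet,lem:common_facet,lem:facet_or_boundary}), hence empty. You instead construct the piecewise-affine deformation $\phi_\tau\colon P_0\to U$ and argue via topological degree and an induction on dimension. The setup of your first three steps (boundary homeomorphism by the inductive hypothesis, degree well-defined once the boundary is controlled, degree $\equiv 1$ by continuity, all affine pieces orientation-preserving because the Jacobian is continuous and nonvanishing) is sound modulo some bookkeeping.

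The final step, however, has a genuine gap. You claim that a $(d-1)$-dimensional intersection $\Delta_i^\tau\cap\Delta_j^\tau$, together with extremality of vertices and disjoint interiors, forces $|\JJ_i\cap\JJ_j|=d$. This does not follow. Extremality only tells you that any vertex of one simplex lying inside the other must be one of the other's vertices; it does \emph{not} force the $(d-1)$-dimensional overlap to contain $d$ shared vertices, or indeed any vertex of either simplex. Concretely, the overlap sits in a common hyperplane $H$ and equals $F_i\cap F_j$ for facets $F_i$ of $\Delta_i^\tau$ and $F_j$ of $\Delta_j^\tau$; a priori $F_i$ and $F_j$ can be distinct $(d-1)$-simplices in $H$ meeting in full dimension while sharing fewer than $d$ vertices. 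This is exactly the difference between a \emph{dissection} and a \emph{triangulation}: a collection of simplices with vertices in $\VV(P)$, pairwise disjoint interiors, and covering $P$ need not meet facet-to-facet, which is precisely why \cref{lem:facet-to-facet} carries its codimension-$1$ hypothesis. What rules out this behavior here is continuity in $\tau$, which your snapshot argument at fixed $\tau$ does not exploit. To close the gap you would need either the closedness statement of \cref{lem:Z-open}, or a local analysis near a generic point of the overlap combining your boundary homeomorphism with the orientation-preservation of adjacent simplices (essentially \cref{lem:common_facet}). As written, the inference "combined with the disjoint-interiors conclusion, this forces $|\JJ_i\cap\JJ_j|=d$" is unjustified.
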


The main application of \cref{prop:deform} that we have in mind is proving \cref{thm:triangulation}. Indeed, in \cref{subsec:regular}, we will show that there exists $\yys\in\BB$ such that $\SBDW_{\yys}(W,c)$ is a triangulation of $\Perm_{\yys}$. For an arbitrary $\yy\in\BB$, we will then argue that we can continuously deform $\Perm_{\yys}$ into $\Perm_{\yy}$ in such a way that each of the simplices in the triangulation remains full-dimensional throughout the deformation.

Throughout this subsection, we preserve the hypotheses and notation from  \cref{prop:deform}. Before proving that proposition, we need the following lemmas. 

\begin{lemma}\label{lem:Z-open}
For all distinct $i,i'\in[m]$, the set \[\mathcal Z_{i,i'}=\{\tau\in[0,1]:\conv(\varphi_\tau(\JJ_i))\cap\conv(\varphi_\tau(\JJ_{i'}))\neq\conv(\varphi_\tau(\JJ_i\cap\JJ_{i'}))\}\] is a closed subset of $[0,1]$. 
\end{lemma}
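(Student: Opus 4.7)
The plan is to show that if $\tau_{0}\in\mathcal Z_{i,i'}$, then some neighborhood of $\tau_{0}$ in $[0,1]$ lies in $\mathcal Z_{i,i'}$.  Write $\sigma^{(j)}_{\tau}:=\conv(\varphi_{\tau}(\JJ_{j}))$ for $j\in\{i,i'\}$ and $C_{\tau}:=\conv(\varphi_{\tau}(\JJ_{i}\cap\JJ_{i'}))$.  Since the vertices $\varphi_{\tau_{0}}(\JJ_{i}\cap\JJ_{i'})$ are a subset of the vertex set of the simplex $\sigma^{(i)}_{\tau_{0}}$, the set $C_{\tau_{0}}$ is a face of $\sigma^{(i)}_{\tau_{0}}$, and likewise a face of $\sigma^{(i')}_{\tau_{0}}$; consequently $C_{\tau_{0}}$ is a face of the convex polytope $\sigma^{(i)}_{\tau_{0}}\cap\sigma^{(i')}_{\tau_{0}}$.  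The hypothesis $\tau_{0}\in\mathcal Z_{i,i'}$ says that this face is proper, so I can select a witness $q_{0}$ in the relative interior of $\sigma^{(i)}_{\tau_{0}}\cap\sigma^{(i')}_{\tau_{0}}$; such a point is automatically disjoint from the proper face $C_{\tau_{0}}$.

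Next I would write $q_{0}$ in barycentric coordinates with respect to both simplices: $q_{0}=\sum_{v\in\JJ_{i}}a_{v}\varphi_{\tau_{0}}(v)=\sum_{w\in\JJ_{i'}}b_{w}\varphi_{\tau_{0}}(w)$, with $a,b\geq 0$ and $\sum a_{v}=\sum b_{w}=1$.  Since $q_{0}\notin C_{\tau_{0}}$, the unique barycentric representation in $\sigma^{(i)}_{\tau_{0}}$ must satisfy $a_{v^{*}}>0$ for some $v^{*}\in\JJ_{i}\setminus\JJ_{i'}$; by the same reasoning $b_{w^{*}}>0$ for some $w^{*}\in\JJ_{i'}\setminus\JJ_{i}$.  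Moreover, choosing $q_{0}$ in the relative interior of the intersection amounts to choosing a point in the relative interior of the affine slice $\{(a,b)\in\mathbb R^{\JJ_{i}}_{\geq 0}\times\mathbb R^{\JJ_{i'}}_{\geq 0}:\sum a=\sum b=1,\ \sum a_{v}\varphi_{\tau_{0}}(v)=\sum b_{w}\varphi_{\tau_{0}}(w)\}$, at which the active inequalities are precisely those forced by the equations.

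For $\tau$ near $\tau_{0}$, the same linear system with $\varphi_{\tau}$ in place of $\varphi_{\tau_{0}}$ has data varying continuously in $\tau$.  Applying the implicit function theorem to a nondegenerate subsystem specifying the affine solution space at $\tau_{0}$, I obtain a continuous family $(a^{\tau},b^{\tau})$ with $(a^{\tau_{0}},b^{\tau_{0}})=(a,b)$ and $\sum a^{\tau}_{v}\varphi_{\tau}(v)=\sum b^{\tau}_{w}\varphi_{\tau}(w)$ and $\sum a^{\tau}=\sum b^{\tau}=1$.  Strict positivity of the coordinates that were positive at $\tau_{0}$ --- in particular of $a^{\tau}_{v^{*}}$ --- is an open condition, so it persists on a neighborhood of $\tau_{0}$; the strict inequalities arising from relative-interiority play the same role as strict complementarity in parametric linear programming.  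The point $q_{\tau}:=\sum a^{\tau}_{v}\varphi_{\tau}(v)\in\sigma^{(i)}_{\tau}\cap\sigma^{(i')}_{\tau}$ then satisfies $q_{\tau}\notin C_{\tau}$, since its $\sigma^{(i)}_{\tau}$-barycentric representation has $a^{\tau}_{v^{*}}>0$ with $v^{*}\notin\JJ_{i'}$.  Hence $\tau\in\mathcal Z_{i,i'}$.

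The main obstacle is arranging the initial witness so that the implicit function theorem applies and nonnegativity is preserved.  Choosing $q_{0}$ in the relative interior (rather than an arbitrary witness on the boundary of the intersection) is the key technical point: it ensures that the nonnegativity constraints active at $(a,b)$ are exactly those that remain active under small deformations of the linear system, allowing the continuity of $(a^{\tau},b^{\tau})$ and hence the persistence of the witness. This use of the relative-interior witness, rather than just any witness, is what allows the openness conclusion to be extracted from a continuity statement for parametric linear systems.
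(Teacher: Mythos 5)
There is a mismatch you should be aware of: the lemma as printed says that $\mathcal Z_{i,i'}$ is \emph{open}, but the paper's own proof establishes that it is \emph{closed} (the argument takes $\tau^*\in[0,1]\setminus\mathcal Z_{i,i'}$, separates the two simplices by a hyperplane through the common face via \cref{lem:hyperplane_polytopes}, and uses continuity to show a whole neighborhood of $\tau^*$ avoids $\mathcal Z_{i,i'}$ --- i.e.\ the complement is open). The proof of \cref{prop:deform} then explicitly invokes this as ``\,$\mathcal T$ is a union of finitely many \emph{closed} subsets of $[0,1]$''. So the word ``open'' in the statement is almost certainly a typo for ``closed'', and your proposal, which aims at openness, is pointed at the wrong target.

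Beyond the misdirection, the openness argument itself has a genuine gap. After applying the implicit function theorem you obtain a family $(a^\tau,b^\tau)$ solving the \emph{linear equalities} (affine span and $\sum a^\tau_v\varphi_\tau(v)=\sum b^\tau_w\varphi_\tau(w)$), and you correctly note that the coordinates that were strictly positive at $\tau_0$ stay positive. But you then assert $q_\tau\in\sigma^{(i)}_\tau\cap\sigma^{(i')}_\tau$, which additionally requires the coordinates that were \emph{zero} at $\tau_0$ to remain nonnegative. Nothing forces this. The phrase ``the active inequalities are precisely those forced by the equations'' conflates two different things: at a relative-interior point of the feasible polyhedron, the active nonnegativity constraints are those that are \emph{implied for every feasible point by the equations together with the other nonnegativity constraints}, not by the linear equations alone. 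A coordinate such as $a_v=0$ may be pinned to zero only through an inequality chain of the form $2a_v = -b_w$, $a_v\ge 0$, $b_w\ge 0$; when $\tau$ varies, the coefficient in that equation can change sign, and the IFT continuation then pushes $a_v^\tau$ (or forces $b_w^\tau$ to collapse to $0$, shrinking the intersection to the common face). In other words, $\mathcal Z_{i,i'}$ can genuinely fail to be open: two simplices sharing only a vertex can, as $\tau$ crosses a critical value, transition from meeting exactly at that vertex to meeting along a segment and then to overlapping in their interiors; the critical value belongs to $\mathcal Z_{i,i'}$ but has a one-sided neighborhood outside it. The paper's separating-hyperplane argument for the \emph{complement} does not have this problem precisely because a strict separation is a robustly open condition, whereas ``the intersection is strictly larger than the common face'' is not. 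You should retarget the argument to prove closedness and can then essentially reproduce the paper's hyperplane-separation approach.
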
 
\begin{proof}
Let $\www_1,\ldots,\www_p$ be the vertices in $\JJ_i\cap \JJ_{i'}$. Fix affinely independent points $\uuu_1,\ldots,\uuu_p$ in $U$. For each $\tau\in[0,1]$, there exists an affine transformation $\chi_\tau\colon U\to U$ such that $\chi_\tau(\varphi_\tau(\www_k))=\uuu_k$ for all $k\in[p]$. We can choose the family $(\chi_\tau)_{\tau\in[0,1]}$ so that the polytopes $R_\tau=\conv(\chi_\tau(\varphi_\tau(\JJ_i)))$ and $R_\tau'=\conv(\chi_\tau(\varphi_\tau(\JJ_{i'})))$ are $d$-dimensional simplices. We can also assume that $\tau\in\mathcal Z_{i,i'}$ if and only if $R_\tau\cap R_\tau'\neq\conv(\{\uuu_1,\ldots,\uuu_k\})$. Moreover, because each of the maps $\tau\mapsto\varphi_\tau(\vvv)$ for $\vvv\in\VV(P_0)$ is continuous, we can choose the family $(\chi_\tau)_{\tau\in[0,1]}$ so that the map from $U\times [0,1]$ to $U$ given by $(\xx,\tau)\mapsto\chi_\tau(\xx)$ is continuous (we equip $U\times[0,1]$ with the natural product topology). Let $Y_\tau=\chi_\tau(\varphi_\tau(\JJ_i\setminus\JJ_{i'}))$ be the set of vertices of $R_\tau$ that are not in $\{\uuu_1,\ldots,\uuu_k\}$. Let $Y_\tau'=\chi_\tau(\varphi_\tau(\JJ_{i'}\setminus\JJ_{i}))$ be the set of vertices of $R_\tau'$ that are not in $\{\uuu_1,\ldots,\uuu_k\}$.

Now choose $\tau^*\in[0,1]\setminus\mathcal Z_{i,i'}$. By \cref{lem:hyperplane_polytopes}, there is a hyperplane $H$ in $U$ such that \[H \cap R_{\tau^*}=H\cap R_{\tau^*}'=\conv(\{\uuu_1,\ldots,\uuu_k\}).\] Let $H^+$ and $H^-$ be the open half-spaces bounded by $H$. The set $Y_{\tau^*}$ is contained in one of these open half-spaces, while $Y_{\tau^*}'$ is contained in the other; without loss of generality, assume $Y_{\tau^*}\subseteq H^+$ and $Y_{\tau^*}'\subseteq H^-$. For each $\vvv\in\VV(P_0)$, the map $\tau\mapsto \chi_\tau(\varphi_\tau(\vvv))$ is continuous. Therefore, there exists $\epsilon>0$ such that $Y_\tau\subseteq H^+$ and $Y_\tau'\subseteq H^-$ for all $\tau\in[0,1]$ satisfying $|\tau-\tau^*|<\epsilon$. It follows that every $\tau\in[0,1]$ satisfying $|\tau-\tau^*|<\epsilon$ is in $[0,1]\setminus\mathcal Z_{i,i'}$. As $\tau^*$ was arbitrary, this proves that $[0,1]\setminus\mathcal Z_{i,i'}$ is open, so $\mathcal Z_{i,i'}$ is closed.    
\end{proof}

\begin{lemma}\label{lem:common_facet}
Suppose $i,i'\in[m]$ are such that the simplices $\conv(\JJ_i)$ and $\conv(\JJ_{i'})$ intersect along a common facet. For every $\tau\in[0,1]$, the simplices $\conv(\varphi_\tau(\JJ_i))$ and $\conv(\varphi_\tau(\JJ_{i'}))$ intersect along a common facet, which is $\conv(\varphi_\tau(\JJ_i\cap\JJ_{i'}))$. 
\end{lemma}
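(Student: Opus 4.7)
The plan is to reduce the statement to a continuity argument about the signed side of a moving hyperplane.

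First I would set up the geometric picture. Since $\conv(\JJ_i)$ and $\conv(\JJ_{i'})$ are $d$-simplices in $U$ that share a common facet, $|\JJ_i \cap \JJ_{i'}| = d$, so there is a unique vertex $\vvv \in \JJ_i \setminus \JJ_{i'}$ and a unique vertex $\vvv' \in \JJ_{i'} \setminus \JJ_i$. For each $\tau \in [0,1]$, let $H_\tau$ denote the affine hyperplane in $U$ spanned by $\varphi_\tau(\JJ_i \cap \JJ_{i'})$. The standing hypothesis that $\conv(\varphi_\tau(\JJ_i))$ and $\conv(\varphi_\tau(\JJ_{i'}))$ are both $d$-simplices forces $\varphi_\tau(\vvv) \notin H_\tau$ and $\varphi_\tau(\vvv') \notin H_\tau$.

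The heart of the argument is to show that $\varphi_\tau(\vvv)$ and $\varphi_\tau(\vvv')$ lie in opposite open half-spaces of $H_\tau$ for every $\tau$. After enumerating $\JJ_i \cap \JJ_{i'} = \{\www_1, \ldots, \www_d\}$ and fixing coordinates on $U$, one can define a continuous signed-volume function by
\[\delta_\tau(\xx) = \det\begin{bmatrix} \varphi_\tau(\www_1) & \cdots & \varphi_\tau(\www_d) & \xx \\ 1 & \cdots & 1 & 1 \end{bmatrix}.\]
Then $\delta_\tau(\xx) = 0$ exactly when $\xx \in H_\tau$, and the sign of $\delta_\tau(\xx)$ records the open half-space of $H_\tau$ in which $\xx$ lies. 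The continuity of $\tau \mapsto \varphi_\tau(\vvv)$ and $\tau \mapsto \varphi_\tau(\vvv')$ makes $\tau \mapsto \delta_\tau(\varphi_\tau(\vvv))$ and $\tau \mapsto \delta_\tau(\varphi_\tau(\vvv'))$ continuous functions $[0,1] \to \RR$ that never vanish, so their signs are constant on $[0,1]$. At $\tau = 0$ the hypothesis that $\conv(\JJ_i) \cap \conv(\JJ_{i'}) = \conv(\JJ_i \cap \JJ_{i'})$ is a common facet forces $\vvv$ and $\vvv'$ onto opposite sides of $H_0$; hence the two signs disagree for all $\tau \in [0,1]$.

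A convexity argument then finishes the proof. The simplex $\conv(\varphi_\tau(\JJ_i))$ lies in the closed half-space of $H_\tau$ containing $\varphi_\tau(\vvv)$, and $\conv(\varphi_\tau(\JJ_{i'}))$ lies in the opposite closed half-space, so their intersection lies in $H_\tau$. Since $\varphi_\tau(\vvv)$ is the unique vertex of $\conv(\varphi_\tau(\JJ_i))$ off $H_\tau$, we obtain $\conv(\varphi_\tau(\JJ_i)) \cap H_\tau = \conv(\varphi_\tau(\JJ_i \cap \JJ_{i'}))$, and similarly for $\JJ_{i'}$; the reverse inclusion is immediate since $\conv(\varphi_\tau(\JJ_i \cap \JJ_{i'}))$ is a face of each simplex. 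As both simplices are $d$-dimensional, this common face is in fact a common facet, as required. The only delicate point is the explicit continuous construction of $\delta_\tau$; beyond that, the argument is a standard hyperplane-separation calculation and should not present any real obstacle.
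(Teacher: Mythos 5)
Your proof is correct and follows essentially the same strategy as the paper's: both arguments reduce the lemma to showing that the unique vertices $\www \in \JJ_i \setminus \JJ_{i'}$ and $\www' \in \JJ_{i'} \setminus \JJ_i$ remain on opposite sides of the moving hyperplane $H_\tau$ for all $\tau$, using the full-dimensionality hypothesis to rule out either point ever landing on $H_\tau$. The difference is one of presentation rather than substance: the paper defines $\mathcal T \subseteq [0,1]$ as the set of $\tau$ where the opposite-sides condition holds, observes it is open and contains $0$, and derives a contradiction from the minimum of its complement; you instead make the continuity argument concrete via the signed-volume determinant $\delta_\tau$ and the intermediate value theorem. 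Your version is arguably more self-contained — the paper's step asserting that at the critical time $\tau^\#$ one of the two points must lie on $H_{\tau^\#}$ is tacitly appealing to exactly the kind of IVT argument you spell out with $\delta_\tau$ — while the paper's version is a bit shorter. Your closing convexity paragraph (intersecting each simplex with $H_\tau$ to identify the common facet) is a detail the paper leaves implicit, and it is handled correctly.
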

\begin{proof}
Let us write $\JJ_i\setminus\JJ_{i'}=\{\www\}$ and $\JJ_{i'}\setminus\JJ_i=\{\www'\}$. Let $H_\tau$ be the hyperplane containing $\varphi_\tau(\JJ_i\cap\JJ_{i'})$. Let $\mathcal T$ be the set of $\tau\in[0,1]$ such that $\varphi_\tau(\www)$ and $\varphi_\tau(\www')$ lie in opposite open half-spaces bounded by $H_\tau$. Because each map $\tau\mapsto\varphi_\tau(\vvv)$ for $\vvv\in\VV(P_0)$ is continuous, $\mathcal T$ is an open subset of $[0,1]$ that contains $0$. We wish to show that $\mathcal T=[0,1]$, so suppose by way of contradiction that this is not that case. Then $[0,1]\setminus\mathcal T$ contains its minimum, which is some number $\tau^\#$. Either $\varphi_{\tau^\#}(\www)$ or $\varphi_{\tau^\#}(\www')$ lies in the hyperplane $H_{\tau^{\#}}$. This contradicts the fact that $\conv(\varphi_\tau(\JJ_i))$ and $\conv(\varphi_\tau(\JJ_{i'}))$ are $d$-dimensional. We conclude that $\mathcal T=[0,1]$, as desired. 
\end{proof} 

\begin{lemma}\label{lem:facet_or_boundary}
Suppose $\Theta_0$ is a triangulation of $P_0$. For every $\tau\in[0,1]$, every $i\in[m]$, and every facet $F$ of $\conv(\varphi_\tau(\JJ_i))$, either $F=\conv(\varphi_\tau(\JJ_i))\cap \conv(\varphi_\tau(\JJ_{j}))$ for some $j\in[m]$, or $F$ is contained in the boundary of $P_\tau$. 
\end{lemma}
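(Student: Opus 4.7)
The plan is to trace the combinatorial structure of a facet of $\conv(\varphi_\tau(\JJ_i))$ back to $\Theta_0$ via the vertex-bijection $\varphi_\tau$, apply the hypothesis that $\Theta_0$ is a triangulation at $\tau = 0$, and then transport the conclusion forward to $\tau$ using either \cref{lem:common_facet} or the poset isomorphism $\Faces(P_0) \cong \Faces(P_\tau)$. Since each $\conv(\varphi_\tau(\JJ_i))$ is a $d$-dimensional simplex whose vertex set is $\varphi_\tau(\JJ_i)$, every facet $F$ has the form $F = \conv(\varphi_\tau(\JJ_i'))$ for a unique $d$-element subset $\JJ_i' \subset \JJ_i$. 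Because $\varphi_0$ is the identity and $\Theta_0$ is a triangulation of $P_0$, the companion set $\conv(\JJ_i')$ is a facet of the $d$-simplex $\conv(\JJ_i) \in \Theta_0$, and hence falls into one of two cases.

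In the first case, $\conv(\JJ_i')$ is shared with another simplex of $\Theta_0$, say $\conv(\JJ_i') = \conv(\JJ_i) \cap \conv(\JJ_j)$, which forces $\JJ_i' \subseteq \JJ_i \cap \JJ_j$ and, by cardinality, $\JJ_i' = \JJ_i \cap \JJ_j$. I would then directly invoke \cref{lem:common_facet} to conclude that $\conv(\varphi_\tau(\JJ_i))$ and $\conv(\varphi_\tau(\JJ_j))$ meet along the common facet $\conv(\varphi_\tau(\JJ_i \cap \JJ_j)) = F$.

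In the second case, $\conv(\JJ_i')$ is contained in the boundary of $P_0$. Here the key geometric observation is that $\conv(\JJ_i')$ is a $(d-1)$-simplex and therefore affinely spans a hyperplane; since distinct facets of $P_0$ lie in distinct affine hyperplanes, $\conv(\JJ_i')$ must lie in a single facet $G$ of $P_0$. As vertices of $G$ coincide with those vertices of $P_0$ lying in $G$, we obtain $\JJ_i' \subseteq \VV(G)$. Let $\Psi\colon \Faces(P_0) \to \Faces(P_\tau)$ be the poset isomorphism from the hypotheses, which agrees with $\varphi_\tau$ on vertices; then $G_\tau := \Psi(G)$ is a facet of $P_\tau$ with $\VV(G_\tau) = \varphi_\tau(\VV(G))$. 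Consequently $\varphi_\tau(\JJ_i') \subseteq \VV(G_\tau)$, and $F = \conv(\varphi_\tau(\JJ_i')) \subseteq G_\tau$ lies in the boundary of $P_\tau$, as required.

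The main (albeit mild) subtlety is the justification in the second case that $\JJ_i' \subseteq \VV(G)$ for a single facet $G$; this uses the rigidity that distinct facets of a polytope have distinct affine hulls together with the standard fact that vertices of a face of a polytope are themselves vertices of the polytope. Once this is in hand, the poset isomorphism does all the combinatorial bookkeeping, so the deformation respects boundary incidences automatically.
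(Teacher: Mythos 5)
Your proof is correct and takes essentially the same route as the paper: both split on whether the facet of $\conv(\JJ_i)$ lies on $\partial P_0$ or is shared with another simplex of $\Theta_0$, using the poset isomorphism hypothesis in the former case and \cref{lem:common_facet} in the latter. You supply slightly more detail (e.g.\ the observation that a $(d-1)$-dimensional subset of $\partial P_0$ lies in a single facet), but the argument is the same.
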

\begin{proof}
The conclusion of the lemma certainly holds if $\tau=0$. Now suppose $\tau>0$. There is a facet $F_0$ of $\conv(\JJ_i)$ such that $\varphi_\tau(\VV(F_0))=\VV(F)$. If $F_0$ is contained in a facet of $P_0$, then it follows from the third bulleted item in the hypothesis of \cref{prop:deform} that $F$ is contained in a facet of $P_\tau$. Now suppose $F_0$ is not contained in a facet of $P_0$. There must be some index $j\in[m]$ such that $F_0=\conv(\JJ_i)\cap \conv(\JJ_{j})=\conv(\JJ_i\cap \JJ_j)$. Invoking \cref{lem:common_facet}, we find that  ${F=\conv(\varphi_\tau(\JJ_i\cap\JJ_j))=\conv(\varphi_\tau(\JJ_i))\cap \conv(\varphi_\tau(\JJ_{j}))}$.  
\end{proof}

\begin{proof}[Proof of \cref{prop:deform}]
Let $\mathcal T$ be the set of $\tau\in[0,1]$ such that $\Theta_\tau$ is not a triangulation of $P_\tau$; our goal is to show that $\mathcal T=\emptyset$. It follows from \cref{lem:union-covers-real} and the hypothesis of the proposition that $\bigcup_{i=1}^m\conv(\varphi_\tau(\JJ_i))=P_\tau$ for every $\tau\in[0,1]$. Therefore, $\mathcal T$ is equal to the set of $\tau\in[0,1]$ such that two simplices in $\Theta_\tau$ do not intersect along a common face. It follows from \cref{lem:Z-open} that $\mathcal T$ is a union of finitely many closed subsets of $[0,1]$, so it is also closed. 

Let $\mathcal T'$ be the set of $\tau\in[0,1]$ such that there exist two simplices in $\Theta_\tau$ that intersect in their interiors. Since interiors of simplices are open sets, we know that $\mathcal T'$ is an open subset of $[0,1]$. We will show that $\mathcal T=\mathcal T'$. This will prove that $\mathcal T$ is clopen; since $[0,1]$ is connected and $0\not\in\mathcal T$, it will follow that $\mathcal T=\emptyset$. 

We certainly have $\mathcal T'\subseteq\mathcal T$. Suppose that there exists $\tau\in\mathcal T\setminus\mathcal T'$; we will obtain a contradiction, which implies the desired equality. According to \cref{lem:facet-to-facet}, there exist $i,i'\in[m]$ such that ${\conv(\varphi_\tau(\JJ_i))\cap\conv(\varphi_\tau(\JJ_{i'}))}$ is $(d-1)$-dimensional but is not a common facet of $\conv(\varphi_\tau(\JJ_i))$ and $\conv(\varphi_\tau(\JJ_{i'}))$. Then $\conv(\varphi_\tau(\JJ_i))\cap\conv(\varphi_\tau(\JJ_{i'}))$ is properly contained in some facet $F$ of $\conv(\varphi_\tau(\JJ_i))$. Note that $F$ is not contained in the boundary of $P_\tau$. By \cref{lem:facet_or_boundary}, there exists $j\in[m]$ such that $F=\conv(\varphi_\tau(\JJ_i))\cap\conv(\varphi_\tau(\JJ_j))$. But then $\conv(\varphi_\tau(\JJ_{i'}))$ and $\conv(\varphi_\tau(\JJ_j))$ must intersect in their interior, which contradicts the assumption that $\tau\not\in\mathcal T'$.  
\end{proof}

\section{Triangulating the Permutahedron}\label{sec:triangulating}  

As before, let us fix a finite Coxeter group $W$ acting in its reflection representation on an $r$-dimensional Euclidean space $V$. Fix a standard Coxeter element $c$ of $W$. In this section, we prove that
$\SBDW_\yy(W,c)$
is a triangulation of the permutahedron $\Perm_\yy$ for every point $\yy$ in the base region~$\BB$. 

\subsection{SBDW simplices} 
We begin by verifying that $\SBDW_\yy(W,c)$ satisfies conditions (i) and (iii) of \cref{prop:conditions-for-max-cells-in-reg-sub} for every $\yy\in\BB$ and every height function $\hgt: W\yy \to \RR$. 

\begin{lemma}\label{lem:i_and_iii}
    Let $\nabla \in \SBDW_\yy(W,c)$. Then 
    \begin{itemize}
        \item the polytope $\nabla$ is an $r$-dimensional simplex, and its vertices are vertices of $\Perm_\yy$;
        \item for any height function $\hgt: W \yy \to \RR$, the set $\VV(\nabla)^\hgt$ of lifted vertices is contained in a hyperplane in $V\oplus\RR$. 
    \end{itemize}
\end{lemma}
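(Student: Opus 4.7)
The plan is to prove the two bullets separately. For the first, fix $\nabla = \conv(\{u_0 \yy, u_1\yy, \ldots, u_r\yy\}) \in \SBDW_\yy(W,c)$ associated to $u_0 \in W_c^+$ and a saturated chain $u_0 \lessdot_\hB u_1 \lessdot_\hB \cdots \lessdot_\hB u_r = u_0 c$ in $[u_0, u_0c]_\hB$. Since $\yy$ lies in the open region $\BB$, its $W$-stabilizer is trivial, so $W\yy$ consists of $|W|$ distinct points and is exactly $\VV(\Perm_\yy)$; in particular, each $u_i\yy$ is a vertex of $\Perm_\yy$. It remains to verify affine independence. Applying the isometry $u_0^{-1}$ reduces this to showing that $\pi_0\yy, \pi_1\yy, \ldots, \pi_r\yy$ are affinely independent, where $\pi_i := u_0^{-1}u_i$, so that $e = \pi_0 \lessdot_T \pi_1 \lessdot_T \cdots \lessdot_T \pi_r = c$ is the corresponding chain in $\NC(W,c)$ (via \Cref{lem:(wC)-to-saturated-chains}) and the reflections $t_i := \pi_{i-1}^{-1}\pi_i$ satisfy $t_1t_2\cdots t_r \in \Red_T(c)$.

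The key step is to identify the ``moved space'' $V_i := \mathrm{im}(1 - \pi_i) \subseteq V$ in terms of coroots. Recall that $\dim V_i = \ell_T(\pi_i) = i$. I would prove by induction on $i$ that
\[V_i = \mathrm{span}\bigl\{\beta_{t_1}^\vee, \ldots, \beta_{t_i}^\vee\bigr\},\]
using the identity $(1 - \pi_i) = (1 - \pi_{i-1}) + \pi_{i-1}(1 - t_i)$ together with the observation that $\pi_{i-1}(\beta_{t_i}^\vee) \equiv \beta_{t_i}^\vee \pmod{V_{i-1}}$ (since $(1 - \pi_{i-1})\beta_{t_i}^\vee \in V_{i-1}$). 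Comparing dimensions closes the induction and yields a strict filtration $V_0 \subsetneq V_1 \subsetneq \cdots \subsetneq V_r = V$; this is consistent with the linear independence of $\beta_{t_1}^\vee, \ldots, \beta_{t_r}^\vee$ afforded by Carter's lemma (the easy direction of \Cref{lem:linearly_independent}).

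From the formula
\[\pi_i\yy - \pi_{i-1}\yy = \pi_{i-1}(t_i\yy - \yy) = -\langle\beta_{t_i},\yy\rangle\,\pi_{i-1}(\beta_{t_i}^\vee),\]
the scalar is nonzero since $\yy \in \BB$; setting $b_i := \pi_{i-1}(\beta_{t_i}^\vee)$, the previous paragraph gives $b_i \in V_i$ and $b_i \notin V_{i-1}$. A standard triangular argument (take the largest index with nonzero coefficient in a hypothetical linear dependence and extract a contradiction from $b_k \in V_{k-1}$) then shows $b_1, \ldots, b_r$ are linearly independent, so the consecutive differences $\pi_i\yy - \pi_{i-1}\yy$ are linearly independent, completing the proof of affine independence.

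The second bullet is essentially free: by the first bullet, $\VV(\nabla)^\hgt$ consists of $r+1$ points in the $(r+1)$-dimensional space $V \oplus \RR$, so their affine span has dimension at most $r$ and is therefore contained in some hyperplane of $V \oplus \RR$. The main obstacle throughout is the bookkeeping in the first two paragraphs: Carter's lemma provides linear independence of \emph{roots}, but what we actually need is linear independence of the \emph{twisted coroots} $b_i = \pi_{i-1}(\beta_{t_i}^\vee)$, and the filtration $\{V_i\}$ is the bridge.
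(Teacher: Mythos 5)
Your proof is correct, but it takes a longer route than the paper's for the first bullet. The paper works with the \emph{left} multipliers $p_i := w_i w_{i-1}^{-1}$ (so $w_i = p_i w_{i-1}$), which makes each consecutive difference $w_i\yy - w_{i-1}\yy = -\langle\beta_{p_i}, w_{i-1}\yy\rangle\beta_{p_i}^\vee$ directly parallel to the coroot $\beta_{p_i}^\vee$ with no twisting, and since $p_r\cdots p_1$ is a reduced $T$-word for a Coxeter element, linear independence of the $\beta_{p_i}^\vee$ follows immediately from \Cref{lem:linearly_independent}. You instead work with the \emph{right} multipliers $t_i = \pi_{i-1}^{-1}\pi_i$ (the natural choice if one starts from $\rw(\C)$), which forces the differences to be parallel to the twisted coroots $b_i = \pi_{i-1}\beta_{t_i}^\vee$; Carter's lemma no longer applies directly, so you have to build the moved-space filtration $V_i = \mathrm{im}(1-\pi_i)$ and do the dimension count. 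That argument is sound (the identity $(1-\pi_i) = (1-\pi_{i-1}) + \pi_{i-1}(1-t_i)$ together with $\dim V_i = \ell_T(\pi_i) = i$ closes the induction), but it essentially re-derives Carter's lemma from scratch via the rank formula, which is more than the situation demands. A shortcut that stays inside your setup: $b_i$ is, up to sign, the coroot $\beta_{p_i}^\vee$ of $p_i := \pi_{i-1}t_i\pi_{i-1}^{-1} = \pi_i\pi_{i-1}^{-1}$, and $p_r\cdots p_1 = c$, so \Cref{lem:linearly_independent} gives linear independence of the $b_i$ in one line---this is precisely the paper's choice of multipliers, relocated to the identity. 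Your treatment of the second bullet coincides with the paper's.
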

\begin{proof}
By definition, $\nabla = \conv(w\C\yy)$ for some $(w, \C) \in \Omega(W,c)$.
Say $w\C=\{w_0 \leq_{\hB} \cdots \leq_{\hB} w_{r}\}$, and set $p_i:= w_i w_{i-1}^{-1}$. 

    For the first bullet point, note that $w_{i-1} \yy - w_{i}\yy$ is parallel to the coroot $\beta_{p_i}^\vee$. Since $p_1 p_2 \cdots p_r$ is a reduced $T$-word for the Coxeter element $c$, the coroots $\beta_{p_1}^\vee,\ldots,\beta_{p_r}^\vee$ are linearly independent by \cref{lem:linearly_independent}. This implies that the vectors $w_0 \yy,w_1\yy,\ldots,w_r\yy$ are affinely independent, so their convex hull $\nabla$ is a simplex. The fact that $\VV(\nabla)\subseteq\VV(\Perm_\yy)$ follows from the definition of $\nabla$. 

    The second bullet point is immediate from the fact that $\VV(\nabla)^\hgt=\{(w_i \yy)^\hgt:0\leq i\leq r\}$ is a set of $r+1$ points in the $(r+1)$-dimensional space $V\oplus \RR$. 
\end{proof}

\subsection{SBDW simplices meet facet-to-facet}

In this section, we show that $\SBDW_\yy(W,c)$ satisfies condition (ii) of \cref{prop:conditions-for-max-cells-in-reg-sub}. 

\begin{proposition}\label{prop:facet-matching}
    Let $\nabla \in \SBDW_\yy(W,c)$, and let $F$ be a facet of $\nabla$. Then either $F$ lies on the boundary of $\Perm_\yy$, or there is a unique $\nabla' \in \SBDW_\yy(W,c)$ such that $\nabla \cap \nabla' =F$.
\end{proposition}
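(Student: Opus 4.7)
The plan is to parametrize the facets of $\nabla=\conv(w\C\yy)$ (with $\C=\{\pi_0 \lessdot_T \cdots \lessdot_T \pi_r\}$) as $F_i=\conv\{w\pi_j\yy: j\neq i\}$ for $0\leq i\leq r$, and to treat the interior indices $0<i<r$ separately from the endpoint indices $i\in\{0,r\}$.

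\emph{Interior case $(0<i<r)$.} By the diamond property of Bruhat order (\cref{lem:quadrilateral}), the rank-$2$ interval $[w\pi_{i-1},w\pi_{i+1}]_\hB$ contains exactly one element $x\neq w\pi_i$ with $w\pi_{i-1}\lessdot_\hB x \lessdot_\hB w\pi_{i+1}$. Since $y\mapsto w^{-1}y$ embeds $[w,wc]_\hB$ as a subposet of $\NC(W,c)$, the element $w^{-1}x$ lies in $\NC(W,c)$, and replacing $\pi_i$ by $w^{-1}x$ in $\C$ yields another maximal chain $\C'\in\MCh(W,c)$ concordant with $w$. The simplex $\nabla':=\conv(w\C'\yy)\in\SBDW_\yy(W,c)$ satisfies $\nabla\cap\nabla'=F_i$, and uniqueness of $\nabla'$ follows because any such simplex must insert its extra vertex at rank $\ell_S(w\pi_{i-1})+1$ inside the four-element diamond, of which only two choices exist.

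\emph{Endpoint case $(i\in\{0,r\})$.} The remaining $r$ vertices already form a saturated Bruhat chain of length $r-1$; any $\nabla'\in\SBDW_\yy(W,c)$ sharing $F_i$ must extend this to a saturated length-$r$ chain by appending a single vertex at an end. One of the two endpoint extensions recovers $\nabla$ itself, leaving at most one genuine candidate: for $i=0$ it appends $(w\pi_1)c$ at the top with base $w'=w\pi_1$, and for $i=r$ it prepends $(w\pi_{r-1})c^{-1}$ at the bottom with base $w'=(w\pi_{r-1})c^{-1}$. In either case, since inequalities in $\NC(W,c)$ of the form $\pi_a\leq_T\pi_b\leq_T c$ imply $\pi_a^{-1}\pi_b\leq_T\pi_a^{-1}c\leq_T c$ (via the Kreweras complement), the associated new chain is a valid maximal chain of $\NC(W,c)$. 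Thus the candidate $\nabla'$ lies in $\SBDW_\yy(W,c)$ precisely when its base $w'$ lies in $W_c^+$; and since $w'$ is uniquely determined by $F_i$, the candidate is unique if it exists.

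The crux of the argument is then to show that $w'\in W_c^+$ is equivalent to $F_i$ not lying on the boundary of $\Perm_\yy$. Focusing on $i=0$: the facet $F_0$ is on the boundary iff $\{w\pi_1,\ldots,w\pi_r\}$ is contained in a single right coset of a maximal standard parabolic $W_{\langle s\rangle}$, which, after translating by $(w\pi_1)^{-1}$ and using that the parabolic closure of $\pi_1^{-1}c\in\NC(W,c)$ contains all the $\pi_1^{-1}\pi_j$, is equivalent to $\pi_1^{-1}c\in W_{\langle s\rangle}$ for some $s\in S$. Because $\pi_1^{-1}c$ is a parabolic Coxeter element of its parabolic closure (so $\ell_S$ equals the rank of that parabolic) and $\ell_T(\pi_1^{-1}c)=r-1$, this proper-support condition forces $\ell_S(\pi_1^{-1}c)=r-1$, which in turn is equivalent to $\pi_1\in T_\LL(c)$. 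Combining with the characterization $W_c^+=\{u:T_\LL(u^{-1})\cap T_\LL(c)=\emptyset\}$ (used in the proof of \cref{prop:region}) and analyzing how $T_\LL((w\pi_1)^{-1})$ compares to $T_\LL(w^{-1})$ under right multiplication by the reflection $\pi_1$, one concludes $w\pi_1\in W_c^+$ iff $\pi_1\notin T_\LL(c)$, completing the equivalence. The case $i=r$ is handled by the analogous dual argument with $c$ replaced by $c^{-1}$ and with the conjugate reflection $\pi_{r-1}c^{-1}\pi_{r-1}^{-1}$ playing the role of $\pi_1$. The main obstacle is executing this inversion-set bookkeeping cleanly; specifically, establishing that proper standard-parabolic support of $\pi_1^{-1}c$ is equivalent both to $\pi_1\in T_\LL(c)$ and to $w\pi_1\notin W_c^+$, which together give the desired dichotomy between boundary facets of $\Perm_\yy$ and interior facets with a unique neighboring simplex.
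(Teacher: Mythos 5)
Your proposal identifies the right decomposition into an interior-index case and the two endpoint cases, and the candidate simplex $\nabla'$ you construct in each case is the correct one. However, there are two genuine gaps.

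First, in both the interior and endpoint cases you assert that $\nabla\cap\nabla'=F$ without proof. At this stage of the argument one cannot yet appeal to the triangulation property (that is precisely what is being established), so the equality requires an independent argument: one must show that the two ``new'' vertices of $\nabla$ and $\nabla'$ lie strictly on opposite sides of the affine hull of $F$; otherwise the intersection could be full-dimensional (or could strictly contain $F$). The paper handles this explicitly. In the interior case it uses \cref{lem:quadrilateral} to see that $\BIP_{w_{i-1},w_{i+1}}$ is a planar quadrilateral whose diagonal $[w_{i-1}\yy,w_{i+1}\yy]$ separates the two middle vertices, and in the endpoint cases it computes $\langle\gamma,w\yys-p\yys\rangle$ and $\langle\gamma,pc\yys-wc\yys\rangle$ directly in terms of the root $\beta_{t_1}$ to get the opposite-sign conclusion. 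This separation step is load-bearing and cannot be omitted.

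Second, your characterization of boundary facets in the endpoint case relies on the claim that for $\pi_1^{-1}c\in\NC(W,c)$, the Coxeter length $\ell_S(\pi_1^{-1}c)$ equals the rank of its parabolic closure (and hence equals $\ell_T(\pi_1^{-1}c)=r-1$). This is false: $\ell_S$ of a noncrossing partition can greatly exceed $\ell_T$ when the parabolic closure is not a standard parabolic. For instance, in $\SSS_3$ with $c=s_1s_2$ the noncrossing partition $(1\,3)$ has $\ell_T=1$ but $\ell_S=3$. So the chain of equivalences ``$F_0$ on boundary $\iff \ell_S(\pi_1^{-1}c)=r-1 \iff \pi_1\in T_\LL(c)$'' does not go through as written (the end equivalence that you ultimately want, namely $w\pi_1\notin W_c^+$, is true, but the route you sketch to it is broken). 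The paper instead argues directly with reduced $S$-words: if $\ell_S(w\pi_1\,c)<\ell_S(w\pi_1)+r$ then $w\pi_1 c<_\hB wc$, a reduced word for $w\pi_1 c$ is obtained from $\mathsf{w}\mathsf{c}$ by deleting a letter that must lie in $\mathsf{c}$, which forces $w\pi_1$ and $wc$ into a common left coset of some $W_{\langle s\rangle}$, and then \cite[Proposition~2.5.1]{BjornerBrenti} puts the whole interval $[w\pi_1,wc]_\hB$ in that coset. This avoids any claim about $\ell_S$ of noncrossing partitions. You would need to replace your parabolic-closure reasoning with something of this flavor, and separately supply the opposite-sides computation, to make the proof complete.
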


\begin{proof}
By \cref{lem:(wC)-to-saturated-chains}, we have $\nabla=\conv(\C\yy)$ for some chain \[w\C =\{w=w_0 \lessdot_{\hB} w_1 \lessdot_{\hB} \cdots \lessdot_{\hB} w_r=wc\}\in \widetilde\Omega(W,c).\]
    The facet $F$ of $\nabla$ has vertex set $w\C\yy \setminus\{w_i\yy\}$ for some $i$. There are three cases: $i=0$, $i=r$, and $1\le i\le r-1$.

\medskip 

\noindent \textbf{Case 1.} Suppose $1 \le i \le r-1$. The Bruhat interval $[w_{i-1}, w_{i+1}]_{\hB}$ has rank $2$, so it is a diamond by \cref{lem:quadrilateral}. Let $p$ and $q$ be the middle elements of this diamond, where $p=w_i$. Exchanging $p$ for $q$ in $w\C$ produces $w\C' \in \widetilde\Omega(W,c)$. Let $\nabla':= \conv(w\C'\yy)$. Note that $\nabla'$ also has $F$ as a facet and that no other simplices in $\SBDW_\yy(W,c)$ have $F$ as a facet. We would like to show that $\nabla \cap \nabla' =F$. Let $H$ be the affine hull of $F$, which is a hyperplane. It suffices to show that $p$ and $q$ lie on opposite sides of $H$. 

Consider the Bruhat interval polytope $\BIP_{w_{i-1},w_{i+1}}=\conv([w_{i-1}, w_{i+1}]_{\hB}\yy)$, whose vertex set is exactly $[w_{i-1}, w_{i+1}]_{\hB}\yy$. It follows from \cref{lem:quadrilateral} that $\BIP_{w_{i-1},w_{i+1}}$ is a quadrilateral whose edges correspond to the four cover relations in $[w_{i-1}, w_{i+1}]_{\hB}$. The intersection $\BIP_{w_{i-1},w_{i+1}} \cap H$ is the line segment between $w_{i-1} \yy$ and $w_{i+1}\yy$, which is not an edge of $\BIP_{w_{i-1},w_{i+1}}$. In $\BIP_{w_{i-1},w_{i+1}}$, the vertices $p\yy$ and $q\yy$ lie on either side of this line segment, so they are on opposite sides of $H$. 

\medskip 

\noindent \textbf{Case 2.} Suppose $i=0$. Let $p=w_1$. We have $p=t_1 w = w t_2$, where $t_1$ is a left inversion of $p$ and $t_2$ is a right inversion of $p$. 

We first show that if $\ell_S(pc) < \ell_S(p) + r$, then $F$ lies on the boundary of $\Perm_\yy$. Choose reduced $S$-words ${\sf w}$ and ${\sf c}$ for $w$ and $c$. If $\ell_S(pc) < \ell_S(p) + r$, then $pc = t_1 wc <_{\hB} wc$. This implies that a (not necessarily reduced) word for $pc$ can be obtained from ${\sf w}{\sf c}$ by removing a letter. This letter cannot be in $\w$ or we would have $p=t_1w <_{\hB}w$. So this letter is in ${\sf c}$, and we have $t_2 c <_{\hB} c$. This implies that $p$ and $wc=pt_2 c$ differ by an element of a maximal parabolic subgroup $W_{\langle s\rangle}$ of $W$; in other words $pW_{\langle s\rangle} = wc W_{\langle s\rangle}$. It follows from \cite[Proposition~2.5.1]{BjornerBrenti} that every element of the interval $[p,wc]_{\hB}$ belongs to the same left coset of $W_{\langle s\rangle}$ as $p$ and $wc$. In particular, $w_jW_{\langle s\rangle} = pW_{\langle s\rangle}$ for all $2\leq j\leq r$. Thus, all vertices of $F$ lie on the facet of $\Perm_\yy$ with vertex set $\{u\yy: u \in pW_{\langle s\rangle}\}$.

Now, assume that $\ell_S(pc) = \ell_S(p) + r$. Let $\C' \in \widetilde\Omega(W,c)$ be the chain $p=w_1 \lessdot_{\hB} \cdots \lessdot_{\hB} w_r=wc \lessdot pc$, and let $\nabla':= \conv(\C'\yy)$. Note that $F$ is a facet of $\nabla'$ and that no other simplices in $\SBDW_\yy(W,c)$ have $F$ as a facet. Let $H$ be the affine hull of $F$. We will show that $w$ and $pc$ lie on opposite sides of $H$, which will imply that $\nabla \cap \nabla' = F$. 

Let $\beta=\beta_{t_1}$ be the positive root corresponding to $t_1$. Choose a normal vector $\gamma \in V^*$ of $H$ that satisfies $\langle \gamma, \beta^\vee \rangle >0$. We have 
\[H=\{\xx\in V:\langle \gamma, \xx - p\yy \rangle = \langle \gamma, \xx - wc\yy \rangle=0\}.\]
Since $w=t_1p$, we have 
\[w\yy= p\yy - 2\frac{\langle \beta, p\yy \rangle}{\langle \beta, \beta^\vee  \rangle} \beta^\vee, \quad \text{so} \quad w\yy -p\yy = -2 \frac{\langle \beta, p\yy \rangle}{\langle \beta, \beta^\vee  \rangle} \beta^\vee. \]
Since $t_1p<_{\hB}p$, we have $\langle \beta, p\yy \rangle <0$.
Similarly, since $pc = t_1 wc$, we have
\[pc\yy= wc\yy - 2\frac{\langle \beta, wc\yy  \rangle}{\langle \beta, \beta^\vee \rangle} \beta^\vee, \quad \text{so} \quad pc\yy -wc\yy = -2 \frac{\langle \beta, wc\yy \rangle}{\langle \beta, \beta^\vee \rangle} \beta^\vee. \]
Since $wc<_{\hB}t_1wc$, we have $\langle \beta, wc\yy \rangle >0$. We deduce that 
\[\langle \gamma, w\yy -p\yy \rangle = -2 \frac{  \langle \beta,p\yy \rangle}{\langle \beta, \beta^\vee \rangle} \langle \gamma, \beta^\vee \rangle >0 \quad \text{and} \quad \langle \gamma, pc\yy -wc\yy \rangle = -2 \frac{\langle \beta, wc\yy \rangle}{\langle \beta, \beta^\vee \rangle} \langle  \gamma, \beta^\vee\rangle <0,\]
    which shows that $w\yy$ and $pc\yy$ are on opposite sides of $H$, as desired.

\medskip 

\noindent \textbf{Case 3.} Suppose $i=r$. Let $q= w_{r-1}$. If $\ell_S(qc^{-1}) = \ell_S(q) - r$, then the argument in Case~2 (taking $qc^{-1}$ in place of $w$ and $w$ in place of $p$) shows that there is a unique simplex $\nabla' \neq \nabla$ in $\SBDW_\yy(W,c)$ with $F$ as a facet and that $\nabla \cap \nabla'=F$. It remains to show that if $\ell_S(qc^{-1}) \neq \ell_S(q) - r$, then $F$ lies on a facet of $\Perm_\yy$. The argument is quite similar to Case~2. Again, choose reduced $S$-words $\w$ and ${\sf c}$ for $w$ and $c$. Since $q \lessdot_{\hB} wc$, a reduced $S$-word for $q$ may be obtained by removing one letter from ${\sf w}{\sf c}$. The assumption that $\ell(qc^{-1}) = \ell(q) - r$ ensures that this letter cannot be from $\w$, so it must be from $\sf c$. Thus, $q$ and $w$ differ by an element of a maximal parabolic subgroup $W_{\langle s\rangle}$. An argument identical to the one in Case~2 shows that $wW_{\langle s\rangle}=w_jW_{\langle s\rangle}=qW_{\langle s\rangle}$ for all $1\leq j\leq r-1$, so $F$ lies on a facet of $\Perm_\yy$.
\end{proof}

\subsection{A regular triangulation in simply-laced type}\label{subsec:regular} 
Assume in this subsection that $W$ is a simply-laced Coxeter group of rank $r$. Let $c$ be a standard Coxeter element of $W$, and let $\Q^c$ be the corresponding Dynkin quiver of type $W$. 

\begin{definition}\label{def:height}
For $\yy\in\BB$, $\gamma\in V$, and $\epsilon\geq 0$, define a height function $\hgt_{\yy,\gamma}^\epsilon\colon W\yy\to\RR$ on the vertex set of $\Perm_\yy$ by 
\[\hgt_{\yy,\gamma}^\epsilon(w\yy)=\langle\gamma,w^{-1}\yy\rangle-\epsilon\, 2^{\ell_S(w)}.\] 
\end{definition}

Recall that for $\yy\in \BB$ and $\gamma\in V^*$, there is a linear stability function $Z_{\yy,\gamma}\colon V^*\to\CC$, which has the associated slope function $\mu_{\yy,\gamma}\colon\Phi^+\to\RR$ given by \[\mu_{\yy,\gamma}(\beta)=\frac{\langle \gamma, \beta\rangle}{\langle\beta^\vee,\yy\rangle}.\]  
According to \cref{prop:Chang}, there exist $\yys\in V$ and $\gamma^\star\in V^*$ such that $Z_{\yys,\gamma}$ is totally stable.

Our goal in this subsection is to prove the following result, which tells us that there exists some $\yys\in\BB$ such that $\SBDW_\yys(W,c)$ is a regular triangulation of $\Perm_{\yys}$. 

\begin{proposition}\label{prop:regular_simply_laced}
Let $W$ be a simply-laced Coxeter group, and let $c$ be a standard Coxeter element of $W$. Fix $\yys\in\BB$ and $\gamma^\star\in V^*$ such that the stability function $Z_{\yys,\gamma^\star}$ is totally stable. Then $\SBDW_{\yys}(W,c)$ is a regular triangulation of $\Perm_{\yys}$, induced by $\hgt_{\yys,\gamma^\star}^\epsilon$ when $\epsilon>0$ is sufficiently small. 
\end{proposition}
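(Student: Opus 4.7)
The plan is to apply \cref{prop:conditions-for-max-cells-in-reg-sub} with $P = \Perm_{\yys}$, $\Theta = \SBDW_{\yys}(W,c)$, and $\hgt = \hgt_{\yys,\gamma^\star}^\epsilon$. Hypotheses (i) and (iii)---that each cell is an $r$-simplex with vertices in $W\yys \subseteq \VV(\Perm_{\yys})$, and that the lift of each cell spans an affine hyperplane in $V \oplus \RR$---follow immediately from \cref{lem:i_and_iii}. Hypothesis (ii) is exactly \cref{prop:facet-matching}. Thus all the substance of the proof lies in verifying the local folding condition (iv).

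To verify (iv), fix adjacent simplices $\nabla = \conv(w\C\yys)$ and $\nabla' = \conv(w\C'\yys)$ in $\SBDW_{\yys}(W,c)$ meeting along a common facet $F$, and let $p\yys \in \VV(\nabla) \setminus F$ and $q\yys \in \VV(\nabla') \setminus F$ be their distinguished vertices. The proof of \cref{prop:facet-matching} distinguishes three structural cases: a middle swap inside a rank-$2$ Bruhat diamond $[w_{i-1},w_{i+1}]_{\hB}$ for some $1 \le i \le r-1$, a bottom swap ($i = 0$), or a top swap ($i = r$). In every case, because $\nabla$ and $\nabla'$ share $F$, the hyperplanes $H_\nabla$ and $H_{\nabla'}$ agree on the lifted facet $F^\hgt$. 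Consequently, the condition ``$p\yys^\hgt$ lies strictly above $H_{\nabla'}$'' is controlled by a single affine dependence among the $r+2$ points $\{p\yys, q\yys\} \cup \VV(F)$, together with the heights assigned by $\hgt^\epsilon$.

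Unpacking this affine dependence via the formula $t\xx = \xx - \langle \beta_t, \xx \rangle \beta_t^\vee$ applied to the successive increments along the chain, the local folding inequality reduces in each case to a strict comparison of slope-function values $\mu_{\yys,\gamma^\star}(\beta_t) = \langle \gamma^\star, \beta_t^\vee\rangle / \langle \beta_t, \yys \rangle$ at two specific positive roots determined by the swap. Total stability of $Z_{\yys,\gamma^\star}$ supplies such a strict comparison whenever the two roots are $\preceq_c$-comparable. In the middle-swap case, the relevant pair of roots lies inside the rank-$2$ parabolic subgroup generated by the four Bruhat-cover reflections of the diamond, and \cref{lem:i-1modk} pins down their $\preceq_c$-order; moreover, since $\ell_S(p) = \ell_S(q)$, the perturbation term $-\epsilon\, 2^{\ell_S(w)}$ cancels out of the inequality, and total stability at $\epsilon = 0$ already delivers the needed strict inequality, which then persists for small $\epsilon > 0$ by continuity. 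In the endpoint cases, the two relevant roots are $\preceq_c$-comparable via the Auslander--Reiten translation (\cref{lem:AR_translation}); here $\ell_S(p) \ne \ell_S(q)$, so the $\epsilon$-term contributes, and one shows that for sufficiently small $\epsilon > 0$ the strict slope inequality from total stability dominates the $\epsilon$-perturbation, the dyadic scaling $2^{\ell_S(w)}$ ensuring that the different Coxeter-length cells stay well-separated as $\epsilon \to 0^+$.

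Expected main obstacle: The middle-swap sub-case is a relatively clean consequence of total stability combined with \cref{lem:i-1modk}. The delicate step is the endpoint sub-case, where one must correctly identify which pair of roots appears in the slope comparison (in terms of $w$, $c$, and the Bruhat-cover reflection at the end of the chain) and verify that \cref{lem:AR_translation} supplies the $\preceq_c$-comparison in precisely the direction required by the sign of the local folding inequality. Getting the signs right---and bounding the $\epsilon$-perturbation so it does not overturn them---is the core technical challenge.
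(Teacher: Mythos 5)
Your high-level structure matches the paper exactly: invoke \cref{prop:conditions-for-max-cells-in-reg-sub}, discharge (i) and (iii) via \cref{lem:i_and_iii}, discharge (ii) via \cref{prop:facet-matching}, and concentrate on the local folding condition (iv). You also correctly identify that the case split follows the case split in the proof of \cref{prop:facet-matching}, and that the endpoint cases rely on the Auslander--Reiten translation (\cref{lem:AR_translation}) supplying the relevant $\preceq_c$-comparison.

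However, your treatment of the middle-swap case has a genuine gap. You assert that ``since $\ell_S(p) = \ell_S(q)$, the perturbation term $-\epsilon\,2^{\ell_S(w)}$ cancels out of the inequality, and total stability at $\epsilon=0$ already delivers the needed strict inequality.'' This is false precisely when the two Bruhat-cover reflections of the diamond \emph{commute}. In that sub-case the rank-$2$ parabolic subgroup is $A_1\times A_1$: the four relevant vertices (and also their $W$-inverse images) form parallelograms, so after substituting into the height function the $\gamma^\star$-linear parts cancel \emph{exactly} and total stability contributes nothing---at $\epsilon=0$ the four lifted points are coplanar, not strictly folded. The strict inequality comes entirely from the $\epsilon$-term: the paper's \cref{lem:regular_simply_laced_2} computes that the height surplus equals $b\epsilon\,2^{\ell_S(u_i)-1}(1+2^2-2^1-2^1)=b\epsilon\,2^{\ell_S(u_i)-1}>0$, and the convexity of $n\mapsto 2^n$ is what makes this term survive. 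Your claim that the $\epsilon$-perturbation cancels is thus exactly backwards in this sub-case; were it true, the argument would break down there. The non-commuting middle-swap sub-case ($A_2$, via \cref{lem:i-1modk} and \cref{lem:basification}) is where total stability carries the argument at $\epsilon=0$ with the $\epsilon$-term only needing to not overturn a pre-existing strict inequality, as you describe. You need to separate these two sub-cases; the paper handles them in \cref{lem:regular_simply_laced_2} and \cref{lem:regular_simply_laced_3} respectively, and the $\epsilon\,2^{\ell_S}$ term is essential (not merely a safety margin) in the commuting one.

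One further nitpick: in the paper's treatment of the endpoint case, the WLOG assumption $\ell_S(u_0)\leq\ell_S(u_0')$ merges what you call ``bottom swap'' and ``top swap'' into the single case $u_0\neq u_0'$ with $u_j'=u_{j+1}$; this is a convenient symmetry reduction you could adopt rather than treating $i=0$ and $i=r$ separately.
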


Throughout the rest of this subsection, we fix $\yys\in\BB$ and $\gamma^\star\in V^*$ such that $Z_{\yys,\gamma^\star}$ is totally stable. 
Our strategy to prove \cref{prop:regular_simply_laced} is to employ \cref{prop:conditions-for-max-cells-in-reg-sub}. We have already checked (in \cref{lem:i_and_iii,prop:facet-matching}) that $\SBDW_{\yys}(W,c)$ satisfies conditions (i), (ii), and (iii) in \cref{prop:conditions-for-max-cells-in-reg-sub}, so we just need to check the local folding condition (iv).  

To ease notation, let us write $\hgt^\epsilon$ instead of $\hgt_{\yys,\gamma^{\star}}^\epsilon$. Fix two simplices $\nabla$ and $\nabla'$ in $\SBDW_{\yys}(W,c)$ that intersect along a common facet. We have $\VV(\nabla)=u_0\C\yys$ and $\VV(\nabla')=u_0'\C'\yys$ for some Bruhat order chains \[u_0\C=\{u_0\lessdot_{\hB} u_1\lessdot_{\hB}\cdots\lessdot_{\hB} u_r\}\quad\text{and}\quad u_0'\C'=\{u_0'\lessdot_{\hB} u_1'\lessdot_{\hB}\cdots\lessdot_{\hB} u_r'\}.\] We assume without loss of generality that $\ell_S(u_0)\leq\ell_S(u_0')$. Let $u_i'$ be the unique element of $u_0'\C'\setminus u_0\C$ so that $u_i'\yys$ is the vertex of $\nabla'$ that is not in $\nabla$. Our aim is to show that the lifted vertex $(u_i'\yys)^{\hgt^{\epsilon}}$ lies strictly above the hyperplane $H_\nabla$ in $V\oplus\RR$ that contains $\VV(\nabla)^{\hgt^\epsilon}$. We will split the argument into two cases based on whether or not $u_0$ and $u_0'$ are equal. 

Let us first record a couple of observations.
There exist $\lambda\in V^*$ and $b,\xi\in\RR$ with $b>0$ such that 
\[H_\nabla=\{(\vvv,a)\in V\oplus\RR:\langle\lambda,\vvv\rangle+ba=\xi\}\] (note that $\lambda$, $b$, and $\xi$ depend on the parameter $\epsilon$). 
Since $\VV(\nabla)^{\hgt^\epsilon}\subseteq H_\nabla$, we have \[\langle\lambda,\vvv-\vvv'\rangle+b(\hgt^\epsilon(\vvv)-\hgt^\epsilon(\vvv'))=0\] for all $\vvv,\vvv'\in\VV(\nabla)$. If $u\in W$ and $t\in T$, then
\begin{align}
\nonumber \langle \lambda,u\yys-tu\yys\rangle+b(\hgt^\epsilon(u\yys)-\hgt^\epsilon(tu\yys))&=\langle \lambda,\langle\beta_t,u\yys\rangle\beta_t^\vee\rangle+b(\hgt^\epsilon(u\yys)-\hgt^\epsilon(tu\yys)) \\ 
\label{eq:utu1}&=\langle\beta_t,u\yys\rangle\langle \lambda,\beta_t^\vee\rangle+b(\hgt^\epsilon(u\yys)-\hgt^\epsilon(tu\yys)).
\end{align}
If we assume that $u$ and $tu$ are both in the chain $\C$, then it follows that 
\begin{equation}\label{eq:utu2}
\langle\beta_t,u\yys\rangle\langle \lambda,\beta_t^\vee\rangle+b(\hgt^\epsilon(u\yys)-\hgt^\epsilon(tu\yys))=0. 
\end{equation}

\begin{lemma}\label{lem:regular_simply_laced_1}
If $u_0\neq u_0'$, then for all sufficiently small $\epsilon>0$, the lifted vertex $(u_i'\yys)^{\hgt^\epsilon}$ lies strictly above the hyperplane $H_\nabla$. 
\end{lemma}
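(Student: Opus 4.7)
The plan is to rewrite the ``strictly above $H_\nabla$'' condition as an explicit inequality linear in $\epsilon$, control its $\epsilon=0$ content by the total stability of $Z_{\yys,\gamma^\star}$, and exploit the geometrically growing weights $2^{\ell_S(\cdot)}$ in $\hgt^\epsilon$ to break ties for small positive $\epsilon$.

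\emph{Setup.} Since $u_0 \neq u_0'$ and $\ell_S(u_0) \leq \ell_S(u_0')$, this is precisely Case~2 in the proof of \cref{prop:facet-matching}. Writing $w := u_0$ and $p := u_0' = u_1$, we have $p = t_1 w$ for some reflection $t_1$, with $\ell_S(pc) = \ell_S(wc) + 1$, $\langle\beta_{t_1}, wc\yys\rangle > 0$, and the new vertex is $u_i' = pc = t_1 wc$ (so $i=r$).

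\emph{Explicit form of the inequality.} Normalize so that $b=1$ in the equation $\langle\lambda,\vvv\rangle + a = \xi$ for $H_\nabla$. For each cover $u_{j-1}\lessdot_\hB u_j$ of $\C$, set $t_j := u_{j-1}^{-1}u_j$ and $\tau_j := u_{j-1}t_j u_{j-1}^{-1}$, so $\beta_{\tau_j}^\vee = u_{j-1}\beta_{t_j}^\vee$ and $\langle\beta_{\tau_j}, u_{j-1}\yys\rangle = \langle\beta_{t_j},\yys\rangle$. Then \eqref{eq:utu2} together with the definition of $\hgt^\epsilon$ gives
\[
\langle\lambda^\epsilon, \beta_{\tau_j}^\vee\rangle = -\frac{\langle\beta_{t_j}, u_{j-1}^{-1}\yys\rangle\,\langle\gamma^\star,\beta_{t_j}^\vee\rangle + \epsilon\cdot 2^{\ell_S(u_{j-1})}}{\langle\beta_{t_j},\yys\rangle}, \qquad 1\leq j\leq r,
\]
and $\lambda^\epsilon$ is uniquely determined because $\{\beta_{\tau_j}^\vee\}_{j=1}^r$ is a basis of $V$ by \cref{lem:linearly_independent}. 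Combining $(wc\yys)^{\hgt^\epsilon}\in H_\nabla$ with $pc\yys - wc\yys = -\langle\beta_{t_1}, wc\yys\rangle\,\beta_{t_1}^\vee$ and
\[
\hgt^\epsilon(pc\yys) - \hgt^\epsilon(wc\yys) = -\langle\beta_{t_1},\yys\rangle\,\langle\gamma^\star, c^{-1}w^{-1}\beta_{t_1}^\vee\rangle - \epsilon\cdot 2^{\ell_S(wc)},
\]
the condition ``$(pc\yys)^{\hgt^\epsilon}$ strictly above $H_\nabla$'' becomes
\[
-\langle\beta_{t_1},wc\yys\rangle\,\langle\lambda^\epsilon,\beta_{t_1}^\vee\rangle \;-\; \langle\beta_{t_1},\yys\rangle\,\langle\gamma^\star, c^{-1}w^{-1}\beta_{t_1}^\vee\rangle \;-\; \epsilon\cdot 2^{\ell_S(wc)} \;>\; 0.
\]

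\emph{Analysis at $\epsilon=0$ and $\epsilon$-perturbation.} At $\epsilon=0$, one expands $\beta_{t_1}^\vee$ in the basis $\{\beta_{\tau_j}^\vee\}$ and observes that $c^{-1}w^{-1}\beta_{t_1}^\vee = \beta_{t''}^\vee$ with $t'' := (wc)^{-1}t_1(wc)$ (a positive coroot because $t_1\notin T_\LL(wc)$). This rewrites the left-hand side as an $\RR$-linear combination of slopes $\mu_{\yys,\gamma^\star}(\beta_{t_j})$ and $\mu_{\yys,\gamma^\star}(\beta_{t''})$. The $\preceq_c$-comparisons relating these reflections, accessible through iterated use of \cref{lem:AR_translation} along the chain, combined with the total stability of $Z_{\yys,\gamma^\star}$, then yield the required inequality weakly. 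If it is strict, continuity gives the claim for all sufficiently small $\epsilon>0$. Otherwise, I turn to the first-order $\epsilon$-coefficient: the contribution from $\langle\lambda^\epsilon,\beta_{t_1}^\vee\rangle$ is a linear combination of $-2^{\ell_S(u_{j-1})}$, and the geometric growth $2^{\ell_S(u_{j-1})} = 2^{\ell_S(w)+j-1}$, together with positivity of the coefficient of $\beta_{\tau_r}^\vee$ in the expansion of $\beta_{t_1}^\vee$, should make this combination dominate the explicit $-2^{\ell_S(wc)}$ term, yielding strict positivity.

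\emph{Main obstacle.} The hardest step is the equality case at $\epsilon=0$: identifying the precise slope-comparison that emerges (tracking the reflection $t''$ and its $\preceq_c$-relationship to the chain reflections $t_j$), and then verifying strict positivity of the net $\epsilon$-coefficient. The latter requires a positivity statement about the change-of-basis coefficients expressing $\beta_{t_1}^\vee$ in $\{u_{j-1}\beta_{t_j}^\vee\}$, which should follow from the Bruhat-increasing nature of $u_0\C$, but the combinatorial bookkeeping is delicate. A subsidiary point is that the specific choice $\hgt^\epsilon(w\yys) = \langle\gamma^\star, w^{-1}\yys\rangle - \epsilon\cdot 2^{\ell_S(w)}$ (with $w^{-1}$ rather than $w$ in the leading term, and doubling weights along Bruhat covers) is essential to make both the slope-function framework and the AR-translation machinery applicable simultaneously.
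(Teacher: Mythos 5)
Your overall strategy---reduce ``strictly above $H_\nabla$'' to a slope inequality and invoke total stability plus \cref{lem:AR_translation}---is the right one, and it is the one the paper uses. But your proposed implementation contains a genuine gap and a wrong fork, and the paper's argument is much cleaner.

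The key observation you miss is that the single reflection $t := u_1 u_0^{-1}$ is exactly $u_r' u_r^{-1}$ (since $u_r' = u_1 c$ and $u_r = u_0 c$). Because both $u_0$ and $u_1 = t u_0$ lie in the chain $\C$, one can apply \eqref{eq:utu2} \emph{directly} to express $\langle\lambda,\beta_t^\vee\rangle/b$ as a ratio of height differences between consecutive chain vertices. This collapses the entire ``strictly above'' condition into a single slope comparison $\mu_{\yys,\gamma^\star}(\beta_t u_r) < \mu_{\yys,\gamma^\star}(\beta_t u_0)$, with $\beta_t u_0 = \beta_t u_r c^{-1}$, which follows from one application of \cref{lem:AR_translation} and total stability. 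Your plan instead expands $\beta_{t_1}^\vee$ in the basis $\{\beta_{\tau_j}^\vee\}_{j=1}^r$ and anticipates ``a linear combination of slopes''---but if you unwind your own notation, $\tau_1 = u_0 (u_0^{-1}u_1) u_0^{-1} = u_1 u_0^{-1}$ is the very reflection whose coroot you are expanding, so the expansion is trivially one basis vector and the combinatorial bookkeeping you worry about evaporates. Moreover, your ``Otherwise'' branch (first-order $\epsilon$-analysis via the geometric weights $2^{\ell_S}$) does not belong in this case: since $c$ has no fixed roots, $\beta_t u_r \neq \beta_t u_r c^{-1}$, so the $\epsilon = 0$ slope inequality is always \emph{strict}; the $2^{\ell_S}$ perturbation is needed only in the $u_0 = u_0'$, commuting-reflections case treated in \cref{lem:regular_simply_laced_2}. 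You also overload the symbol $t_1$ as both $u_1 u_0^{-1}$ and $u_0^{-1} u_1$, and your assertion $\beta_{\tau_j}^\vee = u_{j-1}\beta_{t_j}^\vee$ needs a sign argument that you do not give. As written, the proposal is an outline with an acknowledged hole (``the combinatorial bookkeeping is delicate''; ``should follow''), not a proof.
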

\begin{proof}
Suppose that $u_0\neq u_0'$, and choose $\epsilon>0$ sufficiently small. Using our assumption that ${\ell_S(u_0)\leq\ell_S(u_0')}$, we find that $u_j'=u_{j+1}$ for all $0\leq j\leq r-1$ and that $i=r$. We have $u_r'=u_1c$ and $u_r=u_0c$, so 
$u_r'u_r^{-1}=u_r'c^{-1}u_0^{-1}=u_1u_0^{-1}$. Let \[t=u_r'u_r^{-1}=u_1u_0^{-1}\in T.\] We want to show that $\langle\lambda,u_r'\yys\rangle+b\,\hgt^\epsilon(u_r'\yys)>\xi$. Since $\xi=\langle\lambda,u_r\yys\rangle+b\,\hgt^\epsilon(u_r\yys)$, we need to show that 
\[\langle\lambda,u_r'\yys-u_r\yys\rangle+b(\hgt^\epsilon(u_r'\yys)-\hgt^\epsilon(u_r\yys))>0.\] 
Using \eqref{eq:utu1}, we can rewrite this desired inequality as 
\[
\langle\beta_t,u_r'\yys\rangle\langle\lambda,\beta_t^\vee\rangle +b(\hgt^\epsilon(u_r'\yys)-\hgt^\epsilon(u_r\yys))>0.\]
Recall that $b>0$. We also have $\langle\beta_t,u_r'\yys\rangle<0$ because $t$ is a left inversion of $u_r'$. Therefore, we wish to show that 
\[\frac{\hgt^\epsilon(u_r\yys)-\hgt^\epsilon(u_r'\yys)}{\langle\beta_t,u_r'\yys\rangle}>\frac{\langle\lambda,\beta_t^\vee\rangle}{b}.\] 
Because $u_1$ and $u_0=tu_1$ are both in $\C$, we can apply \eqref{eq:utu2} to find that \[\frac{\langle\lambda,\beta_t^\vee\rangle}{b}=\frac{\hgt^\epsilon(u_0\yys)-\hgt^\epsilon(u_1\yys)}{\langle\beta_t,u_1\yys\rangle}.\] 
Hence, we need to show that 
\begin{equation}\label{eq:long_chain_1}
\frac{\hgt^\epsilon(u_r\yys)-\hgt^\epsilon(u_r'\yys)}{\langle\beta_t,u_r'\yys\rangle}>\frac{\hgt^\epsilon(u_0\yys)-\hgt^\epsilon(u_1\yys)}{\langle\beta_t,u_1\yys\rangle}.
\end{equation}

Using the definition of the height function $\hgt^\epsilon=\hgt_{\yys,\gamma^\star}^\epsilon$ from \cref{def:height}, we compute that 
\begin{align*}
\hgt^\epsilon(u_r\yys)-\hgt^\epsilon(u_r'\yys)&=\langle\gamma^\star,u_r^{-1}\yys-(u_r')^{-1}\yys\rangle-\epsilon(2^{\ell_S(u_r)}-2^{\ell_S(u_r')}) \\ 
&=\langle\gamma^\star,(u_r')^{-1}(t\yys-\yys)\rangle-\epsilon(2^{\ell_S(u_r)}-2^{\ell_S(u_r')}) \\ 
&=\langle\gamma^\star,(u_r')^{-1}(-\langle \beta_t,\yys\rangle\beta_t^\vee)\rangle-\epsilon(2^{\ell_S(u_r)}-2^{\ell_S(u_r')}) \\ 
&=-\langle\beta_t,\yys\rangle\langle\gamma^\star, (u_r')^{-1}\beta_t^\vee\rangle-\epsilon(2^{\ell_S(u_r)}-2^{\ell_S(u_r')}). 
\end{align*}
Similarly, 
\[\hgt^\epsilon(u_0\yys)-\hgt^\epsilon(u_1\yys)=-\langle\beta_t,\yys\rangle\langle\gamma^\star,u_1^{-1}\beta_t^\vee\rangle-\epsilon(2^{\ell_S(u_0)}-2^{\ell_S(u_1)}).\]
We will prove that 
\begin{equation}\label{eq:long_chain_2}
\frac{-\langle\beta_t,\yys\rangle\langle\gamma^\star, (u_r')^{-1}\beta_t^\vee\rangle}{\langle\beta_t,u_r'\yys\rangle}>\frac{-\langle\beta_t,\yys\rangle\langle\gamma^\star, u_1^{-1}\beta_t^\vee\rangle}{\langle\beta_t,u_1\yys\rangle};
\end{equation}
this will imply that \eqref{eq:long_chain_1} holds so long as $\epsilon$ is sufficiently small. 

We have $(u_r')^{-1}\beta_t^\vee=-u_r^{-1}\beta_t^\vee=-(\beta_tu_r)^\vee$ and $u_1^{-1}\beta_t^\vee=-u_0^{-1}\beta_t^\vee=-(\beta_tu_0)^\vee$. In addition, we have $\langle\beta_t,u_r'\yys\rangle=\langle\beta_tu_r',\yys\rangle=-\langle\beta_tu_r,\yys\rangle$ and $\langle\beta_t,u_1\yys\rangle=\langle\beta_tu_1,\yys\rangle=-\langle\beta_tu_0,\yys\rangle$. Since $\yys\in\BB$, we know that $\langle\beta_t,\yys\rangle>0$, so \eqref{eq:long_chain_2} is equivalent to the inequality 
\[\frac{\langle\gamma^\star, (\beta_tu_r)^\vee\rangle}{\langle\beta_tu_r,\yys\rangle}<\frac{\langle\gamma^\star, (\beta_tu_0)^\vee\rangle}{\langle\beta_tu_0,\yys\rangle}.\] In other words, we must show that \begin{equation}\label{eq:utu3}
\mu_{\yys,\gamma^\star}(\beta_tu_r)<\mu_{\yys,\gamma^\star}(\beta_tu_0).
\end{equation} 
Since $\beta_t u_0$ and $\beta_t u_r$ are positive roots and $\beta_tu_0=\beta_t u_rc^{-1}$, \cref{lem:AR_translation} implies that there is a path from $\beta_t u_r$ to $\beta_t u_0$ in the Auslander--Reiten quiver for $\Q^c$, so \eqref{eq:utu3} follows from the hypothesis that the stability function $Z_{\yys,\gamma^\star}$ is totally stable.  
\end{proof}

Assume now that $u_0=u_0'$. This implies that $u_r=u_0c=u_0'c=u_r'$. Hence, $1\leq i\leq r-1$, and $u_j=u_j'$ for all $0\leq j\leq r$ with $j\neq i$. The Bruhat order interval $[u_{i-1},u_{i+1}]_{\hB}$ is a diamond consisting of the elements $u_{i-1},u_i.u_i',u_{i+1}$. We will consider two cases based on whether or not the reflections $u_iu_{i-1}^{-1}$ and $u_{i+1}u_{i}^{-1}$ commute. 

\begin{lemma}\label{lem:regular_simply_laced_2}
If $u_0=u_0'$ and the reflections $u_iu_{i-1}^{-1}$ and $u_{i+1}u_{i}^{-1}$ commute, then for all $\epsilon>0$, the lifted vertex $(u_i\yys)^{\hgt^\epsilon}$ lies strictly above the hyperplane $H_\nabla$. 
\end{lemma}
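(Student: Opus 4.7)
The plan is to exploit the commutation of $t := u_iu_{i-1}^{-1}$ and $t' := u_{i+1}u_i^{-1}$, which forces $\beta_t \perp \beta_{t'}$, so that each of the two reflections fixes the other's (co)root. This promotes the Bruhat diamond $[u_{i-1},u_{i+1}]_\hB$ into a configuration that is ``flat'' in two complementary senses. Concretely, for any $\zz \in V$ one has the algebraic identity $\zz + tt'\zz = t\zz + t'\zz$, which is immediate because $\zz - t\zz$ is a multiple of $\beta_t^\vee$, and $\beta_t^\vee$ is fixed by $t'$ under the commutation hypothesis. Applying this identity with $\zz = u_{i-1}\yys$ (using $u_i = tu_{i-1}$, $u_i' = t'u_{i-1}$, $u_{i+1} = tt'u_{i-1}$) and then with $\zz = \yys$ (using $u_i^{-1} = u_{i-1}^{-1}t$, $u_i'^{-1} = u_{i-1}^{-1}t'$, $u_{i+1}^{-1} = u_{i-1}^{-1}tt'$) would yield the two parallelogram identities
\[u_{i-1}\yys + u_{i+1}\yys = u_i\yys + u_i'\yys, \qquad u_{i-1}^{-1}\yys + u_{i+1}^{-1}\yys = u_i^{-1}\yys + u_i'^{-1}\yys.\]

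Next I would convert the geometric goal into a scalar inequality. Parameterize $H_\nabla$ as $\{(\vvv,a) : \langle\lambda,\vvv\rangle + ba = \xi\}$ with $b>0$, and use the fact that $(u_j\yys)^{\hgt^\epsilon} \in H_\nabla$ for every $j=0,\dots,r$. Feeding the first parallelogram identity into $\langle\lambda,u_i'\yys\rangle + b\,\hgt^\epsilon(u_i'\yys) - \xi$ collapses ``$(u_i'\yys)^{\hgt^\epsilon}$ lies strictly above $H_\nabla$'' to the single inequality
\[\hgt^\epsilon(u_i\yys) + \hgt^\epsilon(u_i'\yys) \;>\; \hgt^\epsilon(u_{i-1}\yys) + \hgt^\epsilon(u_{i+1}\yys).\]
Splitting $\hgt^\epsilon$ as in \cref{def:height}, the linear part $\langle\gamma^\star, w^{-1}\yys\rangle$ contributes zero by the \emph{second} parallelogram identity, and the penalty $-\epsilon\cdot 2^{\ell_S(w)}$ contributes $\epsilon\cdot 2^{\ell_S(u_{i-1})} > 0$ after invoking the length grading of the diamond (so $\ell_S(u_{i-1}) = k$, $\ell_S(u_i) = \ell_S(u_i') = k+1$, $\ell_S(u_{i+1}) = k+2$, and $2^{k+1} + 2^{k+1} - 2^k - 2^{k+2} = -2^k$).

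The only real obstacle is the bookkeeping for the two parallelogram identities; once these are established, the remainder is a one-line calculation, and the inequality holds for \emph{all} $\epsilon > 0$ rather than requiring $\epsilon$ to be small. This cleanness is specific to the commuting case: without the hypothesis, $\beta_t^\vee$ is no longer $t'$-fixed and both identities fail, which is presumably why the next lemma (the non-commuting case) will need a separate argument that genuinely uses the total stability of $Z_{\yys,\gamma^\star}$, along the lines of the calculation in \cref{lem:regular_simply_laced_1}.
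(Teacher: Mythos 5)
Your proof is correct and takes essentially the same approach as the paper: both establish the two parallelogram identities, reduce the claim to $\hgt^\epsilon(u_i\yys)+\hgt^\epsilon(u_i'\yys)>\hgt^\epsilon(u_{i-1}\yys)+\hgt^\epsilon(u_{i+1}\yys)$, and observe that the linear part of $\hgt^\epsilon$ cancels while the $\epsilon$-penalty term is strictly positive. The only difference is cosmetic: you derive both parallelogram identities directly from the single algebraic identity $\zz + tt'\zz = t\zz + t'\zz$ (applied to $\zz=u_{i-1}\yys$ and $\zz=\yys$), whereas the paper obtains the second identity by passing to inverses via the Bruhat-order automorphism $w\mapsto w^{-1}$ and repeating a geometric rectangle argument; your version is a bit more streamlined.
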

\begin{proof}
Suppose that $u_0=u_0'$ and that the reflections $u_iu_{i-1}^{-1}$ and $u_{i+1}u_{i}^{-1}$ commute. Let $H_1$ and $H_2$ be the hyperplanes in $\HH$ such that $u_iu_{i-1}^{-1}=t_{H_1}$ and $u_{i+1}u_{i}^{-1}=t_{H_2}$. The normal vectors of $H_1$ and $H_2$ are orthogonal, and we have $u_{i}'=t_{H_2}u_{i-1}$ and $u_{i+1}=t_{H_1}u_i'$. It follows that the points $u_{i-1}\yys,u_i\yys,u_{i+1}\yys,u_i'\yys$ are the vertices of a rectangle,  so 
\begin{equation}\label{eq:rectangle}
u_{i}'\yys=u_{i-1}\yys-u_i\yys+u_{i+1}\yys. 
\end{equation} 
The map $W\to W$ given by $w\mapsto w^{-1}$ is an automorphism of the Bruhat order, so the elements $u_{i-1}^{-1},u_i^{-1},(u_i')^{-1},u_{i+1}^{-1}$ also form a diamond interval in the Bruhat order. Because $t_{H_1}$ and $t_{H_2}$ commute, the reflections $u_i^{-1}u_{i-1}=u_{i}^{-1}t_{H_1}u_{i}$ and $u_{i+1}^{-1}u_{i}=u_{i}^{-1}t_{H_2}u_{i}$ commute, so the same argument from above shows that the points $u_{i-1}^{-1}\yys,u_i^{-1}\yys,u_{i+1}^{-1}\yys,(u_i')^{-1}\yys$ are the vertices of a rectangle. Consequently, 
\begin{equation}\label{eq:rectangle2}
(u_{i}')^{-1}\yys=u_{i-1}^{-1}\yys-u_i^{-1}\yys+u_{i+1}^{-1}\yys. 
\end{equation} 

We want to show that $\langle\lambda,u_i'\yys\rangle+b\,\hgt^\epsilon(u_i'\yys)>\xi$. By \cref{def:height}, we have \[\langle\lambda,u_i'\yys\rangle+b\,\hgt^\epsilon(u_i'\yys)=\langle\lambda,u_i'\yys\rangle+b\langle\gamma^*,(u_i')^{-1}\yys\rangle-\epsilon 2^{\ell_S(u_i')}.\] Using \eqref{eq:rectangle}, \eqref{eq:rectangle2}, \cref{def:height}, and the fact that $u_{i-1},u_i,u_{i+1}\in\C$, we can rewrite this expression as  
\begin{align*}
&\phantom{{}={}}(\langle\lambda,u_{i-1}\yys\rangle+b\langle\gamma^\star,u_{i-1}^{-1}\yys\rangle)\!-\!(\langle\lambda,u_{i}\yys\rangle+b\langle\gamma^\star,u_{i}^{-1}\yys\rangle)\!+\!(\langle\lambda,u_{i+1}\yys\rangle+b\langle\gamma^\star,u_{i+1}^{-1}\yys\rangle)-b\epsilon 2^{\ell_S(u_i')} \\ 
&=(\xi+b\epsilon 2^{\ell_S(u_{i-1})})-(\xi+b\epsilon 2^{\ell_S(u_i)})+(\xi+b\epsilon 2^{\ell_S(u_{i+1})})-b\epsilon 2^{\ell_S(u_i')} \\ 
&=\xi +b\epsilon (2^{\ell_S(u_{i-1})}+2^{\ell_S(u_{i+1})}-2^{\ell_S(u_i)}-2^{\ell_S(u_i')}) \\ 
&=\xi +b\epsilon 2^{\ell_S(u_i)-1}(1+2^2-2^1-2^1) \\ 
&=\xi +b\epsilon 2^{\ell_S(u_i)-1}, 
\end{align*}
and this is clearly greater than $\xi$. 
\end{proof}

\begin{lemma}\label{lem:regular_simply_laced_3}
If $u_0=u_0'$ and the reflections $u_iu_{i-1}^{-1}$ and $u_{i+1}u_{i}^{-1}$ do not commute, then for all sufficiently small $\epsilon>0$, the lifted vertex $(u_i\yys)^{\hgt^\epsilon}$ lies strictly above the hyperplane $H_\nabla$. 
\end{lemma}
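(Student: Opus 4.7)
The plan is to follow the template of \cref{lem:regular_simply_laced_1}, using the total stability hypothesis to produce a strict slope inequality that dominates the $\epsilon$-perturbation, but now leveraging the rank-$2$ type-$A_2$ structure of the Bruhat diamond $[u_{i-1},u_{i+1}]_{\hB}$. Set $\tau_k := u_{k-1}^{-1} u_k$ and $\tau_k' := u_{k-1}^{-1} u_k'$ (right-multiplication reflections). Since $W$ is simply-laced and $t_1, t_2$ do not commute, the rank-$2$ parabolic $W' := \langle \tau_i, \tau_{i+1}, \tau_i', \tau_{i+1}' \rangle$ has exactly three reflections, and by \cref{lem:basification} its canonical generators form either the pair $\{\tau_i,\tau_{i+1}\}$ or the pair $\{\tau_i',\tau_{i+1}'\}$. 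After possibly exchanging the roles of $\nabla$ and $\nabla'$, we may assume the former, whereupon \cref{lem:i-1modk} forces $(\tau_i, \tau_{i+1}) = (p_1, p_3)$, where $p_1 \prec_c p_2 \prec_c p_3$ are the three reflections of $W'$; the remaining pair $(\tau_i', \tau_{i+1}')$ must then be one of $(p_2,p_1)$ or $(p_3,p_2)$, giving two subcases.

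In each subcase, writing $P_k := u_{i-1} p_k u_{i-1}^{-1}$ for the three reflections of the conjugate left-multiplication parabolic, a direct computation shows that two of the four left-multiplication reflections $t_1, t_2, t_3, t_4$ coincide: in subcase $(\tau_i',\tau_{i+1}')=(p_2,p_1)$ one has $t_2 = t_3 = P_2$ with $\{t_1, t_4\} = \{P_1, P_3\}$, while in subcase $(\tau_i',\tau_{i+1}')=(p_3,p_2)$ one has $t_1 = t_4 = P_1$ with $\{t_2, t_3\} = \{P_2, P_3\}$. Using the simply-laced $A_2$ coroot identity $\beta_{p_2}^\vee = \beta_{p_1}^\vee + \beta_{p_3}^\vee$, and tracking signs under conjugation by $u_{i-1}$ (governed by whether $p_3 \in T_R(u_{i-1})$, which is in turn forced by which subcase we are in for consistency of the Bruhat diamond), one expresses $\beta_{t_3}^\vee$ as a $\mathbb{Z}$-linear combination of $\beta_{t_1}^\vee$ and $\beta_{t_2}^\vee$. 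Feeding this into equation \eqref{eq:utu2} applied to the consecutive pairs $(u_{i-1}, u_i)$ and $(u_i, u_{i+1})$ in $\C$ yields a formula for $\langle\lambda, \beta_{t_3}^\vee\rangle/b$ in terms of the height differences $\Delta_1 = \hgt^\epsilon(u_{i-1}\yys) - \hgt^\epsilon(u_i\yys)$ and $\Delta_2 = \hgt^\epsilon(u_i\yys) - \hgt^\epsilon(u_{i+1}\yys)$.

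Substituting the explicit form $\hgt^\epsilon(w\yys) = \langle\gamma^\star, w^{-1}\yys\rangle - \epsilon \cdot 2^{\ell_S(w)}$ and simplifying, the desired inequality $\langle\beta_{t_3}, u_{i-1}\yys\rangle\langle\lambda, \beta_{t_3}^\vee\rangle + b\,\Delta_3 < 0$ reduces to one of the shape
\[
x \cdot \bigl(\langle\beta_{p_3},\yys\rangle\langle\gamma^\star,\beta_{p_1}^\vee\rangle - \langle\beta_{p_1},\yys\rangle\langle\gamma^\star,\beta_{p_3}^\vee\rangle\bigr) + \epsilon \cdot C \cdot 2^{\ell_S(u_{i-1})} < 0,
\]
where $x = \langle\beta_P,\yys\rangle > 0$ for some positive root $\beta_P$ of the parabolic $W'$ and $C$ is a subcase-dependent real constant. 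The main term is strictly negative by the total-stability hypothesis: applied to the comparable pair $p_1 \prec_c p_3$, total stability gives $\mu_{\yys,\gamma^\star}(\beta_{p_1}) < \mu_{\yys,\gamma^\star}(\beta_{p_3})$, which is exactly the inequality inside the parentheses. Hence the whole expression is strictly negative for all sufficiently small $\epsilon > 0$ (and in one subcase already at $\epsilon = 0$). The main obstacle is the careful sign bookkeeping under conjugation by $u_{i-1}$, along with the verification that each of the two subcases is geometrically consistent with the hypothesized Bruhat diamond (forcing the correct value of $\epsilon_3 \in \{+1,-1\}$ in the relation $u_{i-1}\beta_{p_3}^\vee = \epsilon_3 \beta_{P_3}^\vee$); once these are resolved, the algebra mirrors that of \cref{lem:regular_simply_laced_1}.
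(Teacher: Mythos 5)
Your proposal is correct, and it takes a genuinely different computational route from the paper while leaning on the same structural lemmas. The paper first uses the coincidence of left‑multiplication reflections (the case $t = t'''$, i.e.\ $u_{i+1}(u_i')^{-1} = u_i u_{i-1}^{-1}$, handling the other case by swapping $\nabla$ and $\nabla'$) to connect $u_i'\yys$ to the vertex $u_{i+1}\yys \in \VV(\nabla)$ via a reflection that also labels the edge $(u_{i-1},u_i)$ of $\nabla$; this lets them apply \eqref{eq:utu2} once and reduces the desired inequality to a single slope comparison $\mu_{\yys,\gamma^\star}(\beta_{t_1}) < \mu_{\yys,\gamma^\star}(\beta_{t_4})$ for adjacent reflections in the $A_2$ parabolic (e.g.\ $p_1 \prec_c p_2$). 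You instead connect $u_i'\yys$ to $u_{i-1}\yys$, express $\beta_{t_3}^\vee$ as a $\mathbb{Z}$-combination of $\beta_{t_1}^\vee, \beta_{t_2}^\vee$ via the $A_2$ coroot identity, apply \eqref{eq:utu2} to both consecutive edges $(u_{i-1},u_i)$ and $(u_i,u_{i+1})$, and reduce to the extremal comparison $\mu_{\yys,\gamma^\star}(\beta_{p_1}) < \mu_{\yys,\gamma^\star}(\beta_{p_3})$; these two reductions are equivalent by the see‑saw relation $\beta_{p_2}^\vee = \beta_{p_1}^\vee + \beta_{p_3}^\vee$. Your approach has the mild advantage that both subcases $(\tau_i',\tau_{i+1}') = (p_2,p_1)$ and $(p_3,p_2)$ are treated by the same linear‑algebra framework, without needing to single out $t=t'''$ versus $t'=t''$. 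A couple of small imprecisions in your sketch: the prefactor $x$ in the final inequality is not a single pairing $\langle\beta_P,\yys\rangle$ but a ratio such as $\langle\beta_{t_1},\yys\rangle / \langle\beta_{p_1},\yys\rangle$ or $\langle\beta_{t_2},\yys\rangle / \langle\beta_{p_3},\yys\rangle$ (still positive, which is all that matters); and the claim that the sign $\epsilon_3$ in $u_{i-1}\beta_{p_3}^\vee = \epsilon_3\beta_{P_3}^\vee$ is ``forced by which subcase we are in'' should be unpacked—the cleanest reason is that in the subcase $\tau_i' = p_3$ one has $p_3 \notin T_\R(u_{i-1})$ because $u_{i-1} \lessdot_{\hB} u_{i-1}p_3 = u_i'$, forcing $\epsilon_3 = +1$, while in the subcase $\tau_i' = p_2$ one has $t_3 = t_2 = P_2$, so $\epsilon_3$ never enters.
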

\begin{proof}
Suppose that $u_0=u_0'$ and that the reflections $t=u_iu_{i-1}^{-1}$ and $t'=u_{i+1}u_{i}^{-1}$ do not commute. Let $t''=u_i'u_{i-1}^{-1}$ and $t'''=u_{i+1}(u_{i}')^{-1}$. Let $W'$ be the subgroup of $W$ generated by $t,t',t'',t'''$. According to \cref{lem:quadrilateral}, $W'$ is a parabolic subgroup of $W$ of rank $2$; since $W$ is simply-laced, $W'$ is isomorphic to $A_1\times A_1$ or $A_2$. Because $t$ and $t'$ do not commute, $W'$ cannot be isomorphic to the abelian group $A_1\times A_1$, so it must be isomorphic to $A_2$. This implies that $W'$ has $3$ reflections, which forces either $t=t'''$ or $t'=t''$. We will assume that $t=t'''$; the other case is similar (in the other case, our same argument applies if one switches the roles of $\nabla$ and $\nabla'$). 

We want to show that $\langle\lambda,u_i'\yys\rangle+b\, \hgt^\epsilon(u_i'\yys)>\xi$. Since $\xi=\langle\lambda,u_{i+1}\yys\rangle+b\,\hgt^\epsilon(u_{i+1}\yys)$, we need to show that 
\[\langle\lambda,u_i'\yys-u_{i+1}\yys\rangle+b(\hgt^\epsilon(u_i'\yys)-\hgt^\epsilon(u_{i+1}\yys))>0.\] 
Using \eqref{eq:utu1}, we can rewrite this desired inequality as 
\[
\langle\beta_{t},u_{i}'\yys\rangle\langle\lambda,\beta_{t}^\vee\rangle +b(\hgt^\epsilon(u_i'\yys)-\hgt^\epsilon(u_{i+1}\yys))>0.\]
Recall that $b>0$. We also have $\langle\beta_{t},u_i'\yys\rangle>0$ because $t$ is not a left inversion of $u_i'$. Therefore, we wish to show that 
\[\frac{\hgt^\epsilon(u_{i+1}\yys)-\hgt^\epsilon(u_i'\yys)}{\langle\beta_{t},u_i'\yys\rangle}<\frac{\langle\lambda,\beta_{t}^\vee\rangle}{b}.\] 
Because $u_i$ and $u_{i-1}=tu_i$ are both in $\C$, we can apply \eqref{eq:utu2} to find that \[\frac{\langle\lambda,\beta_{t}^\vee\rangle}{b}=\frac{\hgt^\epsilon(u_{i-1}\yys)-\hgt^\epsilon(u_i\yys)}{\langle\beta_{t},u_i\yys\rangle}.\] 
Hence, we need to show that 
\begin{equation}\label{eq:diamond_1}
\frac{\hgt^\epsilon(u_{i+1}\yys)-\hgt^\epsilon(u_i'\yys)}{\langle\beta_{t},u_i'\yys\rangle}<\frac{\hgt^\epsilon(u_{i-1}\yys)-\hgt^\epsilon(u_i\yys)}{\langle\beta_t,u_i\yys\rangle}.
\end{equation}

Using the definition of the height function $\hgt^\epsilon=\hgt_{\yys,\gamma^\star}^\epsilon$ from \cref{def:height}, we compute that 
\begin{align*}
\hgt^\epsilon(u_{i+1}\yys)-\hgt^\epsilon(u_i'\yys)&=\langle\gamma^\star,u_{i+1}^{-1}\yys-(u_i')^{-1}\yys\rangle-\epsilon(2^{\ell_S(u_{i+1})}-2^{\ell_S(u_i')}) \\ 
&=\langle\gamma^\star,(u_i')^{-1}(t\yys-\yys)\rangle-\epsilon(2^{\ell_S(u_{i+1})}-2^{\ell_S(u_i')}) \\ 
&=\langle\gamma^\star,(u_i')^{-1}(-\langle \beta_{t},\yys\rangle\beta_{t}^\vee)\rangle-\epsilon(2^{\ell_S(u_{i+1})}-2^{\ell_S(u_i')}) \\ 
&=-\langle\beta_{t},\yys\rangle\langle\gamma^\star, (u_i')^{-1}\beta_{t}^\vee\rangle-\epsilon(2^{\ell_S(u_{i+1})}-2^{\ell_S(u_i')}). 
\end{align*}
Similarly, 
\[\hgt^\epsilon(u_{i-1}\yys)-\hgt^\epsilon(u_i\yys)=-\langle\beta_{t},\yys\rangle\langle\gamma^\star,u_i^{-1}\beta_{t}^\vee\rangle-\epsilon(2^{\ell_S(u_{i-1})}-2^{\ell_S(u_i)}).\] 
We will prove that 
\begin{equation}\label{eq:diamond_2}
\frac{-\langle\beta_{t},\yys\rangle\langle\gamma^\star, (u_i')^{-1}\beta_{t}^\vee\rangle}{\langle\beta_{t},u_i'\yys\rangle}<\frac{-\langle\beta_{t},\yys\rangle\langle\gamma^\star, u_i^{-1}\beta_{t}^\vee\rangle}{\langle\beta_{t},u_i\yys\rangle};
\end{equation}
this will imply that \eqref{eq:diamond_1} holds so long as $\epsilon$ is sufficiently small. 

We have $(u_i')^{-1}\beta_{t}^\vee=(\beta_{t}u_i')^\vee$ and $u_i^{-1}\beta_{t}^\vee=-u_{i-1}^{-1}\beta_{t}^\vee=-(\beta_{t}u_{i-1})^\vee$. In addition, we have $\langle\beta_{t},u_i'\yys\rangle=\langle\beta_{t}u_i',\yys\rangle$ and $\langle\beta_{t},u_{i}\yys\rangle=\langle\beta_{t}u_i,\yys\rangle=-\langle\beta_{t}u_{i-1},\yys\rangle$. Since $\yys\in\BB$, we know that $\langle\beta_{t},\yys\rangle>0$, so \eqref{eq:diamond_2} is equivalent to the inequality 
\[\frac{\langle\gamma^\star, (\beta_{t}u_i')^\vee\rangle}{\langle\beta_{t}u_i',\yys\rangle}>\frac{\langle\gamma^\star, (\beta_{t}u_{i-1})^\vee\rangle}{\langle\beta_{t}u_{i-1},\yys\rangle}.\] In other words, we must show that \begin{equation}\label{eq:diamond_3}
\mu_{\yys,\gamma^\star}(\beta_{t}u_i')>\mu_{\yys,\gamma^\star}(\beta_{t}u_{i-1}).
\end{equation} 
Note that $\beta_{t} u_i'$ and $\beta_{t} u_{i-1}$ are positive roots. Because the stability function $Z_{\yys,\gamma^\star}$ is totally stable, the inequality \eqref{eq:diamond_3} will follow if we can show that there is a path from $\beta_{t}u_{i-1}$ to $\beta_{t}u_i'$ in the Auslander--Reiten quiver for $\Q^c$. 

Consider the reflections 
\[t_1=u_{i-1}^{-1}u_i,\quad t_2=u_i^{-1}u_{i+1},\quad t_3=u_{i-1}^{-1}u_i',\quad t_4=(u_i')^{-1}u_{i+1}.\] We have \[
\beta_t u_{i-1}=\beta_{t_1}\quad \text{and} \quad \beta_tu_i'=\beta_{t_4}.  
\] Thus, we must show that $t_1\prec_c t_4$. 
It is straightforward to compute that 
\[t_1=u_{i-1}^{-1}tu_{i-1},\quad t_2=t_3=u_{i-1}^{-1}t''u_{i-1},\quad t_4=u_{i-1}^{-1}t'u_{i-1}.\] 
Hence, the subgroup of $W$ generated by the reflections $t_1$, $t_2=t_3$, and $t_4$ is $u_{i-1}^{-1}W'u_{i-1}$, which is isomorphic to $A_2$. Because $u_0\leqB u_{i-1}\leqB u_{i+1}\leqB u_{0}c$, it follows from \cref{lem:basification} that the set of canonical generators of $u_{i-1}^{-1}W'u_{i-1}$ is either $\{t_1,t_2\}$ or $\{t_2,t_4\}$. In either case, $t_2$ is a canonical generator. Because $u_0\C$ and $u_0'\C'$ are in $\widetilde\Omega(W,c)$, the expressions $t_1t_2$ and $t_2t_4$ appear in reduced $T$-words for $c$. By \cref{lem:i-1modk} (and the fact that $t_2$ is a canonical generator of $u_{i-1}^{-1}W'u_{i+1}$), we must have $t_2\prec_c t_1\prec_c t_4$ or $t_1\prec_c t_4\prec_c t_2$. In either case, $t_1\prec_c t_4$, as desired.  
\end{proof}

Put together, \cref{lem:regular_simply_laced_1,lem:regular_simply_laced_2,lem:regular_simply_laced_3} complete the proof of \cref{prop:regular_simply_laced}. 

\subsection{The SBDW triangulation in simply-laced type} 

We are now prepared to prove the following proposition, which is a restatement of \cref{thm:triangulation} in the case when $W$ is simply-laced. 

\begin{proposition}\label{prop:triangulation_simply_laced} 
Let $W$ be a finite simply-laced Coxeter group. For each Coxeter element $c$ of $W$ and each point $\yy\in\BB$, the collection $\SBDW_\yy(W,c)$ is a triangulation of the $W$-permutahedron $\Perm_\yy$. 
\end{proposition}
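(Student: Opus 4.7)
The plan is to bootstrap from the special case handled by \cref{prop:regular_simply_laced} to an arbitrary basepoint $\yy \in \BB$ by invoking the deformation result \cref{prop:deform}. First, fix $\yys \in \BB$ and $\gamma^\star \in V^*$ as in \cref{prop:Chang}, so that \cref{prop:regular_simply_laced} tells us that $\SBDW_{\yys}(W,c)$ is a (regular) triangulation of $\Perm_{\yys}$. Given an arbitrary $\yy \in \BB$, I would then use the fact that $\BB$ is convex to define the continuous path $\yy_\tau := (1-\tau)\yys + \tau\yy$ for $\tau \in [0,1]$, noting that $\yy_\tau \in \BB$ for every $\tau$.

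Set $P_\tau := \Perm_{\yy_\tau}$, and define $\varphi_\tau \colon W\yys \to W\yy_\tau$ by $w\yys \mapsto w\yy_\tau$. Index the simplices of $\SBDW_{\yys}(W,c)$ by their vertex sets $\JJ_1, \ldots, \JJ_m \subseteq W\yys$, so that $\SBDW_{\yy_\tau}(W,c) = \{\conv(\varphi_\tau(\JJ_i)) : i \in [m]\}$ by the definition of the SBDW triangulation (which only involves the combinatorial data of $W$ and $c$, not the choice of basepoint). I would then verify the four hypotheses of \cref{prop:deform}. The first two are immediate: $\varphi_0$ is the identity, and for each vertex $\vvv = w\yys$, the map $\tau \mapsto \varphi_\tau(\vvv) = w\yy_\tau$ is linear, hence continuous. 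The third hypothesis, a poset isomorphism $\Faces(P_0) \to \Faces(P_\tau)$ compatible with $\varphi_\tau$, follows from the standard fact that for every $\yy' \in \BB$, the normal fan of $\Perm_{\yy'}$ is exactly the Coxeter fan of $W$; thus all the polytopes $\Perm_{\yy_\tau}$ are normally equivalent and share the same vertex-face incidence structure under the labeling $w \mapsto w\yy_\tau$.

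The remaining hypothesis to check is that each $\conv(\varphi_\tau(\JJ_i))$ is a full-dimensional simplex for every $\tau \in [0,1]$. This is exactly the content of the first bullet of \cref{lem:i_and_iii}: the proof there that a chain $\{u_0 \lessdot_\hB \cdots \lessdot_\hB u_r\}$ with $u_r = u_0 c$ yields affinely independent vertices $u_0\yy', \ldots, u_r\yy'$ uses only that the difference $u_{i-1}\yy' - u_i\yy'$ is a nonzero multiple of a coroot $\beta_{p_i}^\vee$ (with $\{\beta_{p_i}^\vee\}$ linearly independent by \cref{lem:linearly_independent}), and the nonvanishing of that multiple only requires $\yy' \in \BB$. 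Applying this observation at each $\yy_\tau \in \BB$ gives the fourth hypothesis. \cref{prop:deform} then yields that $\SBDW_\yy(W,c) = \{\conv(\varphi_1(\JJ_i)) : i \in [m]\}$ is a triangulation of $\Perm_\yy$, completing the proof.

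The only step with any subtlety is the combinatorial equivalence of the permutahedra $\Perm_{\yy_\tau}$ along the path; everything else is essentially bookkeeping, plugging the special triangulation from \cref{prop:regular_simply_laced} into the deformation machinery of \cref{sec:subdivisions}. I expect no serious obstacle, since the hard analytic work (establishing the regular triangulation and proving the deformation lemma) has already been done in the preceding sections.
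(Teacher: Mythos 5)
Your proposal matches the paper's proof essentially verbatim: both fix $\yys$ from \cref{prop:Chang,prop:regular_simply_laced}, define the straight-line path $\yy_\tau = \tau\yy + (1-\tau)\yys$ inside the convex region $\BB$, set up the family of bijections $\varphi_\tau(w\yys) = w\yy_\tau$, use \cref{lem:i_and_iii} to check full-dimensionality of the simplices, and invoke \cref{prop:deform} to transport the regular triangulation at $\yys$ to a triangulation at $\yy$. The paper states the verification of the first three hypotheses of \cref{prop:deform} more tersely, while you spell out the normal-fan argument for the poset-isomorphism condition, but the logic is identical.
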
 
\begin{proof}
We will invoke \cref{prop:deform}. Fix a standard Coxeter element $c$ and a point $\yy\in\BB$. We know by \cref{prop:Chang,prop:regular_simply_laced} that there exists $\yys\in\BB$ such that $\SBDW_{\yys}(W,c)$ is a triangulation of $\Perm_{\yys}$. For $\tau\in[0,1]$, let $\yy_\tau=\tau\yy+(1-\tau)\yys$, and let $P_\tau=\Perm_{\yy_\tau}$. Note that $\VV(P_\tau)=W\yy_\tau$. In particular, $P_0=\Perm_{\yys}$, so $\VV(P_0)=W\yys$. Define $\varphi_\tau\colon\VV(P_0)\to\VV(P_\tau)$ by $\varphi_\tau(w\yys)=w\yy_\tau$. This collection of maps satisfies the three bulleted items in the statement of \cref{prop:deform}. Now let $\JJ_1,\ldots,\JJ_m\subseteq\VV(P_0)$ be the vertex sets of the simplices in $\SBDW_{\yys}(W,c)$. For $\tau\in[0,1]$, we have \[\{\conv(\varphi_\tau(\JJ_1)),\ldots,\varphi_\tau(\JJ_m)\}=\SBDW_{\yy_\tau}(W,c),\] and we know that the polytopes in $\SBDW_{\yy_\tau}(W,c)$ are $r$-dimensional simplices by \cref{lem:i_and_iii}. Therefore, it follows from \cref{prop:deform} that $\SBDW_{\yy_\tau}(W,c)$ is a triangulation of $\Perm_{\yy_\tau}$ for every $\tau\in[0,1]$. This proves the desired result since $\yy_1=\yy$.  
\end{proof}

\subsection{Folding} \label{sec:folding}

To prove \cref{thm:triangulation} for arbitrary finite Coxeter groups, we will employ a common technique known as \emph{folding} \cite{Stembridge}. Let us start by briefly discussing the general setup of this technique. 

Suppose $(W,S)$ and $(W^{\fold},S^{\fold})$ are finite Coxeter systems. Suppose there is a surjective map $\ff\colon S\to S^{\fold}$ such that for each $s^{\fold}\in S^{\fold}$, the simple reflections in $\ff^{-1}(s^{\fold})$ commute with each other. Let us also assume that there is an injective group homomorphism $\g\colon W^{\fold}\to W$ such that $\g(s^{\fold})=\prod_{s\in \ff^{-1}(s^{\fold})}s$ for all $s^{\fold}\in S^{\fold}$. We say $(W^{\fold},S^{\fold})$ is a \dfn{folding} of $(W,S)$. We will identify $W^{\fold}$ with its image under $\g$, which is a subgroup of $W$. 

As before, let $W$ act via its reflection representation on $V$. Let $V^{\fold}$ be the orthogonal complement of the subspace of $V$ that is fixed pointwise by all elements of $W^{\fold}$. Then $W^{\fold}$ acts via its reflection representation on $V^{\fold}$, and its Coxeter arrangement is $\HH^{\fold}=\{H\cap V^{\fold}:H\in\HH\}$. Let $\BB^{\fold}$ be the region of $\HH^{\fold}$ that intersects the base region $\BB$ of $\HH$. 

Every finite Coxeter system can be obtained as a folding of a simply-laced Coxeter system, so the following lemma clinches the proof of \cref{thm:triangulation}. 

\begin{lemma}
Suppose $W$ is simply-laced. Let $c$ be a standard Coxeter element of $W^{\fold}$, and let $\yy\in\BB^{\fold}$. Then $\SBDW_\yy(W^{\fold},c)$ is a triangulation of the $W^{\fold}$-permutahedron $\Perm_\yy^{\fold}=\conv(W^{\fold}\yy)\subseteq V^{\fold}$. 
\end{lemma}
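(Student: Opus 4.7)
The plan is to reduce to the simply-laced case via the embedding $\g \colon W^{\fold} \hookrightarrow W$. First, I observe that $\g(c)$ is a standard Coxeter element of $W$: since the simple reflections in each fiber $\ff^{-1}(s^{\fold}) \subseteq S$ commute, expanding $\g(s^{\fold}) = \prod_{s \in \ff^{-1}(s^{\fold})} s$ in a reduced expression for $\g(c)$ yields a reduced $S$-product of every simple reflection of $W$. Next, because each hyperplane in $\HH^{\fold}$ is the restriction of a hyperplane in $\HH$ to $V^{\fold}$, the region $\BB^{\fold}$ lies inside $V^{\mathrm{reg}}$; being connected and meeting $\BB$, it is contained in $\BB$. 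Hence $\yy \in \BB^{\fold}$ forces $\yy \in \BB$, and \cref{prop:triangulation_simply_laced} applied to $(W, c, \yy)$ provides a triangulation $\SBDW_\yy(W, c)$ of $\Perm_\yy(W) \subseteq V$.

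The key second observation is that the proofs of \cref{lem:i_and_iii} and \cref{prop:facet-matching} never invoked simply-lacedness; they depend only on general facts about Bruhat order, reduced $T$-words, diamond intervals (via \cref{lem:quadrilateral}), and parabolic cosets. Applying them to $(W^{\fold}, c, \yy)$, one finds that $\SBDW_\yy(W^{\fold}, c)$ consists of $r^{\fold}$-dimensional simplices in $\Perm^{\fold}_\yy \subseteq V^{\fold}$ with vertices in $\VV(\Perm^{\fold}_\yy)$ and satisfying the facet-matching condition of \cref{prop:conditions-for-max-cells-in-reg-sub}(ii). Invoking \cref{lem:union-covers-real} then gives $\bigcup \SBDW_\yy(W^{\fold}, c) = \Perm^{\fold}_\yy$.

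To conclude via \cref{lem:facet-to-facet}, I still need pairwise disjoint interiors for the simplices in $\SBDW_\yy(W^{\fold}, c)$. For this I would transfer from the triangulation $\SBDW_\yy(W, c)$ established above. Each saturated Bruhat chain $u_0^{\fold} \lessdot_{\hB} \cdots \lessdot_{\hB} u_{r^{\fold}}^{\fold} = u_0^{\fold} c$ in $W^{\fold}$ lifts via $\g$ to a chain $\g(u_0^{\fold}) \leq_{\hB} \cdots \leq_{\hB} \g(u_{r^{\fold}}^{\fold})$ in $W$ of total Bruhat jump $\ell_S(c) = r$, and hence refines into saturated Bruhat chains underlying simplices of $\SBDW_\yy(W, c)$. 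Any two distinct $W^{\fold}$-SBDW simplices meeting in their interiors would, via these refinements, force their corresponding $W$-SBDW representatives to overlap after slicing by $V^{\fold}$, contradicting the triangulation property of $\SBDW_\yy(W, c)$.

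The main obstacle I anticipate is making this transfer precise: a $W$-SBDW simplex refining a $W^{\fold}$-chain generically protrudes out of $V^{\fold}$, since the intermediate vertices introduced when lifting each cover $u_{i-1}^{\fold} \lessdot_{\hB} u_i^{\fold}$ along the commuting simple reflections of the fiber $\ff^{-1}(s^{\fold})$ do not lie in $V^{\fold}$. One must therefore carefully relate the $W^{\fold}$-simplex in $V^{\fold}$ to the appropriate slice-by-$V^{\fold}$ of the $W$-simplex in $V$, keeping track of how the fiber structure of $\ff$ governs the combinatorial correspondence between chains and ensuring that the slicing operation respects facets and interiors.
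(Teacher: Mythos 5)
Your overall strategy is the paper's: lift chains from $W^{\fold}$ to $W$ via $\g$, use the already-established triangulation $\SBDW_\yy(W,c)$ of $\Perm_\yy(W)$, and transfer back. Your preliminary reductions (that $\g(c)$ is a standard Coxeter element of $W$, that $\yy\in\BB$, that \cref{lem:i_and_iii,prop:facet-matching} are type-uniform so \cref{lem:union-covers-real} gives $\bigcup\SBDW_\yy(W^{\fold},c)=\Perm_\yy^{\fold}$) are all correct and match the paper. But at the decisive step you have a genuine gap: you correctly identify that you must relate each $W^{\fold}$-simplex to a slice-by-$V^{\fold}$ of a $W$-simplex, you correctly observe that the extra vertices of the $W$-simplex protrude out of $V^{\fold}$, and then you stop, flagging this as an unresolved obstacle.

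The resolution is short and is exactly where you hesitated. Fix $\nabla^{\fold}=\conv(u\C^{\fold}\yy)\in\SBDW_\yy(W^{\fold},c)$ and let $\nabla=\conv(v\C\yy)\in\SBDW_\yy(W,c)$ be a lift (refining the chain $u\C^{\fold}$). Then $\nabla^{\fold}$ is the convex hull of a subset of the vertices of the simplex $\nabla$, hence a \emph{face} of $\nabla$; it is $r^{\fold}$-dimensional; and it sits inside $V^{\fold}$, which is an $r^{\fold}$-dimensional subspace. So the affine span of $\nabla^{\fold}$ equals $V^{\fold}$, and since a face of a simplex is recovered as the intersection of the simplex with the affine span of that face, we get $\nabla^{\fold}=\nabla\cap V^{\fold}$. (Far from being an obstacle, the fact that the intermediate vertices of $\nabla$ lie outside $V^{\fold}$ is exactly what makes the slice equal the face rather than something larger.) Now for two simplices $\nabla_1^{\fold},\nabla_2^{\fold}$ with lifts $\nabla_1,\nabla_2$, we have
\[\nabla_1^{\fold}\cap\nabla_2^{\fold}=\nabla_1\cap\nabla_2\cap V^{\fold}=\conv\bigl(v_1\C_1\yy\cap v_2\C_2\yy\cap V^{\fold}\bigr)=\conv\bigl(u_1\C_1^{\fold}\yy\cap u_2\C_2^{\fold}\yy\bigr),\]
where the middle equality uses that $\nabla_1\cap\nabla_2$ is a common face of $\nabla_1$ and $\nabla_2$ (since $\SBDW_\yy(W,c)$ is a triangulation). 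This is a common face of $\nabla_1^{\fold}$ and $\nabla_2^{\fold}$, which gives the triangulation property directly — you do not even need the detour through \cref{lem:facet-to-facet} or a separate disjoint-interiors argument.
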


\begin{proof}
First, it follows from \cref{lem:union-covers-real,prop:facet-matching} that $\bigcup\SBDW_\yy(W^{\fold},c)=\Perm_{\yy}^{\fold}$. Hence, we just need to show that any two simplices in $\SBDW_\yy(W^{\fold},c)$ intersect along a common facet. Let $\nabla_1^{\fold},\nabla_2^{\fold}\in\SBDW_{\yy}(W^{\fold},c)$. For $i\in\{1,2\}$, we can write $\nabla_i^{\fold}=\conv(u_i\C_i^{\fold}\yy)$ for some $u_i\C_i^{\fold}\in\widetilde\Omega(W^{\fold},c)$. Note that $c$ is also a standard Coxeter element of $W$ (since each element of $S^{\fold}$ is identified via $\g$ with a product of elements of $S$). This implies that each chain $u_i\C_i^{\fold}$ is contained in a chain $v_i\C_i\in\widetilde\Omega(W,c)$. Then $\nabla_1=\conv(v_1\C_1\yy)$ and $\nabla_2=\conv(v_2\C_2\yy)$ are simplices in $\SBDW_\yy(W,c)$. Because $W$ is simply-laced, we know by \cref{prop:triangulation_simply_laced} that $\nabla_1$ and $\nabla_2$ intersect along a common face $F$. The vertex set of $F$ is $v_1\C_1\yy\cap v_2\C_2\yy$. 

Let $r^{\fold}$ be the rank of $W^{\fold}$. Then $\nabla_1^{\fold}$ is the convex hull of $r^{\fold}+1$ vertices of $\nabla_1$, so it is an $r^{\fold}$-dimensional face of $\nabla_1$. Since $V^{\fold}$ is an $r^{\fold}$-dimensional space whose intersection with $\nabla_1$ contains $\nabla_1^{\fold}$, we must have $\nabla_1^{\fold}=\nabla_1\cap V^{\fold}$. 
In other words, $\nabla_1^{\fold}=\conv((v_1\C_1\yy\cap W^{\fold}\yy)$. Similarly, $\nabla_2^{\fold}=\nabla_2\cap V^{\fold}=\conv(v_2\C_2\yy\cap W^{\fold}\yy)$. Thus, 
\[\nabla_1^{\fold}\cap\nabla_2^{\fold}=\nabla_1\cap\nabla_2\cap V^{\fold}=\conv(v_1\C_1\yy\cap v_2\C_2\yy\cap V^{\fold})=\conv(u_1\C_1^{\fold}\cap u_2\C_2^{\fold})\] 
is a common face of $\nabla_1^{\fold}$ and $\nabla_2^{\fold}$, as desired. 
\end{proof}

\section{Combinatorics of the SBDW Triangulation}\label{sec:combinatorics}

We now prove the results from \cref{subsec:properties-of-triang} in order to gain a much finer combinatorial understanding of SBDW triangulations, whose simplices are labeled by concordant pairs $(u, \bpi) \in W_c^+ \times \MCh(W,c)$. We characterize when elements of $W_c^+$ are concordant with the same set of chains in $\MCh(W,c)$ using Cambrian congruence classes, show the subset of $W_c^+$ concordant with a fixed $\bpi \in \MCh(W,c)$ is an interval in the left weak order, and characterize when $u \in W_c^+$ is concordant with $\bpi \in \MCh(W,c)$ using the positive cluster complex.

Recall that we write $\Sort(W,c)$ for the set of $c$-sortable elements of $W$, which are the minimum elements of the $c$-Cambrian congruence classes (in the right weak order).

For $u\in W$, recall that we write $\pi_c^\downarrow(u)$ for the maximum element of $\{w\in\Sort(W,c):w\leq_\R u\}$ in the right weak order. Let $J\subseteq S$, and let $c_J$ be a standard Coxeter element of the parabolic subgroup $W_J$. The right weak order on $W$ is a lattice; let $\wedge$ denote its meet operation. For $u\in W$, the set ${\{w\in\Sort(W_J,c_J):w\leq_\R u\}}$ has a maximum element, which is $\pi_{c_J}^\downarrow(u\wedge w_\circ(J))$, where $w_\circ(J)$ is the long element of $W_J$. Let us define $\pi_{c_J}^\downarrow(u)=\pi_{c_J}^\downarrow(u\wedge w_\circ(J))$. Thus, we have a well-defined map $\pi_{c_J}^\downarrow\colon W\to\Sort(W_J,c_J)$. 

We will need the next lemma in what follows.

\begin{lemma}\label{lem:parabolic} 
Let $J\subseteq S$, and let $c_J$ be a standard Coxeter element of $W_J$. 
Let $u_1,u_2\in W$ be such that $\ell_S(u_1c_J)=\ell_S(u_1)+\ell_S(c_J)$ and $\ell_S(u_2c_J)=\ell_S(u_2)+\ell_S(c_J)$. Suppose that $u_1\lessdot_\LL u_2$ and that $\pi_{c_J^{-1}}^\downarrow(u_1^{-1})=\pi_{c_J^{-1}}^\downarrow(u_2^{-1})$. Then $u_2\not\leq_{\hB}u_1c_J$. 
\end{lemma}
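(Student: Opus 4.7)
The plan is to translate the desired inequality $u_2 \not\leq_{\hB} u_1c_J$ into a statement about $\NC(W_J, c_J)$ and then leverage the Cambrian hypothesis to force a contradiction. First, I would write $u_2 = su_1$ for the unique simple reflection $s \in T_\LL(u_2) \setminus T_\LL(u_1)$ witnessing the cover $u_1 \lessdot_\LL u_2$, and set $t_* := u_1^{-1}su_1 \in T$, so that $u_2 = u_1t_*$. Under the poset isomorphism $x \mapsto u_1^{-1}x$ from $[u_1, u_1c_J]_{\hB}$ onto a subposet of $\NC(W_J, c_J)$, the assertion $u_2 \leq_{\hB} u_1c_J$ is equivalent to $t_*$ being an atom of this subposet. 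If $t_* \notin T_J := T \cap W_J$, this is automatic since $c_J \in W_J$, so we are done. Otherwise $t_* \in T_J$, and setting $v_i := u_i^{-1} \wedge \wo(J) \in W_J$ one computes (using that the right weak order meet coincides with intersection of left inversion sets) that $T_\LL(v_i) = T_\R(u_i) \cap T_J$. Combined with the direct calculation $T_\R(u_2) = T_\R(u_1) \sqcup \{t_*\}$, this yields a cover $v_1 \lessdot_\R v_2$ in $W_J$ with $t_* \in T_\LL(v_2) \setminus T_\LL(v_1)$.

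Next I would dispatch the case $t_* \in J$ directly from the Cambrian hypothesis. Every simple reflection of $W_J$ is $c_J^{-1}$-sortable, so $\pi_{c_J^{-1}}^\downarrow(t_*) = t_*$. Since $t_*$ is simple, $T_\LL(t_*) = \{t_*\}$, which together with $t_* \in T_\LL(v_2)$ gives $t_* \leq_\R v_2$. Applying monotonicity of $\pi_{c_J^{-1}}^\downarrow$, we obtain $t_* \leq_\R \pi_{c_J^{-1}}^\downarrow(v_2)$; by the Cambrian hypothesis this equals $\pi_{c_J^{-1}}^\downarrow(v_1) \leq_\R v_1$. Hence $t_* \leq_\R v_1$, i.e.\ $t_* \in T_\LL(v_1)$, contradicting $t_* \in T_\LL(v_2) \setminus T_\LL(v_1)$. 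So we may assume $t_* \in T_J \setminus J$, i.e.\ $t_*$ is a non-simple reflection of $W_J$.

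For this remaining case I would induct on $\ell_S(c_J) \geq 2$. Pick a right descent $s^* \in J$ of $c_J$ and factor $c_J = c_J's^*$ with $c_J'$ a standard Coxeter element of $W_{J \setminus \{s^*\}}$; using $u_i \in W_{c_J}^+$ and $s^* \in T_\R(c_J)$ one checks that $u_i \in W_{c_J'}^+$ as well. In the generic case $s^* \notin T_\R(u_1) \cup T_\R(u_2)$, the lifting property of Bruhat order reduces $u_2 \leq_{\hB} u_1c_J$ to $u_2 \leq_{\hB} u_1c_J'$, while Reading's recursion for $\pi_c^\downarrow$ upon removing an initial letter of $c_J^{-1}$ shows that $\pi_{c_J^{-1}}^\downarrow(u_i^{-1}) = \pi_{(c_J')^{-1}}^\downarrow(u_i^{-1})$, transferring the Cambrian hypothesis; the inductive hypothesis then finishes. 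The main obstacle is the non-generic case where $s^* \in T_\R(u_i)$ for some $i$: here the lifting property requires right-multiplying both sides by $s^*$, so one must re-pair $(u_1, u_2)$ as something like $(u_1 s^*, u_2 s^*)$ and verify that the cover-in-left-weak-order relation and the equality of Cambrian projections survive this reorganization (invoking the corresponding branch of Reading's recursion involving conjugation by $s^*$). Carrying out this case analysis cleanly---and tracking how $s$, $s^*$, and $t_*$ interact with the descent sets of $u_1$ and $u_2$ throughout---is where I expect the technical heart of the proof to lie.
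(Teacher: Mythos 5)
Your approach differs genuinely from the paper's. The two cases you complete are sound and rather slick: writing $T_J:=T\cap W_J$, the case $t_*\notin T_J$ is dismissed because every cover in $[u_1,u_1c_J]_\hB$ is labeled by a reflection of $W_J$, and the case $t_*\in J$ is ruled out by monotonicity of $\pi_{c_J^{-1}}^\downarrow$ together with the identity $T_\LL(u^{-1}\wedge\wo(J))=T_\R(u)\cap T_J$ (which is correct, but is special to meeting with $\wo(J)$; weak-order meets do not in general commute with intersecting inversion sets). The genuine gap is the one you flag yourself: the substantive case $t_*\in T_J\setminus J$ is left unfinished, and your proposed single induction on $\ell_S(c_J)$ cannot finish it. In the non-generic re-pairing from $(u_1,u_2,c_J)$ to $(u_1s^*,u_2s^*,s^*c_{J'})$, the new Coxeter element $s^*c_{J'}=s^*c_Js^*$ is a Coxeter element of $W_J$, not of $W_{J'}$, so $\ell_S(c_J)$ does not decrease; only $\ell_S(u_1)$ does. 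You need a second induction variable.

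The paper runs exactly that double induction on $\ell_S(u_1)$ and $|J|$, and it splits cases more efficiently: on whether the chosen right descent $s$ of $c_J$ is a left descent of the common projection $w=\pi_{c_J^{-1}}^\downarrow(u_i^{-1})$. If yes, $s$ is automatically a right descent of both $u_i$, and Reading's conjugation recursion transfers the Cambrian hypothesis to $(u_1s,u_2s,sc_{J'})$, dropping $\ell_S(u_1)$. If no, the paper passes to $c_{J'}$ \emph{without} requiring $s\notin T_\R(u_i)$; the inductive hypothesis gives $u_2\not\leqB u_1c_{J'}$, and then $u_2\leqB u_1c_J$ is excluded by the lifting property together with the fact that $x$ and $\pi_{c_J^{-1}}^\downarrow(x)$ share left descents. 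Your generic/non-generic split on $T_\R(u_i)$ is not exhaustive in the same way: the paper's second case already absorbs configurations with $s\in T_\R(u_i)$ that would otherwise land in your unfinished re-pairing. Your plan could probably be completed, but only after upgrading to the double induction and actually carrying out the conjugation bookkeeping for the re-paired triple---which is precisely where the paper does its work.
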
 
\begin{proof}
We proceed by induction on $\ell_S(u_1)$ and on $|J|$. Let $w=\pi_{c_J^{-1}}^\downarrow(u_1^{-1})=\pi_{c_J^{-1}}^\downarrow(u_2^{-1})$. Let $s$ be a right descent of $c_J$, and let $J'=J\setminus\{s\}$. Let $c_{J'}=c_Js$; note that $c_{J'}$ is a standard Coxeter element of $W_{J'}$. We consider two cases. 

\medskip 

\noindent {\bf Case 1.} Suppose $s$ is a left descent of $w$. Since $w\leq_\R u_1^{-1}$ and $w\leq_\R u_2^{-1}$, we find that $s$ is a left descent of both $u_1^{-1}$ and $u_2^{-1}$. It follows from \cite[Equation~3.1]{reading2007sortable} that ${\pi_{sc_{J'}}^\downarrow(su_1^{-1})=\pi_{sc_{J'}}^\downarrow(su_2^{-1})=sw}$. By induction on $\ell_S(u_1)$, we know that $u_2s\not\leq_{\hB}u_1s(sc_{J'})=u_1c_Js$. Since $s$ is not a right descent of $u_2s$ or $u_1c_Js$, it follows from \cref{lem:Bruhat_descents} that $u_2\not\leqB u_1c_J$.

\medskip 

\noindent {\bf Case 2.} Suppose $s$ is not a left descent of $w$. Because $w\in\Sort(W,c_J^{-1})$, we have $w\in W_{J'}$. Because $\Sort(W_{J'},c_{J'}^{-1})\subseteq\Sort(W_J,c_J^{-1})$, we must have 
\[\pi_{c_J^{-1}}^\downarrow(u_1^{-1})=\pi_{c_J^{-1}}^\downarrow(u_2^{-1})=w.\] We also have $\ell_S(u_1c_{J'})=\ell_S(u_1)+\ell_S(c_{J'})$ and $\ell_S(u_2c_{J'})=\ell_S(u_2)+\ell_S(c_{J'})$. By induction, \[u_2\not\leqB u_1c_{J'}=u_1c_Js.\] Suppose by way of contradiction that $u_2\leqB u_1c_J$. This forces $s$ to be a left descent of $u_2^{-1}$. Because $s$ is also a left descent of the long element $w_\circ(J)$ of $W_J$, we find that $s$ is a left descent of $u_2^{-1}\wedge w_\circ(J)$ (the meet is computed in the right weak order). Each element $x\in W_J$ has the same set of left descents as $\pi_{c_J^{-1}}^{\downarrow}(x)$. This implies that $s$ is a left descent of the element \[\pi_{c_J^{-1}}^{\downarrow}(u_2^{-1}\wedge w_\circ(J))=\pi_{c_J^{-1}}^\downarrow(u_2^{-1})=w,\] which is a contradiction. 
\end{proof} 

The next lemma constitutes part of \cref{thm:Cambrian}. Recall that $\Class_c(u)$ denotes the set of commutation equivalence classes of chains $\bpi\in\MCh(W,c)$ such that $(u,\bpi)\in\Omega(W,c)$. 

\begin{lemma}\label{lem:Cambrian_one_direction} 
Suppose $u_1,u_2\in W_c^+$ are such that $u_1^{-1}$ and $u_2^{-1}$ belong to the same $c^{-1}$-Cambrian class. Then $\Class_c(u_1)=\Class_c(u_2)$. 
\end{lemma}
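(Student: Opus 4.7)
The plan is to reduce to the case of a single left weak order cover and then exploit \cref{lem:parabolic} via the lifting property to pin down the descent behavior of the distinguishing simple reflection.

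First I would note that each $c^{-1}$-Cambrian congruence class is an interval in the right weak order, so its image under inversion is a left weak order interval containing both $u_1$ and $u_2$. Because $W_c^+ = \{u : u \leq_\LL w_\circ c^{-1}\}$ is downward-closed in left weak order, its intersection with this interval is again an interval and so contains a saturated chain in left weak order from $u_1$ to $u_2$ that lies entirely in $W_c^+$ and inside the inverted Cambrian class. Iterating the lemma along this chain reduces the problem to the case $u_2 = s u_1$ for some simple reflection $s$ that is not a left descent of $u_1$. From $u_1, u_2 \in W_c^+$ one computes $\ell_S(u_2 c) = \ell_S(u_1 c) + 1$, and since $u_2 c = s u_1 c$ this shows $s$ is also not a left descent of $u_1 c$.

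The heart of the argument is a pair of descent sub-claims, both of which follow from \cref{lem:parabolic} (applied with $J = S$, giving $u_2 \not\leqB u_1 c$) via the lifting property of Bruhat order:
\begin{itemize}
\item[\textup{(A)}] For every $y \in [u_1, u_1 c]_{\hB}$, $s$ is \emph{not} a left descent of $y$.
\item[\textup{(B)}] For every $y \in [u_2, u_2 c]_{\hB}$, $s$ \emph{is} a left descent of $y$.
\end{itemize}
For (A), if $s$ were a left descent of $y$, applying the left lifting property to the pair $(u_1, y)$---using $u_1 \leqB y$, $u_1 \leqB s u_1 = u_2$, and $sy \leqB y$---would force $u_2 \leqB y \leqB u_1 c$, contradicting \cref{lem:parabolic}. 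For (B), if $s$ were not a left descent of $y$, applying lifting to $(y, u_2 c)$---using $y \leqB u_2 c$, $y \leqB sy$, and $s u_2 c = u_1 c \leqB u_2 c$---would give $y \leqB u_1 c$, whence $u_2 \leqB y \leqB u_1 c$, again contradicting \cref{lem:parabolic}.

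Finally I would use these sub-claims to transfer concordance. By \cref{lem:(wC)-to-saturated-chains}, concordance of a chain $\bpi \in \MCh(W,c)$ with $u_i$ is equivalent to the translated chain $u_i \bpi$ being a saturated Bruhat chain in $[u_i, u_i c]_{\hB}$. Given such a chain $u_1 = a_0 \lessdot a_1 \lessdot \cdots \lessdot a_r = u_1 c$, sub-claim (A) ensures $s$ is never a left descent of any $a_i$, so $b_i := s a_i$ satisfies $\ell_S(b_i) = \ell_S(a_i) + 1$ and has $s$ as a left descent. \cref{lem:Bruhat_descents} then transports each cover $a_i \lessdot a_{i+1}$ to a cover $b_i \lessdot b_{i+1}$, producing the saturated chain $u_2 = b_0 \lessdot b_1 \lessdot \cdots \lessdot b_r = u_2 c$ that witnesses concordance of $\bpi$ with $u_2$. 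The reverse direction is symmetric, using (B) to guarantee that $s$ is a left descent of every element of a saturated chain at $u_2$ and then passing to $a_i := s b_i$. The main obstacle is ruling out the mixed configurations in which $s$ could change descent status along an intermediate chain element; both sub-claims dispatch this by reducing every bad configuration to the single non-comparability statement $u_2 \not\leqB u_1 c$ furnished by \cref{lem:parabolic}.
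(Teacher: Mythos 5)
Your proof is correct and follows essentially the same route as the paper's: reduce to a single left weak order cover $u_2 = s u_1$, invoke \cref{lem:parabolic} with $J=S$ to get $u_2 \not\leqB u_1 c$, establish that $s$ is a left descent of every element of $[u_2, u_2c]_\hB$ and of no element of $[u_1, u_1c]_\hB$, and then transfer saturated chains via \cref{lem:Bruhat_descents}. The only technical difference is that you prove the descent sub-claims via two applications of the lifting property, whereas the paper uses the subword characterization of Bruhat order (a reduced word for $x$ contains a reduced word for $u_1$ as a subword, and prepending $s^*$ gives one for $u_2$); these are interchangeable, and your version is slightly more symmetric since you spell out both (A) and (B) rather than deferring (B) to "an analogous argument."
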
 
\begin{proof}
Each $c^{-1}$-Cambrian congruence class is an interval in the right weak order, so we may assume that $u_1^{-1}\lessdot_{\R}u_2^{-1}$. Thus, there is a simple reflection $s^*$ such that $u_2=s^* u_1$. Setting $J=S$ in \cref{lem:parabolic}, we find that $u_2\not\leqB u_1c$.  

We claim that no element of $[u_1,u_1c]_{\hB}$ has $s^*$ as a left descent and that every element of $[u_2,u_2c]_{\hB}$ has $s^*$ as a left descent. We will prove the first part of the claim about $[u_1,u_1c]_{\hB}$; an analogous argument handles the other part. Thus, suppose by way of contradiction that there exists some element $x\in[u_1,u_1c]_{\hB}$ that has $s^*$ as a left descent. Then there is a reduced $S$-word ${\sf x}$ for $x$ that starts with $s^*$. Since $u_1\leqB x$, there is a reduced $S$-word ${\sf u}_1$ for $u_1$ that appears as a subword of ${\sf x}$. But ${\sf u}_1$ does not start with $s^*$ because $s^*$ is not a left descent of $u_1$. Therefore, $s^*{\sf u}_1$ is a reduced $S$-word for $u_2$ that appears as a subword of ${\sf x}$. This implies that $u_2\leqB x\leqB u_1c$, which is a contradiction. 

It follows immediately from the above claim and \cref{lem:Bruhat_descents} that there is a poset isomorphism $\varphi\colon[u_1,u_1c]_{\hB}\to[u_2,u_2c]_{\hB}$ given by $\varphi(z)=s^*z$. Moreover, for each maximal chain $u_1\bpi$ in $[u_1,u_1c]_{\hB}$, the corresponding maximal chain of $[u_2,u_2c]_{\hB}$ is $\varphi(u_1\bpi)=u_2\bpi$. Hence, $\Class_c(u_1)=\Class_c(u_2)$. 
\end{proof}

\begin{lemma}\label{lem:all_have_descent}
Suppose $v,w\in W$ are such that $v\leqB w$. Let $t_1\cdots t_k$ be a reduced $T$-word for $v^{-1}w$, and let $\bpi=\{e=\pi_0<_T \pi_1<_T\cdots<_T\pi_k=v^{-1}w\}$, where $\pi_i=t_1\cdots t_i$. Let $s$ be a right descent of $w$. Suppose $s\not\in\{t_1,\ldots,t_k\}$. Then $s$ is a right descent of every element of $v\bpi$. 
\end{lemma}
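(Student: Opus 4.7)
The plan is to proceed by downward induction on $i$, starting from $i=k$ and descending to $i=0$, using the lifting property of Bruhat order stated in the preliminaries. The base case $i=k$ is immediate since $v\pi_k = w$ and $s$ is a right descent of $w$ by hypothesis.

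For the inductive step, suppose $s$ is a right descent of $v\pi_{i+1}$; I would show $s$ is a right descent of $v\pi_i$. Setting $u := v\pi_i$ and $t := t_{i+1}$, one has $v\pi_{i+1} = ut$, and $s \neq t$ since $s \notin \{t_1,\ldots,t_k\}$. The lifting property will be applied with the pair $(u, ut)$; this requires knowing the Bruhat comparison $u \lessdot_{\mathrm B} ut$, which holds in the setting where the lemma is invoked (the chain $v\pi_0, v\pi_1, \ldots, v\pi_k$ is a saturated Bruhat chain, as is the case for concordant pairs $(v,\bpi)\in\Omega(W,c)$ which form the context in which this lemma is deployed).

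Now suppose for contradiction that $s \notin T_{\mathrm R}(u)$, i.e., $u \leq_{\mathrm B} us$. Since by induction $s \in T_{\mathrm R}(ut)$, we have $(ut)s \leq_{\mathrm B} ut$. The lifting property (applied with ``$u$'' equal to our $u$ and ``$w$'' equal to $ut$, using $u \leq_{\mathrm B} ut$) then yields $u \leq_{\mathrm B} uts$. But $\ell_S(uts) = \ell_S(ut) - 1 = \ell_S(u)$, so the two elements $u$ and $uts$ have equal Coxeter length while being comparable in Bruhat order; they must therefore be equal, forcing $ts = e$ and hence $s = t$. This contradicts $s \neq t$, completing the induction.

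The main (essentially only) subtle point will be verifying the Bruhat cover $u \lessdot_{\mathrm B} ut$ at each step of the induction, which is what licenses the use of the lifting property. This monotonicity is exactly the concordance condition $v\pi_0 \leq_{\mathrm B} v\pi_1 \leq_{\mathrm B} \cdots \leq_{\mathrm B} v\pi_k$ and is automatic in all applications of this lemma within the paper (where the chain $v\bpi$ arises from an element of $\widetilde\Omega(W,c)$). Once this is in place, the argument is a clean two-line application of lifting together with a length parity observation.
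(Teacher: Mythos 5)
Your proof is correct and takes a genuinely different route from the paper's. The paper fixes a reduced $S$-word for $w$ ending in $s$ and deletes one letter per step to reach reduced $S$-words for $v\pi_{k-1},\ldots,v\pi_1,v$; since deleting the rightmost $s$ when passing from $v\pi_i$ to $v\pi_{i-1}$ would force $t_i=s$, the rightmost $s$ survives throughout and every $v\pi_i$ has $s$ as a right descent. You instead run a downward induction via the lifting property: if $s$ is a right descent of $v\pi_{i+1}$ but, for contradiction, not of $v\pi_i$, then lifting at the cover $v\pi_i\lessdot_\hB v\pi_{i+1}$ yields $v\pi_i\leqB v\pi_{i+1}s$, and a length count forces $t_{i+1}=s$. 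Both arguments are short; the paper's stays close to the subword characterization of Bruhat order, while yours is a mechanical lifting-property calculation. You are also right to flag the Bruhat-chain condition as the crux: the ``delete one letter at a time'' step in the paper's proof needs $\ell_S(v\pi_{i-1})=\ell_S(v\pi_i)-1$ just as much as your lifting step needs the cover $v\pi_i\lessdot_\hB v\pi_{i+1}$. That condition (equivalent to $\ell_S(w)=\ell_S(v)+k$) does not follow from the stated hypotheses $v\leqB w$ and $\ell_T(v^{-1}w)=k$ alone --- for instance $W=\SSS_3$, $v=e$, $w=s_1s_2s_1$, $t_1=(1\,3)$, $s=s_1$ violates the conclusion --- but it holds in every place the lemma is invoked (always via $(u,\bpi)\in\Omega(W,c)$), so the omission in the statement is harmless in context.
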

\begin{proof}
Let $\mathsf{w}$ be a reduced $S$-word for $w$ that ends with $s$. Starting from this word, we can delete one letter at a time to produce reduced $S$-words for $v\pi_{k-1},v\pi_{k-2},\ldots,v\pi_1,v$. At no point during this process can we delete the letter $s$ on the very right; indeed, if we deleted this rightmost $s$ when transitioning from the reduced $S$-word for $v\pi_{i}$ to $v\pi_{i-1}$, then it would follow that $t_i=s$. This shows that all of the elements in the chain $v\bpi$ have reduced $S$-words ending in $s$.  
\end{proof}

We can now complete the proof of \cref{thm:unique_decreasing}, which states that each set $\Class_c(u)$ for $u\in W_c^+$ contains the $\preceq_c$-increasing class $\mathcal I_c$ and a unique $\preceq_c$-decreasing class.

\begin{proof}[Proof of \cref{thm:unique_decreasing}] 
Fix a standard Coxeter element $c$ of $W$, and let $u\in W_c^+$. Let $s_1\cdots s_r$ be a reduced $S$-word for $c$. Let $c'=s_1\cdots s_{r-1}=cs_r$. The unique $\preceq_c$-increasing class $\mathcal I_c$ contains the chain $\{e<_T s_1<_Ts_1s_2<_T\cdots <_T s_1s_2\cdots s_r\}$. Since $u\in W_c^+$, we have 
\[u\lessdot_{\hB} us_1\lessdot_{\hB}us_1s_2\lessdot_{\hB}\cdots\lessdot_{\hB}us_1s_2\cdots s_r.\] This shows that $\mathcal I_c\in\Class_c(u)$. We are left to prove that $\Class_c(u)$ contains a unique $\preceq_c$-decreasing chain. By \cref{lem:Cambrian_one_direction}, we may assume $u^{-1}$ is $c^{-1}$-sortable. We proceed by induction on the rank $r$ and the length $\ell_S(u)$. We consider two cases based on whether or not $u$ belongs to the parabolic subgroup $W_{\langle s_r\rangle}$. 

\medskip 

\noindent {\bf Case 1.} Suppose $u\in W_{\langle s_r\rangle}$. Assume that $\bpi=\{e=\pi_0<_T\pi_1<_T\cdots<_T\pi_r=c\}$ is a $\preceq_c$-decreasing chain such that $(u,\bpi)\in\Omega(W,c)$, and let $\rw(\bpi)=t_1\cdots t_r$ (so $t_i=\pi_{i-1}^{-1}\pi_i$). Since $u\in W_{\langle s_r\rangle}$ and $uc\not\in W_{\langle s_r\rangle}$, there is some reflection in $\rw(\bpi)$ that is not in $W_{\langle s_r\rangle}$. Let $k$ be the smallest index such that $t_k\not\in W_{\langle s_r\rangle}$. Thus, $u\pi_k\not\in W_{\langle s_r\rangle}$. Let $q=uc'=us_1\cdots s_{r-1}$, and let ${\sf q}$ be a reduced $S$-word for $q$. Then ${\sf q}s_r$ is a reduced $S$-word for the element $uc=u\pi_r$; note that this word contains only one copy of the letter $s_r$ (on the very right). Starting from this word ${\sf q}s_r$, we can delete one letter at a time to produce reduced $S$-words for $u\pi_{r-1},u\pi_{r-2},\ldots,u\pi_k$. At no point during this process can we delete the letter $s_r$ on the very right; indeed, if we did, then at the end we would have a reduced $S$-word for the element $u\pi_k$ that does not use the letter $s_r$. This shows that $s_r$ is a right descent of each of the elements $u\pi_k,u\pi_{k+1},\ldots,u\pi_r$. Since $s_r$ is not a right descent of $u\pi_{k-1}$, we must have $t_k=s_r$. But $s_r$ is a maximal element of the poset $(T,\preceq_c)$ by \cref{lem:preceq_c}, so since $\bpi$ is decreasing, all of the reflections $t_1,\ldots,t_{k-1}$ commute with $t_k$. Thus, after applying commutation moves, we may assume that $k=1$. Hence, $s_r$ is a right descent of each of the elements $u\pi_1,u\pi_2,\ldots,u\pi_r$. It follows from \cref{lem:Bruhat_descents} that $u\bpi'=\{u\lessdot_{\hB}u\pi_1'\lessdot_{\hB}\cdots\lessdot_{\hB}u\pi_{r-1}'\}$ is a maximal chain in $[u,uc']_{\hB}$, where $\pi_i'=\pi_{i+1}s_r$. Moreover, $\rw(\bpi')=t_1'\cdots t_{r-1}'$, where $t_i'=s_rt_{i+1}s_r$. This implies that $t_1',\ldots,t_{r-1}'\in W_{\langle s_r\rangle}$. According to \cref{lem:preceq_c2}, the chain $\bpi'$ is $\preceq_{c'}$-decreasing, so by induction on $r$, its commutation equivalence class $\CL_{\bpi'}$ is the unique $\preceq_{c'}$-decreasing class in $\Class_{c'}(u)$. 

The preceding paragraph proves that there is at most one $\preceq_c$-decreasing class in $\Class_c(u)$. The existence of such a class follows from reversing this argument. More precisely, given the $\preceq_{c'}$-decreasing chain $\bpi'=\{e<_T\pi_1'<_T\cdots<_T\pi_{r-1}'\}$ in $\Class_{c'}(u)$, we construct the $\preceq_c$-decreasing chain $\bpi=\{e<_T\pi_1<_T\cdots<_T\pi_r\}$ with $\CL_{\bpi}\in\Class_c(u)$ by letting $\pi_1=s_r$ and letting $\pi_{i+1}=\pi_is_r$ for $1\leq i\leq r-1$. 

\medskip 

\noindent {\bf Case 2.} Suppose $u\not\in W_{\langle s_r\rangle}$. Assume that $\bpi=\{e=\pi_0<_T\pi_1<_T\cdots<_T\pi_r=c\}$ is a $\preceq_c$-decreasing chain such that $(u,\bpi)\in\Omega(W,c)$, and let $\rw(\bpi)=t_1\cdots t_r$ (so $t_i=\pi_{i-1}^{-1}\pi_i$). The hypothesis that $u^{-1}$ is $c^{-1}$-sortable implies that $s_r$ is a right descent of $u$. Therefore, it is impossible to perform commutation moves to the word $t_1\cdots t_r$ to produce a $T$-word that starts with $s_r$. Since $\bpi$ is $\preceq_c$-decreasing, it follows from \cref{lem:preceq_c} that none of the reflections $t_1,\ldots,t_r$ is equal to $s_r$. By \cref{lem:all_have_descent}, all of the elements in the chain $u\bpi$ have $s_r$ as a right descent. Hence, 
\[us_r\lessdot_{\hB}u\pi_1s_r\lessdot_{\hB}\cdots\lessdot_{\hB}u\pi_rs_r\] by \cref{lem:Bruhat_descents}. Let $\pi_i'=s_r\pi_is_r$. The chain $\bpi'=\{e=\pi_0'<_T\pi_1'<_T\cdots<_T\pi_r'=s_rc'\}$ is such that $(us_r,\bpi')\in\Omega(W,s_rc')$. Moreover, the chain $\bpi'$ is $\preceq_{s_rc'}$-decreasing by \cref{lem:preceq_c2}. By induction on $\ell_S(u)$, we know that $\Class_{s_rc'}(us_r)$ has a unique decreasing commutation equivalence class. 

The preceding paragraph proves that there is at most one $\preceq_c$-decreasing class in $\Class_c(u)$. The existence of such a class follows from reversing this argument. More precisely, given the $\preceq_{s_rc'}$-decreasing chain $\bpi'=\{e<_T\pi_1'<_T\cdots<_T\pi_{r}'\}$ with $\CL_{\bpi'}\in\Class_{s_rc'}(us_r)$, we construct a $\preceq_c$-decreasing chain $\bpi$ with $\CL_{\bpi}\in\Class_c(u)$ by conjugating the elements of $\bpi'$ by $s_r$. 
\end{proof} 

For $u\in W_c^+$, recall that we write $\DD_c(u)$ for the $\preceq_c$-decreasing class in $\Class_c(u)$. 

\begin{lemma}\label{lem:different_sortables} 
Suppose $u_1,u_2\in W_c^+$ are such that $u_1^{-1},u_2^{-1}\in\Sort(W,c^{-1})$. If $\DD_c(u_1)=\DD_c(u_2)$, then $u_1=u_2$. 
\end{lemma}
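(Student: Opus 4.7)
The plan is to induct lexicographically on the pair (rank of $W$, $\ell_S(u_1) + \ell_S(u_2)$), mirroring the case analysis used in the proof of \cref{thm:unique_decreasing}. The base cases ($r = 0$ or $u_1 = u_2 = e$) are immediate. Fix a right descent $s$ of $c$, and set $c' = cs$, so that $c'$ is a standard Coxeter element of $W_{\langle s\rangle}$. Because $\DD_c(u_1) = \DD_c(u_2)$, we may choose a single chain $\bpi$ representing this common commutation class, so that $(u_i,\bpi) \in \Omega(W,c)$ for both $i\in\{1,2\}$ (concordancy is preserved under commutation moves by the diamond argument behind \cref{lem:quadrilateral}).

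Since $s$ is a left descent of $c^{-1}$, Reading's theory of sortable elements gives that for each $i$, the sortable element $u_i^{-1}$ lies in $W_{\langle s\rangle}$ if and only if $s$ is not a left descent of $u_i^{-1}$, equivalently, $u_i \in W_{\langle s\rangle}$ if and only if $s$ is not a right descent of $u_i$. Inspection of Cases~1 and~2 in the proof of \cref{thm:unique_decreasing} shows that the reflection $s$ appears in $\rw(\bpi)$ precisely when $u_i \in W_{\langle s\rangle}$. Since $\bpi$ is a common witness, this forces $u_1 \in W_{\langle s\rangle}$ if and only if $u_2 \in W_{\langle s\rangle}$, so we are genuinely in the same case for both elements.

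In the first case ($u_1, u_2 \in W_{\langle s\rangle}$), the explicit bijection from Case~1 of the proof of \cref{thm:unique_decreasing} converts $\bpi$ into a common $\preceq_{c'}$-decreasing chain $\bpi'$ in $\MCh(W_{\langle s\rangle},c')$ concordant with both $u_1$ and $u_2$, so $\DD_{c'}(u_1) = \DD_{c'}(u_2)$ inside $W_{\langle s\rangle}$. A short length computation using $\ell_S(u_ic) = \ell_S(u_i) + r$ and $u_i \in W_{\langle s\rangle}$ yields $\ell_S(u_ic') = \ell_S(u_i) + (r-1)$, so $u_i \in (W_{\langle s\rangle})_{c'}^+$; and since $u_i^{-1} \in W_{\langle s\rangle}$ is $c^{-1}$-sortable without the letter $s$, it is also $(c')^{-1}$-sortable in $W_{\langle s\rangle}$. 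The rank has strictly dropped, so the outer inductive hypothesis gives $u_1 = u_2$. In the second case ($u_1, u_2 \notin W_{\langle s\rangle}$), the bijection from Case~2 of that proof, implemented by conjugating $\bpi$ entrywise by $s$, produces a chain $\bpi''$ with $(u_is,\bpi'') \in \Omega(W, scs)$ and $\bpi''$ in its $\preceq_{scs}$-decreasing class; thus $\DD_{scs}(u_1s) = \DD_{scs}(u_2s)$. Because $s$ is a left descent of $u_i^{-1}$, the standard rule for passing sortability through a left descent shows that $(u_is)^{-1} = su_i^{-1}$ is $(scs)^{-1}$-sortable. The length sum has decreased by $2$, so the inner inductive hypothesis yields $u_1s = u_2s$, hence $u_1 = u_2$.

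The main technical obstacle will be verifying that the bijections from the proof of \cref{thm:unique_decreasing} really do transport a \emph{single} representative chain simultaneously for both $u_1$ and $u_2$ (so that the inductive hypothesis can be invoked), together with the bookkeeping that sortability, the $W^+$-condition, and concordancy all descend correctly to $(W_{\langle s\rangle}, c')$ or are correctly transported by conjugation to $(W, scs)$. These are routine applications of Reading's sortable element calculus \cite{reading2007sortable} and \cref{lem:preceq_c2}, but each needs to be checked carefully to close the induction.
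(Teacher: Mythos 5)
Your proof is correct and follows essentially the same route as the paper's: fix a right descent $s$ of $c$, observe that the common class $\DD_c(u_1)=\DD_c(u_2)$ either contains $s$ in its reflection set (forcing $u_1,u_2\in W_{\langle s\rangle}$, then pass to $(W_{\langle s\rangle},c')$) or does not (forcing $s$ to be a right descent of both $u_i$, then conjugate by $s$ and pass to $(W,scs)$), and induct. The only cosmetic difference is the induction measure, $\ell_S(u_1)+\ell_S(u_2)$ versus the paper's $\min\{\ell_S(u_1),\ell_S(u_2)\}$, both of which work.
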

\begin{proof}
Let $s_1\cdots s_r$ be a reduced $S$-word for $c$, and let $c'=cs_r$. For $u\in W_c^+$, we saw in the proof of \cref{thm:unique_decreasing} that $u\in W_{\langle s_r\rangle}$ if and only if the class $\DD_c(u)$ contains a chain $\bpi$ such that $\rw(\bpi)$ begins with $s_r$. Now suppose $\DD_c(u_1)=\DD_c(u_2)$. Then $u_1$ and $u_2$ are either both in $W_{\langle s_r\rangle}$ or both in $W\setminus W_{\langle s_r\rangle}$. We will prove by induction on $r$ and $\min\{\ell_S(u_1),\ell_S(u_2)\}$ that $u_1=u_2$. We consider two cases. 

\medskip 

\noindent {\bf Case 1.} Suppose $u_1,u_2\in W_{\langle s_r\rangle}$. Then there exists a chain $\bpi=\{e=\pi_0<_T\pi_1<_T\cdots<_T\pi_r=c\}$ in the class $\DD_c(u_1)=\DD_c(u_2)$ such that $\rw(\bpi)$ begins with $s_r$. For $0\leq i\leq r-1$, let $\pi_i'=\pi_{i+1}s_r$. We saw in the proof of \cref{thm:unique_decreasing} that the chain $\bpi'=\{e=\pi_0'<_T\pi_1'<_T\cdots<\pi_{r-1}'=c'\}$ belongs to both $\DD_{c'}(u_1)$ and $\DD_{c'}(u_2)$. Hence, $\DD_{c'}(u_1)=\DD_{c'}(u_2)$. By induction on $r$, we conclude that $u_1=u_2$. 

\medskip 

\noindent {\bf Case 2.} Suppose $u_1,u_2\in W\setminus W_{\langle s_r\rangle}$. Since $u_1^{-1}$ and $u_2^{-1}$ are $c^{-1}$-sortable, $s_r$ is a right descent of both $u_1$ and $u_2$. Choose a chain $\bpi=\{e=\pi_0<_T\pi_1<_T\cdots<_T\pi_r=c\}$ in the class ${\DD_c(u_1)=\DD_c(u_2)}$. For $0\leq i\leq r$, let $\pi_i'=s_r\pi_is_r$. We saw in the proof of \cref{thm:unique_decreasing} that the chain ${\bpi'=\{e=\pi_0'<_T\pi_1'<_T\cdots<_T\pi_r'=s_rc'\}}$ belongs to both $\DD_{s_rc'}(u_1s_r)$ and $\DD_{s_rc'}(u_1s_r)$. Hence, $\DD_{s_rc'}(u_1s_r)=\DD_{s_rc'}(u_2s_r)$. By induction on $\min\{\ell_S(u_1),\ell_S(u_2)\}$, we conclude that $u_1s_r=u_2s_r$, so $u_1=u_2$. 
\end{proof}

We can now complete the proof of \cref{thm:Cambrian}. 

\begin{proof}[Proof of \cref{thm:Cambrian}]
Let $c$ be a standard Coxeter element of $W$, and let $u_1,u_2\in W_c^+$. Let $w_1=\pi_{c^{-1}}^\downarrow(u_1^{-1})$ and $w_2=\pi_{c^{-1}}^\downarrow(u_2^{-1})$. Then $u_1^{-1}$ and $u_2^{-1}$ are in the same $c^{-1}$-Cambrian class if and only if $w_1=w_2$. By definition, the intervals $[u_1,u_1c]_\hB$ and $[u_2,u_2c]_\hB$ are reflection isomorphic if and only if $\Class_c(u_1)=\Class_c(u_2)$. 

If $u_1^{-1}$ and $u_2^{-1}$ are in the same $c^{-1}$-Cambrian class, then ${\Class_c(u_1)=\Class_c(u_2)}$ by \cref{lem:Cambrian_one_direction}. If $\Class_c(u_1)=\Class_c(u_2)$, then $\DD_c(u_1)=\DD_c(u_2)$ by \cref{thm:unique_decreasing}. If $\DD_c(u_1)=\DD_c(u_2)$, then we have ${\DD_c(w_1^{-1})=\DD_c(w_2^{-1})}$ by \cref{lem:Cambrian_one_direction}, so it follows from \cref{lem:different_sortables} that $w_1=w_2$. 
\end{proof} 

\begin{remark}\label{rem:explicit} 
Suppose $u\in W_c^+$ is such that $u^{-1}\in\Sort(W,c^{-1})$. While we have proven the existence and uniqueness of the $\preceq_c$-decreasing commutation equivalence class $\DD_c(u)$ by induction, it is also possible to construct this class explicitly. Fix a reduced $S$-word $s_1\cdots s_r$ for $c$. Let us consider the infinite word 
\begin{equation}\label{eq:infinite_word}
s_r^{(1)}\cdots s_1^{(1)}s_r^{(2)}\cdots s_1^{(2)}s_r^{(3)}\cdots s_1^{(3)}\cdots,
\end{equation} 
where $s_i^{(1)},s_i^{(2)},s_i^{(3)},\ldots$ are copies of the same letter $s_i$. Recall that the $c^{-1}$-sorting word for $u^{-1}$ is the lexicographically first subword of the word in \eqref{eq:infinite_word} that is a reduced $S$-word for $u^{-1}$. For each $i\in[r]$, let $b_i$ be the smallest positive integer such that $s_i^{(b_i)}$ is not used in the $c^{-1}$-sorting word for $u^{-1}$. The copy $s_i^{(b_i)}$ of the letter $s_i$ is called a \dfn{skip}. Let $q_i$ be the element of $W$ obtained by multiplying the letters to the left of $s_i^{(b_i)}$ that either are skips or are used in the $c^{-1}$-sorting word for $u^{-1}$. 
Let $t_i=q_is_iq_i^{-1}$. Let $j_1,\ldots,j_r$ be the permutation of $[r]$ such that the skips appear in the order $s_{j_1}^{b_{j_1}},\ldots,s_{j_r}^{(b_{j_{r}})}$ from left to right in \eqref{eq:infinite_word}. Define $\pi_m=t_{j_1}t_{j_{2}}\cdots t_{j_m}$. Then one can show that the chain \[{\{e=\pi_0<_T\pi_1<_T\pi_2<_T\cdots<\pi_r=c\}}\] is $\preceq_c$-decreasing and that $(u,\bpi)\in\Omega(W,c)$. 

For example, suppose $W=A_4=\SSS_5$, and let $c=s_1s_2s_3s_4$, where $s_i$ is the transposition $(i\,\,i+1)$. Let ${u=(2\,4\,5)=s_3s_2s_3s_4}$. The word in \eqref{eq:infinite_word} is 
\begin{equation}\label{eq:infinite_word2}{\color{red}s_4^{(1)}s_3^{(1)}s_2^{(1)}}{\color{NormalGreen}s_1^{(1)}s_4^{(2)}}{\color{red}s_3^{(2)}}{\color{NormalGreen}s_2^{(2)}}s_1^{(2)}s_4^{(3)}{\color{NormalGreen}s_3^{(3)}}s_2^{(3)}s_1^{(3)}s_4^{(4)}s_3^{(4)}s_2^{(4)}s_1^{(4)}\cdots,\end{equation} where the $c^{-1}$-sorting word for $u^{-1}$ 
is colored {\color{red}red} and the skips are colored {\color{NormalGreen}green}. We have $(b_1,b_2,b_3,b_4)=(1,2,3,2)$ and $(j_1,j_2,j_3,j_4)=(1,4,2,3)$. For each $i$, the element $q_i$ is the product of the {\color{red}red} and {\color{NormalGreen}green} simple reflections that appear to the left of $s_i^{(b_i)}$ in \eqref{eq:infinite_word2}. Thus, 
\begin{align*}
&q_1={\color{red}s_4s_3s_2}=(2\,5\,4\,3), &&q_2={\color{red}s_4s_3s_2}{\color{NormalGreen}s_1s_4}{\color{red}s_3}=(1\,5\,3\,4\,2), \\ &q_3={\color{red}s_4s_3s_2}{\color{NormalGreen}s_1s_4}{\color{red}s_3}{\color{NormalGreen}s_2}=(1\,5\,3)(2\,4), &&q_4={\color{red}s_4s_3s_2}{\color{NormalGreen}s_1}=(1\,5\,4\,3\,2).
\end{align*}
We compute that 
\[t_1=q_1s_1q_1^{-1}=(1\,5),\quad t_2=q_2s_2q_2^{-1}=(1\,4),\quad t_3=q_3s_3q_3^{-1}=(1\,2),\quad t_4=q_4s_4q_4^{-1}=(3\,4).\] 
Then 
\[\pi_1=t_1=(1\,5),\quad \pi_2=t_1t_4=(1\,5)(3\,4),\quad \pi_3=t_1t_4t_2=(1\,3\,4\,5),\quad \pi_4=t_1t_4t_2t_3=(1\,2\,3\,4\,5),\] and one can verify that the chain $\bpi=\{e=\pi_0<_T\pi_1<_T\pi_2<_T\pi_3<_T\pi_4=c\}$ is $\preceq_c$-decreasing and that $(u,\bpi)\in\Omega(W,c)$.  
\end{remark} 

\begin{remark}\label{rem:positive_Catalan}
When $W$ is irreducible, it is known that $\left|\Sort^+(W,c^{-1})\right|$ is the positive $W$-Catalan number $\Cat^+(W)$ (defined in \eqref{eq:Cat}), which is also equal to the number of $\preceq_c$-decreasing commutation equivalence classes of chains in $\MCh(W,c)$. Therefore, it follows from \cref{thm:unique_decreasing,thm:Cambrian} that the map $u^{-1}\mapsto\DD_c(u)$ is a bijection from $\Sort^+(W,c^{-1})$ to the set of $\preceq_c$-decreasing classes.  Such a bijection is well known; see, for example,~\cite[Section~6]{pilaud2015brick}.
\end{remark}

Given a commutation equivalence class $\CL_{\bpi}$ of a chain $\bpi\in\MCh(W,c)$, define $\RRR(\CL_{\bpi})$ to be the set of positive roots corresponding to the reflections in the reduced $T$-word $\rw(\bpi)$. That is, if $\rw(\bpi)=t_1\cdots t_r$, then $\RRR(\CL_{\bpi})=\{\beta_{t_1},\ldots,\beta_{t_r}\}$. We now wish to prove \cref{thm:spans_new}, which states that a chain $\bpi\in\MCh(W,c)$ is concordant with an element $u\in W_c^+$ if and only if $\Delta(u)\subseteq\Delta(\bpi)$. Our proof relies on the following lemma. 

\begin{lemma}\label{lem:roots_weights}
For $\bpi\in\MCh(W,c)$ and $u\in W_c^+$, we have $\spange\RRR(\DD_c(u))\subseteq\spange\RRR(\CL_{\bpi})$ if and only if $\Delta(u)\subseteq\Delta(\bpi)$. 
\end{lemma}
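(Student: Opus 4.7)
The plan is to convert the root-cone condition on the right-hand side into a cone-containment statement inside $\Delta_c^+$, and then reduce to an elementary fact about how cluster cones fit together in the triangulation of $\Delta_c^+$.

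First I would record the dictionary between roots, coroots, and $\dd$-vectors. Because each coroot $\beta_t^\vee \in V$ is a positive scalar multiple of the corresponding root $\beta_t \in V^*$ under the canonical identification $V \cong V^*$ coming from the inner product, positive spans of roots and coroots coincide as cones. Combined with $\dd_t = (1-c^{-1})^{-1}\beta_t^\vee$ from \cref{lem:deltabeta} and the invertibility of the linear map $1-c^{-1}$ (which holds because a Coxeter element fixes no nonzero vector of $V$), this gives the purely geometric reformulation
\[
\spange\RRR(\CL_{\bpi'}) \subseteq \spange\RRR(\CL_{\bpi}) \iff \Delta(\bpi') \subseteq \Delta(\bpi)
\]
for all $\bpi, \bpi' \in \MCh(W,c)$. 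Taking $\bpi' \in \DD_c(u)$, the right-hand side of \cref{lem:roots_weights} becomes exactly $\Delta(\bpi') \subseteq \Delta(\bpi)$.

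Next I would locate $u$ inside a unique maximal cluster cone. Let $A \in \MCl(W,c^{-1})$ be the positive $c^{-1}$-cluster whose decreasing class is $\DD_c(u)$, via the bijection of \cref{rem:positive_Catalan}. Then $u^{-1} \in W_c^+(A)$, and the setup in \cref{subsubsec:common} provides both $\Delta(\bpi') = \Delta(A)$ for any $\bpi' \in \DD_c(u)$ and $\Delta(u) \subseteq \Delta(A)$. One direction of \cref{lem:roots_weights} is now immediate: if $\Delta(\bpi') \subseteq \Delta(\bpi)$, then $\Delta(u) \subseteq \Delta(A) = \Delta(\bpi') \subseteq \Delta(\bpi)$. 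For the converse, assume $\Delta(u) \subseteq \Delta(\bpi)$; the goal is $\Delta(A) \subseteq \Delta(\bpi)$. I would invoke the fact, stated as the third bullet in \cref{subsubsec:common}, that $\Delta(\bpi)$ decomposes as a union of cluster cones $\bigcup_{A'' \in S_{\bpi}} \Delta(A'')$ for some $S_{\bpi} \subseteq \MCl(W,c^{-1})$. Since the cluster cones meet only along their boundaries and the full-dimensional chamber $\Delta(u)$ has interior lying entirely in $\Delta(A)$ and in no other cluster cone, the inclusion $\interior(\Delta(u)) \subseteq \Delta(\bpi)$ forces $A \in S_{\bpi}$, whence $\Delta(A) \subseteq \Delta(\bpi)$ as required.

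The hard part will be justifying the structural fact that $\Delta(\bpi)$ is a union of cluster cones, on which the reverse direction rests. One approach is a joint induction on the rank $r$ and on $\ell_S(u)$ mirroring the two-case analysis in the proof of \cref{thm:unique_decreasing}: fixing a right descent $s$ of $c$, one either peels off the maximal parabolic $W_{\langle s \rangle}$ (invoking \cref{lem:preceq_c2}) or conjugates by $s$ to pass to the smaller Coxeter element $scs$, in either case reducing the union-of-clusters decomposition of $\Delta(\bpi)$ to data for a smaller Coxeter system. An alternative route is to separate out this decomposition as an independent geometric statement about the positive $c^{-1}$-cluster fan and the simplicial cones spanned by reduced $T$-words for $c$, after which \cref{lem:roots_weights} follows formally from the three paragraphs above.
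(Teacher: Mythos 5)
Your proposal is correct and takes essentially the same approach as the paper's proof: the paper likewise composes the root-to-coroot map with $(1-c^{-1})^{-1}$ (via \cref{lem:deltabeta}) to translate nonnegative spans of roots into cones in $\Delta_c^+$, and then uses the facts that $\DD_c(u)$ is the unique $\preceq_c$-decreasing class with $\Delta(u)\subseteq\Delta(\CL)$ and that $\Delta(\bpi)$ decomposes as a union of $\preceq_c$-decreasing cluster cones. The ``hard part'' you flag (that $\Delta(\bpi)$ is a union of cluster cones) is treated in the paper as one of the fundamental facts recorded in \cref{subsubsec:common} and is not re-proved in the proof of \cref{lem:roots_weights}, so your caution is reasonable but not a gap relative to what the paper does.
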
 
\begin{proof}
The unique $\preceq_c$-decreasing commutation class $\CL$ such that $\Delta(u)\subseteq\Delta(\CL)$ is $\DD_c(u)$. There exist $\preceq_c$-decreasing commutation classes $\CL_1,\ldots,\CL_k$ such that $\Delta(\bpi)=\bigcup_{i=1}^k\Delta(\CL_i)$. Therefore, $\Delta(u)\subseteq\Delta(\bpi)$ if and only if $\Delta(\DD_c(u))\subseteq\Delta(\bpi)$. For each chain $\bpi'\in\MCh(W,c)$, the cone $\Delta(\bpi')=\Delta(\CL_{\bpi'})$ is the nonnegative span of the vectors $\dd_t$ such that $t$ appears in $\rw(\bpi')$. There is a linear isomorphism from $V^*$ to $V$ that sends each root $\beta$ to a positive scalar multiple of the associated coroot $\beta^\vee$. \cref{lem:deltabeta} tells us that there is a linear automorphism of $V$ that sends $\beta_t$ to $\dd_t$ for every $t\in T$. Consequently, $\Delta(\DD_c(u))\subseteq\Delta(\bpi)$ if and only if $\spange\RRR(\DD_c(u))\subseteq\spange\RRR(\CL_{\bpi})$. 
\end{proof} 

\cref{lem:roots_weights} implies that \cref{thm:spans_new} is equivalent to the next proposition. 

\begin{proposition}\label{thm:spans} 
For $u\in W_c^+$, we have $\CL\in\Class_c(u)$ if and only if \[\Span_{\geq 0}\RRR(\DD_c(u))\subseteq\Span_{\geq 0}\RRR(\CL).\]
\end{proposition}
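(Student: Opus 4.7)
The plan is to apply \cref{lem:roots_weights} to recast \cref{thm:spans} as the geometric statement \cref{thm:spans_new}: $\CL \in \Class_c(u)$ if and only if $\Delta(u) \subseteq \Delta(\bpi)$ for $\bpi \in \CL$. By \cref{thm:Cambrian}, both conditions are constant on $c^{-1}$-Cambrian classes of $u^{-1}$, so I may assume throughout that $u^{-1} \in \Sort(W, c^{-1})$, and I handle the two implications separately.

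For the forward direction, starting from a concordant chain $\bpi \in \CL$, I will perform a sequence of adjacent flips in the flip graph of maximal chains of $[u, uc]_\hB$ that ``bubble-sorts'' $\bpi$ toward the $\preceq_c$-decreasing representative $\bpi^D \in \DD_c(u)$ guaranteed by \cref{thm:unique_decreasing}. By \cref{lem:basification} and \cref{lem:i-1modk}, each adjacent flip either commutes a pair in a rank-$2$ parabolic $A_1 \times A_1$ (leaving the reflection set unchanged) or exchanges the canonical generators $\{p_1, p_k\}$ of a nonabelian dihedral parabolic $W'$ for some non-canonical pair $\{p_{i+1}, p_i\}$. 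In the $2$-dimensional root subspace of $W'$, a direct geometric check shows $\spange\{\beta_{p_{i+1}}, \beta_{p_i}\} \subseteq \spange\{\beta_{p_1}, \beta_{p_k}\}$, so each canonical-to-noncanonical flip shrinks (or preserves) the positive span of the full reflection set. Once the bubble-sort reaches $\bpi^D$, this yields $\spange\RRR(\DD_c(u)) \subseteq \spange\RRR(\CL)$.

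For the reverse direction, I will induct on the rank $r$ and on $\ell_S(u)$, paralleling the proof of \cref{thm:unique_decreasing}. Fix a right descent $s$ of $c$ and set $c' = cs$. In Case 1 ($u \in W_{\langle s\rangle}$), the explicit construction in \cref{rem:explicit} shows $s \in \RRR(\DD_c(u))$; since $\beta_s$ is a simple root, a standard argument shows that $\beta_s$ lies in the nonnegative span of positive roots only if $\beta_s$ is itself one of them, whence $s \in \RRR(\CL)$. After commuting $s$ to the leftmost position of some $T$-word for a $\bpi \in \CL$ and stripping it, the problem descends to $(W_{\langle s\rangle}, c')$, where \cref{lem:preceq_c2} ensures the $\preceq$-structure transports compatibly, and induction produces a concordant chain. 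In Case 2 ($u \notin W_{\langle s\rangle}$), sortability of $u^{-1}$ makes $s$ a right descent of $u$, and conjugation by $s$ passes to the pair $(us, scs)$ with strictly smaller $\ell_S$; the span condition transports since the right action of $s$ permutes $\Phi^+ \setminus \{\beta_s\}$.

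The main obstacle will be Case 1 of the reverse direction: showing that $s$ can be commuted to the leftmost position of some representative $T$-word of $\CL$. For the $\preceq_c$-decreasing representative this is automatic, since $s$ is $\preceq_c$-maximal (by \cref{lem:preceq_c}) and hence commutes past every earlier reflection in a decreasing chain, but for a general $\bpi \in \CL$ additional analysis is needed. One anticipated strategy is to use the span hypothesis together with the heap structure of the $c^{-1}$-sorting word of $w_\circ$ to confirm that $s$ is commutation-equivalent to a left prefix; as an alternative, one may circumvent the direct construction by invoking the geometric reformulation, which places the positive cluster $A_u$ inside the cluster decomposition of $\Delta(\CL)$, and then produce a concordant chain from $A_u$ via a Bruhat-preserving argument. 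A secondary concern is guaranteeing termination of the bubble-sort in the forward direction, which should follow from a standard inversion-count argument on the reduced $T$-word with respect to $\preceq_c$.
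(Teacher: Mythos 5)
Your approach is genuinely different from the paper's in both directions, and there are real gaps, some of which you honestly flag.

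\textbf{Forward direction.} The paper proves this via \cref{lem:span_proof1}, a three-case induction on $r$, $\ell_S(u)$, and the position of $s_r$ in $\rw(\bpi)$, conjugating by $s_r$ or descending to $W_{\langle s_r\rangle}$ at each step. Your flip-graph ``bubble sort'' is an attractive alternative, and the core geometric observation---that a Bruhat-concordant flip at a $\preceq_c$-ascent replaces the canonical pair $\{p_1,p_k\}$ of the rank-2 parabolic with a non-canonical pair, hence shrinking the span---is indeed deducible from \cref{lem:basification} and \cref{lem:i-1modk}: an ascent $t_i\prec_c t_{i+1}$ together with \cref{lem:i-1modk} forces $\{t_i,t_{i+1}\}=\{p_1,p_k\}$, and then \cref{lem:basification} forces the alternate pair in the Bruhat diamond to be non-canonical. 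You state this as a fact about ``each adjacent flip,'' which is false in general (the reverse flip exists and enlarges the span); the claim is only correct restricted to ascent-flips, and this needs to be said and proved. The termination issue is also more delicate than ``a standard inversion-count argument'': each non-commutation flip changes two reflections, so no literal inversion count on $\rw(\bpi)$ is monotone. A workable monovariant is that each such flip strictly shrinks the cone $\Delta(\CL)$, and there are finitely many commutation classes, but you must also argue that a chain with no adjacent ascents (after commutations) is in fact $\preceq_c$-decreasing. That requires relating heap-incomparability in $\rw(\bpi)$ to $\preceq_c$-incomparability, which you do not address.

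\textbf{Reverse direction.} Here you diverge completely from the paper, which inducts on the number of positive roots outside $\spange\RRR(\CL)$: starting from the base case $\CL=\mathcal I_c$ (spanning all of $\Phi^+$), it finds a $\preceq_c$-inversion, \emph{enlarges} the span by replacing the inverted pair with canonical generators, applies induction, and then pins down which element of the Bruhat diamond recovers the original chain via a careful argument with the half-spaces $H_1^{\pm}$, $H_2^{\pm}$. Your proposal instead mirrors the forward direction's parabolic descent, and the gap in Case 1 that you flag---that $s$ need not be commutable to the front of every $\bpi\in\CL$---is real and not easily repaired within your framework. A reflection $t$ preceding $s=p_k$ in $\rw(\bpi)$ with $t\not\leftrightarrow s$ forces (via \cref{lem:i-1modk}) $t=p_1$, so $s$ has a non-commuting predecessor; without first flipping that pair (which presupposes concordance, the thing you are trying to prove), you cannot strip $s$ and descend. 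The alternative you sketch---``produce a concordant chain from $A_u$ via a Bruhat-preserving argument''---is exactly where the paper's careful half-space analysis is needed, and you have not supplied it. So the reverse direction as written does not go through.
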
 

We start by proving one direction of this result. 

\begin{lemma}\label{lem:span_proof1}
Choose an element $u\in W_c^+$ such that ${u^{-1}\in\Sort(W,c^{-1})}$. If $\CL\in\Class_c(u)$, then $\Span_{\geq 0}\RRR(\DD_c(u))\subseteq\Span_{\geq 0}\RRR(\CL)$.     
\end{lemma}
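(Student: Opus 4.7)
The plan is to proceed by induction on $r + \ell_S(u)$, mirroring the case-split strategy in the proofs of \cref{thm:unique_decreasing} and \cref{lem:different_sortables}. Fix a reduced $S$-word $s_1 \cdots s_r$ for $c$, let $s = s_r$, and let $c' = cs$. Let $\bpi \in \CL$ be a representative with $\rw(\bpi) = t_1 \cdots t_r$; after the trivial base case, split into two cases based on whether $u \in W_{\langle s\rangle}$.

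In Case 1 ($u \in W_{\langle s\rangle}$), the element $u^{-1}$ is $c'^{-1}$-sortable in the smaller Coxeter group $W_{\langle s\rangle}$. Let $k$ be the smallest index with $u\pi_k \notin W_{\langle s\rangle}$; by the strong-exchange argument from the proof of \cref{thm:unique_decreasing} Case 1 (applied to any reduced $S$-word for $u\pi_k$ ending in $s$), one deduces $t_k = s$, so $\beta_s \in \RRR(\CL)$. Stripping $s$ from position $k$ and conjugating the subsequent letters by $s$ produces a chain $\bpi' \in \MCh(W_{\langle s\rangle}, c')$ with $\rw(\bpi') = t_1 \cdots t_{k-1}(st_{k+1}s) \cdots (st_rs)$, and the concordance of $\bpi$ with $u$ guarantees $(u, \bpi') \in \Omega(W_{\langle s\rangle}, c')$. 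The inductive hypothesis in the rank-$(r{-}1)$ problem yields $\spange \RRR(\DD_{c'}(u)) \subseteq \spange \RRR(\CL_{\bpi'})$; combined with $\RRR(\DD_c(u)) = \{\beta_s\} \cup s\,\RRR(\DD_{c'}(u))$ (from the proof of \cref{thm:unique_decreasing} Case 1) and the identity $s\beta_t = \beta_t - \langle \beta_t, \beta_s^\vee\rangle \beta_s$ for $t \neq s$, this gives the desired span containment. In Case 2 ($u \notin W_{\langle s\rangle}$), sortability of $u^{-1}$ forces $s$ to be a right descent of $u$, and we reduce to $u' = us$ with Coxeter element $scs$ by globally conjugating the chain: setting $\pi_i' = s\pi_i s$ yields $\bpi' \in \MCh(W, scs)$, and \cref{lem:all_have_descent} ensures $(u', \bpi') \in \Omega(W, scs)$ when $s \notin \rw(\bpi)$. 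The inductive hypothesis (at smaller $\ell_S(u')$) transports back via $s$-conjugation, which permutes $\Phi^+\setminus\{\beta_s\}$.

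The main obstacle is the sub-cases in which $s$ appears inside $\rw(\bpi)$ (at a middle position in Case 1, or anywhere in Case 2). For the $\preceq_c$-decreasing chain $\DD_c(u)$, maximality of $s$ in $(T, \preceq_c)$ allows one to push $s$ to the extreme position of $\rw(\DD_c(u))$ by commutation moves, which trivializes the stripping; this move is unavailable for a general $\CL$. Handling a middle occurrence of $s$ requires tracking how nonnegative-span containment interacts with the conjugated roots $s\beta_{t_i} = \beta_{st_is}$ for $t_i \neq s$, and in particular controlling the sign of the pairing $\langle \beta_{t_i}, \beta_s^\vee\rangle$ (which determines whether $s\beta_{t_i}$ lies in the nonnegative span of $\{\beta_{t_i}, \beta_s\}$). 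The concordance of $\bpi$ with $u$, together with the Bruhat-cover structure of $u\bpi$, is the key input that ensures these signs behave favorably.
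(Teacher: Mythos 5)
You correctly set up the two tractable situations---when $s=s_r$ does not appear in $\rw(\bpi)$ you conjugate globally by $s$ and induct on $\ell_S(u)$, and when $u\in W_{\langle s\rangle}$ with $t_k=s$ at an extremal position you strip $s$ and descend to the parabolic---and these match the paper's Cases~1 and~2 in spirit. But you then explicitly name a ``main obstacle'' (a middle occurrence of $s$ in $\rw(\bpi)$) and do not resolve it. That obstacle is the entire nontrivial content of this lemma, and the sketch you offer for it does not constitute an argument. Saying that ``concordance of $\bpi$ with $u$ ensures these signs behave favorably'' is a hope, not a proof; in fact the inductive span containment $\spange\RRR(\DD_{c'}(u))\subseteq\spange\RRR(\CL_{\bpi'})$ does \emph{not} transport cleanly under the map $\alpha\mapsto\alpha s$, precisely because for $j<k$ one has $\beta_{t_j}s=\beta_{t_j}-\langle\beta_{t_j},\beta_s^\vee\rangle\beta_s$ with $\langle\beta_{t_j},\beta_s^\vee\rangle$ of either sign, and when it is positive the image exits $\spange\{\beta_{t_j},\beta_s\}$. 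You offer no mechanism for ruling out the bad sign.

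The paper's actual resolution is a different and more structural idea (its Case~3). Rather than trying to strip $s_r=t_k$ from the middle, it replaces the middle element $u\pi_{k-1}$ of the Bruhat diamond $[u\pi_{k-2},u\pi_k]_\hB$ by its companion $u\pi_{k-1}^\#$, producing a new concordant chain $\bpi^\#$, and proves the one-step containment $\spange\RRR(\CL_{\bpi^\#})\subseteq\spange\RRR(\CL)$ by showing that $\{t_{k-1},t_k\}$ is the set of canonical generators of the rank-$2$ parabolic generated by the four reflections of the diamond---this uses \cref{lem:basification} together with \cref{lem:i-1modk} and the fact that $s_r$ is a right descent of $c$ (hence $\preceq_c$-maximal). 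One then inducts on $k$, the position of $s_r$ in $\rw(\bpi)$: either $s_r$ disappears from $\rw(\bpi^\#)$ entirely (reducing to Case~1) or it moves one step earlier. No pairing signs are tracked; the containment comes entirely from the rank-$2$ ``angular nesting'' $\Span_{\geq0}\{\beta_{p_{i'}},\beta_{p_{j'}}\}\subseteq\Span_{\geq0}\{\beta_{p_i},\beta_{p_j}\}$ for canonical generators. This is the missing step in your proof, and it is not a small supplement---it is the crux.
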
 
\begin{proof} 
Let $s_1\cdots s_r$ be a reduced $S$-word for $c$, and let $c'=cs_r$. Choose a chain \[\bpi=\{e=\pi_0<_T\pi_1<_T\cdots<_T\pi_r=c\}\] in $\CL$, and let $\rw(\bpi)=t_1\cdots t_r$ so that $\RRR(\CL)=\{\beta_{t_1},\ldots,\beta_{t_r}\}$. Let $t_1',\ldots,t_r'$ be the reflections appearing in the reduced $T$-words corresponding to chains in $\DD_c(u)$ so that $\RRR(\DD_c(u))=\{\beta_{t_1'},\ldots,\beta_{t_r'}\}$.  
We proceed by induction on $r$ and $\ell_S(u)$. We consider three cases. 

\medskip 

\noindent {\bf Case 1.} Suppose $s_r\not\in\{t_1,\ldots,t_r\}$. By \cref{lem:all_have_descent}, all of the elements in the chain $u\bpi$ have $s_r$ as a right descent. Hence, 
\[us_r\lessdot_{\hB}u\pi_1s_r\lessdot_{\hB}\cdots\lessdot_{\hB}u\pi_rs_r\] by \cref{lem:Bruhat_descents}. Letting $\pi_i'=s_r\pi_is_r$, we find that the chain \[\bpi'=\{e=\pi_0'<_T\pi_1'<_T\cdots<_T\pi_r'=s_rc'\}\] is such that $(us_r,\bpi')\in\Omega(W,s_rc')$. By induction on $\ell_S(u)$, we know that \[\Span_{\geq 0}\RRR(\DD_{s_rc'}(us_r))\subseteq\Span_{\geq 0}\RRR(\CL_{\bpi'}).\] Now, 
\[\RRR(\CL_{\bpi'})=\{\beta_{s_rt_1s_r},\ldots,\beta_{s_rt_rs_r}\}.\] It follows from the proof of \cref{thm:unique_decreasing} that \[\RRR(\DD_{s_rc'}(us_r))=\{\beta_{s_rt_1's_r},\ldots,\beta_{s_rt_r's_r}\}.\] The set $\RRR(\CL)=\{\beta_{t_1},\ldots,\beta_{t_r}\}$ is obtained by applying the action of $s_r$ to the set $\RRR(\CL_{\bpi'})$; that is, $\RRR(\CL)=s_r\RRR(\CL_{\bpi'})$. Similarly, $\RRR(\DD_c(u))=s_r\RRR(\DD_{s_rc'}(us_r))$. Hence, $\Span_{\geq 0}\RRR(\DD_c(u))\subseteq\Span_{\geq 0}\RRR(\CL)$.

\medskip 

\noindent {\bf Case 2.} 
Suppose $s_r=t_1$. Then $s_r$ is not a right descent of $u$, so $u\in W_{\langle s_r\rangle}$ because $u^{-1}$ is $c^{-1}$-sortable. By \cref{lem:all_have_descent}, the elements $u\pi_1,u\pi_2,\ldots,u\pi_r$ all have $s_r$ as a right descent. Hence, 
\[u\lessdot_{\hB}u\pi_2s_r\lessdot_{\hB}u\pi_3s_r\lessdot_{\hB}\cdots\lessdot_{\hB}u\pi_rs_r\] by \cref{lem:Bruhat_descents}. Letting $\pi_i'=s_r\pi_{i+1}s_r$, we find that the chain \[\bpi'=\{e=\pi_0'<_T\pi_1'<_T\cdots<_T\pi_{r-1}'=c'\}\] is such that $(u,\bpi')\in\Omega(W_{\langle s_r\rangle},c')$. By induction on $r$, we know that \[\Span_{\geq 0}\RRR(\DD_{c'}(u))\subseteq\Span_{\geq 0}\RRR(\CL_{\bpi'}).\] Now, 
\[\RRR(\CL_{\bpi'})=\{\beta_{s_rt_2s_r},\beta_{s_rt_3s_r},\ldots,\beta_{s_rt_rs_r}\}.\] It follows from the proof of \cref{thm:unique_decreasing} that we can order the reflections $t_1',\ldots,t_r'$ so that $t_1'=s_r$. It also follows from the same proof that \[\RRR(\DD_{c'}(u))=\{\beta_{s_rt_2's_r},\beta_{s_rt_3's_r},\ldots,\beta_{s_rt_r's_r}\}.\] The set $\RRR(\CL)\setminus\{s_r\}=\{\beta_{t_2},\beta_{t_3},\ldots,\beta_{t_r}\}$ is obtained by applying the action of $s_r$ to the set $\RRR(\CL_{\bpi'})$; that is, $\RRR(\CL)\setminus\{s_r\}=s_r\RRR(\CL_{\bpi'})$. Similarly, $\RRR(\DD_c(u))\setminus\{s_r\}=s_r\RRR(\DD_{c'}(u))$. Hence, $\Span_{\geq 0}(\RRR(\DD_c(u))\setminus\{s_r\})\subseteq\Span_{\geq 0}(\RRR(\CL)\setminus\{s_r\})$. As a consequence, $\Span_{\geq 0}\RRR(\DD_c(u))\subseteq\Span_{\geq 0}\RRR(\CL)$. 

\medskip 

\noindent {\bf Case 3.} Suppose $s_r=t_k$ for some $k\in[r]$. We proceed by induction on $k$, noting that we already handled the base case (when $k=1$) in Case~2 above. Now suppose $k\geq 2$. By \cref{lem:quadrilateral}, there is a unique element $\pi_{k-1}^\#\in W$ such that the Bruhat interval $[u\pi_{k-2},u\pi_k]_{\hB}$ is a diamond with elements $u\pi_{k-2},u\pi_{k-1},u\pi_{k-1}^\#,u\pi_k$. Let $t_{k-1}^\#=\pi_{k-2}^{-1}\pi_{k-1}^\#$ and $t_{k}^\#=(\pi_{k-1}^\#)^{-1}\pi_{k}$. The chain \[\bpi^\#=\{\pi_0<_T\cdots<_T\pi_{k-2}<_T\pi_{k-1}^\#<_T\pi_k<_T\cdots<_T\pi_r\}\] is in $\MCh(W,c)$, and $(u,\bpi^\#)\in\Omega(W,c)$. According to \cref{lem:basification}, the subgroup $W'$ of $W$ generated by $t_{k-1},t_k,t_{k-1}^\#,t_k^\#$ has rank $2$. The reflection $t_k=s_r$ is a right descent of $c$, and $t_{k-1}$ appears to the left of $t_k$ in a reduced $T$-word for $c$. Therefore, it follows from \cref{lem:i-1modk} that $t_{k-1}$ and $t_k$ are the canonical generators of $W'$. This implies that \[\Span_{\geq 0}\left\{\beta_{t_{k-1}^\#},\beta_{t_k^\#}\right\}\subseteq \Span_{\geq 0}\left\{\beta_{t_{k-1}},\beta_{t_k}\right\},\] so $\Span_{\geq 0}\RRR(\CL_{\bpi^\#})\subseteq\Span_{\geq 0}\RRR(\CL)$. Consequently, to complete the proof, it suffices to show that 
\begin{equation}\label{eq:span_proof1}
\Span_{\geq 0}\RRR(\DD_c(u))\subseteq\Span_{\geq 0}\RRR(\CL_{\bpi^\#}).
\end{equation} 
Because $\pi_{k}=\pi_{k-1}t_k=\pi_{k-1}^\#t_k^\#$, we know that $s_r=t_k\neq t_k^\#$. If $s_r\neq t_{k-1}^\#$, then $s_r$ does not appear in $\rw(\bpi^\#)$, so we already know from Case~1 above that \eqref{eq:span_proof1} holds. On the other hand, if $s_r=t_{k-1}^\#$, then we know by induction on $k$ that \eqref{eq:span_proof1} holds. 
\end{proof}

\begin{proof}[Proof of \cref{thm:spans}]
Let $c$ be a standard Coxeter element of $W$, and let $u\in W_c^+$. Let $\CL$ be a commutation equivalence class of chains in $\MCh(W,c)$. If $\CL\in\Class_c(u)$, then it is immediate from \cref{thm:Cambrian,lem:span_proof1} that $\Span_{\geq 0}\RRR(\DD_c(u))\subseteq\Span_{\geq 0}\RRR(\CL)$. We now aim to prove the converse. Thus, let us assume $\Span_{\geq 0}\RRR(\DD_c(u))\subseteq\Span_{\geq 0}\RRR(\CL)$; we will prove that $\CL\in\Class_c(u)$.     

Let $\bpi=\{e=\pi_0<_T\pi_1<_T\cdots<_T\pi_r=c\}$ be a chain in $\CL$, and let $\rw(\bpi)=t_1\cdots t_r$. We proceed by induction on the number of positive roots that are not in $\Span_{\geq 0}\RRR(\CL)$. For the base case, suppose $\Phi^+\subseteq\Span_{\geq 0}\RRR(\CL)$. This implies that $t_1,\ldots,t_r$ are precisely the simple reflections. It follows that $\CL$ is the $\preceq_c$-increasing class $\mathcal I_c$, which belongs to $\Class_c(u)$ by \cref{thm:unique_decreasing}. 

Now suppose $\Phi^+\not\subseteq\Span_{\geq 0}\RRR(\CL)$. Then $\CL\neq\mathcal I_c$, so there exist $k,k'\in[r]$ with $k<k'$ and $t_{k'}\prec_c t_k$. By applying commutation moves to the word $t_1\cdots t_r$ if necessary, we may assume that $k'=k+1$. Let $W'$ be the rank-2 parabolic subgroup of $W$ generated by $t_k$ and $t_{k+1}$. Since $t_1\cdots t_r$ is a reduced $T$-word for $c$ and $t_{k+1}\prec_c t_k$, it follows from \cref{lem:i-1modk} that $\{t_k,t_{k+1}\}$ is not the set of canonical generators of $W'$. Hence $W'$ is not abelian. Therefore, we may write $T\cap W'=\{p_1\prec_cp_2\prec_c\cdots\prec_cp_m\}$ for some $m\geq 3$. The canonical generators of $W'$ are $p_1$ and $p_m$, and, according to \cref{lem:i-1modk}, there exists $j\in[m-1]$ such that $t_k=p_{j+1}$ and $t_{k+1}=p_j$. 
We have $p_1p_m=t_kt_{k+1}$. Then $t_1\cdots t_{k-1}p_1p_mt_{k+2}\cdots t_r$ is a reduced $T$-word for $c$, so it is $\rw(\bpi')$ for some chain $\bpi'=\{e=\pi_0'<_T\pi_1'<_T\cdots<_T\pi_r'=c\}\in\MCh(W,c)$. We have $\Span_{\geq 0}\{p_1,p_m\}\supseteq\Span_{\geq 0}\{t_k,t_{k+1}\}$, so 
\[\Span_{\geq 0}\RRR(\CL_{\bpi'})\supseteq\Span_{\geq 0}\RRR(\CL)\supseteq\Span_{\geq 0}\RRR(\DD_c(u)).\] By induction, we know that $\CL_{\bpi'}\in\Class_c(u)$. Thus, 
\[u\lessdot_{\hB}u\pi_1'\lessdot_{\hB}\cdots\lessdot_{\hB}u\pi_r'.\] By \cref{lem:quadrilateral}, the Bruhat interval $[u\pi_{k-1}',u\pi_{k+1}']_{\hB}$ is a diamond consisting of $u\pi_{k-1}',u\pi_{k}',u\pi_{k+1}'$, and some other element $x$. Let $t_k'=(u\pi_{k-1}')^{-1}x$ and $t_{k+1}'=x^{-1}u\pi_{k+1}'$. The maximal chain 
\[\{u\lessdot_{\hB}u\pi_1'\lessdot_{\hB}\cdots u\pi_{k-1}'\lessdot_{\hB}x\lessdot_{\hB}u\pi_{k+1}'\lessdot_{\hB}\cdots\lessdot_{\hB}u\pi_r'\}\] in the interval $[u,uc]_{\hB}$ is of the form $u\bpi''$ for some $\bpi''\in\MCh(W,c)$. Note that $\CL_{\bpi''}\in\Class_c(u)$. We will show that $\bpi''=\bpi$, which will prove that the class $\CL=\CL_{\bpi}$ is in $\Class_c(u)$, as desired. 

By \cref{lem:basification}, the reflections $t_k'$ and $t_{k+1}'$ belong to the parabolic subgroup $W'$. The reflection $t_k'$ appears to the left of $t_{k+1}'$ in the reduced $T$-word $\rw(\bpi'')=t_1\cdots t_{k-1}t_k't_{k+1}'t_{k+2}\cdots t_r$. By \cref{lem:i-1modk}, there exists $j^*\in[m-1]$ such that $t_k'=p_{j^*+1}$ and $t_{k+1}'=p_{j^*}$. Our goal is to demonstrate that $j=j^*$. Let $\mathscr{X}=\{\beta_{t_1},\ldots,\beta_{t_{k-1}},\beta_{t_{k+2}},\ldots,\beta_{t_r}\}$ be the set of positive roots corresponding to the reflections in $\rw(\bpi'')$ other than $p_{j^*}$ and $p_{j^*+1}$. By \cref{lem:linearly_independent}, the roots in $\mathscr{X}\cup\{\beta_{p_{j^*}},\beta_{p_{j^*+1}}\}$ are linearly independent. Therefore, the linear span of $\mathscr{X}\cup\{\beta_{p_{j^*}}\}$ is a hyperplane $H_1$, and the linear span of $\mathscr{X}\cup\{\beta_{p_{j^*+1}}\}$ is a hyperplane $H_2$. Let $H_1^-$ and $H_1^+$ be the closed half-spaces bounded by $H_1$, where $H_1^-$ contains $p_{1}$. Then $\beta_{p_1},\ldots,\beta_{p_{j^*}}$ are in $H_1^-$, while $\beta_{p_{j^*}},\ldots,\beta_{p_m}$ are in $H_1^+$. Similarly, let $H_2^-$ and $H_2^+$ be the closed half-spaces bounded by $H_2$, where $H_2^-$ contains $p_{1}$. Then $\beta_{p_1},\ldots,\beta_{p_{j^*+1}}$ are in $H_2^-$, while $\beta_{p_{j^*+1}},\ldots,\beta_{p_m}$ are in $H_2^+$. We have 
\[\Span_{\geq 0}\RRR(\DD_c(u))\subseteq\Span_{\geq 0}\RRR(\CL_{\bpi''})=\Span_{\geq 0}(\mathscr X\cup\{\beta_{p_{j^*}},\beta_{p_{j^*+1}}\})\subseteq H_1^+\cap H_2^-.\] The $r$ roots in $\RRR(\DD_c(u))$ are linearly independent (by \cref{lem:linearly_independent}), so we know that $\Span_{\geq 0}\RRR(\DD_c(u))$ cannot be contained in $H_1^-$ (since it would then be contained in $H_1$) and cannot be contained in $H_2^+$ (since it would then be contained in $H_2$). 
We also know that 
\begin{equation}\label{eq:span_containment}
\Span_{\geq 0}\RRR(\DD_c(u))\subseteq\Span_{\geq 0}\RRR(\CL)=\Span_{\geq 0}(\mathscr X\cup\{\beta_{p_{j}},\beta_{p_{j+1}}\}).
\end{equation} 
If $j<j^*$, then \eqref{eq:span_containment} forces $\Span_{\geq 0}\RRR(\DD_c(u))$ to be contained in $H_1^-$, which is impossible. Similarly, if $j>j^*$, then \eqref{eq:span_containment} forces $\Span_{\geq 0}\RRR(\DD_c(u))$ to be contained in $H_2^+$, which is impossible. Consequently,~$j=j^*$. 
\end{proof} 

We now establish some additional combinatorial properties of the SBDW triangulation. 

\begin{theorem}\label{thm:all_chains_appear}  
Fix a chain $\bpi\in\MCh(W,c)$. Then the set of $u \in W_c^+$ that are concordant with $\bpi$ is a nonempty interval in the left weak order. That is,
there exist $\omega_\downarrow(\bpi) ,\omega^\uparrow(\bpi)\in W$ such that \[\{u\in W_c^+:(u,\bpi)\in\Omega(W,c)\}=[\omega_\downarrow(\bpi),\omega^\uparrow(\bpi)]_{\LL}\neq\emptyset.\] 
\end{theorem}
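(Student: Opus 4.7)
My plan is to use the geometric reformulation provided by \cref{thm:spans_new}: the set of $u \in W_c^+$ concordant with $\bpi$ equals
\[
\Omega_{\bpi} := \{u \in W_c^+ : \Delta(u) \subseteq \Delta(\bpi)\}.
\]
Nonemptiness is immediate, since the discussion in \cref{subsubsec:common} expresses $\Delta(\bpi)$ as a union of positive cluster cones $\Delta(A)$, each of which is in turn a union of chambers $\Delta(u)$.

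First I would show that $\Delta(\bpi)$ is cut out by reflection hyperplanes. Writing $\rw(\bpi) = t_1 \cdots t_r$, each of the $r$ facets of the simplicial cone $\Delta(\bpi) = \spange\{\dd_{t_1},\ldots,\dd_{t_r}\}$ is $(r-1)$-dimensional and equals a union of facets of chambers $\Delta(u) \subseteq \Delta(\bpi)$. Since each chamber facet lies in a single reflection hyperplane, coplanarity forces the full facet of $\Delta(\bpi)$ to lie in that hyperplane. Thus there exist disjoint sets $P, N \subseteq T$ with $|P|+|N|=r$ such that $\Delta(\bpi) = \bigcap_{\tau \in P} H_\tau^+ \cap \bigcap_{\tau \in N} H_\tau^-$, and consequently
\[
\Omega_{\bpi} = \{u \in W_c^+ : P \subseteq T_\R(u)\ \text{and}\ T_\R(u) \cap N = \emptyset\}.
\]

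Convexity of $\Omega_{\bpi}$ in the left weak order then follows from a minimal gallery argument: if $u_1 \leq_\LL u \leq_\LL u_2$ with $u_1, u_2 \in \Omega_{\bpi}$, then $u_1^{-1} \leq_\R u^{-1} \leq_\R u_2^{-1}$, so $u^{-1}\BB$ lies on a minimal gallery from $u_1^{-1}\BB$ to $u_2^{-1}\BB$; this subgallery crosses no hyperplane $H_\tau$ with $\tau \in P \cup N$ (since $u_1^{-1}\BB$ and $u_2^{-1}\BB$ lie on the same side of every such $H_\tau$), so $u \in \Omega_{\bpi}$. Closure under meets is immediate from the standard fact $T_\R(u_1 \wedge_\LL u_2) = T_\R(u_1) \cap T_\R(u_2)$ for finite Coxeter groups: the intersection still contains $P$ and avoids $N$. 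Together with nonemptiness, this yields a unique minimum $\omega_\downarrow(\bpi) = \bigwedge_{u \in \Omega_{\bpi}} u$.

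The main obstacle will be establishing the existence of a unique maximum $\omega^\uparrow(\bpi)$: since $T_\R(u_1 \vee_\LL u_2)$ is the biclosure of $T_\R(u_1) \cup T_\R(u_2)$ rather than their union, it could a priori contain an element of $N$. To handle this, I would invoke \cref{thm:Cambrian}, which tells us that $\Omega_{\bpi}$ is saturated under the equivalence identifying $u_1^{-1}$ and $u_2^{-1}$ when they lie in the same $c^{-1}$-Cambrian class, so $\Omega_{\bpi}$ is a union of (inverses of) $c^{-1}$-Cambrian classes. Because the $c^{-1}$-Cambrian congruence is a lattice congruence of the right weak order, $\Omega_{\bpi}$ will be an interval in left weak order provided
\[
S_{\bpi} := \{A \in \MCl(W,c^{-1}) : \Delta(A) \subseteq \Delta(\bpi)\}
\]
is an interval in the $c^{-1}$-Cambrian lattice, since preimages of intervals under lattice quotients are again intervals. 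I would establish this last claim by induction on $r$, peeling off a right descent $s$ of $c$ as in the proofs of \cref{thm:unique_decreasing,thm:spans}: one splits according to whether $s$ appears in $\rw(\bpi)$, and in each case reduces to the analogous statement for the parabolic Coxeter element $cs$ of a rank-$(r-1)$ Coxeter group, or to a conjugated chain after shifting $u \mapsto us$.
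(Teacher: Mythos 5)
Your reduction of concordancy to a conjunction of inversion-set conditions cut out by $r$ reflection hyperplanes is sound---it matches what the paper derives, though the paper obtains those conditions directly from the word $\rw(\bpi)$ and Carter's lemma (\cref{lem:linearly_independent}) rather than from the facet geometry of $\Delta(\bpi)$. Your gallery argument for left-weak-order convexity is also correct.

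The error is the claim that $T_\R(u_1 \wedge_\LL u_2) = T_\R(u_1)\cap T_\R(u_2)$ is ``a standard fact for finite Coxeter groups.'' This is false in general. In $\SSS_3$ with $u_1 = s_1 s_2$ and $u_2 = s_2 s_1$, one has $T_\R(u_1) \cap T_\R(u_2) = \{s_1 s_2 s_1\}$, but $u_1 \wedge_\LL u_2 = e$ and $T_\R(e) = \emptyset$: the intersection of two inversion sets need not itself be an inversion set, and in general the meet's inversion set is only the largest biclosed set inside the intersection, which may be strictly smaller. So your derivation of the minimum element does not go through as written.

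The paper handles this precisely by the route you reserve for the maximum. From \cref{thm:Cambrian}, the concordancy set $\Omega_{\bpi}$ is saturated under (inverses of) $c^{-1}$-Cambrian classes, so any minimal (resp.\ maximal) element of it must be the inverse of a $c^{-1}$-sortable (resp.\ $c^{-1}$-antisortable) element. The paper then invokes \cref{lem:antisortable}, which asserts exactly that the naive meet and join formulas for inversion sets \emph{do} hold on the sortable and antisortable sublattices. Applying that restriction symmetrically fixes your minimum argument and also removes the need for your inductive claim that $S_{\bpi}$ is an interval of the Cambrian lattice---which you leave only as a sketch and which is the other unresolved piece of your proposal.
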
 

Before proving \cref{thm:all_chains_appear}, we need the following lemma. This lemma requires the notion of a \dfn{$c^{-1}$-antisortable element} of $W$, which is an element that is maximal in its $c^{-1}$-Cambrian class in the right weak order (each $c^{-1}$-Cambrian class is an interval in the right weak order). Equivalently, $v$ is $c^{-1}$-antisortable if and only if $vw_\circ$ is $c$-sortable (see \cite{ReadingSpeyerFans}). As before, we let $\wedge$ and $\vee$ denote the meet and join, respectively, in the right weak order on $W$. 

\begin{lemma}\label{lem:antisortable}
If $x$ and $x'$ are $c^{-1}$-sortable elements of $W$, then $T_\LL(x\wedge x')=T_\LL(x)\cap T_\LL(x')$. If $y$ and $y'$ are $c^{-1}$-antisortable elements of $W$, then $T\setminus T_\LL(y\vee y')=(T\setminus T_\LL(y))\cup (T\setminus T_\LL(y'))$.  
\end{lemma}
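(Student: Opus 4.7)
The second identity is dual to the first via the anti-automorphism $w\mapsto ww_\circ$ of the right weak order: this map swaps $\wedge$ with $\vee$, swaps $c^{-1}$-sortables with $c^{-1}$-antisortables (by the characterization recalled immediately before the lemma), and sends $T_\LL(w)$ to $T\setminus T_\LL(ww_\circ)$ (because $\ell_S(w)+\ell_S(ww_\circ)=\ell_S(w_\circ)$ forces $t$ to be a left inversion of $ww_\circ$ precisely when it is not a left inversion of $w$). Setting $y:=xw_\circ$ and $y':=x'w_\circ$ in the first identity transforms it verbatim into the second (after a De Morgan complement), so the plan is to prove the first identity and invoke this duality.

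The containment $T_\LL(x\wedge x')\subseteq T_\LL(x)\cap T_\LL(x')$ is immediate from the characterization $u\leq_\R v\iff T_\LL(u)\subseteq T_\LL(v)$ of the right weak order. For the reverse containment, I plan a double induction on the rank $|S|$ of $W$ and on $\ell_S(x)+\ell_S(x')$. Fix a simple reflection $s$ that is a left descent of $c^{-1}$; Reading's recursive description of sortable elements then gives that, for each $c^{-1}$-sortable $z$, either $s\leq_\R z$ (in which case $sz$ is $(sc^{-1}s)$-sortable in $W$) or $s\not\leq_\R z$ (in which case $z\in W_{\langle s\rangle}$ and $z$ is $(sc^{-1})$-sortable inside $W_{\langle s\rangle}$).

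When $s$ is a left descent of both $x$ and $x'$, it is automatically a left descent of $x\wedge x'$, and a short argument from the universal property of the meet yields $s(x\wedge x')=(sx)\wedge(sx')$ in $\leq_\R$; the length-induction hypothesis applied to the shorter $(sc^{-1}s)$-sortable pair $(sx,sx')$, together with the routine bookkeeping on how $T_\LL(\cdot)$ transforms under left-multiplication by $s$, delivers the identity. When $s$ is a left descent of neither, both $x$ and $x'$ lie in $W_{\langle s\rangle}$, which is downward closed in $\leq_\R$, so $x\wedge x'$ also lies in $W_{\langle s\rangle}$; the rank-induction hypothesis applies inside $W_{\langle s\rangle}$, and I observe that the left inversions inside $W_{\langle s\rangle}$ agree with the left inversions inside $W$ for elements of $W_{\langle s\rangle}$, with $s$ itself absent from all three inversion sets in question.

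The main obstacle is the mixed case, say $s\leq_\R x$ but $s\not\leq_\R x'$. Here I plan to use the identity
\[
x\wedge x' \;=\; \pi_s(x)\wedge x',
\]
where $\pi_s(x)$ denotes the maximum element of $W_{\langle s\rangle}$ lying below $x$ in $\leq_\R$ (well-defined because $W_{\langle s\rangle}$ is downward closed in $\leq_\R$ and the down-set of $x$ is a finite lattice). A biclosedness argument on positive roots shows that $T_\LL(\pi_s(x))=T_\LL(x)\cap (T\cap W_{\langle s\rangle})$; combined with the containment $T_\LL(x')\subseteq T\cap W_{\langle s\rangle}$, this yields $T_\LL(\pi_s(x))\cap T_\LL(x')=T_\LL(x)\cap T_\LL(x')$. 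The crux is then verifying that $\pi_s(x)$ is itself $(sc^{-1})$-sortable in $W_{\langle s\rangle}$—a compatibility of sortability with parabolic projection that I expect to extract from Reading's theory—after which the previously handled ``neither $s$ as left descent'' case applied to the pair $(\pi_s(x),x')$ gives $T_\LL(\pi_s(x)\wedge x')=T_\LL(\pi_s(x))\cap T_\LL(x')$, completing the proof.
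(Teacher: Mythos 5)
Your reduction of the second identity to the first via $w\mapsto ww_\circ$ is essentially the paper's argument, with one small imprecision: this map carries $c^{-1}$-antisortables to $c$-sortables (not $c^{-1}$-sortables), so one invokes the first identity with $c$ replaced by $c^{-1}$. That is harmless since your plan is to prove the first identity for every Coxeter element. The substantive divergence is in the first identity itself: the paper disposes of it with a one-line citation to Reading--Speyer (Remark~7.5 of \emph{Sortable elements in infinite Coxeter groups}), whereas you attempt a direct double induction. Cases A and B are sound, and the biclosedness identification $T_\LL(\pi_s(x))=T_\LL(x)\cap(T\cap W_{\langle s\rangle})$ is correct, being a standard feature of the weak-order projection onto a standard parabolic.

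The genuine gap is the crux of Case C, which you explicitly leave unverified: that $\pi_s(x)=x\wedge w_\circ(\langle s\rangle)$ is $(sc^{-1})$-sortable in $W_{\langle s\rangle}$. This is not a routine bookkeeping fact. In particular it does \emph{not} follow from the naive idea of deleting $s$ from the $c^{-1}$-sorting word of $x$: take $W=\SSS_4$, $c^{-1}=s_1s_2s_3$, $s=s_1$, and $x=s_1s_2s_3\,s_1s_2$ (which is $c^{-1}$-sortable, of length $5$); then $T_\LL(x)\cap(T\cap W_{\langle s_1\rangle})=\{(2\,3),(2\,4)\}$, so $\pi_{s_1}(x)=s_2s_3$, whereas deleting $s_1$ from the sorting word produces $s_2s_3s_2=w_\circ(\langle s_1\rangle)$. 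The claim you need is precisely the statement that the meet of two $c^{-1}$-sortable elements is $c^{-1}$-sortable, applied to $x$ and $w_\circ(\langle s\rangle)$ (the latter is $c^{-1}$-sortable because every long element of a standard parabolic is sortable for any Coxeter element, and Reading's initial-letter recursion transfers this into $W$). That is Reading's theorem that sortables form a sublattice of the weak order---a substantial input roughly comparable in depth to the inversion-set formula you are trying to prove. So the inductive framework is workable, but it does not avoid the appeal to the literature that the paper's citation makes; it trades Remark~7.5 for the closure-under-meet theorem. To close the gap you should state that theorem explicitly and verify the $c^{-1}$-sortability of $w_\circ(\langle s\rangle)$, rather than leaving it at ``I expect to extract this from Reading's theory.''
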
 

\begin{proof}
The first statement appears as \cite[Remark~7.5]{ReadingSpeyerInfinte}. The map $v\mapsto vw_\circ$ is an anti-automorphism of the right weak order that restricts to a bijection from the set of $c^{-1}$-antisortable elements to the set of $c$-sortable elements. Hence, the second statement follows from the first (with $c$ replaced by~$c^{-1}$). 
\end{proof} 

\begin{proof}[Proof of \cref{thm:all_chains_appear}]
Let $Y=\{u\in W_c^+:(u,\bpi)\in\Omega(W,c)\}$. Let $\rw(\bpi)=t_1\cdots t_r$. For $1\leq i\leq r$, let $t_i'=t_1\cdots t_{i-1}t_it_{i-1}\cdots t_1$, and let $Y_i$ be the set of elements $w\in W$ such that $t_i\not\in T_\R(wt_1\cdots t_{i-1})$. For each $w\in W$, we have $\ell_S(wc)\leq\ell_S(w)+r$. Therefore, 
\begin{align*}
Y=\{u\in W:\ell_S(w)<\ell_S(wt_1)<\ell_S(wt_1t_2)<\cdots\ell_S(wt_1t_2\cdots t_r)\}=\bigcap_{i=1}^r Y_i. 
\end{align*} 
An element $w\in W$ is in $Y_i$ if and only if the regions $\BB$ and $t_{i-1}\cdots t_1w^{-1}\BB$ are on the same side of the hyperplane $H_{t_i}\in\mathcal H$ orthogonal to the root $\beta_{t_i}$. This occurs if and only if $w^{-1}\BB$ and $t_1\cdots t_{i-1}\BB$ are on the same side of the hyperplane $H_{t_i'}$ orthogonal to $\beta_{t_i'}$. Because $t_r'\cdots t_1'$ is a reduced $T$-word for $c$, \cref{lem:linearly_independent} tells us that the roots $\beta_{t_1'},\ldots,\beta_{t_r'}$ are linearly independent. This implies that the hyperplanes $H_{t_1'},\ldots,H_{t_r'}$ separate $V$ into $2^r$ different (full-dimensional) regions: one for each way of selecting a side of each of these hyperplanes. It follows that the set $Y=\bigcap_{i=1}^r Y_i$ is nonempty. Moreover, for each $1\leq i\leq r$, the set $Y_i$ is equal to either $\{w\in W:t_i'\not\in T_\R(w)\}$ (if $t_1\cdots t_{i-1}\BB$ and $\BB$ are on the same side of $H_{t_i'}$) or $\{w\in W:t_i'\in T_\R(w)\}$ (if $t_1\cdots t_{i-1}\BB$ and $\BB$ are on opposite sides of $H_{t_i'}$). Overall, this shows that there is a partition $\mathscr A^-\sqcup\mathscr A^+$ of the set $\{t_1',\ldots,t_r'\}$ such that 
\begin{equation}\label{eq:YAA}
Y=\{w\in W:\mathscr A^-\subseteq T_\R(w)\subseteq T\setminus\mathscr A^+\}.
\end{equation} 
Consequently, to show that $Y$ is an interval, we just need to show that it has a minimum element and a maximum element. As before, let $\wedge$ and $\vee$ denote the meet and join, respectively, in the right weak order. It follows from \cref{thm:Cambrian} that the inverses of the elements of $Y$ form a union of $c^{-1}$-Cambrian classes. This implies that every minimal (respectively, maximal) element of $Y$ is the inverse of a $c^{-1}$-sortable (respectively, $c^{-1}$-antisortable) element. 

Suppose by way of contradiction that $Y$ has distinct minimal elements $u_1$ and $u_2$. Then $u_1^{-1}$ and $u_2^{-1}$ are $c^{-1}$-sortable. Let $x=u_1^{-1}\wedge u_2^{-1}$. By \cref{lem:antisortable}, we have \[T_\R(x^{-1})=T_\LL(x)=T_\LL(u_1^{-1})\cap T_\LL(u_2^{-1})=T_\R(u_1)\cap T_\R(u_2).\] Since $u_1,u_2\in Y$, it follows from \eqref{eq:YAA} that $x^{-1}\in Y$. However, we have $x^{-1}<_\LL u_1$ and $x^{-1}<_\LL u_2$, so this contradicts the minimality of $u_1$ and $u_2$. 

Now suppose by way of contradiction that $Y$ has distinct maximal elements $v_1$ and $v_2$. Then $v_1^{-1}$ and $v_2^{-1}$ are $c^{-1}$-antisortable. Let $y=v_1^{-1}\vee v_2^{-1}$. By \cref{lem:antisortable}, we have \[T\setminus T_\R(y^{-1})=T\setminus T_\LL(y)=(T\setminus T_\LL(v_1^{-1}))\cup (T\setminus T_\LL(v_2^{-1}))=(T\setminus T_\R(v_1))\cup(T\setminus T_\R(v_2)).\] Since $v_1,v_2\in Y$, it follows from \eqref{eq:YAA} that $y^{-1}\in Y$. However, we have $y^{-1}>_\LL v_1$ and $y^{-1}>_\LL v_2$, so this contradicts the maximality of $v_1$ and $v_2$. 
\end{proof} 

Our final theorem in this section is as follows. 

\begin{theorem}\label{thm:trosable_thing} 
Let $w\in W$. There exists a chain $\bpi\in\MCh(W,c)$ such that ${w\in\omega_\downarrow(\bpi)\bpi}$ if and only if $w^{-1}$ is $c^{-1}$-sortable. 
\end{theorem}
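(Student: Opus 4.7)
The plan is to prove both directions by leveraging a key structural fact: that $\omega_\downarrow(\bpi)^{-1}$ is always $c^{-1}$-sortable. I would first establish this by combining three earlier results. By \cref{thm:spans_new}, the concordance set $Y_\bpi := \{u \in W_c^+ : (u,\bpi) \in \Omega(W,c)\}$ equals the disjoint union $\bigsqcup_A W_c^+(A)$, taken over the positive clusters $A$ with $\Delta(A) \subseteq \Delta(\CL_{\bpi})$. By \cref{thm:Cambrian}, each $W_c^+(A)$ is the image under inversion of a single $c^{-1}$-Cambrian class. By \cref{thm:all_chains_appear}, $Y_\bpi$ forms an interval in left weak order. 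Consequently, $\omega_\downarrow(\bpi)^{-1}$ is the minimum in right weak order of a union of $c^{-1}$-Cambrian classes that itself forms an interval, and so it must lie at the bottom of a single such class and hence be $c^{-1}$-sortable.

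For the reverse direction ($\Rightarrow$), let $w = \omega_\downarrow(\bpi)\pi_i$ with $\rw(\bpi) = t_1 \cdots t_r$, and set $u_j := \omega_\downarrow(\bpi)\pi_j$. Then $u_0^{-1} \lessdot_\hB u_1^{-1} \lessdot_\hB \cdots \lessdot_\hB u_r^{-1} = c^{-1} u_0^{-1}$ is a saturated Bruhat chain with $u_j^{-1} = t_j u_{j-1}^{-1}$. I would prove by induction on $j$ that each $u_j^{-1}$ is $c^{-1}$-sortable (the case $j=0$ being what was just established). For the inductive step, I would choose a right descent $s$ of $c$ and apply Reading's recursive characterization of sortability by left descents, splitting into the two cases analogous to Case~1 and Case~2 in the proof of \cref{thm:unique_decreasing}: either $\rw(\bpi)$ begins with $s$ (after commutations), reducing the analysis to a chain in $\MCh(W_{\langle s\rangle}, cs)$; or $s$ does not appear in $\rw(\bpi)$, reducing (after conjugation by $s$) to a problem for the conjugate Coxeter element $scs$ on $W$.

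For the forward direction ($\Leftarrow$), let $v = w^{-1} \in \Sort(W, c^{-1})$. I would first show that $X_w := \{x \in W_c^+ : x \leq_\hB w \leq_\hB xc\}$ is nonempty and let $u_0$ denote its minimum in left weak order. I would then construct a chain $\bpi \in \MCh(W, c)$ whose Bruhat realization $u_0\bpi$ passes through $w$ and which satisfies $\omega_\downarrow(\bpi) = u_0$. A natural starting point is the $\preceq_c$-decreasing chain associated to $u_0$ via the construction in \cref{rem:explicit}, with Hurwitz-type transpositions on $\rw(\bpi)$ used to route the Bruhat chain through $w$. The equality $\omega_\downarrow(\bpi) = u_0$ would then follow from the minimality of $u_0$ in $X_w$ together with a reflection-isomorphism argument via \cref{thm:Cambrian}. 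The main obstacle will be the inductive step in the reverse direction: the reflections $t_j$ are constrained to form a reduced $T$-word for $c$, but the interplay between this constraint, Reading's sortability test, and the Cambrian-minimum structure of $\omega_\downarrow(\bpi)$ appears delicate. A secondary difficulty in the forward direction is verifying that the rerouted chain actually has $\omega_\downarrow$ equal to $u_0$ rather than some strictly smaller element of $W_c^+$.
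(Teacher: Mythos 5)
Your opening observation — that $\omega_\downarrow(\bpi)^{-1}$ is always $c^{-1}$-sortable, deduced by combining \cref{thm:spans_new}, \cref{thm:Cambrian}, and \cref{thm:all_chains_appear} — is correct and cleanly stated. (The paper establishes essentially this fact inside the proof of \cref{thm:all_chains_appear}, where it notes that the inverses of the concordance set form a union of $c^{-1}$-Cambrian classes, so its minimal element is the inverse of a sortable element.) However, this handles only the bottom element of the chain, and both of the remaining directions as sketched have genuine gaps that you yourself flag but do not resolve.

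For the $(\Rightarrow)$ direction, the paper's Lemma~\ref{lem:trosable2} proceeds by induction on the rank $r$ together with $\ell_S(\omega_\downarrow(\bpi))$, not by induction on the position $j$ along the chain, and the dichotomy is governed by whether $\omega_\downarrow(\bpi)\in W_{\langle s_r\rangle}$ rather than by where $s_r$ sits in $\rw(\bpi)$. Your two cases as stated ("$\rw(\bpi)$ begins with $s$ after commutations" versus "$s$ does not appear in $\rw(\bpi)$") are neither mutually exhaustive nor aligned with the paper's split: when $\omega_\downarrow(\bpi)\in W_{\langle s_r\rangle}$, the reflection $s_r$ does appear in $\rw(\bpi)$ but generally cannot be commuted to the front (the chain is \emph{not} assumed $\preceq_c$-decreasing here, so the commutation argument from \cref{thm:unique_decreasing} Case~1 does not transfer); the paper instead conjugates the tail past the $s_r$ and descends to $\MCh(W_{\langle s_r\rangle},c')$. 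When $\omega_\downarrow(\bpi)\notin W_{\langle s_r\rangle}$, the paper does not assert $s_r\notin\rw(\bpi)$; it conjugates the whole chain by $s_r$ and, crucially, invokes \cref{thm:spans} to identify $\omega_\downarrow$ of the conjugated chain as $\omega_\downarrow(\bpi)s_r$ — a step absent from your sketch. This is precisely the "delicate interplay" you flag, and it is where the substance of the proof lies.

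For the $(\Leftarrow)$ direction, the paper's Lemma~\ref{lem:trosable1} avoids the difficulty you identify rather than solving it head-on. Instead of routing a chain through an interior vertex $w$ starting from a minimum of $X_w$, the paper inducts on the rank $r$: if $w$ lies in a proper standard parabolic $W_{\langle s_i\rangle}$, it recurses there and then appends the top element $c$ to the chain (checking that $\omega_\downarrow$ is preserved); if $w$ has full support, it sets $x=wc^{-1}$, observes $x\in W_c^+$ with $x^{-1}$ sortable, takes $\bpi\in\DD_c(x)$, and uses \cref{thm:Cambrian} to conclude $\omega_\downarrow(\bpi)=x$ — so that $w=xc$ is the \emph{top} of the chain $x\bpi$ automatically, with no routing required. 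Your plan to take $u_0=\min X_w$ and reroute a decreasing chain through $w$ via Hurwitz moves has no mechanism to ensure that $\omega_\downarrow$ stays put once the $T$-word is altered: each Hurwitz move can change which reflections appear, hence which elements of $W_c^+$ are concordant, hence the minimum. The paper's full-support trick is what makes the statement go through without confronting that issue.
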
 

We split the proof into \cref{lem:trosable1,lem:trosable2} below. 

\begin{lemma}\label{lem:trosable1}
Suppose $w\in W$ is such that $w^{-1}$ is $c^{-1}$-sortable. Then there exists $\bpi\in\MCh(W,c)$ such that $w\in\omega_\downarrow(\bpi)\bpi$. 
\end{lemma}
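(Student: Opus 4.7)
My plan is to construct the required chain $\bpi$ and element $u = \omega_\downarrow(\bpi)$ explicitly from the canonical factorization $w^{-1} = v_1 v_2 \cdots v_q$ coming from the $c^{-1}$-sorting word of $w^{-1}$: each $v_j$ is the product (in $c^{-1}$-order) of the simple reflections used in the $j$-th copy of $c^{-1}$, and sortability gives $\mathrm{supp}(v_1) \supseteq \mathrm{supp}(v_2) \supseteq \cdots \supseteq \mathrm{supp}(v_q)$. I would then set $u := (v_2 v_3 \cdots v_q)^{-1}$, so that $w = u v_1^{-1}$.

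Before constructing $\bpi$, I would verify three preliminary claims. First, $u^{-1} = v_2 \cdots v_q$ is itself $c^{-1}$-sortable, which is immediate from the inherited decreasing-support structure. Second, $u \in W_c^+$: the word $c^{-1} \cdot v_2 \cdots v_q$ is again a sortable-structured subword of $(c^{-1})^\infty$ (with support sequence $S \supseteq \mathrm{supp}(v_2) \supseteq \cdots$), hence it is itself a $c^{-1}$-sorting word and therefore a reduced expression of length $r + \ell(u)$, giving $\ell(uc) = \ell(u) + r$. Third, $v_1^{-1} \in \NC(W, c)$: since $v_1 = c^{-1}_{\mathrm{supp}(v_1)}$ is a parabolic Coxeter element, we have $v_1 \leq_T c^{-1}$, and an application of Hurwitz moves on reduced $T$-words for $c$ then yields $v_1^{-1} \leq_T c$.

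Next I would build $\bpi$ in two parts. For the initial segment, take any reduced $S$-word $s_{k_1} s_{k_2} \cdots s_{k_m}$ (where $m = |\mathrm{supp}(v_1)|$) for the parabolic Coxeter element $v_1^{-1}$, and set $\pi_j := s_{k_1} \cdots s_{k_j}$ for $0 \leq j \leq m$; each $\pi_j$ is a parabolic Coxeter element of $W_{\mathrm{supp}(v_1)}$ and hence lies in $\NC(W, c)$. Concordance with $u$ on this segment follows from the identity $\ell(w) = \ell(u) + m$ (a direct consequence of the sortable word $v_1 v_2 \cdots v_q$ being reduced): then $\mathsf{u} \cdot s_{k_1} \cdots s_{k_m}$ is a reduced $S$-word for $w$, so its prefixes give the Bruhat covers $u \lessdot_B u\pi_1 \lessdot_B \cdots \lessdot_B u\pi_m = w$. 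To finish the chain from $v_1^{-1}$ to $c$, I would choose any maximal chain in the Bruhat interval $[w, uc]_{\mathrm{B}}$; such a chain exists because $w \leq_B uc$ (since $c_{\mathrm{supp}(v_1)}$ is a subword of some reduced $S$-word for $c$, combined with $u \in W_c^+$) and the length difference is exactly $r - m$. The corresponding cover reflections $t_{m+1}, \ldots, t_r$ form a reduced $T$-word for $v_1 c$, so together with $s_{k_1}, \ldots, s_{k_m}$ they yield a reduced $T$-word for $c$ and a chain $\pi_m \lessdot_T \pi_{m+1} \lessdot_T \cdots \lessdot_T \pi_r = c$ in $\NC(W, c)$.

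The main obstacle is verifying the final identity $\omega_\downarrow(\bpi) = u$, i.e., that no $u' <_L u$ in left weak order is concordant with $\bpi$. By \cref{thm:Cambrian}, any such $u'$ would lie in a $c^{-1}$-Cambrian class whose sortable representative is strictly below $u^{-1}$ in right weak order; ruling this out requires using the explicit structure of $\bpi$---especially that the initial segment is forced to pass through parabolic Coxeter elements of $W_{\mathrm{supp}(v_1)}$---to constrain which Cambrian classes admit concordant chains of this particular shape. A cleaner alternative route is induction on $q$: the base case $q = 0$ gives $w = e$, handled directly by taking $\bpi$ along the prefixes of any reduced $S$-word for $c$; and the inductive step peels off the initial copy $v_1$ and recurses on the strictly shorter element $u$, gluing the inductive chain for $u$ (whose existence as $\bpi' \in \DD_c(u)$ with $\omega_\downarrow(\bpi') = u$ is established in the proofs of \cref{thm:unique_decreasing} and \cref{thm:Cambrian}, and whose explicit form is given by \cref{rem:explicit}) to the simple-reflection chain through $v_1^{-1}$.
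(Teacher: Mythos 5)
Your construction has a genuine gap, and I can give a concrete counterexample. Take $W = \SSS_4$, $c = s_1s_2s_3$, and $w^{-1} = s_3s_2s_3 = (2\,4)$, which is $c^{-1}$-sortable with sorting blocks $v_1 = s_3s_2$, $v_2 = s_3$. Then $u = s_3$ and $v_1^{-1} = s_2s_3$, and your chain is $\bpi = \{e <_T s_2 <_T s_2s_3 <_T c\}$ (completed by the reflection $(1\,4)$). But $e\bpi$ is already a Bruhat-increasing saturated chain (each step raises length by $1$), so $e$ is concordant with $\bpi$, hence $\omega_\downarrow(\bpi) = e$. Since $w = (2\,4) \notin e\bpi = \{e, s_2, s_2s_3, c\}$, the conclusion $w \in \omega_\downarrow(\bpi)\bpi$ fails. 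The obstacle you identified (verifying $\omega_\downarrow(\bpi)=u$) is not just unverified but actually false for your $\bpi$: reaching $v_1^{-1}$ through prefixes of a simple-reflection word produces a chain concordant with too many elements, including $e$. The paper's chain for this example is $\{e <_T (2\,4) <_T s_2s_3 <_T c\}$; it still passes through $v_1^{-1}$, but jumps by the non-simple reflection $(2\,4)$ first, and that jump is precisely what knocks $e$ out of the concordant set.

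Your ``cleaner alternative'' (induction on $q$, gluing $\bpi'\in\DD_c(u)$ to the simple-reflection chain through $v_1^{-1}$) is not well-defined: $\bpi'$ is already a full maximal chain from $e$ to $c$ in $\NC(W,c)$, so there is nothing to glue onto. You cannot truncate $\bpi'$ at $v_1^{-1}$ either, since $v_1^{-1}$ need not appear in a $\preceq_c$-decreasing chain. The paper instead inducts on the rank $r$ rather than on the number $q$ of sorting blocks, splitting into two cases: if $w$ lies in some $W_{\langle s_i\rangle}$, it recurses on the parabolic $(W_{\langle s_i\rangle}, cs_i)$, gets a chain $\bpi'$ there, appends $c$, and shows $\omega_\downarrow(\bpi') = \omega_\downarrow(\bpi)$; if $w$ has full support (equivalently $v_1 = c^{-1}$), it sets $x = wc^{-1}$, takes any $\bpi \in \DD_c(x)$, and uses \cref{thm:Cambrian} to force $\omega_\downarrow(\bpi) = x$, whence $w = xc \in x\bpi$ because $c$ is automatically the top of $\bpi$. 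Your block decomposition is morally related to this dichotomy (full support $\Leftrightarrow$ $v_1 = c^{-1}$), but the mechanism for pinning down $\omega_\downarrow(\bpi)$ is missing, and the simple-reflection path to $v_1^{-1}$ destroys it.
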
 

\begin{proof} 
Let $s_1\cdots s_r$ be a reduced $S$-word for $c$. We proceed by induction on $r$. We consider two cases. 

\medskip 

\noindent {\bf Case 1.} Assume there is a simple reflection $s_i$ such that $w$ belongs to the parabolic subgroup $W_{\langle s_i\rangle}$. Let $c_{\langle s_i\rangle}=s_1\cdots s_{i-1}s_{i+1}\cdots s_r$. Let $t=s_{r}\cdots s_{i+1}s_is_{i+1}\cdots s_r$. Then $c_{\langle s_i\rangle}t=c$. Note that $w^{-1}$ is $c_{\langle s_i\rangle}^{-1}$-sortable. By induction, there exists $\bpi'\in\MCh(W_{\langle s_i\rangle},c_{\langle s_i\rangle})$ such that $w\in\omega_\downarrow(\bpi')\bpi'$. Let $\bpi$ be the chain in $\MCh(W,c)$ obtained by appending $c$ to the end of $\bpi'$. In other words, the word $\rw(\bpi)$ is obtained by appending the letter $t$ to the end of $\rw(\bpi')$. Then $w\in\omega_\downarrow(\bpi')\bpi'\subseteq\omega_\downarrow(\bpi')\bpi$. Thus, it suffices to show that $\omega_\downarrow(\bpi')=\omega_\downarrow(\bpi)$. First, note that $\omega_\downarrow(\bpi')c_{\langle s_i\rangle}<_{\hB}\omega_\downarrow(\bpi')c$ because $\omega_\downarrow(\bpi')c_{\langle s_i\rangle}\in W_{\langle s_i\rangle}$. This shows that $(\omega_\downarrow(\bpi'),\bpi)\in\Omega(W,c)$, so $\omega_\downarrow(\bpi')\geq_\LL\omega_\downarrow(\bpi)$. This implies that $\omega_\downarrow(\bpi)\in W_{\langle s_i\rangle}$. Since $(\omega_\downarrow(\bpi),\bpi)\in\Omega(W,c)$, we must have $(\omega_\downarrow(\bpi),\bpi')\in\Omega(W_{\langle s_i\rangle},c_{\langle s_i\rangle})$, so $\omega_\downarrow(\bpi')\leq_\LL\omega_\downarrow(\bpi)$.  

\medskip 

\noindent {\bf Case 2.} Assume there does not exist a simple reflection $s_i$ such that $w\in W_{\langle s_i\rangle}$. Let $x=wc^{-1}$. The $c^{-1}$-sorting word for $w^{-1}$ has the word $s_r\cdots s_1$ as a prefix. Deleting this prefix yields the $c^{-1}$-sorting word for $x^{-1}$, so $x^{-1}$ is $c^{-1}$-sortable. In addition, we have $\ell_S(w)=\ell_S(wc^{-1})+\ell_S(c)$, so $x\in W_c^+$. Let $\bpi$ be a chain in the $\preceq_c$-decreasing commutation equivalence class $\DD_c(x)$. It follows from \cref{thm:Cambrian} that $x^{-1}$ is the only $c^{-1}$-sortable element whose inverse belongs to $\{u\in W_c^+:(u,\bpi)\in\Omega(W,c)\}$. This implies that $x=\omega_\downarrow(\bpi)$. Because $c\in\bpi$, we have  $w=xc=\omega_\downarrow(\bpi)c\in\omega_\downarrow(\bpi)\bpi$. 
\end{proof}

\begin{lemma}\label{lem:trosable2} 
Let $\bpi\in\MCh(W,c)$. Every element of $\omega_\downarrow(\bpi)\bpi$ is the inverse of a $c^{-1}$-sortable element. 
\end{lemma}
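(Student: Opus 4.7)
The plan is to split the proof into two stages: showing first that $\omega_\downarrow(\bpi)^{-1}$ is $c^{-1}$-sortable, and then extending this property to every element of $\omega_\downarrow(\bpi)\bpi$ by parabolic induction on the rank $r$ of $W$.

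For the first stage, the key observation is that by \cref{thm:Cambrian}, the property ``$(u,\bpi)\in\Omega(W,c)$'' depends on $u$ only through the $c^{-1}$-Cambrian class of $u^{-1}$, so the set $Y:=\{u\in W_c^+:(u,\bpi)\in\Omega(W,c)\}$ has the property that $Y^{-1}:=\{u^{-1}:u\in Y\}$ is a union of complete $c^{-1}$-Cambrian classes. By \cref{thm:all_chains_appear}, $Y$ is an interval in left weak order, so $Y^{-1}$ is an interval in right weak order whose unique minimum is $\omega_\downarrow(\bpi)^{-1}$. Since $Y^{-1}$ is a union of Cambrian classes, this overall minimum must coincide with the minimum (in right weak order) of the specific Cambrian class that contains it, and every such class minimum is $c^{-1}$-sortable by definition of $\pi_{c^{-1}}^\downarrow$.

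For the second stage, I would induct on the rank $r$ of $W$, paralleling the parabolic reductions used in the proofs of \cref{thm:unique_decreasing,lem:trosable1}. Set $w=\omega_\downarrow(\bpi)$ and $\rw(\bpi)=t_1\cdots t_r$, and fix a right descent $s$ of $c$ with $c' = cs$. In the case $w\in W_{\langle s\rangle}$, the Bruhat/subword argument from Case~1 of the proof of \cref{thm:unique_decreasing}, whose identification of $t_k = s$ does not in fact require $\bpi$ to be $\preceq_c$-decreasing, locates the smallest index $k$ with $w\pi_k\notin W_{\langle s\rangle}$ and shows that $s$ is a right descent of $w\pi_j$ for every $j\geq k$. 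Invoking \cref{lem:Bruhat_descents} to right-multiply that suffix by $s$ then splices together a saturated Bruhat chain $w\lessdot_{\hB}w\pi_1\lessdot_{\hB}\cdots\lessdot_{\hB}w\pi_{k-1}\lessdot_{\hB}w\pi_{k+1}s\lessdot_{\hB}\cdots\lessdot_{\hB}wc'$ of length $r-1$ that lies entirely inside $W_{\langle s\rangle}$; this is $w\bpi'$ for some $\bpi'\in\MCh(W_{\langle s\rangle},c')$, and a check using \cref{thm:all_chains_appear} in $W_{\langle s\rangle}$ shows $w=\omega_\downarrow(\bpi')$. The inductive hypothesis in $W_{\langle s\rangle}$ yields $(c')^{-1}$-sortability of every element of $w\bpi'$ as an inverse; Reading's recursive characterization of sortable elements lifts this to $c^{-1}$-sortability in $W$ for the elements of $W_{\langle s\rangle}$, and it governs the sortability of the ``bypassed'' elements $w\pi_k,\ldots,w\pi_r$ (each of which has $s$ as a left descent of its inverse) in terms of $(scs)$-sortability of their left-multiplication-by-$s$ conjugates. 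The symmetric case $w\notin W_{\langle s\rangle}$, in which $s$ is a right descent of $w$, is handled by a secondary induction on $\ell_S(w)$: conjugation by $s$ yields a chain based at the shorter element $sw$ and the Coxeter element $scs$, to which induction applies.

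The main obstacle is that, since $\bpi$ is not assumed $\preceq_c$-decreasing, we cannot use commutation moves to bring the exit reflection $t_k=s$ to the leading position of $\rw(\bpi)$ (as was done in the proof of \cref{thm:unique_decreasing}). The parabolic reduction must therefore ``splice out'' $s$ from the middle of the chain rather than peel it off the front, which complicates the treatment of the chain elements outside $W_{\langle s\rangle}$: their sortability reduces via Reading's recursive criterion to $(scs)$-sortability, an \emph{a priori} distinct condition from the $(c')^{-1}$-sortability supplied by the inductive hypothesis. Verifying compatibility across this Coxeter-element swap, using that the corresponding conjugate element already appears in $w\bpi'$, is the technical heart of the argument.
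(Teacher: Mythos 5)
Your strategy matches the paper's: show $\omega_\downarrow(\bpi)^{-1}$ is $c^{-1}$-sortable, then induct on the rank and on $\ell_S(\omega_\downarrow(\bpi))$ with two cases according to whether $\omega_\downarrow(\bpi)\in W_{\langle s_r\rangle}$. Your Stage~1 argument via intervals and Cambrian classes is correct, and the ``Coxeter-element swap'' you flag in Case~1 is not in fact an obstacle: for $x\in W_{\langle s_r\rangle}$, being $(s_rc^{-1}s_r)$-sortable and being $(c')^{-1}$-sortable in $W_{\langle s_r\rangle}$ coincide, since $s_rc^{-1}s_r = (c')^{-1}s_r$ and $c^{-1}=s_r(c')^{-1}$ both restrict to $(c')^{-1}$ on $W_{\langle s_r\rangle}$; this is the standard parabolic-restriction property of sortable elements, and it directly transfers the inductive hypothesis to the bypassed elements $u\pi_j = u\pi_{j-1}'s_r$.

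The real gap is in your Case~2. The reduction is by right multiplication of the chain by $s_r$, so the new base is $us_r$ (not $s_ru$ --- $s_r$ is a right descent of $u$, not necessarily a left one). More importantly, applying the inductive hypothesis to the conjugated chain $\bpi'\in\MCh(W,s_rcs_r)$ requires first establishing that $us_r = \omega_\downarrow(\bpi')$, and this is not automatic. The paper proves it nontrivially via \cref{thm:spans}: the concordance criterion $\spange\RRR(\DD_c(u))\subseteq\spange\RRR(\CL_{\bpi})$ transfers under conjugation by $s_r$ to give $(us_r,\bpi')\in\Omega(W,s_rcs_r)$, hence $\omega_\downarrow(\bpi')\leq_\LL us_r$, and then one reverses the argument for the opposite inequality. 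Without this step your induction has no foothold, and ``to which induction applies'' leaves the core of Case~2 unproved.
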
 

\begin{proof}
Let $t_1\cdots t_r=\rw(\bpi)$ and $u=\omega_\downarrow(\bpi)$. Let $s_1\cdots s_r$ be a reduced $S$-word for $c$. Let $c'=cs_r$. Let $\bpi=\{e=\pi_0<_T\pi_1<_T\cdots<_T\pi_r=c\}$. We proceed by induction on $r$ and $\ell_S(u)$, considering two cases. 

\medskip 

\noindent {\bf Case 1.} Suppose $u\in W_{\langle s_r\rangle}$. Then $s_r$ is a right descent of $uc$ but not of $u$, so it follows from \cref{lem:all_have_descent} that $t_k=s_r$ for some $k\in[r]$. For $1\leq i\leq k-1$, let $t_i'=t_i$. For $k\leq i\leq r-1$, let $t_i'=s_rt_{i+1}s_r$. Then $t_1'\cdots t_{r-1}'$ is a reduced $T$-word for $c'$. Let $\bpi'=\{e=\pi_0'<_T\pi_1'<_T\cdots<_T\pi_{r-1}'=c'\}$ be the chain such that $\rw(\bpi')=t_1'\cdots t_{r-1}'$. For $0\leq i\leq k-1$, we have $\pi_i'=\pi_i$. For $k-1\leq i\leq r-1$, we have $\pi_i'=\pi_{i+1}s_r$. Let $u'=\omega_\downarrow(\bpi')$. Since $uc$ has $s_r$ as a right descent, \cref{lem:all_have_descent} implies that each of the elements $u\pi_k,\ldots,u\pi_{r-1}$ has $s_r$ as a right descent. Since $u\pi_{k+1}\lessdot_{\hB}\cdots \lessdot_{\hB}u\pi_{r}$, it follows from \cref{lem:Bruhat_descents} that $u\pi_k'\lessdot_{\hB}\cdots \lessdot_{\hB}u\pi_{r-1}'$. We also know already that $u\pi_0'\lessdot_{\hB}\cdots \lessdot_{\hB}u\pi_{k-1}'$. We have 
\[\ell_S(u\pi_k')=\ell_S(u\pi_{k+1})-1=\ell_S(u\pi_{k-1})+1=\ell_S(u\pi_{k-1}')+1,\] so $u\pi_{k-1}'\lessdot_{\hB}u\pi_k'$. This shows that $(u,\bpi')\in\Omega(W_{\langle s_r\rangle},c')$, so $u'\leq_\LL u$. It is straightforward to see that $u\leq_\LL u'$, so $u=u'$. By induction on $r$, we know that every element of the set $u\bpi'=u'\bpi'$ is the inverse of a $c^{-1}$-sortable element. Choose an integer $j$ with $0\leq j\leq r$; we will show that $u\pi_j$ is the inverse of a $c^{-1}$-sortable element. If $j\leq k-1$, then this is immediate because $u\pi_j=u\pi_j'$. Now suppose $k\leq j\leq r$. We have $u\pi_j=u\pi_{j-1}'s_r$, and $u\pi_{j-1}'\in W_{\langle s_r\rangle}$. Because $u\pi_{j-1}'$ is the inverse of a $c^{-1}$-sortable element, this implies that $u\pi_j$ is the inverse of $c^{-1}$-sortable element. 

\medskip 

\noindent {\bf Case 2.} Suppose $u\not\in W_{\langle s_r\rangle}$. Because $u=\omega_\downarrow(\bpi)$, we know that $u^{-1}$ is $c^{-1}$-sortable. This implies that $s_r$ is a right descent of $u$. Choose a chain $\bpi^*\in\DD_c(u)$, and let $t_1^*\cdots t_r^*=\rw(\bpi^*)$. Let $t_i'=s_rt_is_r$ and $t_i^\#=s_rt_i^*s_r$. Then $t_1'\cdots t_r'$ and $t_1^\#\cdots t_r^\#$ are reduced $T$-words for $s_rc'$. Let $\bpi',\bpi^\#\in\MCh(W,s_rc')$ be the chains such that $\rw(\bpi')=t_1'\cdots t_r'$ and $\rw(\bpi^\#)=t_1^\#\cdots t_r^\#$. It follows from the proof of \cref{thm:unique_decreasing} that $\bpi^\#\in\DD_{s_rc'}(us_r)$. We have 
\[\mathcal R(\mathcal C_{\bpi'})=s_r\mathcal R(\mathcal C_{\bpi})\quad\text{and}\quad \mathcal R(\DD_c(u))=s_r\mathcal R(\DD_{s_rc'}(us_r)).\] We know by \cref{thm:spans} that 
\[\Span_{\geq 0}\mathcal R(\mathcal C_{\bpi})\supseteq \Span_{\geq 0}\mathcal R(\DD_c(u)),\] so 
\[\Span_{\geq 0}\mathcal R(\mathcal C_{\bpi'})\supseteq \Span_{\geq 0}\mathcal R(\DD_{s_rc'}(us_r)).\] By \cref{thm:spans}, this implies that $(us_r,\bpi')\in\Omega(W,s_rc')$, so $u'\leq_\LL us_r$, where we let $u'=\delta_\downarrow(\bpi')$. In particular, since $s_r$ is not a right inversion of $us_r$, it is also not a right inversion of $u'$. 

We can reverse the preceding argument to show that $(u's_r,\bpi)\in\Omega(W,c)$, which implies that $u\leq_\LL u's_r$. Since $s_r$ is a right inversion of both $u's_r$ and $u$, this implies that $us_r\leq_\LL u'$. Hence, $u'=us_r$. 

We have 
\[u\bpi=\{u=u\pi_0\lessdot_{\hB}u\pi_1\lessdot_{\hB}\cdots \lessdot_{\hB} u\pi_r\}\quad\text{and}\quad u'\bpi'=\{us_r=u\pi_0s_r\lessdot_{\hB}u\pi_1s_r\lessdot_{\hB}\cdots \lessdot_{\hB} u\pi_rs_r\}.\] Since $\ell_S(us_r)=\ell_S(u)-1$, this implies that $s_r$ is a right descent of every element of $u\bpi$. We know by induction on $\ell_S(u)$ that every element of the set $us_r\bpi'$ is the inverse of an $(s_rc')^{-1}$-sortable element. Since $u\bpi=us_r\bpi's_r$, this implies that every element of $u\bpi$ is the inverse of a $c^{-1}$-sortable element. 
\end{proof} 

\section{The Artin and Dual Presentations}\label{sec:presentations}  
Recall that the Artin presentation of the braid group of $W$ is 
\[\mathbf{B}_{W,w_\circ}=\langle \mathbf{S} : [\mathrm{Red}_S(w_\circ)] \rangle,\] where $[\mathrm{Red}_S(w_\circ)]$ denotes the relations setting equal all reduced $S$-words for $w_\circ$ (with each $s\in S$ replaced by it corresponding $\mathbf{s}\in\mathbf{S}$). Recall that Bessis's dual presentation is
\[\B_{W,c} = \langle \dT : [\mathrm{Red}_T(c)] \rangle,\]
where $[\mathrm{Red}_T(c)]$ stands for the relations setting equal all reduced $T$-words for $c$ (with each $t \in T$ replaced by its corresponding $\mathbf{t} \in \dT)$.

\subsection{Artin relations from dual relations} 

One can view the Salvetti--Brady--Delucchi--Watt triangulation group-theoretically as an explicit geometric mechanism for passing from the Bessis dual presentation to the Artin presentation of the braid group $\mathbf{B}_W$. We illustrate this perspective in the following example. 

\begin{example}\label{exam:S3}
Let $W$ be the symmetric group $\SSS_3$, and let $S=\{s_1,s_2\}$, where $s_i$ denotes the transposition $(i\,\,i+1)$. Let $c=s_1s_2$. Let $t=s_1s_2s_1=s_2s_1s_2$. On the right of \cref{fig:S3} is the $c$-noncrossing partition lattice, which consists of the identity element $e$, the reflections $s_1,t,s_2$, and the Coxeter element $c$. The three maximal chains of this lattice are ${\color{Purple}\C_1}=\{e\lessdot_T s_1\lessdot_T c\}$, ${\color{Green}\C_2}=\{e\lessdot_T t\lessdot_T c\}$, and ${\color{red}\C_3}=\{e\lessdot_T s_2\lessdot_T c\}$. On the left of \cref{fig:S3} is the SBDW triangulation of the $\SSS_3$-permutahedron $\Perm_\yy$. This triangulation consists of the four simplices 
\[\conv(e{\color{Purple}\C_1}\yy),\quad \conv(e{\color{red}\C_3}\yy),\quad \conv(s_2{\color{Green}\C_2}\yy),\quad \conv(s_2{\color{Purple}\C_1}\yy).\] 

\medskip 

\begin{paracol}{2}
The Artin presentation of $\B_{\SSS_3}$ is \[\B_{\SSS_3}=\langle \mathbf{S}:{\color{Teal}\mathbf{s}_1\mathbf{s}_2\mathbf{s}_1}={\color{Orange}\mathbf{s}_2\mathbf{s}_1\mathbf{s}_2}\rangle.\] One can interpret the expression ${\color{Teal}\mathbf{s}_1\mathbf{s}_2\mathbf{s}_1}$ as the result of walking from $\yy$ to $\wo\yy$ along the left-hand edges of the $\SSS_3$-permutahedron $\Perm_\yy$ in \cref{fig:S3}. Similarly, the expression ${\color{Orange}\mathbf{s}_2\mathbf{s}_1\mathbf{s}_2}$ is the result of walking from $\yy$ to $\wo\yy$ along the right-hand edges of $\Perm_\yy$. The equality of these two expressions in the braid group corresponds to a $2$-cell, which is all of $\Perm_\yy$. 
\switchcolumn
Bessis's dual presentation of $\B_{\SSS_3}$ is \[\B_{\SSS_3}=\langle \mathbf{T}_c:{\color{Purple}\mathbf{s}_1\mathbf{s}_2}={\color{red}\mathbf{t}\mathbf{s}_1}={\color{Green}\mathbf{s}_2\mathbf{t}}\rangle.\] The expressions ${\color{Purple}\mathbf{s}_1\mathbf{s}_2}$, ${\color{red}\mathbf{t}\mathbf{s}_1}$, and ${\color{Green}\mathbf{s}_2\mathbf{t}}$ correspond to the chains in the lattice on the right-hand side of~\Cref{fig:S3}. One can also interpret each of these expressions as the result of walking along a simplex of the same color in \cref{fig:6BBW}. We will find it convenient to let ${\color{DarkBlue}\mathbf{c}}={\color{Purple}\mathbf{s}_1\mathbf{s}_2}={\color{red}\mathbf{t}\mathbf{s}_1}={\color{Green}\mathbf{s}_2\mathbf{t}}$.  
\end{paracol}

\medskip

Suppose we are given the dual presentation of the braid group. The SBDW triangulation allows us to deduce the Artin presentation by ``pushing through'' simplices as follows. Let us start with the expression ${\color{Teal}\mathbf{s}_1\mathbf{s}_2\mathbf{s}_1}$, which corresponds to the path $\yy\to s_1\yy\to s_1s_2\yy\to\wo\yy$ walking up the left-hand edges of $\Perm_\yy$. We can push through the lower-left simplex $\conv(e{\color{Purple}\C_1}\yy)$ so that we now traverse the path $\yy\to s_1s_2\yy\to\wo\yy$, which corresponds to the expression ${\color{DarkBlue}\mathbf{c}}{\color{Teal}\mathbf{s}_1}$. This pushing can be viewed as the chain of equalities 
\[{\color{Teal}\mathbf{s}_1\mathbf{s}_2\mathbf{s}_1}=({\color{Purple}\mathbf{s}_1\mathbf{s}_2}){\color{Teal}\mathbf{s}_1}={\color{DarkBlue}\mathbf{c}}{\color{Teal}\mathbf{s}_1}.\] We then push through the simplex $\conv(e{\color{red}\C_3}\yy)$ so that we traverse the path $\yy\to s_2\yy\to s_1s_2\yy\to\wo\yy$, which corresponds to the expression ${\color{Orange}\mathbf{s}_2}{\color{Brown}\mathbf{t}}{\color{Teal}\mathbf{s}_1}$. This pushing can be viewed as the chain of equalities 
\[{\color{DarkBlue}\mathbf{c}}{\color{Teal}\mathbf{s}_1}=({\color{red}\mathbf{s}_2\mathbf{t}}){\color{Teal}\mathbf{s}_1}={\color{Orange}\mathbf{s}_2}{\color{Brown}\mathbf{t}}{\color{Teal}\mathbf{s}_1}.\]
We then push through the simplex $\conv(s_2{\color{Green}\C_2}\yy)$ so that we traverse the path $\yy\to s_2\yy\to\wo\yy$, which corresponds to the expression ${\color{Orange}\mathbf{s}_2}{\color{DarkBlue}\mathbf{c}}$. This pushing can be viewed as the chain of equalities 
\[{\color{Orange}\mathbf{s}_2}{\color{Brown}\mathbf{t}}{\color{Teal}\mathbf{s}_1}={\color{Orange}\mathbf{s}_2}({\color{Green}\mathbf{t}\mathbf{s}_1})={\color{Orange}\mathbf{s}_2}{\color{DarkBlue}\mathbf{c}}.\] 
Finally, we push through the simplex $s_2{\color{Purple}\C_1}\yy$ so that we traverse the path $\yy\to s_2\yy\to s_2s_1\yy\to\wo\yy$, which corresponds to the expression ${\color{Orange}\mathbf{s}_2\mathbf{s}_1\mathbf{s}_2}$. This pushing can be viewed as the chain of equalities 
\[{\color{Orange}\mathbf{s}_2}{\color{DarkBlue}\mathbf{c}}={\color{Orange}\mathbf{s}_2}({\color{Purple}\mathbf{s}_1\mathbf{s}_2})={\color{Orange}\mathbf{s}_2\mathbf{s}_1\mathbf{s}_2}.\] 
In summary, pushing through simplices in the SBDW triangulation allows us to deduce the relation in the Artin presentation of the braid group via the following chain of equalities: 
\[{\color{Teal}\mathbf{s}_1\mathbf{s}_2\mathbf{s}_1}={\color{DarkBlue}\mathbf{c}}{\color{Teal}\mathbf{s}_1}={\color{Orange}\mathbf{s}_2}{\color{Brown}\mathbf{t}}{\color{Teal}\mathbf{s}_1}={\color{Orange}\mathbf{s}_2}{\color{DarkBlue}\mathbf{c}}={\color{Orange}\mathbf{s}_2\mathbf{s}_1\mathbf{s}_2}.\]
\end{example} 

The general strategy for deriving the Artin presentation from the Bessis dual presentation is similar to that presented in \cref{exam:S3}. Suppose we are given the Bessis dual presentation. Suppose in addition that we are given two reduced $S$-words $s_1\cdots s_N$ and $s_1'\cdots s_N'$ for $w_\circ$, and let $\mathbf{s}_1\cdots\mathbf{s}_N$ and $\mathbf{s}_1'\cdots\mathbf{s}_N'$ be their corresponding words over $\mathbf{S}$. These two words over $\mathbf{S}$ correspond to (oriented) paths from $\yy$ to $w_\circ\yy$ that use edges of $\Perm_\yy$. We can continuously deform the first path into the second by repeatedly pushing through simplices in $\SBDW_\yy(W,c)$. Each of these pushing moves corresponds to a relation in $[\mathrm{Red}_T(c)]$. Thus, this deformation is a topological manifestation of the fact that one can derive the equality of $\mathbf{s}_1\cdots\mathbf{s}_N$ and $\mathbf{s}_1'\cdots\mathbf{s}_N'$ in $\mathbf{B}_W$ from the relations in the Bessis dual presentation. As the words $\mathbf{s}_1\cdots\mathbf{s}_N$ and $\mathbf{s}_1'\cdots\mathbf{s}_N'$ were arbitrary, we can derive the entire Artin presentation from the Bessis dual presentation. 

\subsection{Dual lifts and the noncrossing Bruhat order}\label{subsec:bookkeeping} 
The SBDW triangulation leads naturally to the following new partial order on $W$. 

\begin{definition}\label{def:NCB}
The \dfn{$c$-noncrossing Bruhat order} is the partial order $\leq_{c\mathrm{NCB}}$ on $W$ defined so that there is a cover relation $u\lessdot_{c\mathrm{NCB}}v$ if and only if $u\lessdot_\hB v$ and a simplex in $\SBDW_\yy(W,c)$ contains an edge between $u\yy$ and $v\yy$.  
\end{definition} 

(Note that the $c$-noncrossing Bruhat order does not depend on the base point $\yy$.)   The noncrossing Bruhat order provides a ``bookkeeping'' mechanism for encoding the elements of $\dT$. More precisely, we will exhibit a labeling of the edges of the noncrossing Bruhat order with the elements of $\dT$.  

Let $\theta\colon\mathbf{B}_W\to W$ be the quotient map sending each generator $\mathbf{s}\in\mathbf{S}$ to its corresponding simple reflection $s\in S$. Let $\mathbf{R}=\theta^{-1}(T)$. Recall that $r$ is the rank of $W$. For $i\in[r-1]$, we can perform a \dfn{Hurwitz move} to a tuple $(\bt_1,\ldots,\bt_r)\in\mathbf{R}^r$ by replacing it with the tuple $(\bt_1,\ldots,\bt_{i-1},\bt_{i+1},\bt_{i+1}^{-1}\bt_i\bt_{i+1},\bt_{i+2},\ldots,\bt_{r})$, which is again in $\mathbf{R}^r$. The corresponding \dfn{inverse Hurwitz move} replaces $(\bt_1,\ldots,\bt_{i-1},\bt_{i+1},\bt_{i+1}^{-1}\bt_i\bt_{i+1},\bt_{i+2},\ldots,\bt_{r})$ with $(\bt_1,\ldots,\bt_r)$. The \dfn{Hurwitz orbit} of a tuple $(\bt_1,\ldots,\bt_r)\in\mathbf{R}^r$ is the set of tuples that can be obtained from $(\bt_1,\ldots,\bt_r)$ via a sequence of Hurwitz moves and inverse Hurwitz moves. 

Let $s_1\cdots s_r$ be a reduced $S$-word for $c$, and consider the tuple $(\bs_1,\ldots,\bs_r)\in\mathbf R^r$ (where $\bs_i\in\mathbf{S}$ corresponds to $s_i$). Let $t\in T$. Bessis \cite{bessis2003dualbraid} proved that the lift $\dt$ is the unique element of $\in\theta^{-1}(t)$ that appears in a tuple in the Hurwitz orbit of $(\bs_1,\ldots,\bs_r)$. 

As above, let us labeling each edge of $\Perm_\yy$ with the corresponding lift $\bs\in{\bf S}$. Consider a pair $(u,\bpi)\in\Omega(W,c)$ such that $w,w'\in w\bpi$. The $\preceq_c$-increasing chain \[\bpi^*=\{e<_Ts_1<_Ts_1s_2<_T\cdots<_Ts_1s_2\cdots s_r=c\}\] is concordant with every element of $W_c^+$, so in particular, it is concordant with $u$. The labels of the edges in the chain $u\bpi^*$ form the tuple $(\bs_1,\ldots,\bs_r)$. Using \cref{lem:basification}, we can transform the chain $u\bpi^*$ into $u\bpi$ via a sequence of moves, each of which only affects a diamond in the Bruhat order. We can associate a tuple in $\mathbf{R}^r$ to each chain appearing throughout this transofrmation. Indeed, each move changing a diamond in the Bruhat order corresponds to a composition of Hurwitz moves and inverse Hurwitz moves, so we can apply these moves to the tuple associated to one chain to obtain the tuple associated to the next chain. In the end, we obtain a tuple in $\mathbf{R}^r$ associated to $u\bpi$. This produces an element of $\mathbf R$ labeling each cover relation $w\lessdot_\hB w'$ with $w,w'\in u\bpi$. This label is a lift of the reflection $t=w^{-1}w'$. By the aforementioned result of Bessis, this label is the dual lift $\dt\in\dT$. In summary, we have labeled the edges of the noncrossing Bruhat order using the elements of $\dT$. 

\section{Future Work}\label{sec:future}

\subsection{Concordancy posets}
For $u\in W_c^+$, recall that $\Class_c(u)$ is the set of commutation classes of maximal chains in the $c$-noncrossing partition lattice that are concordant with $u$. Our combinatorial and geometric results outlined in \cref{subsec:properties-of-triang} motivate a natural partial order $\leqD$ on $\Class_c(u)$. For $\CL,\CL'\in\Class_c(u)$, we write $\CL\leqD\CL'$ if and only if $\Delta(\CL)\subseteq\Delta(\CL')$. It follows from \cref{thm:Cambrian,thm:spans} that the $\preceq_c$-increasing class $\mathcal I_c$ and the $\preceq_c$-decreasing class $\DD_c(u)$ are the maximum and minimum elements, respectively, of the poset $\Class_c(u)$. These posets appear to have rich combinatorial structure. 

\begin{conjecture}
For each element $u\in W_c^+$, the poset $\Class_c(u)$ is a semidistributive lattice. Moreover, the Hasse diagram of $\Class_c(u)$ is a regular graph of degree $r-1$. 
\end{conjecture}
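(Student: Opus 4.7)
The plan is to give an explicit combinatorial/geometric model for the cover relations in $\Class_c(u)$, use it to establish the degree count, and then derive the lattice and semidistributivity properties.

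First, I would reduce and reformulate. By \cref{thm:Cambrian}, the isomorphism type of $\Class_c(u)$ depends only on the $c^{-1}$-Cambrian class of $u^{-1}$, so we may assume $u^{-1}$ is $c^{-1}$-sortable. Then \cref{thm:spans_new} identifies $\Class_c(u)$ with the collection of commutation classes $\CL$ such that $\Delta(\CL) \supseteq \Delta(u)$, ordered by cone containment. Using \cref{lem:linearly_independent}, each $\CL \in \Class_c(u)$ is uniquely specified by the $r$-element set $R(\CL) \subseteq T$ of reflections appearing in any $\rw(\bpi)$ for $\bpi \in \CL$, and $\Delta(\CL) = \spange\{\dd_t : t \in R(\CL)\}$.

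Next, I would identify the covers via ``flips.'' A cover $\CL \lessdot_\Delta \CL'$ should correspond to a local rearrangement in which $R(\CL)$ and $R(\CL')$ differ in a single reflection. The local structure of such rearrangements is controlled by \cref{lem:quadrilateral,lem:basification,lem:i-1modk}: for each adjacent pair $(t_i,t_{i+1})$ in some $\rw(\bpi) = t_1 \cdots t_r$ generating a nonabelian rank-$2$ parabolic $W'$, one may try to replace $\{t_i, t_{i+1}\}$ by another adjacent pair of reflections in $T \cap W'$. The conjectural regularity is most likely explained as follows: among the $r-1$ consecutive positions in $\rw(\bpi)$, each produces exactly one cover relation in $\Class_c(u)$ (directed upward or downward according to whether the swap enlarges or shrinks the cone relative to $\Delta(u)$), while the concordancy constraint from $\Delta(u) \subseteq \Delta(\CL)$ rules out flips of the ``canonical generator'' pair at each such position. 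Counting these positions gives the degree $r-1$; one would verify by analyzing the possible rank-$2$ parabolic configurations that each flip indeed produces a distinct cover.

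For the lattice property, I would attempt to exhibit explicit meet and join operations. Given $\CL_1, \CL_2 \in \Class_c(u)$, the natural candidate for $\CL_1 \vee \CL_2$ is the class whose cone is the smallest simplicial cone of the form $\Delta(\CL)$ with $\CL \in \Class_c(u)$ that contains both $\Delta(\CL_1)$ and $\Delta(\CL_2)$; the dual construction would give $\CL_1 \wedge \CL_2$. The most promising route is to identify $\Class_c(u)$ with an interval in, or a congruence quotient of, a known semidistributive lattice. The $c^{-1}$-Cambrian lattice is the natural first target given \cref{thm:Cambrian}; failing that, one can try the biclosed-set lattice on the inversion set of $uc(u)^{-1}$ (which is a specific biclosed subset of $\Phi^+$), or Reading's shard intersection order, both of which are known to be semidistributive. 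Each $\CL$ could then be indexed by its set of ``cover reflections'' (the canonical join representation), and semidistributivity would follow via Reading's canonical-join criterion once the covers are fully understood from the second step.

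The hard part will be proving that $\Class_c(u)$ is a lattice in the first place. The geometric interpretation is suggestive but not conclusive, because the admissible cones are highly constrained — the naive set-theoretic meet of two simplicial cones need not be of the form $\Delta(\CL)$ for $\CL \in \Class_c(u)$. If a direct lattice construction proves elusive, the fallback is to construct a combinatorial bijection between $\Class_c(u)$ and a known semidistributive lattice (likely an interval in a Cambrian or weak order, possibly restricted by the biclosed set determined by $u$), transporting both the lattice structure and the degree count. Low-rank computations (e.g., $\SSS_4, \SSS_5, B_3$) should pinpoint the correct target, after which proving the isomorphism type-uniformly becomes the principal task.
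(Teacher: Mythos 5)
This statement is an open conjecture in the paper (\cref{sec:future}), so there is no proof of it to compare against; what you have written is a research program, and you are candid that the lattice property is the bottleneck. Given that, the fair thing to assess is whether your plan has internal gaps that would sink it even if all of the background machinery cooperated.

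The most serious gap is in your degree-count step, and it is upstream of the lattice question. You take for granted that covers of $\leqD$ in $\Class_c(u)$ are exactly ``single flips,'' i.e.\ that $\CL \lessdot_\Delta \CL'$ forces $R(\CL)$ and $R(\CL')$ to differ in one reflection, obtained by replacing an adjacent pair $\{t_i,t_{i+1}\}$ in some $\rw(\bpi)$ by another adjacent pair from the same rank-$2$ parabolic. The order $\leqD$ is defined by cone containment $\Delta(\CL)\subseteq\Delta(\CL')$, and a priori the Hasse diagram of a containment order on simplicial cones spanned by constrained $r$-subsets of $\{\dd_t\}$ need not be a flip graph; cones can nest without any single-coordinate exchange realizing the cover. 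You would need a lemma to the effect that if $\Delta(\CL)\subsetneq\Delta(\CL')$ with nothing strictly in between, then $|R(\CL)\setminus R(\CL')|=1$, and then a second lemma that such a one-element exchange is necessarily a rank-$2$ flip. Neither is obvious, and the second in particular is exactly the kind of statement that \cref{lem:i-1modk} and \cref{lem:basification} constrain but do not immediately deliver. Your count of $r-1$ then hinges on (a) each of the $r-1$ consecutive positions yielding exactly one admissible flip, which fails naively when $t_i$ and $t_{i+1}$ commute (no nonabelian rank-$2$ parabolic, no flip), and (b) the flips being pairwise distinct as covers, which you flag but do not address. Commuting positions correspond to different chains in the same commutation class $\CL$, so the right framing is to count flips over the whole class rather than a fixed word, but then you must show that the total is $r-1$ rather than something depending on $u$ and $\CL$ — this is precisely where the regularity conjecture has its content.

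On the lattice step, the candidate ``smallest admissible cone containing both'' for the join is exactly the kind of closure operation that typically fails to be well-defined without a convexity or matroidal property of the set of admissible $R(\CL)$'s, and you rightly call this out. Your fallback of transporting structure from a known semidistributive lattice is the more promising path, but $\Class_c(u)$ is not obviously an interval of the $c^{-1}$-Cambrian lattice (the Cambrian lattice lives on $W$, whereas $\Class_c(u)$ lives on commutation classes, and the sizes do not match in general). The paper's own presentation hints that the right ambient object may instead be something built from $\MCh(W,c)$ or $\Red_T(c)$ under Hurwitz/flip moves, restricted by the cone condition $\Delta(u)\subseteq\Delta(\CL)$; I would start the low-rank computations you suggest by testing whether $\Class_c(u)$ is an interval in the flip poset on commutation classes of $\Red_T(c)$ before reaching for biclosed sets or shard intersection order, both of which live on rather different underlying sets.
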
 

\begin{conjecture}
Two distinct classes $\CL,\CL'\in\Class_c(u)$ are adjacent in the Hasse diagram of the partial order $\leqD$ if and only if there is an SBDW simplex in the collection $\{\conv(u\bpi\yy):\bpi\in\CL\}$ that shares a facet with an SBDW simplex in the collection $\{\conv(u\bpi\yy):\bpi\in\CL'\}$. 
\end{conjecture}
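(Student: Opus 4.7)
The plan is to reduce the conjecture to a local statement about rank-2 parabolic subgroups via \cref{lem:basification} and \cref{lem:i-1modk}, then translate back to cone containment using \cref{lem:roots_weights}.

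For the $(\Leftarrow)$ direction, suppose $\conv(u\bpi\yy)$ and $\conv(u\bpi'\yy)$ share a facet with $\bpi\in\CL$, $\bpi'\in\CL'$, and $\CL\ne\CL'$. The chains $u\bpi,u\bpi'$ in $[u,uc]_\hB$ both start at $u$ and end at $uc$, so facet-sharing forces them to differ in exactly one intermediate element. We may thus write $\rw(\bpi)=t_1\cdots t_{i-1}t_it_{i+1}t_{i+2}\cdots t_r$ and $\rw(\bpi')=t_1\cdots t_{i-1}t'_it'_{i+1}t_{i+2}\cdots t_r$ with $t_it_{i+1}=t'_it'_{i+1}$. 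By \cref{lem:basification}, the group $W'=\langle t_i,t_{i+1},t'_i,t'_{i+1}\rangle$ is a rank-2 parabolic whose canonical generators are either $\{t_i,t_{i+1}\}$ or $\{t'_i,t'_{i+1}\}$, and $\CL\ne\CL'$ forces $W'$ to be non-abelian. Writing $T\cap W'=\{p_1\prec_c p_2\prec_c\cdots\prec_c p_k\}$ with $k\ge 3$, \cref{lem:i-1modk} together with \cref{lem:basification} pins down one pair as the canonical $\{p_1,p_k\}$ and the other as $\{p_j,p_{j+1}\}$ for some $j\in[k-1]$. Since $\RRR(\CL)$ and $\RRR(\CL')$ agree outside $W'$ and $\spange\{\beta_{p_j},\beta_{p_{j+1}}\}\subsetneq\spange\{\beta_{p_1},\beta_{p_k}\}$ inside the 2-dimensional $W'$-subspace, the linear-isomorphism argument from the proof of \cref{lem:roots_weights} yields $\Delta(\CL)\subsetneq\Delta(\CL')$ (or the reverse, depending on which pair is canonical). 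To upgrade strict comparability to a cover, I would assume for contradiction that $\CL''\in\Class_c(u)$ satisfies $\Delta(\CL)\subsetneq\Delta(\CL'')\subsetneq\Delta(\CL')$ and argue, using that $\Delta(\CL)$ and $\Delta(\CL')$ share a common $(r-2)$-dimensional face spanned by the ``outside'' roots $\{\beta_{t_j}:j\ne i,i+1\}$ together with \cref{lem:linearly_independent}, that $\RRR(\CL'')$ must agree with $\RRR(\CL)$ and $\RRR(\CL')$ outside $W'$. The $W'$-contribution of $\RRR(\CL'')$ is then $\{p_1,p_k\}$ or $\{p_{j'},p_{j'+1}\}$ for some $j'$, and a direct check in the 2-dimensional geometry of $\{\beta_{p_i}\}$ rules out any strictly intermediate choice, yielding the desired contradiction.

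For the $(\Rightarrow)$ direction, assume $\CL\lessdot_\Delta\CL'$. Using the triangulation of $\Delta_c^+$ by the small cones $\Delta(A)$ for $A\in\MCl(W,c^{-1})$, I would locate a small cone contained in $\Delta(\CL')\setminus\Delta(\CL)$ whose closure shares an $(r-1)$-dimensional facet with a small cone inside $\Delta(\CL)$. This shared facet lies in a rank-2 parabolic ``wall'' $W'$, allowing one to construct chains $\bpi\in\CL,\bpi'\in\CL'$ whose reduced $T$-words differ only in two adjacent positions corresponding to a Hurwitz flip in $W'$. Concordancy with $u$ for both chains follows from \cref{thm:all_chains_appear} combined with the assumption $\CL,\CL'\in\Class_c(u)$, and the resulting SBDW simplices then share a facet by construction.

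The hardest step is the ``no intermediate class'' argument in the $(\Leftarrow)$ direction: one must rigorously rule out the possibility that an intermediate $\RRR(\CL'')$ differs from $\RRR(\CL)$ and $\RRR(\CL')$ in positions outside $\{i,i+1\}$. The $(\Rightarrow)$ direction carries a parallel obstacle in identifying the correct rank-2 parabolic witnessing the flip and verifying that the associated chains are both concordant with $u$. Both difficulties ultimately reduce to a compatibility analysis between rank-2 flips in the positive cluster complex of $W$ and the concordancy relation for a fixed $u\in W_c^+$, which I would approach via the bijection between small cones and $\preceq_c$-decreasing commutation classes together with the Cambrian-class description in \cref{thm:Cambrian}.
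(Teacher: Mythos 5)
This statement appears in \cref{sec:future} of the paper as a \emph{conjecture}; the authors give no proof of it, so there is nothing in the paper to compare your argument against. I will therefore assess the proposal purely on whether it constitutes, or plausibly leads to, a complete proof.

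Your reduction in the $(\Leftarrow)$ direction is a reasonable use of the paper's machinery. Facet-sharing between two SBDW simplices with the \emph{same} base point $u$ falls under Case~1 of \cref{prop:facet-matching}, so the chains $u\bpi$ and $u\bpi'$ differ in exactly one intermediate element; distinctness of $\CL$ and $\CL'$ then forces the rank-2 parabolic $W'$ from \cref{lem:basification} to be non-abelian, and \cref{lem:basification,lem:i-1modk} identify one of the two flipped pairs as the canonical generators $\{p_1,p_k\}$ of $W'$ and the other as some $\{p_j,p_{j+1}\}$. Via \cref{lem:roots_weights} this gives strict cone containment. That much is correct and essentially mechanical given what the paper already proves.

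The two steps you defer are, however, exactly the mathematical content of the conjecture, and neither is established. In the $(\Leftarrow)$ direction you still need the strict containment to be a \emph{cover} in $(\Class_c(u),\leqD)$. Your sketch asserts that any intermediate $\CL''$ must agree with $\CL,\CL'$ outside $W'$. Part of this can be salvaged: since each $\dd_x$ with $x\in X:=\RRR(\CL)\cap\RRR(\CL')$ lies in $\Delta(\CL)\subseteq\Delta(\CL'')$ and is an extreme ray of $\Delta(\CL')\supseteq\Delta(\CL'')$, it is automatically an extreme ray of $\Delta(\CL'')$, giving $X\subseteq\RRR(\CL'')$. But it does not follow that the remaining two reflections of $\RRR(\CL'')$ lie in $W'$: a priori their roots only have to project into the correct 2-dimensional cone in $V^*/\Span X$, and forcing them into $\Phi^+\cap W'$ requires further input about $c$-noncrossing root subsets that you have not supplied. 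In the $(\Rightarrow)$ direction the gap is larger: you must show that every cover in $(\Class_c(u),\leqD)$ is realized by a single rank-2 Hurwitz flip, i.e., that $\RRR(\CL)$ and $\RRR(\CL')$ differ in exactly two reflections. Nothing in your sketch rules out a cover in which the two nested cones differ in a more complicated way, and the appeal to the $\MCl(W,c^{-1})$ triangulation and to \cref{thm:all_chains_appear,thm:Cambrian} is a plausible place to look for structure but is only gestured at. As written, the proposal is a plan rather than a proof, and the conjecture remains open.
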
 

\subsection{Noncrossing weak order and noncrossing Bruhat order}\label{sec:other_orders}

Since the set $S$ of simple reflections is a subset of the set $T$ of all reflections, one might expect to be able to identify the Artin generating set $\mathbf{S}$ as a subset of the dual generating set $\dT$ (as elements of the braid group).  Doing so specifies an isomorphism $\BA\simeq \BD$, and we can leverage this inclusion to relate the two groups to the real geometry of the $W$-permutahedron.

\medskip

\begin{paracol}{2}
The \dfn{positive Artin monoid} $\BA^+$ is the submonoid of $\BA$ generated by $\mathbf{S}$; by Garside theory, this positive monoid injects into the braid group.  The Coxeter group $W$ lifts to $\BA^+$ by lifting each reduced $S$-word to the corresponding $\mathbf{S}$-word in Artin generators.  We denote this \defn{positive Artin lift} of $w \in W$ to $\BA^+$ by $\mathbf{w}$.

The map $w\mapsto \bw$ is an isomorphism from the right weak order on $W$ to the interval $[\mathbf{e},\mathbf{w}_\circ]_\mathbf{S}$ in $\BA^+$.
\switchcolumn
The \dfn{dual braid monoid} $\BD^+$ is the submonoid of $\BD$ generated by $\dT$; by Garside theory, this positive monoid injects into the braid group.  The $c$-noncrossing partitions lift to $\mathbf{B}_{W,c}^+$ by lifting each reduced $T$-word to the corresponding $\dT$-word in dual generators.  We denote this \defn{positive dual lift} of $\pi \in \NC(W,c)$ to $\mathbf{B}_{W,c}^+$ by $\boldsymbol{\pi}_{c}$. 

The map $t\mapsto\dt$ is an isomorphism from the $c$-noncrossing partition lattice to the interval $[\mathbf{e},\mathbf{c}_c]_{\dT}$ in $\mathbf{B}_{W,c}^+$.
\end{paracol} 

\medskip

Under the inclusion $\mathbf{S} \subseteq \dT$ identifying $\mathbf{s}$ with $\mathbf{s}_{c}$, 
the positive Artin monoid injects into the dual braid monoid.  We therefore have the inclusions \begin{equation}W=[e,w_\circ]_S \simeq [\mathbf{e},\mathbf{w}_\circ]_\mathbf{S} \hookrightarrow \BA^+ \hookrightarrow \BD^+,\label{eq:inclusion}\end{equation}  and it is natural to wonder what additional edges are introduced in $W$ and in $\BA^+$ when allowing the full set $\dT$ of dual generators---in other words, we consider the restriction of the Cayley graph of $\BD^+$ to the elements of $\BA^+$. 

\begin{definition}
The \dfn{$c$-noncrossing weak order} is the poset $\NCW(\BA^+,c)$ with underlying set $\BA^+$ defined by the cover relations $\mathbf{u}\lessdot_{c\mathrm{NCWeak}}\mathbf{v}$ whenever $\mathbf{v}=\mathbf{u}\dt$ for some $\dt\in\dT$.  We define the \defn{$c$-noncrossing weak order} on $W$ to be the poset  $\NCW(W,c)$ with underlying set $W$ defined so that the map $w\mapsto\mathbf{w}$ is an isomorphism from $\NCW(W,c)$ to the interval between $\mathbf{e}$ and $\mathbf{w}_\circ$ in $\NCW(\BA^+,c)$. 
\label{def:noncrossing_weak}
\end{definition}
\Cref{fig:nathan} illustrates the restriction of $\NCW(\B_{\SSS_3}^+,s_1s_2)$ to the interval from the identity to the full twist $\mathbf{w}_\circ^2$.  Restricting to the interval from $\mathbf{e}$ to $\mathbf{w}_\circ$ and replacing elements of $\B_{\SSS_3}$ with their image in $\SSS_3$ gives $\NCW(\SSS_3,s_1s_2)$.

\begin{conjecture}\label{conj:NCWeak_lattice}
The $c$-noncrossing weak order on $W$ is a lattice. 
\end{conjecture}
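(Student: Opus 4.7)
My plan is to attack \cref{conj:NCWeak_lattice} by first proving the companion conjecture from \cref{sec:other_orders}---that the $c$-noncrossing weak order and the $c$-noncrossing Bruhat order coincide on $W$---and then leveraging the polyhedral structure of the SBDW triangulation to establish the lattice property. The geometric intuition is that the dual generators $\dT$ are designed so that the extra covers in $\NCW(W,c)$ beyond those of the weak order correspond exactly to the edges of the SBDW triangulation that are not edges of $\Perm_\yy$.

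The first step is to show $\NCW(W,c)$ refines the right weak order on $W$ and that each cover $u \lessdot v$ satisfies $v = ut$ for some $t \in T$ with $\ell_S(v) = \ell_S(u) + \ell_S(t)$; this follows from the fact that $\mathbf{u}\dt$ is a positive Artin lift precisely in this case. The second step is to identify each such cover with an edge of an SBDW simplex via the bookkeeping of \cref{subsec:bookkeeping}: Bessis's Hurwitz-orbit characterization of $\dt$ implies that $\dt$ labels an edge along the chain $u\bpi$ whenever $(u,\bpi)\in\Omega(W,c)$ and $t$ is the corresponding step of $\rw(\bpi)$. Conversely, \cref{thm:all_chains_appear} guarantees every edge of $\SBDW_\yy(W,c)$ appears in some such chain, so the Hasse diagrams of $\NCW(W,c)$ and the $c$-noncrossing Bruhat order coincide.

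Having unified the two orders, the lattice property can then be attacked by combining two existing lattice structures that $\NCW(W,c)$ inherits. On the one hand, $\NCW(W,c)$ refines the right weak order, which is a lattice, giving a natural coarse join; on the other hand, \cref{thm:Cambrian} shows that each Bruhat interval $[u,uc]_\hB$ for $u \in W_c^+$ is a lattice that appears in $\NCW(W,c)$ with multiplicity equal to the size of a $c^{-1}$-Cambrian class. I would argue that meets in $\NCW(W,c)$ are computed by refining the weak-order meet using local Bruhat meets inside the simplices $\conv(u\bpi\yy)$, with \cref{thm:spans_new} specifying which simplices are relevant.

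The main obstacle I anticipate is handling pairs of elements that do not lie together in any single SBDW simplex: in that case, the candidate meet must be constructed by patching together local Bruhat meets across different simplices, and verifying this patching is consistent requires a careful analysis of adjacent simplices in $\SBDW_\yy(W,c)$. A promising strategy is to exploit the regularity of $\SBDW_{\yys}(W,c)$ from \cref{prop:regular_simply_laced}: the height function $\hgt_{\yys,\gamma^\star}^\epsilon$ should yield a linear extension of $\NCW(W,c)$ along which one can sweep through the vertices of $\Perm_{\yys}$ and construct meets inductively as infima of sweep-earlier elements. A fallback approach would be an induction on the rank $r$ via parabolic restriction with respect to a right descent $s$ of $c$, using \cref{lem:preceq_c2} to reduce to $W_{\langle s\rangle}$ and verifying dihedral and low-rank simply-laced base cases by direct computation.
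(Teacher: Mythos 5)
This statement is \cref{conj:NCWeak_lattice}, which the paper states as an open \emph{conjecture} and does not prove. There is no proof in the paper to compare against, so any correct argument here would be a genuinely new contribution rather than a reconstruction of existing text.

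As a research plan, your proposal is reasonable in spirit but does not constitute a proof, and it contains a few genuine gaps. First, it rests on first establishing \cref{conj:NCB_NCWeak} (that the $c$-noncrossing weak order and the $c$-noncrossing Bruhat order coincide), which the paper also leaves open; your sketch for it ("Bessis's Hurwitz-orbit characterization of $\dt$ implies\ldots'') asserts exactly the equivalence to be proved without a verification that \emph{every} element of the form $\mathbf{u}\dt$ landing back in $[\mathbf{e},\mathbf{w}_\circ]_{\mathbf S}$ really corresponds to an edge of an SBDW simplex, and conversely. Second, the claim that $\NCW(W,c)$ ``refines the right weak order'' is at best imprecisely stated: adding the covers $\mathbf{u}\dt$ for non-simple $t$ does not obviously preserve all weak-order comparabilities, since those covers jump several ranks and are constrained by membership in $\BA^+$; whether the resulting order relation contains, is contained in, or is incomparable to weak order needs an argument, not an assertion. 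Third, the lattice step is the real content of the conjecture and is essentially hand-waved: you acknowledge the ``patching together local Bruhat meets'' obstacle yourself, and the proposed remedies do not address it --- a regular-triangulation height function gives a shelling or linear extension, but a linear extension by itself gives no control over existence or uniqueness of meets, and the parabolic/rank-induction fallback would still require understanding how $\NCW(W,c)$ restricts to $\NCW(W_{\langle s\rangle},cs)$, which is not established. In short: the approach is a plausible angle of attack, but each of its main steps is currently a further conjecture rather than a completed argument.
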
 

Recall the noncrossing Bruhat order, which was introduced in \cref{def:NCB}. 

\begin{conjecture}\label{conj:NCB_NCWeak}
The $c$-noncrossing weak order on $W$ and the $c$-noncrossing Bruhat order on $W$ are isomorphic. 
\end{conjecture}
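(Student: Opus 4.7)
The plan is to show that the identity map on $W$ gives an isomorphism between the two posets by verifying that they share the same cover relations. Both orders are graded by Coxeter length (multiplication by $\dt \in \dT$ adjusts Artin length by $1$ whenever the product remains in $\BA^+$), both have minimum $e$ and maximum $w_\circ$, and every cover has the shape $u \to ut$ for some reflection $t \in T$ with $\ell_S(ut) = \ell_S(u) + 1$. The problem thus reduces to proving, for every such pair $(u,t)$, writing $v = ut$, the equivalence of: (a) the edge $[u\yy, v\yy]$ lies in some simplex of $\SBDW_\yy(W,c)$, and (b) $\mathbf{v} = \mathbf{u}\cdot\dt$ in $\BD^+$.

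For (a) $\Rightarrow$ (b), I would use the bookkeeping from \Cref{subsec:bookkeeping}. If $(w, \bpi) \in \Omega(W,c)$ witnesses (a), with $u = w\pi_i$ and $v = w\pi_{i+1}$, start from the $\preceq_c$-increasing chain $\bpi^\ast$, whose edge labels are the simple Artin generators, and apply the sequence of Hurwitz moves transforming $\bpi^\ast$ into $\bpi$. The running products of these labels applied on the left by $\mathbf{w}$ initially give the positive Artin lifts of $w, ws_1, \ldots, wc$ along $w\bpi^\ast$. The key inductive step is that, after each Hurwitz move across a Bruhat diamond with midpoints $u_j$ and $u_j'$, the new intermediate product $\mathbf{u}_{j-1}\cdot\dt_{j+1}$ is the positive Artin lift of $u_j'$. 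This must be checked using the Garside structure on $\BD^+$ and Bessis's characterization of dual lifts as the unique elements of the Hurwitz orbit of $(\mathbf{s}_1, \ldots, \mathbf{s}_r)$ meeting each reflection. Since Hurwitz moves preserve products in the dual braid monoid, the label attached to the edge $u \to v$ in $\bpi$ is $\dt$, and $\mathbf{v} = \mathbf{u}\cdot\dt$ follows.

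For (b) $\Rightarrow$ (a), since the inclusion $\BA^+ \hookrightarrow \BD^+$ respects word length, the hypothesis $\mathbf{v} = \mathbf{u}\cdot\dt \in \BA^+$ forces $\ell_S(v) = \ell_S(u)+1$, giving $u \lessdot_\hB v$ with $v = ut$. It remains to produce $(w,\bpi) \in \Omega(W,c)$ with $u$ and $v$ consecutive in $w\bpi$. My preferred approach is a counting argument: for each $u \in W$, the out-degree of $u$ in $\NCW(W,c)$ is the number of $\dt \in \dT$ with $\mathbf{u}\cdot\dt \in \BA^+$, while the out-degree in the $c$-noncrossing Bruhat order can be computed from \Cref{thm:all_chains_appear,thm:Cambrian,thm:spans_new}. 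The forward direction provides an injection from covers in the $c$-noncrossing Bruhat order at $u$ into covers in $\NCW(W,c)$ at $u$, so equality of total cover counts would force the two cover sets to coincide. An alternative constructive approach would fix a reduced $T$-word for $c$ containing $t$ in some position and then use \Cref{thm:spans} to explicitly produce a witnessing concordant pair, verifying the cone containment $\Delta(w) \subseteq \Delta(\bpi)$.

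The main obstacle will be characterizing precisely when $\mathbf{u}\cdot\dt$ remains in $\BA^+$. This is a subtle question about the image of one Garside monoid inside another, and it is the essential content of the reverse direction. If \Cref{conj:NCWeak_lattice} can be established in parallel, then both posets are lattices with matching minima and maxima, and combined with the counting approach above, this would upgrade the cover-relation analysis to a full poset isomorphism.
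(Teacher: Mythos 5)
This statement is not proved in the paper: it is Conjecture 6.4 in the Future Work section, and the authors indicate that they plan to return to the interaction between the positive Artin and dual braid monoids in later work. So there is no proof in the paper to compare against, and the question is simply whether your sketch closes the conjecture. It does not.

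The forward direction has a real gap that Garside bookkeeping alone does not fill. Section 5.2 establishes only that each edge in a simplex $\conv(u\bpi\yy)$ can be labeled by the dual lift of the corresponding reflection, via Hurwitz moves starting from the $\preceq_c$-increasing chain $\bpi^*$; it does not assert that the running products of those edge labels (multiplied on the left of $\mathbf{u}$) are the positive Artin lifts of $u, u\pi_1, u\pi_2, \ldots$. Your ``key inductive step'' is precisely that stronger assertion applied at a single diamond flip, and you flag it as ``must be checked'' without checking it. Unwinding the step at a rank-$2$ diamond $a \lessdot_\hB b,\, b' \lessdot_\hB d$, where the dual lifts $\mathbf{p},\mathbf{q},\mathbf{p}',\mathbf{q}' \in \dT$ of the four reflections satisfy $\mathbf{p}\mathbf{q} = \mathbf{p}'\mathbf{q}'$ in $\BD^+$, shows by left cancellation in the Garside monoid that the two conditions $\mathbf{a}\mathbf{p}' = \mathbf{b}'$ and $\mathbf{b}'\mathbf{q}' = \mathbf{d}$ are equivalent; it does not prove either one. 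So the inductive step cannot be derived from the hypotheses you list.

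The reverse direction is similarly incomplete. Your counting argument requires equality of out-degrees at every $u$, but you obtain at most an injection (and only modulo the unproven forward direction) and never establish the reverse inequality. You name the real obstruction yourself: ``characterizing precisely when $\mathbf{u}\cdot\dt$ remains in $\BA^+$'' is, in your own words, ``the essential content of the reverse direction.'' That obstruction \emph{is} the conjecture, not a bookkeeping detail to supply later, and your appeal to \cref{conj:NCWeak_lattice} does not help, since that is also an open conjecture in the same paper. What you have is a sensible plan of attack with the hard parts correctly identified but left open.
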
 

\begin{conjecture}\label{conj:NCB_lattice}
The $c$-noncrossing Bruhat order on $W$ is a lattice. 
\end{conjecture}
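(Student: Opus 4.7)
The plan is to deduce \cref{conj:NCB_lattice} from the two preceding conjectures, establishing \cref{conj:NCB_NCWeak} and \cref{conj:NCWeak_lattice} simultaneously.

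For \cref{conj:NCB_NCWeak}, the candidate isomorphism is the identity map on $W$, and the key equivalence to establish is: for a Bruhat cover $u \lessdot_\hB ut$, the pair $(u, ut)$ is a cover in $c$-NCB if and only if $\mathbf{u}\dt$ lies in $\BA^+$ (necessarily equaling $\mathbf{ut}$). The forward direction should follow from the edge-labeling construction in \cref{subsec:bookkeeping}: starting from the $\preceq_c$-increasing chain $w\bpi^*$ with atom tuple $(\mathbf{s}_1,\ldots,\mathbf{s}_r)$, one transforms it via moves from \cref{lem:basification} into a concordant chain $w\bpi$ with $u$ and $ut$ consecutive, tracking $\dt$ as the atom labeling the edge between them; a telescoping argument then gives $\mathbf{u}\dt = \mathbf{ut}$. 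For the converse, one would extract a reduced $T$-factorization of $c$ containing $\dt$ at the position connecting $u$ to $ut$ from the Garside normal form of $\mathbf{w}_\circ \in \BD^+$, and apply \cref{thm:all_chains_appear} to locate the corresponding SBDW simplex containing both $u\yy$ and $ut\yy$.

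For \cref{conj:NCWeak_lattice}, the strategy is to exploit the Garside structure of $\BD^+$, in which any two elements admit a unique gcd and lcm under the divisibility preorder $\mathbf{u} \preceq \mathbf{v} \Leftrightarrow \mathbf{v} = \mathbf{u}\boldsymbol{\alpha}$. One would first show that $\leq_{c\mathrm{NCWeak}}$ on $\BA^+ \cap [\mathbf{e}, \mathbf{w}_\circ]$ coincides with the restriction of divisibility in $\BD^+$, i.e., whenever $\mathbf{u}\boldsymbol{\alpha} = \mathbf{v}$ in $\BD^+$ with $\mathbf{u}, \mathbf{v} \in \BA^+$, the quotient $\boldsymbol{\alpha}$ can be factored as a product of dual atoms whose partial products beginning with $\mathbf{u}$ remain in $\BA^+$. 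Given this coincidence, the lattice property of $\NCW(W,c)$ reduces to showing that $\BA^+$ is closed under the Garside meet of $\BD^+$ inside the bounded interval $[\mathbf{e}, \mathbf{w}_\circ]$.

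The main obstacle is precisely these closure statements: since $\BA^+$ is not a parabolic submonoid of $\BD^+$, neither the existence of ``Artin-passing'' dual factorizations nor closure under the Garside meet follows from abstract theory. A more combinatorial attack, enabled by the tentative isomorphism of \cref{conj:NCB_NCWeak}, is to construct meets geometrically: given $u, v \in W$, identify a candidate common lower bound by walking backward along shared SBDW-simplex edges while minimizing total Coxeter length, and then verify maximality using \cref{thm:spans_new} to constrain which SBDW simplices contain each element, together with the concordancy structure described by \cref{thm:Cambrian,thm:all_chains_appear}. Making this construction uniform and complete across all Coxeter types---particularly when $u$ and $v$ belong to different $c^{-1}$-Cambrian congruence classes and their candidate meet must be chosen consistently across the classes---will be the principal difficulty.
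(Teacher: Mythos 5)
This statement appears in the paper only as an open conjecture (stated in Section 6, ``Future Work''), and the paper offers no proof; indeed the authors explicitly observe only that if \cref{conj:NCB_NCWeak} holds then \cref{conj:NCWeak_lattice} and \cref{conj:NCB_lattice} become equivalent. You are commendably honest that your write-up is a plan rather than a proof, and the route you propose---establishing \cref{conj:NCB_NCWeak} via the edge-labeling of \cref{subsec:bookkeeping} and then transferring the lattice property from the dual Garside structure---is a natural one. But the central difficulties you name are not minor gaps to be filled; they are the content of the conjecture. Since $\BA^+$ is not a parabolic (nor even a convex) submonoid of $\BD^+$, closure under the Garside meet cannot be inferred from general Garside theory, and nothing in the paper supplies that closure. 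Your ``combinatorial attack'' fallback (walking backward along shared SBDW edges, then verifying maximality via \cref{thm:spans_new} and \cref{thm:Cambrian,thm:all_chains_appear}) is not an algorithm: it does not specify how to choose the candidate meet when the walk branches, and there is no argument that the result is a lower bound at all, let alone the greatest one.

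There is also a gap in your sketch of the converse direction of \cref{conj:NCB_NCWeak}. Knowing that $\mathbf{u}\,\dt=\mathbf{ut}$ lies in $\BA^+$ does not, on its own, produce a reduced $T$-factorization of $c$ in which $t$ sits at the correct position relative to $u$; the Garside normal form of $\mathbf{w}_\circ$ in $\BD^+$ is a sequence of noncrossing-partition factors, not a maximal chain through a prescribed intermediate element, and extracting the needed chain from it is exactly the hard step. Once you had such a chain $\bpi$, \cref{thm:all_chains_appear} would give you the interval of elements concordant with $\bpi$, but you must still show $u$ lies in that interval. In short: the plan is a reasonable research direction, but every step you flag as an obstacle is genuinely open, and the ``forward'' step you treat as known also needs the converse argument made precise before the equivalence of the conjectures can be used.
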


Of course, if \cref{conj:NCB_NCWeak} holds, then \cref{conj:NCB_lattice,conj:NCWeak_lattice} are equivalent.

\begin{figure}[htbp]
\begin{center}
\includegraphics[height=9.500cm]{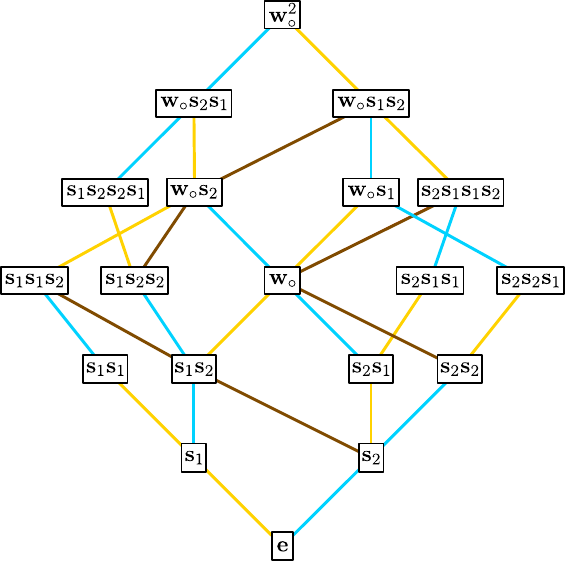}
\end{center}
\caption{The interval between the identity and the full twist in the noncrossing weak order $\NCW(\B_{\SSS_3}^+,c)$, where $c=s_1s_2$.  We have drawn $\mathbf{s}_1=(\mathbf{s}_1)_c$ edges in {\color{s1Yellow}yellow}, $\mathbf{s}_2=(\mathbf{s}_2)_c$ edges in {\color{s2Blue}blue}, and $\mathbf{t}_c=\mathbf{s}_1\mathbf{s}_2\mathbf{s}_1^{-1}=\mathbf{s}_2^{-1}\mathbf{s}_1\mathbf{s}_2$ edges in {\color{Brown}brown}.  
}\label{fig:nathan}
\end{figure}

\subsection{Pure braid group presentations}\label{subsec:pure} 

The pure braid group $\mathbf{P}_W$ is the kernel of the natural homomorphism $\theta\colon\mathbf{B}_W\to W$ defined in \cref{subsec:bookkeeping}. The center of $\mathbf{P}_W$ is generated by the \dfn{full twist} $\mathbb{c}:={\bf w}_\circ^2={\bf c}^h$, where ${\bf w}_\circ$ is the Artin lift of $w_\circ$ in $\mathbf{B}_W$ and ${\bf c}$ is the common lift of $c$ in $\mathbf{B}_W$ and $\mathbf{B}_{W,c}$.    For each $t\in T$, recall that $\dt$ denotes its lift in $\mathbf T_c$; let $\mathbbm{t}_c:={\mathbf t}_c^2$.   Let $\mathbb T_c:=\{\mathbbm{t}_c:t\in T\}$.

\cref{conj:pure} gives an explicit, elegant, type-uniform presentation of ${\bf P}_W$. To manipulate the relations in this presentation, it would be desirable to have a characterization of reduced $\mathbb{T}_{c}$-words for the full twist. This is the content of the next conjecture, which was formulated in unpublished notes by the third author.

\begin{conjecture}[{N.~Williams}]\label{conj:total_orders}
A total order $(t^{(1)},\ldots,t^{(N)})$ on the set of reflections $T$ is an EL-shelling order for $\NC(W,c)$ if and only if $\mathbbm{t}^{(1)}_c\cdots\mathbbm{t}^{(N)}_c$ is a reduced $\mathbb T_c$-word for the full twist $\mathbb{c}=\mathbf{w}_\circ^2=c^h$. 
\end{conjecture}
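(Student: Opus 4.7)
The plan is to approach this equivalence rank-by-rank, using the known characterization of EL-shellings of $\NC(W,c)$ (via the natural edge-labeling $\pi \lessdot_T \pi' \mapsto \pi^{-1}\pi' \in T$) as total orders on $T$ satisfying a local condition on each rank-2 interval, and to match this condition with the combinatorics of factoring the full twist $\mathbb{c}$ in the dual braid monoid. The EL-shelling property reduces to a check on each rank-2 noncrossing parabolic subgroup $W'$ of $W$: by the interval structure of $\NC(W,c)$ together with \Cref{lem:i-1modk}, a total order $\sigma$ is an EL-shelling iff on each such $W'$ with reflections $p_1 \prec_c \cdots \prec_c p_k$, the induced restriction of $\sigma$ is compatible with the $\preceq_{c'}$-ordering of $T \cap W'$. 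I would take this as the working definition of EL-shelling throughout the proof.

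For the forward direction, I would induct on the rank $r$. Given an EL-shelling order $\sigma = (t^{(1)}, \ldots, t^{(N)})$, the lex-smallest label $t^{(1)}$ appears as the first reflection in some reduced $T$-word for $c$, giving a dual factorization $\mathbf{c} = \mathbf{t}^{(1)}_c \cdot \mathbf{c}'$ with $\mathbf{c}'$ a Coxeter element of a rank-$(r-1)$ parabolic subgroup. Raising to the $h$-th power, using centrality of $\mathbb{c}$ in $\mathbf{P}_W$ together with braid identities in $\B_{W,c}$ to commute the two appearances of $\mathbf{t}^{(1)}_c$ together, one obtains $\mathbb{c} = (\mathbf{t}^{(1)}_c)^2 \cdot \mathbb{c}'$, where $\mathbb{c}'$ is the parabolic full twist. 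Since the restriction of $\sigma$ to $T \setminus \{t^{(1)}\}$ remains an EL-shelling order (a standard consequence of the interval restriction of EL-labelings), the inductive hypothesis identifies $\mathbb{c}'$ with the ordered $\mathbb{T}_{c'}$-product, yielding $\mathbb{c} = \mathbbm{t}^{(1)}_c \cdots \mathbbm{t}^{(N)}_c$. Minimality of the resulting $\mathbb{T}_c$-word (length exactly $N$) follows from the abelianization $\mathbf{P}_W \twoheadrightarrow \mathbf{P}_W^{\mathrm{ab}} \cong \mathbb{Z}^N$ under which each $\mathbbm{t}_c$ maps to a distinct basis vector and $\mathbb{c}$ to their sum.

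For the backward direction, I would argue contrapositively: if $\sigma$ fails to be an EL-shelling, the rank-2 criterion isolates a parabolic $W'$ whose induced ordering violates compatibility with $\preceq_{c'}$. Using the parabolic inclusion $\mathbf{P}_{W'} \hookrightarrow \mathbf{P}_W$, I would project the product $\mathbbm{t}^{(1)}_c \cdots \mathbbm{t}^{(N)}_c$ onto its $W'$-subword and show that the residual dihedral product cannot be a reduced $\mathbb{T}_{c'}$-word for the dihedral full twist $\mathbb{c}_{W'}$, already verified by direct computation in rank 2. The main obstacle will lie precisely here: executing this restriction requires systematically ``centralizing'' non-$W'$ factors past $W'$-factors via conjugation in $\mathbf{P}_W$, and although the SBDW triangulation of this paper gives a local handle on such centralizations through its interplay with the noncrossing Bruhat and noncrossing weak orders of \Cref{sec:other_orders}, promoting this local control to the clean global rank-2 obstruction appears to require exactly the sort of presentation asserted in \Cref{conj:pure}. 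This suggests that \Cref{conj:total_orders} is best proved in tandem with \Cref{conj:pure} via a simultaneous induction on rank, with the rank-2 dihedral case serving as both the base case and the core geometric input.
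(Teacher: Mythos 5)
The statement you were asked to prove is stated in the paper as a \emph{conjecture}---explicitly attributed to the third author and presented in \cref{subsec:pure} as an open problem that the paper views its other results as ``steps toward resolving.'' The paper offers no proof of it, so there is nothing to compare your attempt against; what follows is an evaluation of your sketch on its own merits.

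Your forward direction contains a concrete error in the recursive step. You claim that peeling off the lex-smallest label $t^{(1)}$ and raising $\mathbf{c} = \mathbf{t}^{(1)}_c\,\mathbf{c}'$ to the $h$-th power yields $\mathbb{c} = \mathbbm{t}^{(1)}_c\cdot\mathbb{c}'$, where $\mathbb{c}'$ is the full twist of the rank-$(r-1)$ parabolic subgroup $W'$ with Coxeter element $c' = (t^{(1)})^{-1}c$. This is already false for $W=\SSS_3$, $c=s_1s_2$, $t^{(1)}=s_1$: here $h=3$, so $\mathbb{c}=(\mathbf{s}_1\mathbf{s}_2)^3$ has exponent sum $6$ in the Artin generators, while the rank-$1$ parabolic generated by $s_2$ has Coxeter number $2$, so $\mathbbm{s}_1\cdot\mathbf{s}_2^{\,2}=\mathbf{s}_1^2\mathbf{s}_2^2$ has exponent sum only $4$. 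The bookkeeping behind the identity is structurally off: the interval $[t^{(1)},c]_T\subseteq\NC(W,c)$ is isomorphic to $\NC(W',c')$, and the edge labels that appear in it are precisely the $N'$ reflections of $W'$, where $N'<N-1$. Hence ``the restriction of $\sigma$ to $T\setminus\{t^{(1)}\}$'' is not a labeling of $\NC(W',c')$ and is not what any inductive hypothesis applies to; the $N-1-N'$ reflections of $T\setminus\{t^{(1)}\}$ lying outside $W'$ (which certainly appear in your product $\mathbbm{t}^{(2)}_c\cdots\mathbbm{t}^{(N)}_c$) have no place in your recursion. A viable proof must track where those remaining reflections factor, and peeling off a single atom and restricting to the interval above it cannot do this.

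Beyond that, your ``working definition'' of EL-shelling via rank-$2$ compatibility is plausible in spirit but not established in the paper (which only remarks that a characterization exists, proved by Hugh Thomas, without stating it); and the backward direction is, by your own admission, a plan rather than an argument, resting on a ``projection'' $\mathbf{P}_W\to\mathbf{P}_{W'}$ that does not exist in the form you need. Your closing suggestion that \cref{conj:total_orders} and \cref{conj:pure} should be attacked in tandem with the dihedral case as the base is a reasonable diagnosis of where the difficulty lies, but it is a diagnosis and not a resolution.
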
 
The third author further conjectured a complete uniform characterization of the EL-shelling orders for $\NC(W,c)$, which was subsequently proven by Hugh Thomas in 2018 (up to a nontrivial finite check of the characterization up to Coxeter groups of rank 8; this check is easy to carry out for the classical types and low-rank exceptional types), generalizing work of Athanasiadis, Brady, and Watt~\cite{athanasiadis2007shellability}.  Note that there is always at least one EL shelling order: the inversion sequence of the $c$-sorting word for the long element.

We now give a speculative way to relate~\Cref{conj:total_orders} to the geometry of the braid group.  For any $\bpi \in \MCh(W,c)$, let $\mathbf{S}_{\bpi}$ be a formal copy of the reflections in $\rw(\bpi)$.  If we restrict the generators of the dual braid monoid $\mathbf{B}_{W,c}^+$ to the set $\mathbf{S}_{\bpi}$, we define a new monoid $\mathbf{B}_{W,\bpi}^+$; in slightly different language, the corresponding group relations have been studied in, for example,~\cite{palit2022dual,grant2017braid,barot2015reflection}.  Generalizing~\Cref{sec:other_orders} from the chain $\bpi^*$ for which $\mathbf{S}_{\bpi^*}=\mathbf{S}$ is the usual set of Artin generators, we can consider the interaction of some lift of $W$ to $\mathbf{B}_{W,\bpi}^+$ and the additional edges then induced by the full set of generators of $\mathbf{B}_{W,c}^+$.  One might hope that these additional edges allow us to construct a new simplicial complex from $\mathbf{S}_{\bpi}$ that is in some sense analogous to our triangulation of the Salvetti complex.  One might further expect for the combinatorics of this complex to then be governed by some EL-shelling order of the $c$-noncrossing partition lattice with basic reflections given by $\rw(\bpi)$, with decreasing chains indexing the ``innermost'' simplices.  The connection to~\Cref{conj:total_orders} is as follows: we see an obvious word in the braid group for the full twist $\mathbb{c}=c^h$ by walking in $\mathbf{S}_{\bpi}$ order around the outside of the complex; the difficult word in the pure braid group coming from the EL-shelling order will then be seen using the innermost simplices.

\begin{example}
For example, consider the interval $[\mathbf{e},\mathbf{w}_\circ]_\mathbf{S}$ for $\SSS_3$ with $\mathbf{S}_{\bpi}=\mathbf{S}$ (depicted in~\Cref{fig:nathan}).  The innermost edges and the triangulation allow us to convert the word $\mathbb{s}_2 \mathbb{t}_c \mathbb{s}_1$ using generators in $\mathbb{T}_c$ into an obvious word for the full twist using generators in $\mathbf{S}$:
\begin{align*}\mathbb{s}_2 \mathbb{t}_c \mathbb{s}_1=\mathbf{s}_2^2 \mathbf{t}_c^2 \mathbf{s}_1^2&= \mathbf{s}_2 (\mathbf{s}_2 \mathbf{t}_c) (\mathbf{t}_c \mathbf{s}_1) \mathbf{s}_1 = \mathbf{s}_2 (\mathbf{s}_1 \mathbf{s}_2) (\mathbf{s}_1 \mathbf{s}_2) \mathbf{s}_1 = \mathbb{c}.\end{align*}
\end{example} 

\subsection{Obstacles to an explicit homotopy}  \label{sec:obstacles}

Because the Salvetti complex $\Sal(W)$ and the Bessis--Brady--Watt complex $\Bessis(W,c)$ are each homotopy equivalent to the space $\Vreg$, it is natural to ask for an explicit homotopy between them. One of our original motivations for constructing the SBDW triangulation was to provide such a homotopy. However, we encountered several obstacles, which we now detail. 

The first issue we faced is that there are too many simplices used in our triangulation of $\Sal(W)$ to be directly comparable to the simplices in $\Bessis(W,c)$.  This problem is already evident for $\SSS_3$:
\begin{itemize}
\item for $\Sal(\SSS_3)$, each of the $|\SSS_3|=3!$ permutahedral hexagon faces of $\Sal(W)$ have been subdivided into four triangles, for a total of $24$ simplices;
\item on the other hand, each of the $|\SSS_3|=3!$ noncrossing partition lattices for $\Bessis(\SSS_3,c)$ contains only three triangles, for a total of $18$ simplices. 
\end{itemize}
The issue in this $\SSS_3$ example is that each of the simplices labeled by the $\preceq_c$-increasing chain appears in two permutahedral faces of $\Sal(\SSS_3)$.

More generally, in our triangulation of $\Sal(W)$, we wish to see only one copy of each simplex labeled by a particular factorization at each vertex $w \in W$.  Our triangulation gives us many duplicates---for example, our triangulation has $|W_c^+|$-many simplices labeled by the all-simples factorizations at each vertex $w \in W$.

When we tried to eliminate these duplicate simplices, we ran into a second issue.  The natural EL-shelling order of $\NC(W,c)$ coming from $\inv(\w_\circ(c))$ gives a natural order in which to glue together duplicate simplices.  The idea is that, starting with the all-simples factorization at any $w \in W$ that is glued on its two facets contained in the surface of $\Perm(W)$, we begin gluing together the offending duplicated simplices.  If every homology facet of $\NC(W,c)$---the $\preceq_c$-decreasing simplices---appeared only once in our triangulation of $\Perm(W)$, then it seems that this should work to give the desired homotopy.  Because of the existence of non-singleton postive $c^{-1}$-Cambrian classes (see~\Cref{thm:Cambrian}), some of these decreasing simplices can be repeated multiple times.  For example, in~\Cref{fig:TrianglesLinear}, the $\preceq_c$-decreasing chain whose corresponding reduced $T$-word is $(34)(14)(12)$ is concordant with both $u=s_2$ and $u=s_3s_2$.

The failure of these attempts suggests that our triangulation of $\Sal(W)$ should more accurately model the interaction of the positive Artin and dual braid monoids.  That is, we expect that we should replace our use of Bruhat order with the interaction from~\Cref{sec:other_orders} of weak order (and sortable elements) with the noncrossing partition lattice to redefine our triangulation of the permutahedron.  We plan to return to this in future work.

\section*{Acknowledgments}
We thank Mitchell Lee, Christian Stump, and Lauren Williams for extremely helpful conversations.  We especially thank Theo Douvropoulos for catching a serious oversight  we made in an earlier draft of this paper.  
Colin Defant was supported by the National Science Foundation under Award No.\ 2201907 and by a Benjamin Peirce Fellowship at Harvard University. Melissa Sherman-Bennett was partially supported by the National Science Foundation under Award No.~2444020.  Nathan Williams was partially supported by the National Science Foundation under Award No.~2246877 and would like to thank Theo once more here.

\appendix

\section{Upgrading triangulations to subdivisions} 

Our main result in this subsection is the following proposition, which allows us to continuously deform subdivisions under certain conditions. 

\begin{proposition}\label{prop:a-deform}
Let $(P_\tau)_{\tau\in[0,1]}$ be a collection of $d$-dimensional polytopes in $U$. Let $\JJ_1,\ldots,\JJ_m\subseteq\VV(P_0)$. Suppose that for each $\tau\in[0,1]$, there is a bijection $\varphi_\tau\colon\VV(P_0)\to\VV(P_\tau)$. Define $Q_\tau^i := \conv(\varphi_\tau(\JJ_i))$. Assume the following conditions hold: 
\begin{itemize}
\item $\{Q_0^i\}_{i=1}^m$ is a subdivision of $P_0$
\item The map $\varphi_0$ is the identity map on $\VV(P_0)$. 
\item For each $\vvv\in\VV(P_0)$, the map $\tau\mapsto\varphi_\tau(\vvv)$ is a continuous function from $[0,1]$ to $U$. 
\item For every $\tau\in[0,1]$, there is a poset isomorphism from $\Faces(P_0)$ to $\Faces(P_\tau)$ that agrees with $\varphi_\tau$ on $\VV(P_0)$. 
\item For every $\tau\in[0,1]$, there is a poset isomorphism from $\Faces(Q_0^i)$ to $\Faces(Q_\tau^i)$ that agrees with $\varphi_\tau$ on $\VV(Q_0^i)=\JJ_i$. 
\end{itemize} 
Then $\{Q_\tau^i\}_{i=1}^m$ is a subdivision of $P_\tau$ for every $\tau\in[0,1]$.  
\end{proposition}


Throughout this subsection, we preserve the hypotheses and notation from  \cref{prop:a-deform}. Before proving that proposition, we need the following lemmas.

\begin{lemma}\label{lem:a-common_facet}
Suppose $i,i'\in[m]$ are such that $Q_0^i$ and $Q_0^{i'}$ intersect along a common facet, which is necessarily $\conv(\JJ_i \cap \JJ_{i'})$. For every $\tau\in[0,1]$, the polytopes $Q_\tau^i$ and $Q_\tau^{i'}$ intersect along a common facet, which is $\conv(\varphi_\tau(\JJ_i\cap\JJ_{i'}))$. 
\end{lemma}
\begin{proof}
Let $\www \in \JJ_i\setminus\JJ_{i'}$ and $\www' \in \JJ_{i'}\setminus\JJ_i$. We note that by assumption, $\conv(\varphi_\tau(\JJ_i\cap\JJ_{i'}))$ is a facet of $Q_\tau^i$, and so in particular the affine hull of $\varphi_\tau(\JJ_i\cap\JJ_{i'})$ is a hyperplane $H_\tau$. Let $\mathcal T$ be the set of $\tau\in[0,1]$ such that $\varphi_\tau(\www)$ and $\varphi_\tau(\www')$ lie in opposite open half-spaces bounded by $H_\tau$. Because each map $\tau\mapsto\varphi_\tau(\vvv)$ for $\vvv\in\VV(P_0)$ is continuous, $\mathcal T$ is an open subset of $[0,1]$ that contains $0$. We wish to show that $\mathcal T=[0,1]$, so suppose by way of contradiction that this is not that case. Then $[0,1]\setminus\mathcal T$ contains its minimum, which is some number $\tau^\#$. Either $\varphi_{\tau^\#}(\www)$ or $\varphi_{\tau^\#}(\www')$ lies in the hyperplane $H_{\tau^{\#}}$. This contradicts the assumption that $Q_0^{i}$ and $Q_\tau^{i}$ (or $Q_0^{i'}$ and $Q_\tau^{i'}$) are combinatorially isomorphic. We conclude that $\mathcal T=[0,1]$, as desired. 

This shows that for all $\tau$, all elements of $\varphi_\tau(\JJ_i \setminus \JJ_{i'})$ lie in the opposite open half-space defined by $H_\tau$ from all elements of $\varphi_\tau(\JJ_{i'} \setminus \JJ_{i})$. So the intersection $Q_\tau^i \cap Q_\tau^{i'}$ is contained in $H_{\tau}$. On the other hand, $Q_\tau^i \cap H_\tau$ and $Q_\tau^{i'} \cap H_\tau$ are exactly the facet $\conv(\varphi_\tau(\JJ_i\cap\JJ_{i'}))$.
\end{proof} 

\begin{lemma}\label{lem:a-facet_or_boundary}
Choose $i \in [m]$, $\tau \in [0,1]$, and $F$ a facet of $Q_\tau^i$.
Either $F$ is contained in the boundary of $P_\tau$, or $F=Q_\tau^i\cap Q_\tau^j$ for some $j \neq i$. If \cref{prop:a-deform} holds for $(d-1)$-dimensional polytopes and $F$ is contained in the boundary of $P_\tau$, then for every $j\neq i$, the relative interior of $F$ does not intersect $Q_\tau^j$.
\end{lemma}

\begin{proof}

The conclusion of the lemma certainly holds if $\tau=0$. 

Now suppose $\tau>0$. By the fourth bulleted assumption of \cref{prop:a-deform}, there is a facet $F_0$ of $Q_0^i$ such that $\VV(F) = \varphi_\tau(\VV(F_0))$. If $F_0$ is not contained in the boundary of $P_0$, then no facet of $P_0$ contains $\VV(F_0)$. The third bulleted item of the hypotheses of \cref{prop:a-deform} imply that no facet of $P_\tau$ contains $\VV(F)$, so $F$ is not contained in the boundary of $P_\tau$ either. Also, since $\{Q_0^a\}_{a=1}^m$ is a subdivision, $F_0=Q_0^i \cap Q_0^j$ for some $j \neq i$. By \cref{lem:a-common_facet}, $Q_\tau^i \cap Q_\tau^j = \conv(\varphi_\tau(\JJ_i \cap \JJ_j)) = F$ as desired.

If $F_0$ is contained in the boundary of $P_0$, then some facet of $P_\tau$ contains $\VV(F_0)$. The third bulleted assumption of \cref{prop:a-deform} then implies that $F$ is contained in some facet of $P_\tau$.

For the final statement, suppose \cref{prop:a-deform} holds for $(d-1)$-dimensional polytopes. Choose a facet $R_0$ of $P_0$, and let $R_\tau$ be the facet of $P_\tau$ with $\VV(R_\tau)=\varphi_\tau(\VV(R))$. Note that $R_0$ and $R_\tau$ are combinatorially isomorphic by assumption. Let $H_\tau$ be the corresponding facet hyperplane. Let $I$ be the set of $i\in[m]$ such that $Q_0^i \cap H_0$ is a $(d-1)$-dimensional polytope. Then $\{Q_0^i \cap H_0\}_{i \in I}$ is a subdivision of $R_0$. The deformations $\{Q_\tau^i \cap H_\tau\}_{i \in I}$ satisfy the assumptions of \cref{prop:a-deform} using the same map $\varphi_\tau$ (restricted to $\VV(R_0)$). Thus, $\{Q_\tau^i \cap H_\tau\}_{i \in I}$ is a subdivision of $R_\tau$.

Suppose that $F$ is contained in $R_\tau$ and $F \cap Q_\tau^j\neq\emptyset$ for some $j\neq i$. This means $Q_\tau^j \cap H_\tau$ is nonempty and, since $H_\tau$ is a supporting hyperplane of $P_\tau \supset Q_\tau^j$, the polytope $Q_\tau^j \cap H_\tau$ is some face $F'$ of $Q_\tau^j$; $\VV(F')$ is exactly $\varphi_\tau(\JJ_j) \cap \VV(R_\tau)$. By the fourth bulleted assumption of \cref{prop:a-deform}, there is a face $F_0'$ of $Q_0^j$ whose vertices are $\varphi_\tau^{-1}(\VV(F'))= \JJ_j \cap R_0$. We have $F_0'=Q_0^j \cap H_0$. Since $\{Q_0^i\}_{i=1}^m$ is a subdivision, $F_0$ and $F_0'$ intersect in a common face contained in $H_0$. Since $\{Q_\tau^i \cap H_\tau\}_{i=1}^m$ is a subdivision, $F$ and $F'$ also intersect in a common face. Thus, the relative interior of $F$ does not intersect $Q_\tau^j$.





\end{proof}

\begin{proof}[Proof of \cref{prop:a-deform}]
We proceed by induction on dimension. The base case of $d=0$ is trivial. Now assume the proposition holds for dimension $d-1$.

Let $\mathcal T$ be the set of $\tau\in[0,1]$ such that $\{Q_\tau^i\}_{i=1}^m$ is not a subdivision of $P_\tau$; our goal is to show that $\mathcal T=\emptyset$. It follows from \cref{lem:union-covers-real} and the hypothesis of the proposition that $\bigcup_{i=1}^mQ_\tau^i=P_\tau$ for every $\tau\in[0,1]$. Therefore, $\mathcal T$ is equal to the set of $\tau\in[0,1]$ such that two simplices in $\Theta_\tau$ do not intersect along a common face. 

Let $\mathcal T'$ be the set of $\tau\in[0,1]$ such that there exist $Q_\tau^i, Q_\tau^{i'}$ that intersect in their interiors. Since interiors of polytopes are open sets, we know that $\mathcal T'$ is an open subset of $[0,1]$. We will show that $\mathcal T=\mathcal T'$.

We certainly have $\mathcal T'\subseteq\mathcal T$. Suppose that there exists $\tau\in\mathcal T\setminus\mathcal T'$; we will obtain a contradiction, which implies the desired equality. According to \cref{lem:facet-to-facet}, there exist $i,i'\in[m]$ such that $Q_\tau^i \cap Q_\tau^{i'}$ is $(d-1)$-dimensional but is not a common facet of $Q_\tau^i$ and $Q_\tau^{i'}$. Then $Q_\tau^i \cap Q_\tau^{i'}$ is properly contained in, and intersects the relative interior of, some facet $F$ of $Q_\tau^i$. Note that $F$ is not contained in the boundary of $P_\tau$. By \cref{lem:a-facet_or_boundary}, there exists $j\in[m]$ such that $F=Q_\tau^i\cap Q_\tau^j$. But then $Q_\tau^{i'}$ and $Q_\tau^j$ must intersect in their interior, which contradicts the assumption that $\tau\not\in\mathcal T'$.  

Now we would like to show that $\mathcal T'$ is empty, which would prove the proposition. Fix $\tau \in [0,1]$, and suppose for the sake of contradiction that some point $\ppp$ is in the interior of both $Q_\tau^i$ and $Q_\tau^j$. Let $X$ be the union of the $(d-2)$-dimensional faces of the polytopes in $\Theta=\{Q_\tau^i\}_{i=1}^m$. Choose a point $\ppp''$ in $U \setminus P_\tau$ generically, so that the line segment from $\ppp$ to $\ppp''$ does not intersect $X$, and let $\ppp'$ be the intersection of this line segment with the boundary of $P_\tau$. We claim that every point of the line segment $L$ from $\ppp$ to $\ppp'$ is contained in at least two polytopes in $\Theta$. This claim leads to a contradiction: if $\ppp' \in Q_\tau^a \cap Q_\tau^b$, then since $\ppp'$ is in the boundary of $P_\tau$ and is not in $X$, it is in the relative interior of a facet of $Q_\tau^a$ and a facet of $Q_\tau^b$. But this contradicts \cref{lem:a-facet_or_boundary}, as desired.

We now prove the claim. As you walk along $L$, you are contained in $Q_\tau^i \cap Q_\tau^j$ up until you arrive at a point $p$ where $L$ hits a facet $F$ of one of the polytopes, say $Q_\tau^i$, transversely. If $p = \ppp'$, we are done, so we assume $F$ is not on the boundary of $P_\tau$. \cref{lem:a-facet_or_boundary} guarantees that $F= Q_\tau^i \cap Q_\tau^a$ for some $a$. If somehow $p$ is also the point where $L$ hits a facet $F'$ of $Q_\tau^j$ transversely, again \cref{lem:a-facet_or_boundary} guarantees $F'= Q_\tau^j \cap Q_\tau^b$ for some $b$. It is impossible for $F$ to equal $F'$: in that case, the vertex sets agree, so the corresponding facets of $Q_0^i$ and $Q_0^j$ are equal, and since we have a subdivision for $\tau=0$, their intersection $Q_0^i \cap Q_0^j$ is their shared facet and \cref{lem:a-common_facet} implies that $Q_\tau^i \cap Q_\tau^j$ is a common facet, which is impossible as their interiors intersect.
 It is also impossible to have $a = b$ because $p \notin X$, so $p$ is in at most one facet of any polytope. In summary, if you go slightly past $p$ on $L$, you are still in two polytopes in $\Theta$. Repeating this argument as you proceed along $L$ proves the claim.
\end{proof}

\bibliographystyle{plain}
\bibliography{refs.bib}

\end{document}